\pdfoutput=1



\documentclass[preprint,12pt,A4]{elsarticle}
\usepackage[top=2.5cm,bottom=2.5cm,left=2.5cm,right=2.5cm]{geometry}



\usepackage{amssymb}
\usepackage{latexsym}
\usepackage{graphicx}
\usepackage{amsmath,amsthm}
\usepackage{amssymb}
\usepackage{epsfig}
\usepackage{float,lscape}
\usepackage{rotating}
\usepackage{graphicx}
\usepackage{sidecap}
\usepackage{multirow}
\usepackage{float}
\usepackage{lipsum}
\usepackage{rotating}
\floatplacement{figure}{H}
\newtheorem{thm}{Theorem}[section]

\theoremstyle{definition}
\newtheorem{defn}{Definition}[section]

\theoremstyle{remark}
\newtheorem{rem}{Remark}[section]



\journal{}

\begin{document}

\begin{frontmatter}


\author[label1,label2]{Rabia Hameed}
\ead{rabiahameedrazi@hotmail.com}
\author[label1]{Ghulam Mustafa \corref{cor1}}
\ead{Corresponding author: ghulam.mustafa@iub.edu.pk}
\address[label1]{Department of Mathematics, The Islamia University of Bahawalpur \fnref{label3}}
\address[label2]{Department of Mathematics, The Government Sadiq College Women University Bahawalpur \fnref{label3}}
\title{Recursive process for constructing the refinement rules of new combined subdivision schemes and its extended form}




\begin{abstract}
In this article, we present a new method to construct a family of $(2N+2)$-point binary subdivision schemes with one tension parameter where $N$ is a non-negative integer. The construction of the family of schemes is based on repeated local translation of points by certain displacement vectors. Therefore, the refinement rules of a $(2N+2)$-point scheme for $N=M$ are recursively obtained from the refinement rules of the $(2N+2)$-point schemes for $N=0,1,2,\ldots,M-1$. The complexity, polynomial reproduction and polynomial generation of these schemes are increased by two for the successive values of $N$. Furthermore, we modify this family of schemes to a family of $(2N+3)$-point schemes with two tension parameters.
Moreover, a family of interproximate subdivision schemes with tension parameters is also introduced, which allows a different tension value for each edge and vertex of the initial control polygon. Interproximate schemes generate curves and surfaces such that some initial control points are interpolated and others are approximated.
\end{abstract}

\begin{keyword}
primal subdivision schemes; tensor product surface; tension parameter; combined subdivision schemes
\MSC[2010] 65D17, 65D07, 68U07, 65D10.

\end{keyword}

\end{frontmatter}


\section{Introduction}
Subdivision schemes are efficient tools for generating smooth curves/surfaces as the limit of an iterative process based on simple refinement rules starting from certain control points defining a control polygon/mesh. In recent years, subdivision schemes have been an important research area. These schemes provide an efficient way to describe curves, surfaces and related geometric objects. Generally, subdivision schemes are classified as interpolatory or approximating, depending on whether the limit curve passes through all the given initial control points or not. Although approximating schemes yield smoother curves with higher order continuity, interpolating schemes are more useful for engineering applications as they preserve the shape of the coarse mesh.
The special family of interpolatory schemes consists of the schemes with refinement rules that preserve the points associated with the coarse mesh and only generate new points related to the additional vertices of the refined mesh. An important family of interpolatory schemes was introduced by Deslauriers and Dubuc \cite{Dubuc} and latest tools for its analysis were introduced in \cite{Amat}. Whereas an important family of approximating subdivision schemes that is the dual counterparts of the schemes of Deslauriers and Dubuc \cite{Dubuc} was proposed in \cite{Dyn7}. However, there also exists the family of combined subdivision schemes, which can be used either as an approximating scheme or as an interpolatory scheme for a special choice of the tension parameter. See the surveys \citep{Pan, Novara}. Furthermore, combined subdivision schemes can be converted to shape controlled subdivision schemes by defining local tension parameters. In the literature, there are several shape controlled subdivision algorithms that have been constructed by combining the refinement rules of interpolatory and approximating subdivision schemes. Beccari et al. \cite{Beccari1} proposed a method in which the refinement rules of an interpolating univariate subdivision scheme can be derived from the refinement rules of an approximating subdivision scheme by applying certain simple operations on the mask coefficients. Li and Zheng \cite{Li} combined the $4$-point scheme of Dyn et al. \cite{Dyn1} and the cubic B-spline binary refinement scheme to construct a shape controlled subdivision scheme. Tan et al. \cite{Tan} combined the $4$-point scheme of Dyn et al. \cite{Dyn1} and a $2$-point corner cutting scheme to construct another shape controlled subdivision scheme. In this paper, we present a recursive method to construct the $(2N+2)$-point combined subdivision schemes which is a new trend in the construction of subdivision schemes. We also present an extended form of this family of combined schemes by defining shape controlled parameters to increase the flexibility in curves and surfaces fitting.

The following motivation plays an important role in the construction of the proposed families of schemes.
\subsection{Motivation}
There are many algorithms for constructing families of binary dual subdivision schemes. One family of primal binary approximating schemes is constructed in \cite{Dyn5}, but its continuity is only $C^{1}$ and it reproduces only linear polynomials. However, there exists an algorithm that constructs both primal and dual schemes, called Refine-Smooth algorithm. Some of the latest Refine-Smooth algorithms are presented in \citep{Hameed, Hameed1, Mustafa10, Romani}. In this algorithm, generally the smoothness of the schemes is increased by increasing the number of smoothing stages but the degree of polynomial reproduction remains the same. We propose an algorithm for constructing binary primal combined subdivision schemes. These schemes not only give optimal smoothness in limit curves and surfaces but also give the good reproduction degree. Moreover, our families of schemes give both interpolatory and approximating behavior for specific values of the tension parameter. Furthermore, we convert our schemes to interproximate schemes that generate curves and surfaces such that some initial control points are interpolated and others are approximated.

This article is organized as follows. Section 2 deals with some basic definitions and results. In Section 3, we construct three families of primal subdivision schemes. Section 4 deals with some important properties of the proposed families of schemes. In Section 5, numerical examples and comparisons are presented. A family of interproximate subdivision schemes and associated numerical examples are presented in Section 6. Conclusions are given in Section 7.
\section{Preliminaries}
A general compact form of linear, uniform and stationary binary univariate subdivision scheme $S_{a}$ which maps a polygon $f^{k}=\{f^{k}_{i}, i \in \mathbb{Z}\}$ to a refined polygon $f^{k+1}=\{f^{k+1}_{i}, i \in \mathbb{Z}\}$ is defined as
\begin{eqnarray}\label{subdivisionscheme}
  f_{i}^{k+1} &=& \sum \limits_{j \in \mathbb{Z}}a_{i -2 j} f_{j}^{k}, \quad i \in \mathbb{Z}.
\end{eqnarray}
Since the subdivision scheme (\ref{subdivisionscheme}) is a binary scheme, the two rules for defining the new control points are:
\begin{eqnarray}\label{binary-ss}
\nonumber  f_{2i}^{k+1} &=& \sum \limits_{j \in \mathbb{Z}}a_{2i -2 j} f_{j}^{k}=\sum \limits_{\gamma \in \mathbb{Z}}a_{2\gamma} f_{i-\gamma}^{k}, \quad i \in \mathbb{Z},\\
  f_{2i+1}^{k+1} &=& \sum \limits_{j \in \mathbb{Z}}a_{2i+1 -2 j} f_{j}^{k}=\sum \limits_{\gamma \in \mathbb{Z}}a_{2\gamma+1} f_{i-\gamma}^{k}, \quad i \in \mathbb{Z}.
\end{eqnarray}
The symbol of the above subdivision scheme is given by the Laurent polynomial
\begin{eqnarray}\label{polynomial}
  a(z) &=& \sum \limits_{i \in \mathbb{Z}}a_{i}z^{i}, \quad z \in \mathbb{C}\setminus \{0\},
\end{eqnarray}
where $a= \{a_{i}, i \in \mathbb{Z}\}$ is called the mask of the subdivision scheme.
Detailed information about refinement rules, Laurent polynomials and convergence of a subdivision scheme can be found in \cite{Charina, Dyn6, Dyn2}.
The necessary condition for the convergence of the subdivision scheme (\ref{binary-ss}) is that $\sum \limits_{\gamma \in \mathbb{Z}}a_{2\gamma} =\sum \limits_{\gamma \in \mathbb{Z}}a_{2\gamma+1}=1 $.
The continuity of the subdivision schemes can be analyzed by the following theorems.

\begin{thm}\label{thmcontinuity}
\cite{Dyn6} A convergent subdivision scheme $S_{a}$ corresponding to the symbol
\begin{eqnarray*}
a(z)&=&\left(\frac{1+z}{2\, z}\right)^{n}b(z),
\end{eqnarray*}
 is $C^{n}$-continuous iff the subdivision scheme $S_{b}$ corresponding to the symbol $b(z)$ is convergent.
\end{thm}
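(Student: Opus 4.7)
The plan is to use the classical difference-scheme argument. First I would translate everything into the generating-function language. With $f(z)=\sum_{i}f_{i}z^{i}$, applying the scheme amounts to $(S_{a}f)(z)=a(z)f(z^{2})$, while the forward difference operator $\Delta f_{i}=f_{i+1}-f_{i}$ carries the multiplier $(1-z)/z$. A short computation then shows that the factorization $a(z)=\frac{1+z}{2z}\,c(z)$ is equivalent to the symbol identity
\[
\frac{1-z}{z}\,a(z)\;=\;\tfrac{1}{2}\,c(z)\,\frac{1-z^{2}}{z^{2}},
\]
which, read back in sequence form, is the key intertwining relation
\[
\Delta(S_{a}f)\;=\;\tfrac{1}{2}\,S_{c}(\Delta f).
\]

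Second, I would iterate this relation down the telescope. Set $a_{0}:=a$ and $a_{j}(z):=\bigl(\tfrac{2z}{1+z}\bigr)^{j}a(z)$, so that $a_{n}=b$ and each pair $(a_{j},a_{j+1})$ satisfies the same hypothesis. Applying the identity $n$ times gives
\[
\Delta^{n}(S_{a}f)\;=\;\tfrac{1}{2^{n}}\,S_{b}(\Delta^{n}f),
\]
and iterating on the subdivision index $k$ yields the scaled relation $\Delta^{n}f^{(k)}=2^{-nk}\,S_{b}^{\,k}(\Delta^{n}f^{(0)})$, where $f^{(k)}=S_{a}^{\,k}f^{(0)}$.

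Third, I would invoke the standard characterization of $C^{n}$-continuity of a convergent subdivision scheme in terms of its $n$-th differences: $S_{a}$ is $C^{n}$-continuous for every initial data iff the scaled $n$-th differences $2^{nk}\Delta^{n}f^{(k)}$ converge uniformly to a continuous limit for every $f^{(0)}$. Substituting the relation from the previous paragraph, this quantity coincides exactly with $S_{b}^{\,k}(\Delta^{n}f^{(0)})$. Since $\Delta^{n}f^{(0)}$ ranges over all admissible initial data as $f^{(0)}$ does, the required convergence is precisely the convergence of the scheme $S_{b}$. Both implications of the theorem follow simultaneously.

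The main obstacle is the third step: linking the uniform convergence of the scaled $n$-th differences of successive generations to the actual $C^{n}$-regularity of the limit function produced by $S_{a}$. One direction (regularity $\Rightarrow$ convergence of $S_{b}$) is immediate from the derivative being the limit of scaled differences, but the other direction (convergence of $S_{b}$ $\Rightarrow$ $S_{a}$ gives $C^{n}$ limits) needs the intermediate scaled differences $2^{jk}\Delta^{j}f^{(k)}$, $j<n$, to be controlled as well. These are handled by telescoping via the partial factorizations $a(z)=\bigl(\tfrac{1+z}{2z}\bigr)^{j}a_{j}(z)$ and an induction on $n$, with the base case $n=0$ being the tautology that $a=b$. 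Once that regularity bookkeeping is in place, the argument closes and the equivalence is established.
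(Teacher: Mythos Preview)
Your sketch is the standard difference-scheme argument and is correct in outline; the symbol identity, the intertwining relation $\Delta(S_{a}f)=\tfrac12 S_{c}(\Delta f)$, its iteration to $\Delta^{n}(S_{a}f)=2^{-n}S_{b}(\Delta^{n}f)$, and the reduction of $C^{n}$-regularity to convergence of the scaled $n$-th differences are all exactly the ingredients of the classical proof. Your caveat about the reverse implication needing the intermediate schemes $S_{a_{j}}$ and an induction on $n$ is the right identification of where the work lies.

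As for comparison with the paper: there is nothing to compare. The paper does not prove this theorem at all; it is quoted as a preliminary result from Dyn's 1992 survey \cite{Dyn6} and is used only as a black box (together with Theorem~\ref{thmcontinuity--1}) to certify the continuity ranges tabulated in Section~\ref{Analysis of the families of schemes1}. Your proposal therefore supplies an argument that the paper simply omits, and it is essentially the argument one finds in the cited reference.
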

\begin{thm}\label{thmcontinuity--1}
The scheme $S_{b}$ corresponding to the symbol $b(z)$ is convergent iff its difference scheme $S_{c}$ corresponding to the symbol $c(z)$ is contractive, where $b(z)=(1+z)c(z)$. The scheme $S_{c}$ is contractive if
\begin{eqnarray*}
&&||c^{l}||_{\infty}=\mbox{max}\left\{\sum\limits_{i}|c^{l}_{j-2^{l_{i}}}|:0\leq j <2^{l}\right\}<1, \,\ l \in \mathbb{N},
\end{eqnarray*}
where $c^{l}_{i}$ are the coefficients of the scheme $S^{l}_{c}$ with symbol
\begin{eqnarray*}
&&c^{l}(z)=c(z)c(z^2)\ldots c(z^{2^{l-1}}).
\end{eqnarray*}
\end{thm}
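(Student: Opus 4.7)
The plan is to use the forward-difference operator $\Delta$, defined by $(\Delta f)_i = f_{i+1} - f_i$, as an intertwiner between the two schemes. The first step is to verify the operator identity $\Delta\, S_b = S_c\, \Delta$, which is a short Laurent-polynomial calculation: writing the action of a scheme in symbol form as $(S_b f)(z) = b(z) f(z^2)$, the factorization $b(z) = (1+z)c(z)$ is arranged exactly so that the extra factor $(1+z)$ is absorbed when one passes from a polygon to its forward differences, leaving only $c(z)$ to act on the differences. Once this intertwining is established, the first claim of the theorem becomes a purely quantitative question about the decay of the difference sequences $\Delta f^k$.

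For the equivalence itself I would treat the two directions separately. The ``if'' direction is standard: contractivity yields geometric decay $\|\Delta f^{k+l}\|_\infty \le \mu \|\Delta f^k\|_\infty$ with $\mu < 1$, which via the intertwining controls $\|f^{k+1} - f^k\|_\infty$ by a geometric series, so the piecewise-linear interpolants of $f^k$ form a Cauchy sequence in $C(\mathbb{R})$ and converge uniformly to a continuous limit. The ``only if'' direction would go by contraposition: if no iterate of $S_c$ contracts, one can construct an initial polygon whose differences fail to vanish, and this obstructs $S_b$ from producing a continuous limit for that input.

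For the sufficient condition, the key is to identify the mask of the $l$-fold iterate $S_c^l$. A short induction using $(S_c f)(z) = c(z) f(z^2)$ yields $(S_c^l f)(z) = c(z)\, c(z^2) \cdots c(z^{2^{l-1}})\, f(z^{2^l})$, so the coefficients of this Laurent product are precisely the $c^l_i$ appearing in the statement. Splitting the mask into its $2^l$ subsequences indexed modulo $2^l$ then identifies the displayed quantity as the operator norm of $S_c^l$ acting on $\ell^\infty(\mathbb{Z})$; whenever this norm is strictly below $1$, $S_c^l$ is a strict contraction on $\ell^\infty$, which is exactly the required contractivity of $S_c$.

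The main obstacle I anticipate is the ``only if'' half of the equivalence. Passing from the qualitative statement that $S_b$ converges on every initial polygon to the quantitative bound $\|c^l\|_\infty < 1$ for some $l$ is not automatic; the cleanest route is to exhibit, for each $l$, finitely supported test polygons whose differences realize the operator norm of $S_c^l$ up to a uniform multiplicative constant, and then use convergence of $S_b$ on these test inputs to force the norms of the iterates $S_c^l$ to shrink geometrically, at which point some specific $l$ must satisfy the displayed inequality.
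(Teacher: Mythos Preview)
The paper does not prove this theorem. It is stated in the Preliminaries section as a standard background result (alongside Theorem~\ref{thmcontinuity}, which is attributed to \cite{Dyn6}) and is simply invoked later, via computer algebra, to certify continuity of the new schemes. So there is no in-paper proof to compare your proposal against.

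That said, your proposal is a faithful outline of the classical argument due to Dyn and collaborators. The intertwining $\Delta S_b = S_c \Delta$ coming from the factorization $b(z)=(1+z)c(z)$ is exactly the standard reduction, the identification of $c^l(z)=c(z)c(z^2)\cdots c(z^{2^{l-1}})$ as the symbol of the $l$-fold iterate is correct, and the displayed quantity is indeed the $\ell^\infty\to\ell^\infty$ operator norm of $S_c^l$. Your ``if'' direction is fine. You are also right that the ``only if'' direction is where the real work lies; your sketch of extracting test polygons that realize the norm of $S_c^l$ is the usual idea, though in the standard references this is typically packaged via a uniform-continuity/compactness argument on the basic limit function rather than an explicit construction. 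If you want to cite a source rather than reconstruct the argument, the result is in Dyn's 1992 survey \cite{Dyn6} and in Dyn--Levin \cite{Dyn2}.
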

In a geometric context, subdivision schemes are further categorized into primal and dual subdivision schemes. The primal binary subdivision schemes are the schemes that leave or modify the old vertex points and create one new point at each old edge. Primal schemes can be interpolatory, approximating or combined. Dual binary subdivision schemes on the other hand, are the schemes that create two new points at the old edges and discard the old points. Dual schemes are always approximating subdivision schemes. Detailed information about the primal and dual subdivision schemes can be found in \cite{Charina}. Furthermore, if one refinement rule of a binary subdivision scheme (\ref{binary-ss}) uses the affine combination of $\xi=\{\xi_{1} > 2: \xi_{1} \in \mathbb{Z}\}$ control points at level $k$ to get a new control point at level $k+1$ whereas the other refinement rule of (\ref{binary-ss}) uses the affine combination of control points less than $\xi$ and at level $k$ to get a new point at level $k+1$, then that binary scheme is called the primal binary scheme. Moreover, every primal binary scheme is the relaxed subdivision scheme. Mathematical definition of primal are dual subdivision schemes is presented below.
\begin{defn}\label{primal-dual}
\cite{Conti} Let the symbol of the subdivision scheme (\ref{binary-ss}) defined in (\ref{polynomial}) can particularly be written as $a(z)=\ldots +a_{-3}z^{-3}+a_{-2}z^{-2}+a_{-1}z^{-1}+a_{0}z^{0}+a_{1}z^{1}+a_{2}z^{2}+a_{3}z^{3}+\ldots$. If the symbol $a(z)$ defined in (\ref{polynomial}) corresponding to the scheme $S_{a}$ satisfy the following condition
\begin{eqnarray*}
a(z)&=&a(z^{-1}),
\end{eqnarray*}
then $S_{a}$ is said to be a primal subdivision scheme.
On the other hand if it satisfy following condition
\begin{eqnarray*}
z a(z)&=&a(z^{-1}),
\end{eqnarray*}
then $S_{a}$ is said to be a dual subdivision scheme.
\end{defn}

The combined subdivision schemes are the schemes which depend on one or more tension parameters. Moreover, at the specific values of these
parameters, these schemes can be regarded either as an approximating subdivision scheme or an interpolatory one. Interproximate subdivision schemes (see \cite{Li}) are the schemes which generate the limit curves that interpolate some of the vertices of the given control polygons, while approximate the other vertices of the given control polygons.

Generation and reproduction degrees are used to check the behaviors of a subdivision scheme when the original data points lie on the graph of a polynomial. Suppose that the original data points are taken from a polynomial of degree $d$. If the control points of the limit curve lie on graph of the polynomial having same degree (i.e. $d$) then we say that the subdivision scheme generates polynomials of degree $d$. If the control points of the limit curve lie on graph of the same polynomial then we say that the subdivision scheme reproduces polynomials of degree $d$. Mathematically, let $\Pi_{d}$ denote the space of polynomials of degree $d$ and $g$, $h$ $\in$ $\Pi_{d}$, an approximation operator $\mathbf{O}$ generates polynomials of degree $d$ if $\mathbf{O}g=h$ $\forall$ $g, h \in \Pi_{d}$, whereas $\mathbf{O}$ reproduces polynomials of degree $d$ if $\mathbf{O}g=g$ $\forall$ $g \in \Pi_{d}$. Furthermore, the generation degree of a subdivision scheme is the maximum degree of polynomials that can theoretically be generated by the scheme, provided that the initial data is taken correctly. Evidently, it is not less than the reproduction degree. For exact definitions of polynomial generation and reproduction the readers can consult \cite{Charina,Conti}. The following theorem is used to check the generation and reproduction degrees of the subdivision schemes in this paper.

\begin{thm}\label{thm-gd-rd}
\cite{Conti} A univariate binary subdivision scheme $S_{a}$
\begin{description}
  \item[(i)] Generates polynomials up to degree $d$ if and only if
  \begin{eqnarray*}
&& a(1)=2, \,\ a(-1)=0, \,\ \left.D^{(m)}a(z)\right|_{z=-1}=0, \,\ m=1,2,\ldots,d,
  \end{eqnarray*}
  where $\left.D^{(m)}a(z)\right|_{z=-1}$ denotes the $m$-th derivative of $a(z)$ with respect to $z$ evaluated at a point $z=-1$.
  \item[(ii)] Reproduces polynomials up to degree $d$ with respect to the parametrization $\{t_i^{(k)} =\frac{i+\tau}{2^{k}}\}_{i \in \mathbb{Z}}$ with $\tau=\frac{1}{2}\left.D^{(1)}(z)\right|_{z=1}$  if and only if it generates polynomials of degree d and
      \begin{eqnarray*}
      &&\left.D^{(m)}a(z)\right|_{z=1}=2\prod\limits_{h=0}^{m-1}(\tau-h), \,\ m=1,2,\ldots,d.
      \end{eqnarray*}
\end{description}
\end{thm}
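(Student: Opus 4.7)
The plan is to use the generating-function (symbol) formalism to translate polynomial generation and reproduction into algebraic conditions on $a(z)$ at the roots of unity $z=\pm 1$. The basic observation is that applying one step of $S_a$ to the sampled data $\{p(j)\}_{j\in\mathbb{Z}}$ of a polynomial $p$ yields polynomial data on the refined grid whose coefficients are linear combinations of the moments $\sum_i i^m a_i$ and $\sum_i (-1)^i i^m a_i$; by the identity $D^{(m)}a(z)|_{z=\pm 1}=\sum_i i^{\underline{m}}(\pm 1)^{i-m} a_i$ these moments are governed precisely by the derivatives of $a(z)$ at $z=\pm 1$. All the conditions will then follow by matching coefficients in the monomial (or, more cleanly, the falling-factorial) basis.

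For part (i), I would first note that polynomial generation up to degree $d$ is the classical Strang--Fix condition ``sum rules of order $d+1$'', which is equivalent to $(1+z)^{d+1}\mid a(z)$. Since divisibility by $(1+z)^{d+1}$ is equivalent to $a(-1)=D^{(1)}a(-1)=\cdots=D^{(d)}a(-1)=0$, and since $a(1)=2$ is the standard affine-invariance/constant-reproduction condition (equivalent to the necessary convergence requirement $\sum a_{2\gamma}=\sum a_{2\gamma+1}=1$ stated before Theorem~\ref{thmcontinuity}), the list of conditions is exactly the one in the theorem. Conversely, under these derivative conditions the two sub-sequences $\{a_{2\gamma}\}$ and $\{a_{2\gamma+1}\}$ share their first $d+1$ moments, and a direct induction on $m$ produces, for any $p\in\Pi_d$, the refined polynomial $Sp\in\Pi_d$ of the same degree.

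For part (ii), with the parametrization $t_i^{(k)}=(i+\tau)/2^k$, reproduction of $\Pi_d$ is equivalent to the identity
\begin{eqnarray*}
\sum_{j\in\mathbb{Z}} a_{i-2j}\, p(t_j^{(k)})\;=\;p(t_i^{(k+1)}),\qquad i\in\mathbb{Z},
\end{eqnarray*}
holding for every $p\in\Pi_d$. Testing against $p(x)=x^m$ for $m=0,1,\ldots,d$, substituting $l=i-2j$, expanding via the binomial theorem in powers of $i$ and matching coefficients produces a triangular system in the moments of $a$. After the conditions of part~(i) are used to cancel the contributions coming from $z=-1$, the $m$-th equation reduces exactly to $D^{(m)}a(z)|_{z=1}=2\prod_{h=0}^{m-1}(\tau-h)$; the $m=1$ equation alone forces $\tau=\tfrac{1}{2}D^{(1)}a(z)|_{z=1}$, which is why this specific value of $\tau$ appears in the hypothesis.

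The main technical obstacle is the bookkeeping of the interleaved parities in the two refinement rules and of the factor $2^{-k}$ produced by halving the parametric step at each iteration, which is what conspires to produce the leading factor $2$ on the right-hand side and the falling factorial $\tau^{\underline{m}}=\prod_{h=0}^{m-1}(\tau-h)$ in place of a naive $\tau^m$. Working throughout in the falling-factorial basis $\{x^{\underline{m}}\}$ (equivalently, testing against the shifted binomial coefficients $\binom{x+\tau}{m}$) makes this bookkeeping essentially automatic, since in that basis the action of $S_a$ becomes diagonal up to the scalars $D^{(m)}a(1)$, and both necessity and sufficiency in (i) and (ii) drop out from the triangularity of the resulting system.
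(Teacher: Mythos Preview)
This theorem is not proved in the paper: it is quoted as a known result from Conti and Hormann \cite{Conti}, so there is no proof to compare against. Your sketch is along the right lines and is essentially the argument of the original reference (Strang--Fix/sum rules for generation, moment-matching in the falling-factorial basis under the shifted parametrization for reproduction), but be aware that within the present paper you are not expected to supply a proof at all --- the result is simply cited and then applied in Theorems~\ref{GDtheorem} and~\ref{RDtheorem}.
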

The support of a basic limit function and a subdivision scheme is the area of the limit curve that will be affected by the displacement of a single control point from its initial place. The part which is dependent on that given control point is called the support width of the given subdivision scheme. By following the approach of \cite{Beccari}, we give following theorem to calculate the support width of a relaxed binary combined scheme or an interpolatory binary scheme.
\begin{thm}\label{support-theorem}
The support width of a $\xi$-point binary relaxed subdivision scheme $S_{a}$ is $2 \xi$ where $\xi=\{\xi_{1} > 2: \xi_{1} \in \mathbb{Z}\}$, which implies that it vanishes outside the interval $[-\xi,\xi]$. The support width of a $\xi$-point interpolatory binary scheme $S_{a}$ is $2 \xi -2$, which implies that it vanishes outside the interval $[-\xi+1,\xi-1]$.
\end{thm}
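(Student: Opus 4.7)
The plan is to reduce both statements to the standard correspondence between the index support of the mask $\{a_{i}\}$ and the support of the basic limit function of $S_{a}$. I would first recall that this basic limit function is obtained by iterating $S_{a}$ on the delta sequence $\delta^{(0)}_{i} = \delta_{i,0}$ under the parametrisation $t_{i}^{(k)} = i/2^{k}$. After one step the non-zero entries sit exactly at the indices $i$ with $a_{i}\neq 0$, and a short induction on the level shows that after $k$ steps they lie in $[(2^{k}-1)m_{1},(2^{k}-1)m_{2}]$, where $[m_{1},m_{2}]$ is the smallest integer interval containing the mask support. In the parametric variable $t=i/2^{k}$ this set converges to $[m_{1},m_{2}]$ as $k\to\infty$, and together with the non-vanishing of the extreme coefficients $a_{m_{1}}$ and $a_{m_{2}}$ this identifies the support of the basic limit function with $[m_{1},m_{2}]$. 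Hence the support width of $S_{a}$ equals $m_{2}-m_{1}$.

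Having reduced the problem to counting the mask support, the remainder is a direct inspection of the refinement rules \eqref{binary-ss} combined with the primality condition $a(z)=a(z^{-1})$ from Definition~\ref{primal-dual}, which gives $a_{j}=a_{-j}$ and hence $m_{1}=-m_{2}$. For the interpolatory case, the vertex rule is the identity $f_{2i}^{k+1}=f_{i}^{k}$, so $a_{0}=1$ and $a_{2j}=0$ for $j\neq 0$. The edge rule is an affine combination of the $\xi$ control points symmetric about the new midpoint, which makes $a_{2\gamma+1}$ non-zero for the $\xi$ values $\gamma=-\xi/2,\dots,\xi/2-1$; that is, on the odd integers in $\{-\xi+1,\dots,\xi-1\}$. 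The mask support is therefore $[-\xi+1,\xi-1]$ and the width is $2\xi-2$.

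For the relaxed case the vertex rule is no longer the identity, so additional even-indexed mask coefficients appear and broaden the support. By the structure of the $\xi$-point primal relaxed schemes---the recursive construction of Section~3 together with the framework of \cite{Beccari}---the vertex and edge rules jointly extend the mask support symmetrically by one position beyond the interpolatory case, so that $a_{\pm\xi}$ are non-zero while no coefficient further out appears. Applying the first step then yields a support width of $2\xi$.

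The hard part will be confirming sharpness: verifying that the leftmost and rightmost non-zero mask entries are located exactly at $\pm\xi$ (relaxed case) and $\pm(\xi-1)$ (interpolatory case). The upper bound on the support is immediate from the stencil sizes in the definition of a $\xi$-point scheme, whereas the matching lower bound---that the scheme really uses those extreme stencil points with non-zero weight---requires the explicit form of the refinement rules produced by the recursive construction in Section~3.
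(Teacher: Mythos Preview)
The paper does not actually supply a proof of this theorem: it is stated in the preliminaries as a tool, with the phrase ``by following the approach of \cite{Beccari}'' replacing any argument, and it is later invoked verbatim (Theorem~4.4) with the remark that ``the result is trivial.'' So there is no paper proof to compare against beyond the implicit reference to \cite{Beccari}.

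Your proposal is precisely the standard argument one finds behind such statements, and it is the right one: the support of the basic limit function coincides with the convex hull $[m_{1},m_{2}]$ of the mask support, and the width $m_{2}-m_{1}$ is then read off from the stencil sizes together with the primal symmetry $a_{j}=a_{-j}$. That is exactly the content of the Beccari et~al.\ computation the paper is citing. In this sense your plan matches the paper's (outsourced) proof.

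One small point worth tightening: the theorem is phrased for an \emph{arbitrary} $\xi$-point relaxed or interpolatory primal scheme, not only for the schemes built in Section~3. Your verification that the extreme mask coefficients $a_{\pm\xi}$ (resp.\ $a_{\pm(\xi-1)}$) are non-zero leans on ``the recursive construction of Section~3,'' which is specific to the paper's families. For the general statement you should instead argue directly from the paper's definition of a $\xi$-point primal scheme (the paragraph preceding Definition~\ref{primal-dual}): the longer of the two rules uses exactly $\xi$ consecutive old points, which already pins down the outermost non-zero mask entries without appealing to any particular construction. With that adjustment your argument is complete and is the intended one.
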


\section{Construction of the families of subdivision schemes}
In this section, we present a family of $(2N+2)$-point relaxed combined subdivision schemes that is based on repeated local translation of points by using certain displacement vectors. Thus, the refinement rules of a member of the proposed family is recursively obtained by the refinement rules of one other member of this family, i.e. the refinement rules of a $(2N+2)$-point scheme for $N=M$ are recursively obtained from the refinement rules of the $(2N+2)$-point scheme for $N=M-1$.
We propose a new family of $(2N+3)$-point relaxed combined schemes with two tension parameters by extending the points of the family of $(2N+2)$-point relaxed combined schemes. Then we modify the family of $(2N+3)$-point relaxed schemes to a family of $(2N+4)$-point interpolatory schemes by removing one of its tension parameters.
Construction process for the family of $(2N+2)$-point relaxed combined schemes with one tension parameter is given below:
\subsection{Framework for the construction of a family of $(2N+2)$-point relaxed schemes}\label{FSS1}

The family of $(2N+2)$-point combined subdivision schemes $S_{a_{2N+2}}$ which maps the polygon $f^{k}_{N+1}=\{f^{k}_{i,N+1}:{i \in \mathbb{Z}}\}$ to the refined polygon $f^{k+1}_{N+1}=\{f^{k+1}_{i,N+1}:{i \in \mathbb{Z}}\}$ is defined by the set of following refinement rules
\begin{eqnarray}\label{schm1}
\left\{\begin{array}{ccccccc}
&&f^{k+1}_{2i,N+1}=\sum\limits_{j=-N-1}^{N+1}a_{2j,N+1}f^{k}_{i+j,N+1},\\ \\
&&f^{k+1}_{2i+1,N+1}=\sum\limits_{j=-N-1}^{N}a_{2j+1,N+1}f^{k}_{i+j+1,N+1},
\end{array}\right.
\end{eqnarray}
where $N \in \mathbb{N}_{0}=\mathbb{N} \cup \{0\}$ is used to calculate the complexity (number of control points at $k$-th subdivision level used in the insertion of a new point at $(k+1)$-th subdivision level is called the complexity) of the subdivision schemes and $k \in \mathbb{N}$ denote the number of times subdivision is applied on the original data points. Hence for each $N$, the set $\{f_{i,N+1}^{k+1}:{i \in \mathbb{Z}}\}$ represents the $(k+1)$-th level subdivided points obtained by applying $(k+1)$-times the $(2N+2)$-point relaxed subdivision scheme (\ref{schm1}) on the initial data points $\{f_{i}^{0}=f_{i,N+1}^{0}:{i \in \mathbb{Z}}\}$ and $a=[a_{j,N+1}:j=-2(N+1),\ldots,2(N+1)]$ is mask of the scheme (\ref{schm1}) which is same at each level of refinement for a fix value of $N$. The schematic sketches of both rules defined in (\ref{schm1}) are presented in Figure \ref{P5Schematic1}.
\begin{figure}[htb] 
\begin{center}
\begin{tabular}{cc}
\epsfig{file=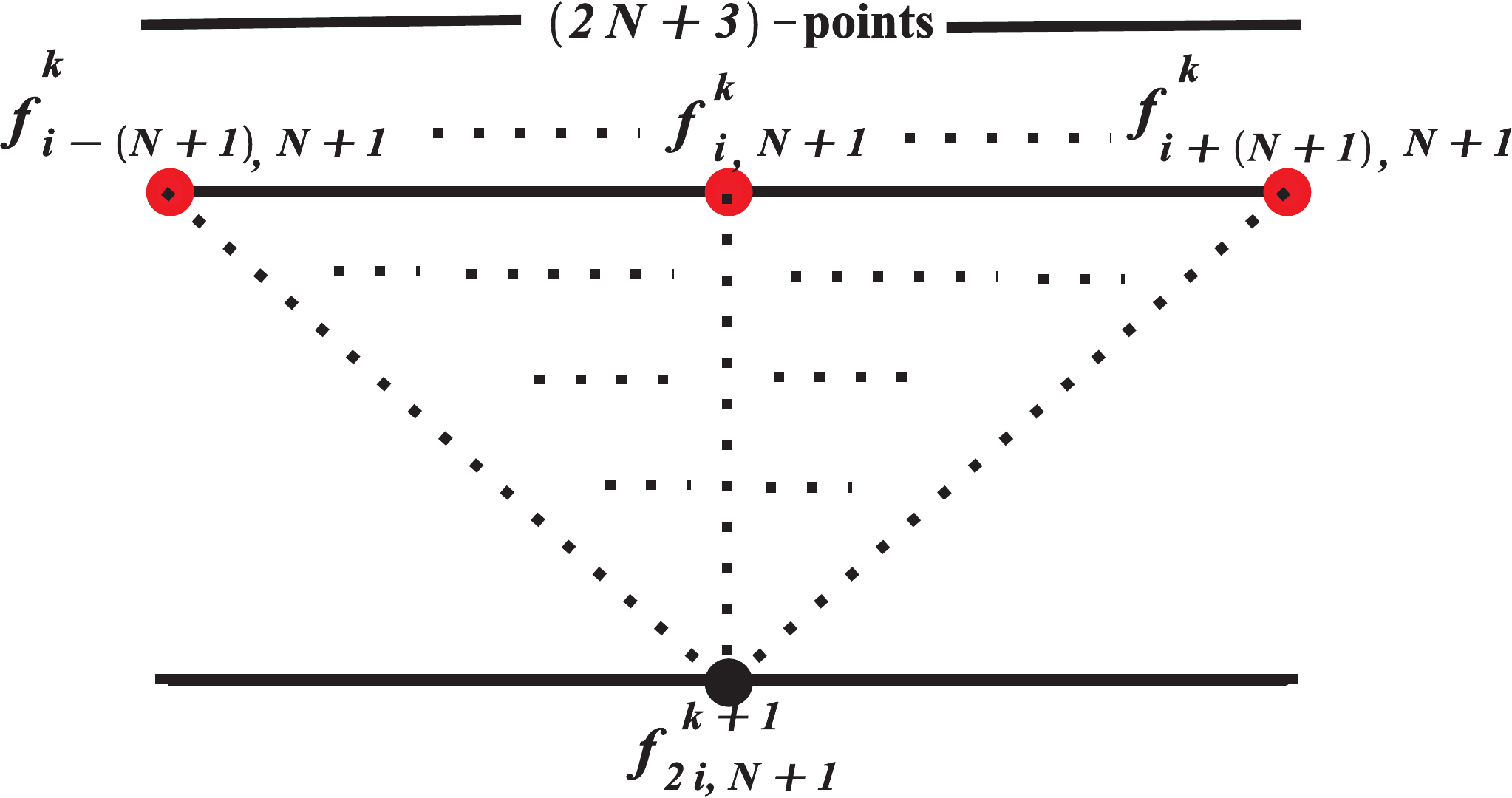, width=2.5 in} & \epsfig{file=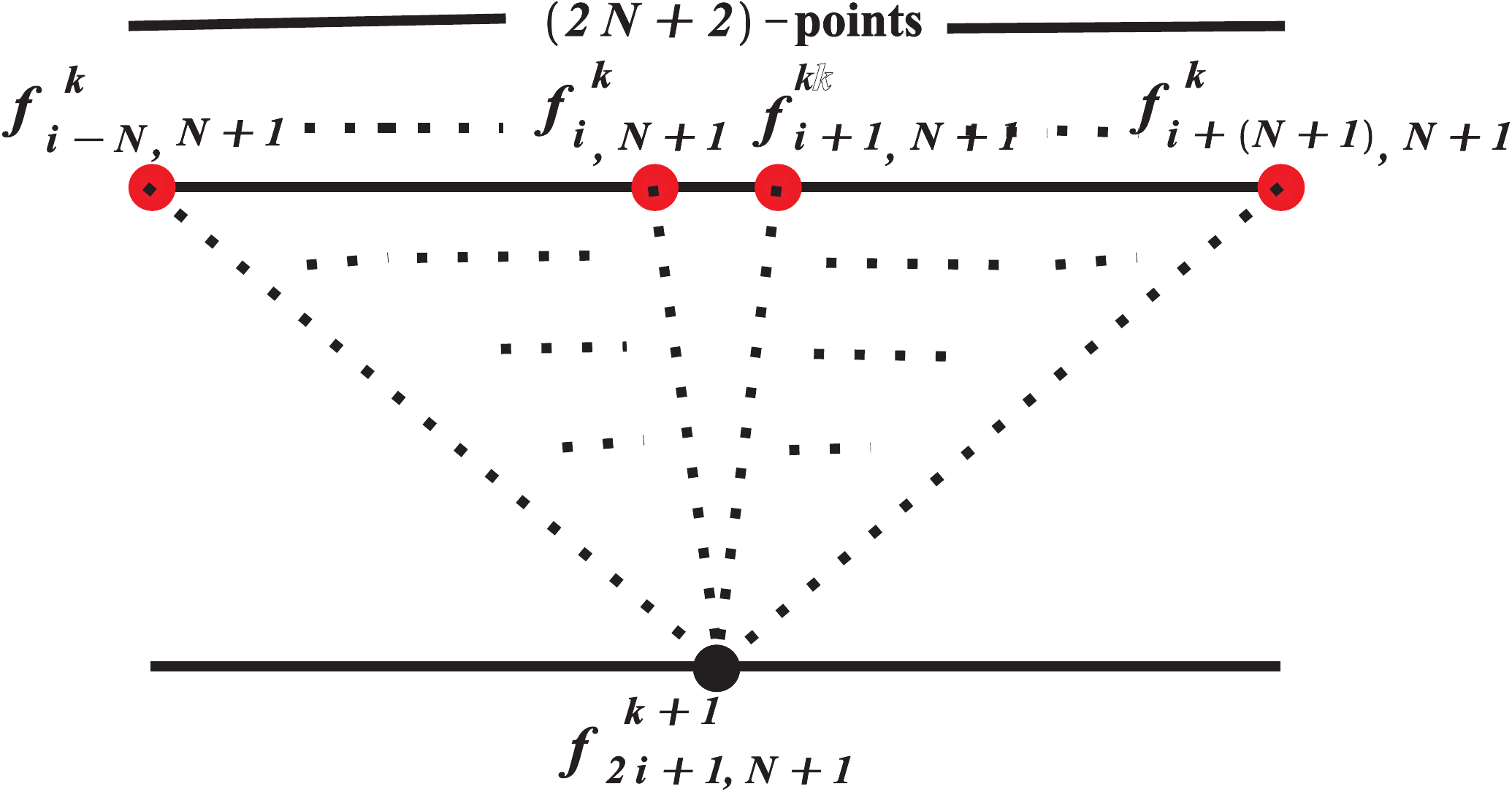, width=2.5 in}\\
(a) Vertex rule & (b) Edge rule
\end{tabular}
\end{center}
 \caption[Graphical sketches of the rules of $(2N+2)$-point primal schemes.]{\label{P5Schematic1}\emph{Graphical sketches of the rules of $(2N+2)$-point primal schemes.}}
\end{figure}
The construction process of these rules is given below:

If $N=0$, the two refinement rules of the $2$-point relaxed scheme are obtained from (\ref{1}). Hence $\{f^{1}_{i,1}:{i \in \mathbb{Z}}\}$ are the control points at first subdivision level obtained by the $2$-point relaxed subdivision scheme on the initial control points $\{f^{0}_{i}=f^{0}_{i,1}:{i \in \mathbb{Z}}\}$. These two refinement rules are the initial refinement rules used to calculate the other refinement rules of proposed family of schemes for each successive value of $N$. The initial refinement rules are defined as
\begin{eqnarray}\label{1}
\left\{\begin{array}{ccccccc}
&&f^{1}_{2i,1}=f_{i,1}^{0}+ \alpha_{0} (f^{0}_{i-1,1}-2 f^{0}_{i,1}+f^{0}_{i+1,1}),\\ \\
&&f^{1}_{2i+1,1}=\frac{1}{2}f_{i,1}^{0}+\frac{1}{2}f_{i+1,1}^{0},
\end{array}\right.
\end{eqnarray}

Now we calculate points $\{f_{2i+\ell,N+1}^{1}:\ell =0,1\}_{i \in \mathbb{Z}}$ of the $(2N+2)$-point relaxed subdivision scheme for $N \geqslant 1$. Hence for a fix value of $N$, the points $\{f_{2i+\ell,N+1}^{1}:\ell =0,1\}_{i \in \mathbb{Z}}$ of the $(2N+2)$-point relaxed subdivision scheme are obtained by moving the points $\{f_{2i+\ell,N}^{1}:\ell =0,1\}_{i \in \mathbb{Z}}$ to the new position according to the displacement vectors $\{\alpha_{\ell} \vec{C}_{2i+\ell,N+1}:\ell =0,1\}_{i \in \mathbb{Z}}$, where $\alpha_{\ell}$ is the tension parameter with $\alpha_{1}=1$ and $\alpha_{0}=\alpha$.
Mathematically, for $N \geq 1$, the two refinement rules of the family of $(2N+2)$-point relaxed subdivision schemes at first level of subdivision are obtained by the following recurrence relation
\begin{eqnarray}\label{scheme}
f^{1}_{2i+\ell,N+1}&=&f_{2i+\ell,N}^{1}+\alpha_{\ell} \vec{C}_{2i+\ell,N+1}, \,\ \ell=0,1,
\end{eqnarray}
where the vectors $\vec{C}_{2i+\ell,N+1}: \ell=0,1$ are calculated by the following recurrence relation
\begin{eqnarray}\label{vec-C}
\left\{\begin{array}{ccccccc}
\vec{C}_{2i,N+1}&=&(\vec{C}_{2i,N}-\vec{C}_{2(i+1),N})+ (\vec{C}_{2i,N}-\vec{C}_{2(i-1),N}), \\ \\
\vec{C}_{2i+1,N+1}&=&\frac{1}{N}\left(\frac{N}{4}-\frac{1}{8}\right)\left((\vec{C}_{2i+1,N}-\vec{C}_{2(i+1)+1,N})+ (\vec{C}_{2i+1,N}-\vec{C}_{2(i-1)+1,N})\right),
\end{array}\right.
\end{eqnarray}
and initial values for relation (\ref{vec-C}) are
\begin{eqnarray}\label{3}
\left\{\begin{array}{ccccccc}
&&\vec{C}_{2i,1}=f^{0}_{i-1,1}-f^{0}_{i,1}+f^{0}_{i+1,1},\\ \\
&&\vec{C}_{2i+1,1}=\frac{1}{2}\left(f_{i,1}^{0}+f_{i+1,1}^{0}\right).
\end{array}\right.
\end{eqnarray}
Here $f^{0}_{i}$ $=$ $f^{0}_{i,1}$ $=$ $f^{0}_{i,2}$ $=$ $f^{0}_{i,3}$ $=$ $\ldots$ $=$ $f^{0}_{i,N+1}$ are the initial control points.
%
While the points $f^{1}_{i,1}$, $f^{1}_{i,2}$, $f^{1}_{i,3}$ and $f^{1}_{i,4}$ are the initial points of the $4$-point, $6$-point and $8$-point relaxed subdivision schemes obtained by substituting the values of $N$ equal to $1$, $2$ and $3$ respectively in (\ref{scheme})-(\ref{vec-C}). Since the proposed subdivision schemes are stationary, so the refinement rules are same at each level of subdivision. Therefore, for other subdivision levels, we apply (\ref{schm1}) while the coefficients of points $f^{k}_{i,N+1}$ remain same as the coefficients of points $f^{0}_{i,N+1}=f^{0}_{i}$ obtained from (\ref{scheme}). Also $f^{k+1}_{i,1}$, $f^{k+1}_{i,2}$, $f^{k+1}_{i,3}$ and $f^{k+1}_{i,4}$ are the control points at $(k+1)$-th subdivision level obtained by applying the $2$-point, $4$-point, $6$-point and $8$-point relaxed subdivision schemes on the $k$-th level points $f^{k}_{i,1}$, $f^{k}_{i,2}$, $f^{k}_{i,3}$ and $f^{k}_{i,4}$ respectively. Moreover, the points other than the initial control points hold the relation $f^{k}_{i,N} \neq f^{k}_{i,N+1}$ $\forall$ $N \in \mathbb{N}$.

At each iteration, i.e. by substituting $N=1,2,3,\ldots$ in (\ref{schm1}) and (\ref{scheme})-(\ref{vec-C}), we get a new binary primal $(2N+2)$-point subdivision scheme. The masks of these $(2N+2)$-point schemes by defining $\alpha_{0}=\alpha$ are tabulated in Table \ref{Mask-S-2N+2}.

\begin{table}[htb] 
 \caption[Mask of the $(2N+2)$-point schemes $S_{a_{2N+2}}$.]{\label{Mask-S-2N+2}\emph{Mask of the $(2N+2)$-point schemes $S_{a_{2N+2}}$.}}
\begin{center}
\begin{tabular}{||c|c||}
  \hline \hline
  N & Mask   \\
  \hline \hline
  0 &  $\frac{1}{2}[2\alpha, 1, 2- 4\alpha, 1, 2 \alpha]$\\
  \hline
  1 &  $\frac{1}{16}[-16 \alpha, -1, 64 \alpha, 9, 16-96 \alpha, 9, 64 \alpha, -1, -16 \alpha]$\\
  \hline
  2 &  $\frac{1}{256}[256 \alpha, 3, -1536 \alpha, -25, 3840 \alpha, 150, 256-5120 \alpha, 150, 3840 \alpha, $\\
    &  $  -25, -1536 \alpha, 3, 256 \alpha]$ \\
    \hline
  3 & $\frac{1}{2048}[-2048 \alpha, -5, 16384 \alpha, 49, -57344 \alpha, -245, 114688 \alpha, 1225, 2048-$  \\
    & $ 143360 \alpha, 1225, 114688 \alpha, -245, -57344 \alpha, 49, 16384 \alpha, -5, -2048 \alpha]$ \\
    \hline
    & $\frac{1}{65536}[65536 \alpha, 35, -655360 \alpha, -405, 2949120 \alpha, 2268, -7864320 \alpha,$  \\
  4 & $  -8820, 13762560 \alpha, 39690, 65536-16515072 \alpha, 39690, 13762560 \alpha, $ \\
    & $  -8820, -7864320 \alpha, 2268, 2949120 \alpha, -405, -655360 \alpha, 35, 65536 \alpha]$\\
    \hline
    & $\frac{1}{524288} [-524288 \alpha, -63, 6291456 \alpha, 847, -34603008 \alpha, -5445, 115343360 \alpha,$ \\
  5 & $ 22869, -259522560 \alpha, -76230, 415236096 \alpha, 320166, 524288$\\
    & $-484442112 \alpha, 320166, 415236096 \alpha, -76230, -259522560 \alpha, 22869,$ \\
    & $ 115343360 \alpha, -5445, -34603008 \alpha, 847, 6291456 \alpha, -63, -524288 \alpha]$\\
  \hline \hline
\end{tabular}
\end{center}
\end{table}

\begin{rem}
If $\alpha=0$, the family of schemes (\ref{schm1}) reduces to the family of $(2N+2)$-point interpolatory schemes with symbol
\begin{eqnarray*}
  a(z) &=& 1+\sum\limits_{j=-N-1}^{N}a_{2j+1,N+1}z^{2j+1},
\end{eqnarray*}
proposed by Deslauriers and Dubuc \cite{Dubuc}. The continuity of $(2N+2)$-point interpolatory schemes is $C^{N}$ for $0 \leq N \leq 4$ and $C^{\approx \frac{83}{200}(N+1)}$ for $N \geq 5$.
\end{rem}

\subsection{Interpretation of Framework \ref{FSS1} for $N=1$}
The refinement rules of the initial subdivision scheme defined in (\ref{1}) are
\begin{eqnarray}\label{4}
\left\{\begin{array}{ccccccc}
&&f^{1}_{2i,1}=\alpha f_{i-1}^{0}+(1-2 \alpha) f_{i}^{0}+\alpha f^{0}_{i+1},\\ \\
&&f^{1}_{2i+1,1}=\frac{1}{2}f_{i}^{0}+\frac{1}{2}f_{i+1}^{0}.
\end{array}\right.
\end{eqnarray}
Moreover, the initial values $\{\vec{C}_{2i+\ell,1}:\ell=0,1\}$ which will be use to calculate the vectors $\{\vec{C}_{2i+\ell,N+1}:\ell=0,1\}_{N \in \mathbb{N}}$ defined in (\ref{3}) are
\begin{eqnarray}\label{5}
\left\{\begin{array}{ccccccc}
&&\vec{C}_{2i,1}=f^{0}_{i-1}-f^{0}_{i}+f^{0}_{i+1},\\ \\
&&\vec{C}_{2i+1,1}=\frac{1}{2} f_{i}^{0}+ \frac{1}{2} f_{i+1}^{0}.
\end{array}\right.
\end{eqnarray}
Now we use initial relations (\ref{4})-(\ref{5}) to calculate the refinement rules of $4$-point relaxed scheme which will be obtain by putting $N=1$ in (\ref{scheme})-(\ref{vec-C}). Hence for $N=1$ we get
\begin{eqnarray}\label{6}
\left\{\begin{array}{ccccccc}
f^{1}_{2i,2}&=&f_{2i,1}^{1}+\alpha_{0} \vec{C}_{2i,2},\\ \\
f^{1}_{2i+1,2}&=&f_{2i+1,1}^{1}+\alpha_{1} \vec{C}_{2i+1,2},
\end{array}\right.
\end{eqnarray}
where
\begin{eqnarray}\label{A}
\left\{\begin{array}{ccccccc}
\vec{C}_{2i,2}&=&(\vec{C}_{2i,1}-\vec{C}_{2(i+1),1})+(\vec{C}_{2i,1}-\vec{C}_{2(i-1),1}), \\ \\
\vec{C}_{2i+1,2}&=&\frac{1}{8}[(\vec{C}_{2i+1,1}-\vec{C}_{2(i+1)+1,1})+(\vec{C}_{2i+1,1}-\vec{C}_{2(i-1)+1,1})].
\end{array}\right.
\end{eqnarray}
By using (\ref{5}) in (\ref{A}), we get
\begin{eqnarray}\label{B}
\left\{\begin{array}{ccccccc}
\vec{C}_{2i,2}&=& -f^{0}_{i-2}+3f^{0}_{i-1}-4f^{0}_{i}+3f^{0}_{i+1}-f^{0}_{i+2}, \\ \\
\vec{C}_{2i+1,2}&=&\frac{1}{16}(-f^{0}_{i-1}+f^{0}_{i}+f^{0}_{i+1}-f^{0}_{i+2}).
\end{array}\right.
\end{eqnarray}
Now firstly we put $\alpha_{0}=\alpha$ and $\alpha_{1}=1$ in (\ref{6}) and then we use (\ref{4}) $\&$ (\ref{B}) in (\ref{6}). Hence we get the following $4$-point relaxed subdivision scheme
\begin{eqnarray}\label{C}
\left\{\begin{array}{ccccccc}
f^{1}_{2i,2}&=&-\alpha f^{0}_{i-2}+4 \alpha f^{0}_{i-1}+(1-6 \alpha)f^{0}_{i}+4 \alpha f^{0}_{i+1}-\alpha f^{0}_{i+2},\\ \\
f^{1}_{2i+1,2}&=&\frac{1}{16}(-f^{0}_{i-1}+9 f^{0}_{i}+9 f^{0}_{i+1}-f^{0}_{i+2}).
\end{array}\right.
\end{eqnarray}
By using (\ref{C}) in (\ref{schm1}), we get
\begin{eqnarray}\label{D}
\left\{\begin{array}{ccccccc}
f^{k+1}_{2i,2}&=&-\alpha f^{k}_{i-2}+4 \alpha f^{k}_{i-1}+(1-6 \alpha)f^{k}_{i}+4 \alpha f^{k}_{i+1}-\alpha f^{k}_{i+2},\\ \\
f^{k+1}_{2i+1,2}&=&\frac{1}{16}(-f^{k}_{i-1}+9 f^{k}_{i}+9 f^{k}_{i+1}-f^{k}_{i+2}).
\end{array}\right.
\end{eqnarray}
Step by step graphical representations of the above procedure are shown in Figures \ref{Geometrical}-\ref{Geometrical1}.
\begin{figure}[p] 
\begin{center}
\begin{tabular}{c}
\epsfig{file=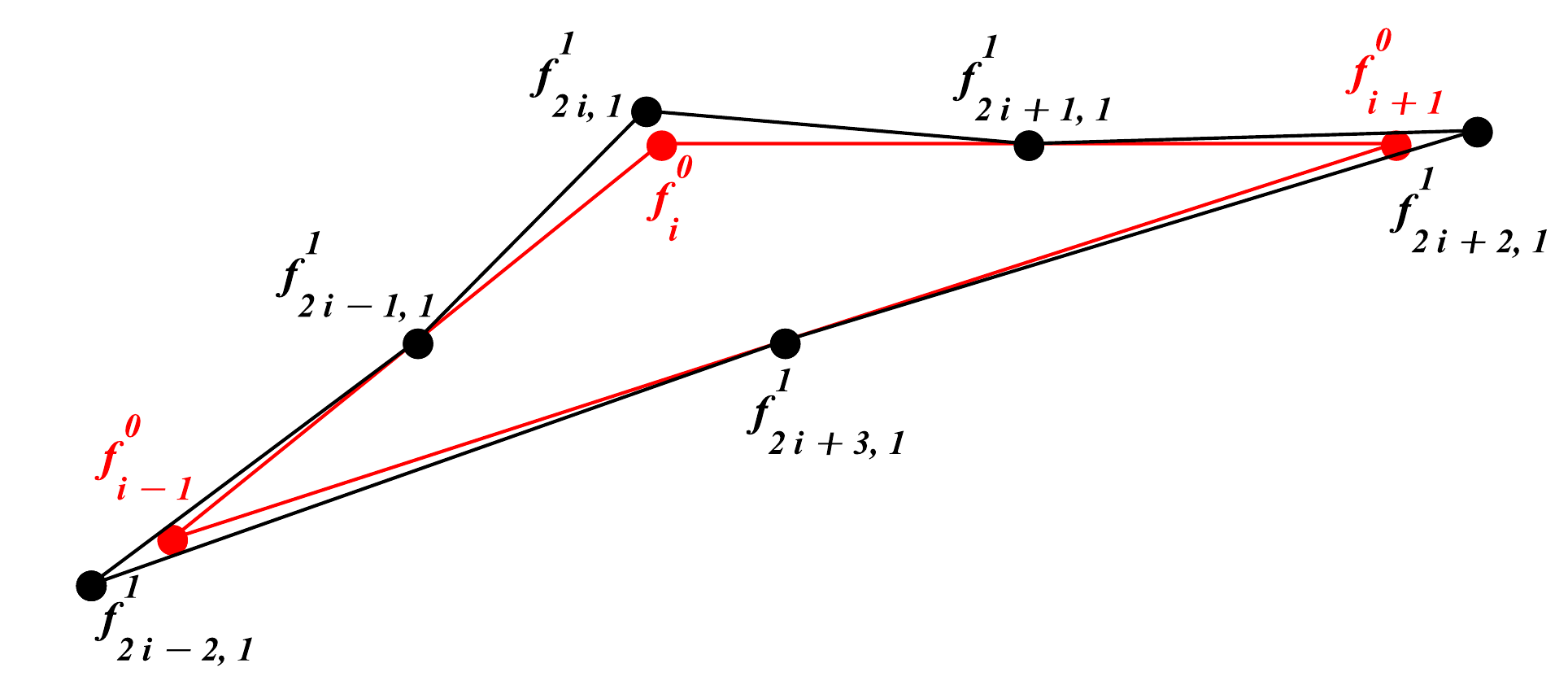, width=4.8 in} \\
(a) Implementation of the initial subdivision scheme defined in (\ref{4}). \\
\epsfig{file=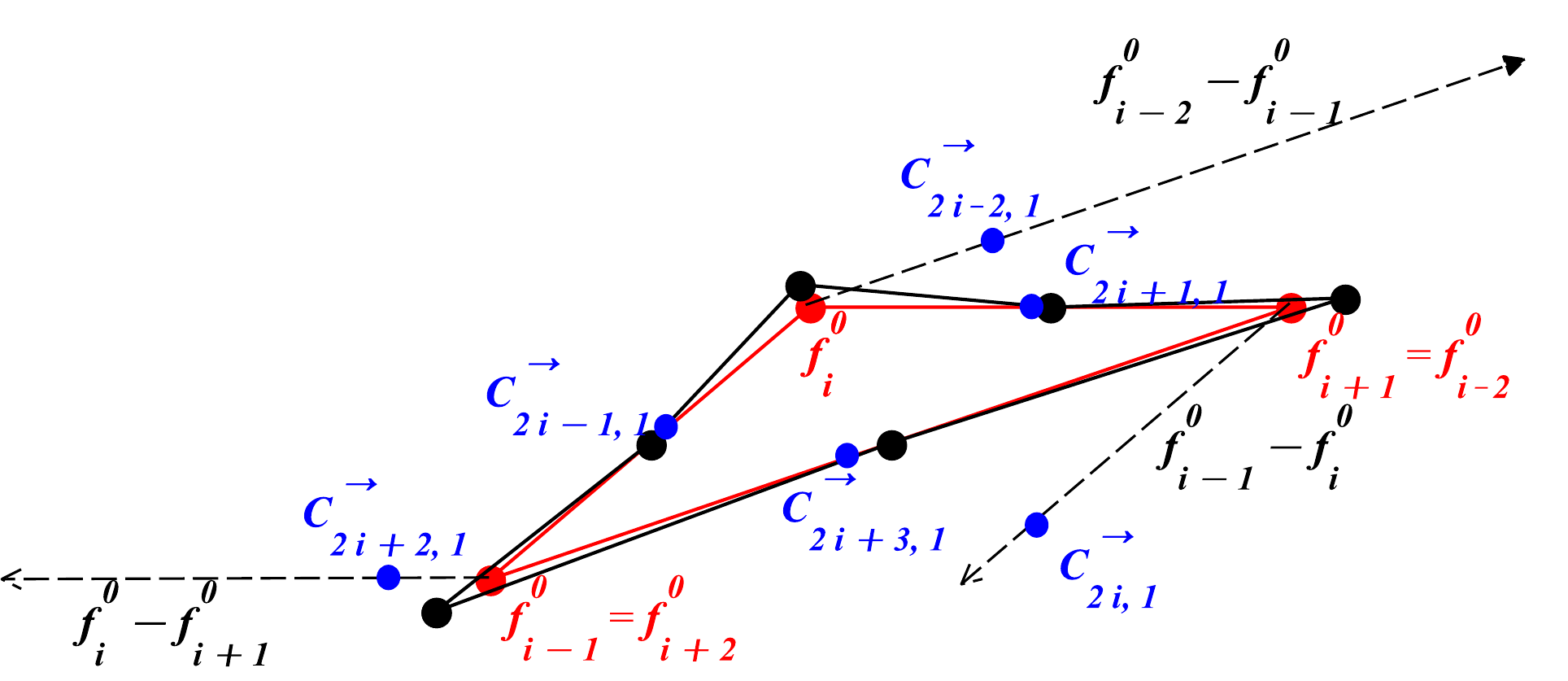, width=4.8 in} \\
(b) Blue bullets show the points obtained by the relations defined in (\ref{5}). Here \\$\vec{C}_{2i-2,1}=f^{0}_{i-2}-f^{0}_{i-1}+f^{0}_{i}$, $\vec{C}_{2i-1,1}=\frac{1}{2}(f^{0}_{i-1}+f^{0}_{i})$, $\vec{C}_{2i,1}=f^{0}_{i-1}-f^{0}_{i}+f^{0}_{i+1}$,\\ $\vec{C}_{2i+1,1}=\frac{1}{2}(f^{0}_{i}+f^{0}_{i+1})$, $\vec{C}_{2i+2,1}=f^{0}_{i}-f^{0}_{i+1}+f^{0}_{i+2}$, $\vec{C}_{2i+3,1}=\frac{1}{2}(f^{0}_{i+1}+f^{0}_{i+2})$.\\
\epsfig{file=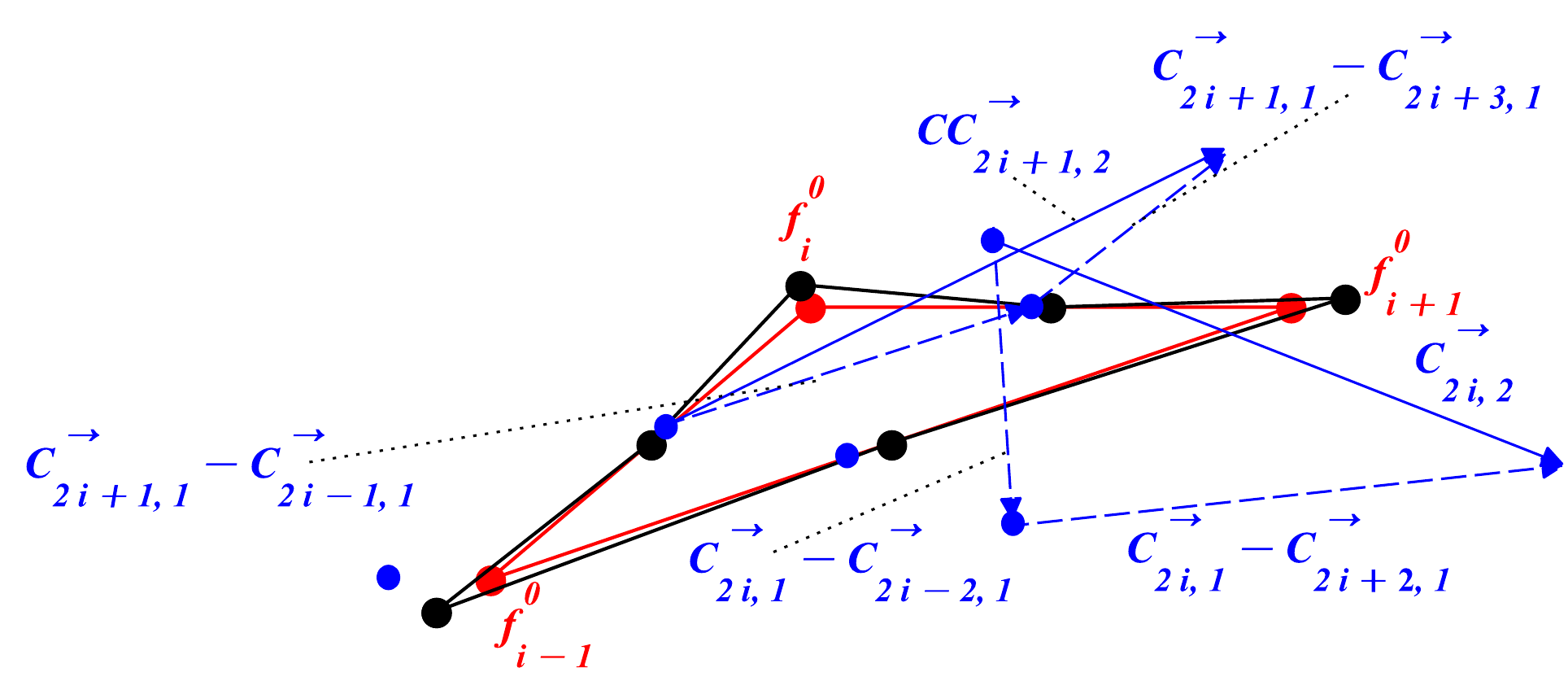, width=4.8 in} \\
(c) Here $\vec{C}_{2i,2}$ is the resultant vector of two vectors $\vec{C}_{2i,1}-\vec{C}_{2(i+1),1}$ and $\vec{C}_{2i,1}-\vec{C}_{2(i-1),1}$.\\ Similarly, $\vec{CC}_{2i+1,2}$ is the resultant vector of $\vec{C}_{2i+1,1}-\vec{C}_{2(i+1)+1,1}$ and $\vec{C}_{2i+1,1}-\vec{C}_{2(i-1)+1,1}$. \\
The resultant vectors are obtained by adding two vectors with head to tail rule. The \\resultant vectors are denoted by blue solid lines while the other vectors are denoted by \\ blue dashed lines.
 \end{tabular}
\end{center}
 \caption{\label{Geometrical}\emph{Geometrical interpretation of the proposed framework. Red bullets and red lines represent initial points and initial polygons respectively. Black bullets and black lines represent the points and the polygons obtained by the subdivision scheme (\ref{4}).}}
\end{figure}
\begin{figure}[p] 
\begin{center}
\begin{tabular}{c}
\epsfig{file=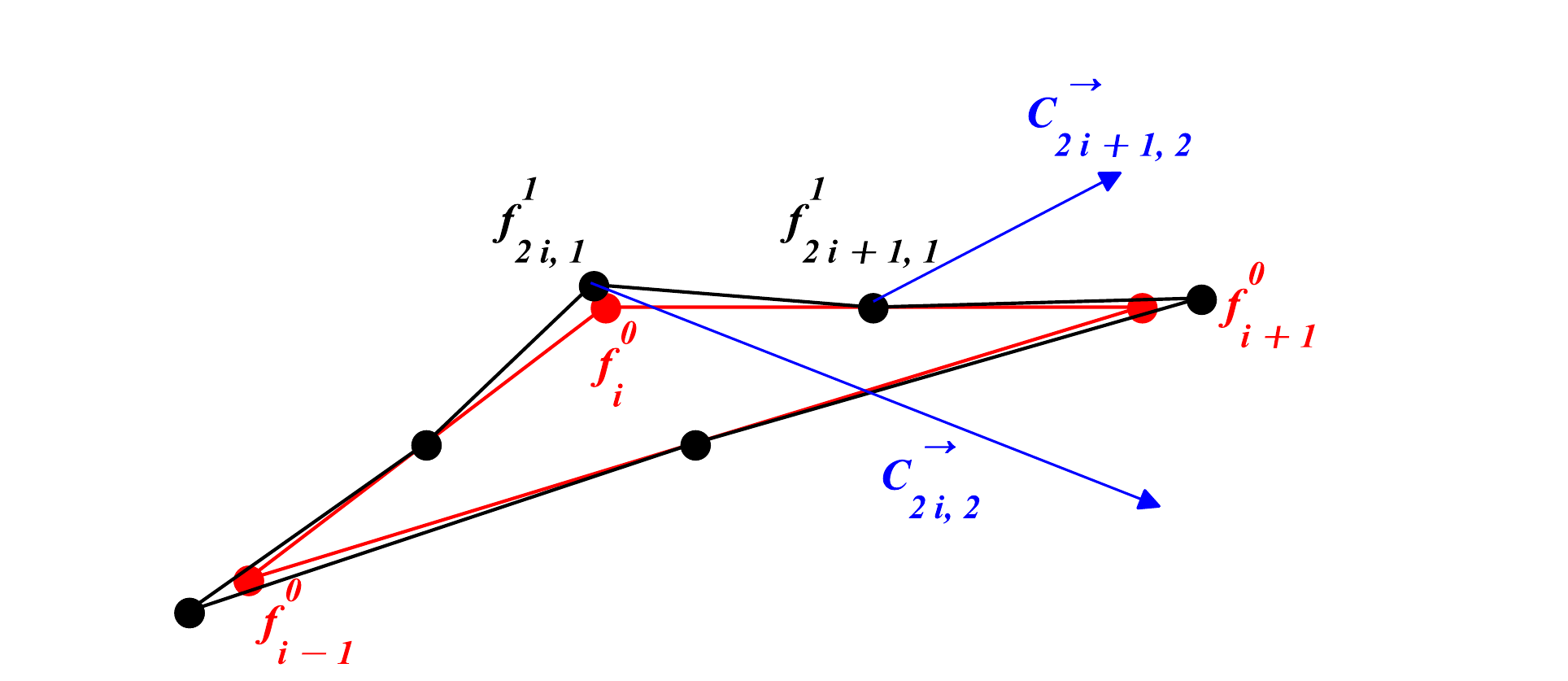, width=4.8 in} \\
(a) Geometrical representation of the vectors defined in (\ref{A}). Here $\vec{C}_{2i+1,2}=\frac{1}{8}\vec{CC}_{2i+1,2}$. \\
\epsfig{file=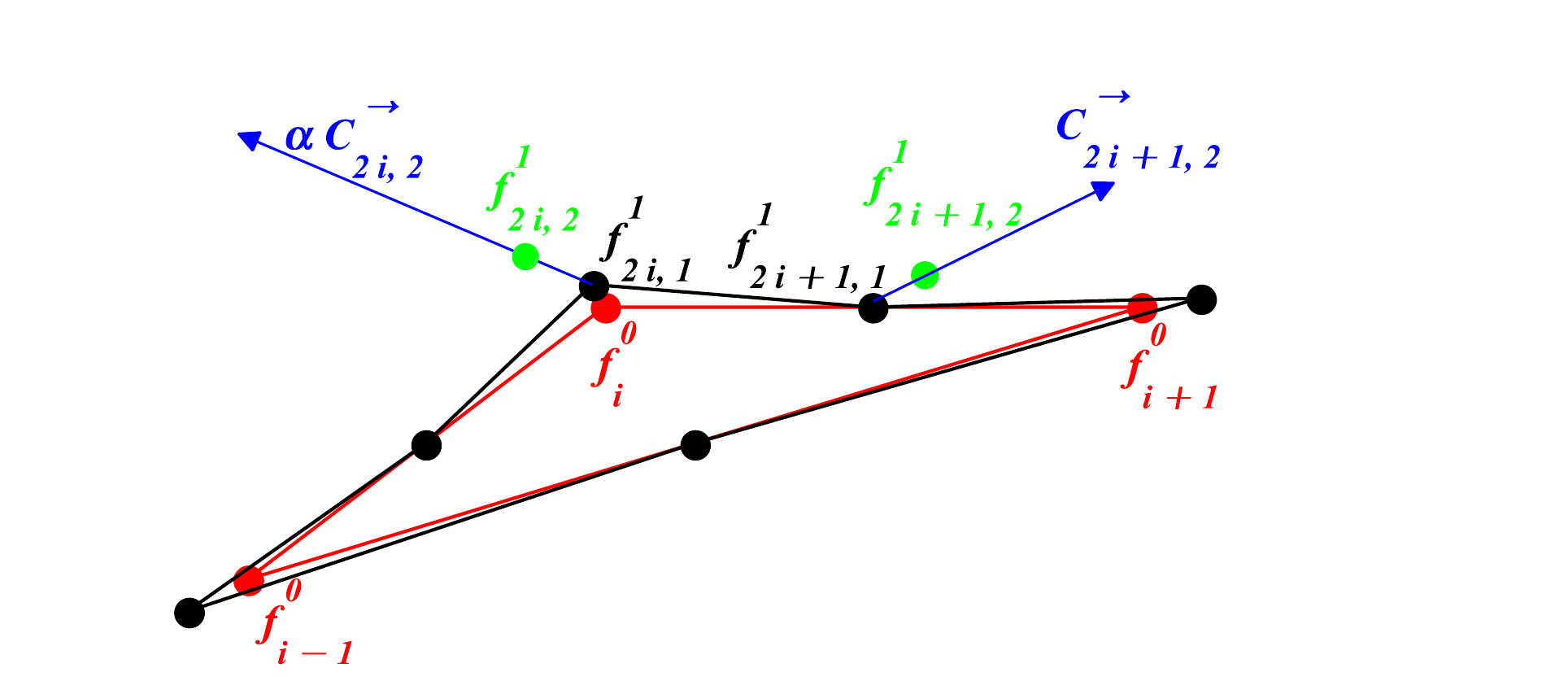, width=4.8 in} \\
(b) Translation of the points $f^{1}_{2i,1}$ and $f^{1}_{2i+1,1}$ by using vectors $\alpha \vec{C}_{2i,2}$ and $\vec{C}_{2i+1,2}$
to \\ obtain the points $f^{1}_{2i,2}$ and $f^{1}_{2i+1,2}$ denoted by green bullets.
Geometrical \\ representation of points obtained by the refinement rules defined in (\ref{B}). \\ Here $\alpha$ has a negative value, hence direction of $\alpha \vec{C}_{2i,2}$ is changed.\\
\epsfig{file=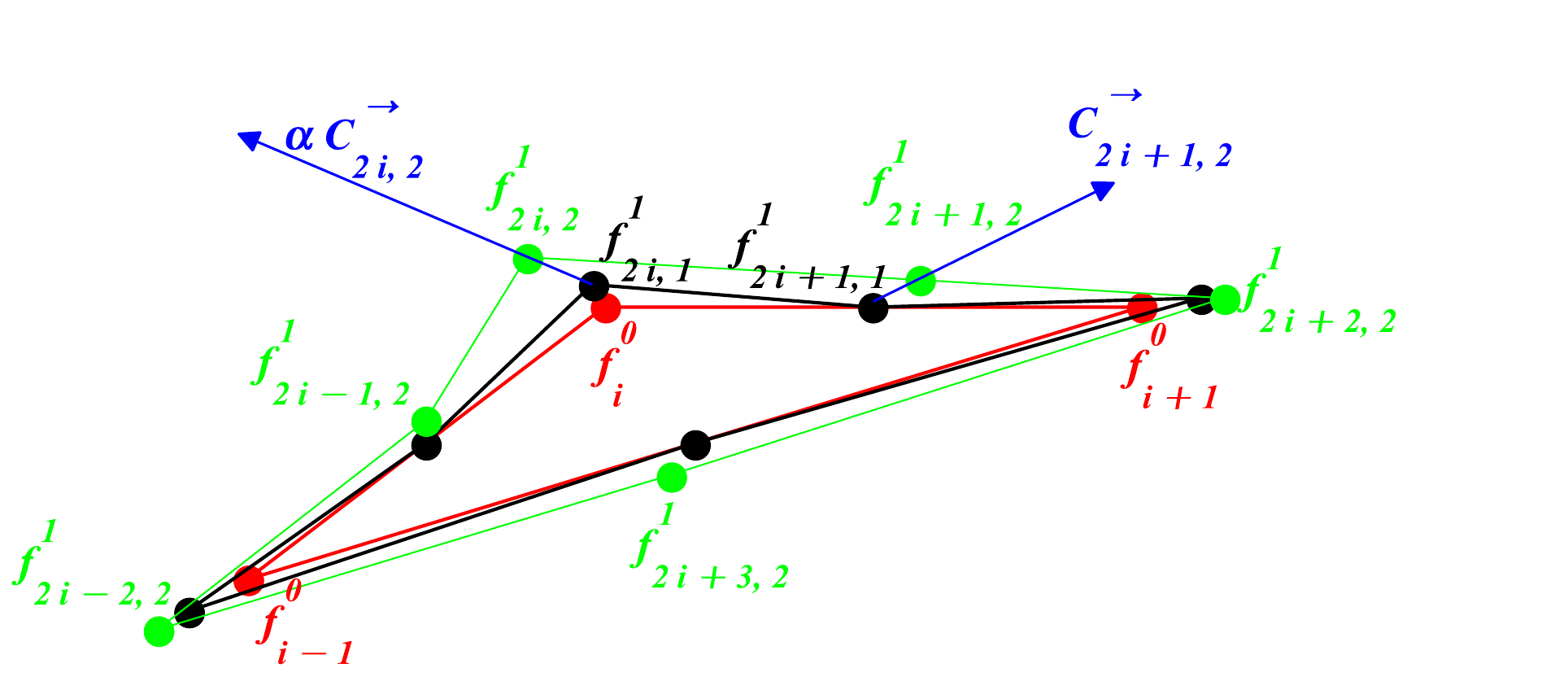, width=4.8 in} \\
(c) Green bullets show the points of the subdivision scheme (\ref{B}) constructed by\\ the proposed framework.
 \end{tabular}
\end{center}
 \caption{\label{Geometrical1}\emph{Geometrical interpretation of the proposed framework. Red bullets and red lines represent initial points and initial polygons respectively. Black bullets and black lines represent the points and the polygons obtained by the subdivision scheme (\ref{4}). Green bullets and green lines represent the points and the polygons obtained by the subdivision scheme (\ref{C}).}}
\end{figure}
\subsection{Extended form of Framework \ref{FSS1} for constructing a family of $(2N+3)$-point relaxed schemes}\label{FSS2}
When $\ell=1$, we add these weights
\begin{eqnarray}\label{weights}
\nonumber &&\sum\limits_{j=-N-1}^{N}(-1)^{j+N+1}\left(\frac{2j+1}{N-j+1}\right)\left(\begin{array}{c}2N+2\\N+j+2\end{array}\right)\beta(1-\alpha)f^{0}_{i+j+1,N+1} +\beta(1-\alpha) (f^{0}_{i-N-1,N+1}\\&&+f^{0}_{i+N+2,N+1}),
\end{eqnarray}
in (\ref{scheme}), where $-1<\alpha<1$ $\&$ $-1<\beta<1$. Hence we get the family of $(2N+3)$-point combined relaxed schemes $S_{a_{2N+3}}$ associated with the following refinement rules
\begin{eqnarray}\label{schm2}
\left\{\begin{array}{ccccccc}
&&f^{k+1}_{2i,N+1}=\sum\limits_{j=-N-1}^{N+1}b_{2j}f^{k}_{i+j,N+1},\\ \\
&&f^{k+1}_{2i+1,N+1}=\sum\limits_{j=-N-2}^{N+1}b_{2j+1}f^{k}_{i+j+1,N+1},
\end{array}\right.
\end{eqnarray}
where
\begin{eqnarray}\label{b2j+1}
\nonumber &&b_{2j}=a_{2j,N+1}\,\ \mbox{for} \,\ j=-(N+1),-N,\ldots,(N+1),\\
\nonumber &&b_{2j+1}=a_{2j+1,N+1}+(-1)^{j+N+1}\left(\frac{2j+1}{N-j+1}\right)\left(\begin{array}{c}2N+2\\N+j+2\end{array}\right)\beta (1-\alpha)\\&&
\mbox{for} \,\ j=-(N+1),-N,\ldots,N \,\ \mbox{and} \\&&
\nonumber b_{2j+1}=\beta (1-\alpha) \,\ \mbox{for} \,\ j=-(N+2)\,\ \& \,\ (N+1).
\end{eqnarray}

\begin{figure}[htb] 
\begin{center}
\begin{tabular}{cc}
\epsfig{file=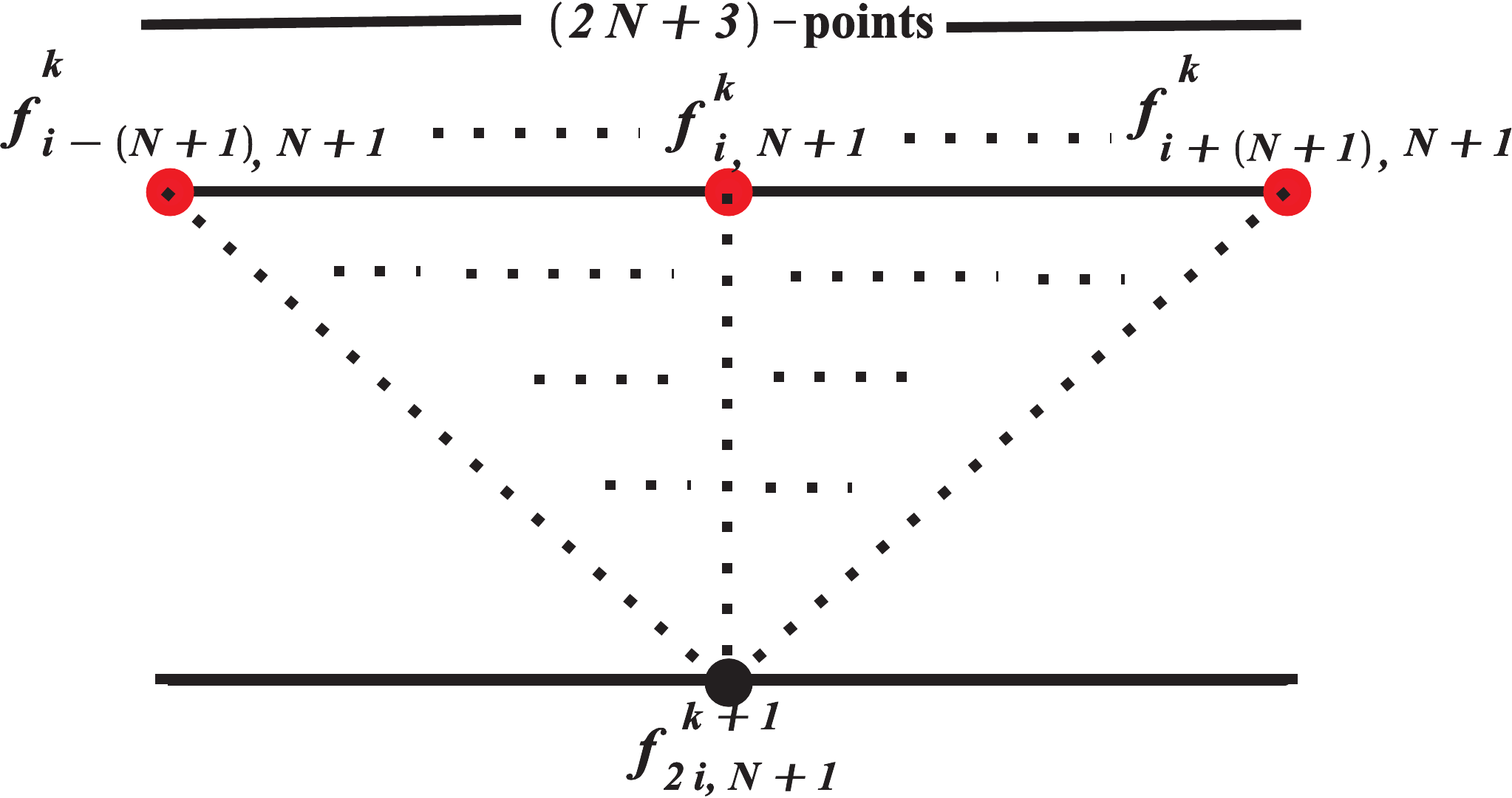, width=2.5 in} & \epsfig{file=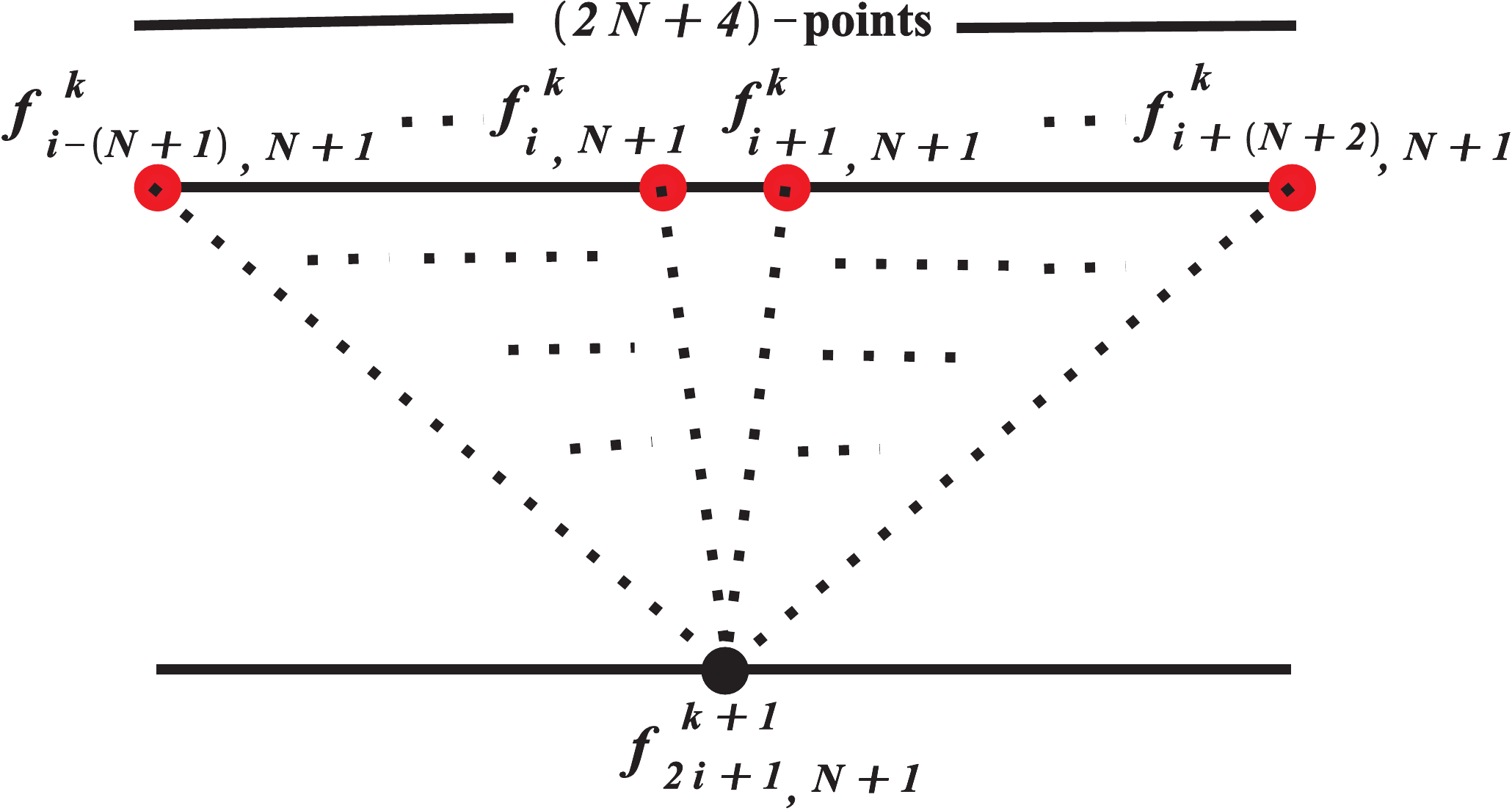, width=2.5 in}\\
(a) Vertex rule & (b) Edge rule
\end{tabular}
\end{center}
 \caption[Graphical sketches of the rules of $(2N+3)$-point primal schemes.]{\label{P5Schematic2}\emph{Graphical sketches of the rules of $(2N+3)$-point primal schemes.}}
\end{figure}

The schematic sketches of these rules are given in Figure \ref{P5Schematic2} and mask of the first three members of this family of schemes are given below:


\begin{itemize}
  \item When $N=0$, (\ref{schm2}) gives the primal $3$-point relaxed scheme with mask
  \begin{eqnarray*}
&&\left[\beta (1-\alpha),\alpha,\frac{1}{2}-\beta (1-\alpha),1-2\alpha,\frac{1}{2}-\beta (1-\alpha),\alpha,\beta (1-\alpha)\right].
  \end{eqnarray*}
  \item When $N=1$, (\ref{schm2}) gives the primal $5$-point relaxed scheme with mask
  \begin{eqnarray*}
&& \left[\beta (1-\alpha),-\alpha,-\frac{1}{16}-3 \beta (1-\alpha),4 \alpha,\frac{9}{16}+2 \beta (1-\alpha),1-6 \alpha,\frac{9}{16}+\right.\\&&\left.2 \beta (1-\alpha),4 \alpha,-\frac{1}{16}-3 \beta (1-\alpha),-\alpha,\beta (1-\alpha)\right].
  \end{eqnarray*}
  \item When $N=2$, (\ref{schm2}) gives the primal $7$-point relaxed scheme with mask
  \begin{eqnarray*}
  &&\left[\beta (1-\alpha),\alpha,\frac{3}{256}-5 \beta(1- \alpha),-6 \alpha,-\frac{25}{256}+9 \beta (1-\alpha), 15 \alpha,\frac{150}{256} -\right.\\&&\left.
5 \beta (1-\alpha),1-20 \alpha, \frac{150}{256}-5 \beta (1-\alpha), 15 \alpha,-\frac{25}{256}+9 \beta (1-\alpha), -6 \alpha,\right.\\&&\left.
\frac{3}{256}-5 \beta(1- \alpha),\alpha,\beta (1-\alpha)\right].
  \end{eqnarray*}
\end{itemize}

\subsection{Interpretation of Extended form \ref{FSS2} for $N=1$}
When $N=1$, (\ref{weights}) gives
\begin{eqnarray}\label{weights1}
\nonumber &&\sum\limits_{j=-2}^{1}(-1)^{j+2}\left(\frac{2j+1}{2-j}\right)\left(\begin{array}{c}4\\3+j\end{array}\right)\beta(1-\alpha)f^{0}_{i+j+1,2} +\beta(1-\alpha) (f^{0}_{i-2,2}+f^{0}_{i+3,2})\\&=&
\beta(1-\alpha)\left[f^{0}_{i-2,2}-3 f^{0}_{i-1,2}+ 2 f^{0}_{i,2}+2 f^{0}_{i+1,2}-3f^{0}_{i+2,2}+f^{0}_{i+3,2}\right].
\end{eqnarray}
Adding weights which are defined in (\ref{weights1}) in the edge rule of (\ref{C}), we get
\begin{eqnarray}\label{E}
\left\{\begin{array}{ccccccc}
f^{1}_{2i,2}&=&-\alpha f^{0}_{i-2}+4 \alpha f^{0}_{i-1}+(1-6 \alpha)f^{0}_{i}+4 \alpha f^{0}_{i+1}-\alpha f^{0}_{i+2},\\ \\
f^{1}_{2i+1,2}&=&\beta(1-\alpha)f^{0}_{i-2}-\left(\frac{1}{16}+3\beta(1-\alpha)\right)f^{0}_{i-1}+\left(\frac{9}{16}+2\beta(1-\alpha)\right)f^{0}_{i}\\&&
+\left(\frac{9}{16}+2\beta(1-\alpha)\right)f^{0}_{i+1}-\left(\frac{1}{16}+3\beta(1-\alpha)\right)f^{0}_{i+2}+\beta(1-\alpha)f^{0}_{i+3}.
\end{array}\right.
\end{eqnarray}
Since the above scheme is stationary, hence the refinement rules of proposed $5$-point relaxed scheme with two tension parameters are
\begin{eqnarray}\label{E}
\left\{\begin{array}{ccccccc}
f^{k+1}_{2i,2}&=&-\alpha f^{k}_{i-2}+4 \alpha f^{k}_{i-1}+(1-6 \alpha)f^{k}_{i}+4 \alpha f^{k}_{i+1}-\alpha f^{k}_{i+2},\\ \\
f^{k+1}_{2i+1,2}&=&\beta(1-\alpha)f^{k}_{i-2}-\left(\frac{1}{16}+3\beta(1-\alpha)\right)f^{k}_{i-1}+\left(\frac{9}{16}+2\beta(1-\alpha)\right)f^{k}_{i}\\&&
+\left(\frac{9}{16}+2\beta(1-\alpha)\right)f^{k}_{i+1}-\left(\frac{1}{16}+3\beta(1-\alpha)\right)f^{k}_{i+2}+\beta(1-\alpha)f^{k}_{i+3}.
\end{array}\right.
\end{eqnarray}
\begin{rem}
The family of $(2N+4)$-point interpolatory schemes $S_{a^{I}_{2N+4}}$ with one tension parameter is obtained by putting $\alpha=0$ in (\ref{schm2}). The refinement rules of these schemes are given below

\begin{eqnarray}\label{schm3}
\left\{\begin{array}{ccccccc}
&&f^{k+1}_{2i,N+1}=f^{k}_{i,N+1},\\ \\
&&f^{k+1}_{2i+1,N+1}=\sum\limits_{j=-N-2}^{N+1}b_{2j+1}f^{k}_{i+j+1,N+1},
\end{array}\right.
\end{eqnarray}
where $b_{2j+1}$ is defined in (\ref{b2j+1}).
\end{rem}
\section{Analysis of the families of schemes}\label{Analysis of the families of schemes1}
In this section, we present properties of the proposed families of schemes. In Table \ref{Continuity-S-2N+2}, we present "ranges of tension parameter $\alpha$" for which the first six members of "the family of $(2N+2)$-point relaxed schemes" are $C^{n}$-continuous. The continuity of the proposed schemes is analyzed with a computer algebra system like Mathematica/Maple by using Theorems \ref{thmcontinuity}-\ref{thmcontinuity--1}. Similarly, continuity of the first three members of $(2N+3)$-point relaxed schemes and $(2N+4)$-point interpolatory schemes is presented in Tables \ref{Continuity-S-2N+3} and \ref{table-S-inter-2N+4} respectively for specific ranges of tension parameters.

In Tables \ref{reproduction-S-2N+2}, \ref{reproduction-S-2N+3} and \ref{table-S-inter-2N+4}, we tabulate the support widths, degrees of polynomial generation and degrees of polynomial reproduction of the proposed families of $(2N+2)$-point relaxed, $(2N+3)$-point relaxed and $(2N+4)$-point interpolatory schemes respectively. The generation and reproduction degrees of the proposed schemes are analyzed by using Theorem \ref{thm-gd-rd}, while the support widths of the proposed schemes are calculated by using Theorem \ref{support-theorem}. Properties of some special members of the family of schemes (\ref{schm1}) are presented in Table \ref{reproduction-SS-2N+2}.
Let $a_{2N+2}(z)$, $a_{2N+3}(z)$ and $a^{I}_{2N+4}(z)$ denote the symbols of the schemes (\ref{schm1}), (\ref{schm2}) and (\ref{schm3}) respectively. By Definition \ref{primal-dual}, the following theorem can be easily proved.
\begin{thm}\label{primaltheorem}
The families of subdivision schemes (\ref{schm1}), (\ref{schm2}) and (\ref{schm3}) are the families of primal schemes.
\end{thm}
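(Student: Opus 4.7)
The plan is to verify the symbol identity $a(z)=a(z^{-1})$ from Definition \ref{primal-dual}, which is equivalent to showing that each mask is palindromic, i.e.\ $a_{k}=a_{-k}$ for every index $k$. I will handle the three families in order, using the recursive construction of Section \ref{FSS1} for the first, direct reflection for the second, and specialization $\alpha=0$ for the third.

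\textbf{Scheme (\ref{schm1}).} I would proceed by induction on $N$. The base case $N=0$ is immediate from (\ref{4}): the mask $\left[\alpha,\tfrac12,1-2\alpha,\tfrac12,\alpha\right]$ is palindromic. For the inductive step I would prove an auxiliary invariant on the displacement vectors, namely that
\begin{equation*}
\vec{C}_{2i,N+1}=\sum_{k}c^{(N+1)}_{k}f^{0}_{i+k,N+1},\qquad c^{(N+1)}_{k}=c^{(N+1)}_{-k},
\end{equation*}
and symmetrically $\vec{C}_{2i+1,N+1}=\sum_{k}\tilde c^{(N+1)}_{k}f^{0}_{i+k,N+1}$ with $\tilde c^{(N+1)}_{k}=\tilde c^{(N+1)}_{1-k}$ (symmetry about the midpoint of indices $i$ and $i{+}1$). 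The base case of this invariant is (\ref{3}). The inductive step follows from (\ref{vec-C}): each right-hand side is the balanced combination $2\vec{C}_{2i+\ell,N}-\vec{C}_{2(i+1)+\ell,N}-\vec{C}_{2(i-1)+\ell,N}$ in which the two unit shifts carry the same weight, so a direct reindexing of the sums confirms that $d_{m}=d_{-m}$ in the even case and $d_{m}=d_{1-m}$ in the odd case. Substituting this invariant into (\ref{scheme}) together with the inductive palindromic symmetry of $f^{1}_{2i+\ell,N}$ yields palindromic symmetry of the mask at level $N+1$, so $a_{2N+2}(z)=a_{2N+2}(z^{-1})$.

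\textbf{Schemes (\ref{schm2}) and (\ref{schm3}).} For (\ref{schm2}), the even coefficients satisfy $b_{2j}=a_{2j,N+1}$ and so inherit palindromic symmetry from the previous step. The two boundary odd coefficients $b_{-2N-3}$ and $b_{2N+3}$ both equal $\beta(1-\alpha)$ by (\ref{b2j+1}) and therefore match under $j\mapsto -j-1$. For the interior indices $j\in\{-(N+1),\ldots,N\}$, using that $a_{2j+1,N+1}$ is already palindromic, the identity $b_{2j+1}=b_{-2j-1}$ reduces to
\begin{equation*}
(-1)^{N-j}\,\frac{-(2j+1)}{N+j+2}\,\binom{2N+2}{N-j+1}=(-1)^{j+N+1}\,\frac{2j+1}{N-j+1}\,\binom{2N+2}{N+j+2},
\end{equation*}
which I would verify using $(-1)^{N-j}=-(-1)^{j+N+1}$ together with the binomial identities $\binom{2N+2}{N-j+1}=\tfrac{N+j+2}{N-j+1}\binom{2N+2}{N-j}$ and $\binom{2N+2}{N-j}=\binom{2N+2}{N+j+2}$; the three ratios multiply to $+1$. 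Finally, setting $\alpha=0$ in (\ref{schm2}) yields (\ref{schm3}), so its primality is an immediate corollary.

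The main obstacle will be the binomial bookkeeping in the extended family (\ref{schm2}): the sign $(-1)^{j+N+1}$, the rational factor $(2j+1)/(N-j+1)$, and the binomial coefficient must be tracked simultaneously under the involution $j\mapsto -j-1$, and one has to check that the extra minus sign produced by $(2j+1)\mapsto -(2j+1)$ is cancelled by the parity shift in $(-1)^{N-j}$ and the swap in the binomial ratio. Everything else (induction for (\ref{schm1}) via the balanced-combination invariant, and the reduction for (\ref{schm3})) is routine once that identity is in hand.
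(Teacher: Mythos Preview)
Your proposal is correct and follows the same route as the paper, namely verifying the symmetry $a(z)=a(z^{-1})$ of Definition~\ref{primal-dual}; the paper's own proof, however, merely \emph{asserts} that $a_{2N+2}(z)=a_{2N+2}(z^{-1})$, $a_{2N+3}(z)=a_{2N+3}(z^{-1})$ and $a^{I}_{2N+4}(z)=a^{I}_{2N+4}(z^{-1})$ hold for all $N$ and then invokes the definition, without supplying any of the inductive or binomial details you carefully work out. Your argument therefore fills a genuine gap in the exposition: the balanced-combination invariant for the displacement vectors and the binomial verification $(N+j+2)\binom{2N+2}{N-j}=(N-j+1)\binom{2N+2}{N-j+1}$ are exactly what is needed but are left implicit in the paper.
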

\begin{proof}
Since the symbols associated with the schemes (\ref{schm1}), (\ref{schm2}) and (\ref{schm3}) satisfy the relations $a_{2N+2}(z)=a_{2N+2}(z^{-1})$, $a_{2N+3}(z)=a_{2N+3}(z^{-1})$ and $a^{I}_{2N+4}(z)=a^{I}_{2N+4}(z^{-1})$ respectively for all $N \in \mathbb{N}_{0}$, then by Definition \ref{primal-dual} these schemes are primal. This completes the proof.
\end{proof}
Now we check the property of polynomial generation and reproduction of the families of schemes $S_{a_{2N+2}}$, $S_{a_{2N+3}}$ and $S_{a^{I}_{2N+4}}$ by using Theorem \ref{thm-gd-rd}. We prove the following theorems for this purpose.

\begin{sidewaystable}[p] 
\caption[Continuity of the proposed schemes $S_{a_{2N+2}}$.]{\label{Continuity-S-2N+2}\emph{Let MC be the maximum continuity of the proposed schemes $S_{a_{2N+2}}$.}}
\centering

    \setlength{\tabcolsep}{0.5pt}
\begin{tabular}{||c||c|c|c|c|c|c|c||}
  \hline \hline
   &&&&&&&\\
 N & MC & $C^{0}$                                   & $C^{1}$                                    & $C^{2}$                                   & $C^{3}$ & $C^{4}$ & $C^{5}$\\  &&&&&&&\\
 \hline \hline
  &&&&&&&\\
 0 & 1  & $-\frac{1}{4}<\alpha<\frac{3}{4}$     &       $0<\alpha<\frac{1}{4}$           &  $\alpha=\frac{1}{8}$                 &&&\\ &&&&&&& \\
 \hline
&&&&&&& \\
 1 & 2  & $-\frac{3}{25}<\alpha<\frac{6}{25}$   & $-\frac{1}{35}<\alpha<\frac{27}{200}$  & $0<\alpha<\frac{1}{16}$               &&&\\ &&&&&&&\\
\hline
&&&&&&& \\
 2 & 3  & $-\frac{1}{21}<\alpha<\frac{1}{14}$   & $-\frac{1}{64}<\alpha<\frac{1}{25}$    & $-\frac{1}{80}<\alpha<\frac{1}{48}$   & $\frac{33}{5000}<\alpha<\frac{19}{1250}$ &&\\ &&&&&&&\\
\hline
&&&&&&& \\
 3 & 4  & $-\frac{1}{256}<\alpha<\frac{1}{116}$ & $-\frac{1}{151}<\alpha<\frac{1}{80}$   & $-\frac{1}{384}<\alpha<\frac{1}{160}$ & $-\frac{1}{21000}<\alpha<\frac{1}{246}$ & $\frac{29}{50000}<\alpha<\frac{41}{50000}$    &\\ &&&&&&&\\
\hline
&&&&&&& \\
 4 & 5  & $-\frac{1}{70}<\alpha<\frac{1}{129}$  & $-\frac{1}{416}<\alpha<\frac{1}{255}$  & $-\frac{1}{838}<\alpha<\frac{1}{503}$ & $-\frac{1}{3560}<\alpha<\frac{1}{828}$         & $-\frac{1}{200000}<\alpha<\frac{1}{1354}$  & $\frac{1}{4099}<\alpha<\frac{1}{1717}$ \\ &&&&&&&\\
\hline
&&&&&&& \\
 5 & 5  & $-\frac{1}{509}<\alpha<\frac{1}{400}$ & $-\frac{1}{1208}<\alpha<\frac{1}{804}$  & $-\frac{1}{2212}<\alpha<\frac{1}{1556}$ & $-\frac{1}{7440}<\alpha<\frac{1}{2716}$        & $-\frac{1}{26106}<\alpha<\frac{1}{4446}$ & $\frac{1}{10160}<\alpha<\frac{1}{7240}$\\  &&&&&&&\\
\hline \hline
\end{tabular}
\end{sidewaystable}

\begin{table}[htb] 
\caption[Properties of schemes $S_{a_{2N+2}}$ for $\forall$ $\alpha$.]{\label{reproduction-S-2N+2}\emph{Properties of schemes $S_{a_{2N+2}}$ for $\forall$ $\alpha$.}}
\begin{center}
\begin{tabular}{||c||cccccc||}
  \hline
  \hline
 N            & 0       & 1          & 2       & 3        & 4           & 5  \\
\hline \hline
 Support      & 4       & 8          & 12      & 16       & 20          & 24              \\ \hline
 Generation   & 1       & 3          & 5       & 7        & 9           & 11              \\ \hline
 Reproduction & 1       & 3          & 5       & 7        & 9           & 11              \\
\hline
\hline
\end{tabular}
\end{center}
\end{table}

\begin{table}[htb] 
\caption[Properties of schemes $S_{a_{2N+2}}$ for specific values of $\alpha$.]{\label{reproduction-SS-2N+2}\emph{Properties of schemes $S_{a_{2N+2}}$ for specific values of $\alpha$.}}
\begin{center}
\begin{tabular}{||c||cccccc||}
  \hline
  \hline
 N            & 0       & 1          & 2       & 3        & 4           & 5  \\
\hline \hline
 $\alpha$     & $\frac{1}{8}$ & $\frac{3}{128}$ & $\frac{5}{1024}$ & $\frac{35}{32768}$ & $\frac{63}{262144}$ & $\frac{231}{4194304}$ \\ \hline
 Continuity & $C^{2}$ & $C^{2}$ & $C^{2}$ & $C^{3}$ & $C^{5}$ & $C^{3}$ \\ \hline
 Generation   & 3       & 5          & 7       & 9        & 11          & 13              \\ \hline
 Reproduction & 1       & 3          & 5       & 7        & 9           & 11              \\
\hline
\hline
\end{tabular}
\end{center}
\end{table}

\begin{sidewaystable}[p] 
\caption[Continuity of the proposed schemes $S_{a_{2N+3}}$.]{\label{Continuity-S-2N+3}\emph{Let MC be the maximum continuity of the proposed scheme $S_{a_{2N+3}}$.}}
\centering

    \setlength{\tabcolsep}{0.5pt}
\begin{small}
\begin{tabular}{||c|c|c|c|c|c|c|c||}
  \hline \hline
   &&&&&&&\\
 N & $\alpha / \beta$ & MC & $C^{0}$                                   & $C^{1}$                                    & $C^{2}$                                   & $C^{3}$ & $C^{4}$ \\  &&&&&&&\\
 \hline \hline
  &&&&&&&\\
    & $\alpha=\frac{1}{8}\left(\frac{16 \beta+1}{2 \beta+1}\right)$ &   & $-\frac{3}{20}<\beta<\frac{5}{4}$     &       $-\frac{1}{16}<\beta<\frac{3}{8}$           &  $-\frac{1}{16}<\beta<\frac{3}{8}$                 & $0<\beta<\frac{1}{12}$ & $\beta=\frac{1}{26}$\\ $0$ && $4$&&&&& \\
    & $\beta=-\frac{1}{16}\left(\frac{8 \alpha-1}{\alpha-1}\right)$ &   & $-\frac{1}{4}<\alpha<\frac{3}{4}$     &       $0<\alpha<\frac{1}{2}$           &  $0<\alpha<\frac{1}{2}$                 & $\frac{1}{8}<\alpha<\frac{1}{4}$ & $\alpha=\frac{3}{16}$\\ &&&&&&& \\
 \hline
    &&&&&&&\\
     & $\alpha=\frac{1}{128}\left(\frac{256 \beta-3}{2 \beta-1}\right)$ &  & $-\frac{21}{208}<\beta<\frac{11}{272}$ & $-\frac{13}{224}<\beta<\frac{3}{256}$ & $-\frac{13}{224}<\beta<\frac{3}{256}$ & $-\frac{2}{121}<\beta<-\frac{1}{248}$ & $-\frac{2}{121}<\beta<-\frac{1}{248}$  \\
  $1$ && $4$ &&&&&\\
   & $\beta=\frac{1}{256}\left(\frac{128\alpha-3}{\alpha-1}\right)$ &  & $-\frac{1}{16}<\alpha<\frac{3}{16}$ & $0<\alpha<\frac{1}{8}$ & $0<\alpha<\frac{1}{8}$ & $\frac{1}{32}<\alpha<\frac{7}{128}$ & $\frac{1}{32}<\alpha<\frac{7}{128}$  \\
   &&&&&&&\\
    \hline
    &&&&&&&\\
     & $\alpha=\frac{1}{1024}\left(\frac{2048 \beta+5}{2 \beta+1}\right)$ &  & $-\frac{21}{2080}<\beta<\frac{113}{5888}$ & $-\frac{5}{2048}< \beta < \frac{11}{997}$ & $-\frac{5}{2048}< \beta < \frac{11}{997}$ & $\frac{3}{2032} < \beta < \frac{8}{3041}$ & $\frac{3}{2032} < \beta < \frac{8}{3041}$  \\
 $2$  && $4$ &&&&&\\
   & $\beta=-\frac{1}{2048}\left(\frac{1024 \alpha-5}{\alpha-1}\right)$ &  & $-\frac{1}{64}<\alpha<\frac{1}{24}$ & $0< \alpha < \frac{27}{1024}$ & $0< \alpha < \frac{27}{1024}$ & $\frac{1}{128} < \alpha < \frac{31}{3072}$ & $\frac{1}{128} < \alpha < \frac{31}{3072}$  \\
   &&&&&&&\\
\hline \hline
\end{tabular}
\end{small}
\end{sidewaystable}

\begin{sidewaystable}[p] 
\caption[Properties of the proposed schemes $S_{a_{2N+3}}$.]{\label{reproduction-S-2N+3}\emph{Properties of schemes $S_{a_{2N+3}}$. Here Sp, GD and RD represent support width, generation degree and reproduction degree respectively.}}
\centering \setlength{\tabcolsep}{0.5pt}
\begin{small}
\begin{tabular}{||c||c|c|c||}
  \hline
  \hline
 N            & 0       & 1          & 2        \\
\hline
 Sp      & 6       & 10         & 14       \\
    \hline
      &&&\\
              & $1$ $\forall$ $\alpha$ $\&$ $\beta$ & $3$ $\forall$ $\alpha$ $\&$ $\beta$ & $5$ $\forall$ $\alpha$ $\&$ $\beta$\\ &&& \\
&$3$ $\forall$ $\alpha$ where $\beta=-\frac{1}{16}\left(\frac{8 \alpha-1}{\alpha-1}\right)$      & $5$ $\forall$ $\alpha$ where $\beta=\frac{1}{256}\left(\frac{128 \alpha-3}{\alpha-1}\right)$           & $7$ $\forall$ $\alpha$ where $\beta=-\frac{1}{2048}\left(\frac{1024 \alpha-5}{\alpha-1}\right)$ \\
 GD    &$3$ $\forall$ $\beta$ where $\alpha=\frac{1}{8}\left(\frac{16 \beta+1}{2 \beta+1}\right)$      & $5$ $\forall$ $\beta$ where $\alpha=\frac{1}{128}\left(\frac{256 \beta-3}{2 \beta-1}\right)$           & $7$ $\forall$ $\beta$ where $\alpha=\frac{1}{1024}\left(\frac{2048 \beta+5}{2 \beta+1}\right)$ \\  &&& \\

 &$5$ for $\alpha=\frac{3}{16}$ $\&$ $\beta=-\frac{1}{16}\left(\frac{8 \alpha-1}{\alpha-1}\right)$ & $7$ for $\alpha=\frac{5}{128}$ $\&$ $\beta=\frac{1}{256}\left(\frac{128 \alpha-3}{\alpha-1}\right)$  &  $9$ for $\alpha=\frac{35}{4096}$ $\&$ $\beta=-\frac{1}{2048}\left(\frac{1024 \alpha-5}{\alpha-1}\right)$     \\
 &$5$ for $\beta=\frac{1}{26}$ where $\alpha=\frac{1}{8}\left(\frac{16 \beta+1}{2 \beta+1}\right)$      & $7$ for $\beta=-\frac{1}{123}$ where $\alpha=\frac{1}{128}\left(\frac{256 \beta-3}{2 \beta-1}\right)$           & $9$ for $\beta=\frac{15}{8122}$ where $\alpha=\frac{1}{1024}\left(\frac{2048 \beta+5}{2 \beta+1}\right)$ \\
                &&&\\
                 \hline
                   &&&\\
 &  $1$ $\forall$ $\alpha$ $\&$ $\beta$        & $3$ $\forall$ $\alpha$ $\&$ $\beta$          & $5$ $\forall$ $\alpha$ $\&$ $\beta$          \\ &&& \\

 RD &  $3$ for $\alpha=0$ $\&$ $\beta=-\frac{1}{16}$ & $5$ for $\alpha=0$ $\&$ $\beta=\frac{3}{256}$  & $7$ for $\alpha=0$ $\&$ $\beta=-\frac{5}{2048}$ \\
              &  $3$ for $\alpha=-\frac{1}{8}$ $\&$ $\beta=0$ & $5$ for $\alpha=-\frac{3}{128}$ $\&$ $\beta=0$  & $7$ for $\alpha=-\frac{5}{1024}$ $\&$ $\beta=0$ \\
                &&&\\
\hline
\hline
\end{tabular}
\end{small}
\end{sidewaystable}

\begin{table}[htb] 
\caption[Properties of the proposed schemes $S_{a^{I}_{2N+4}}$.]{\label{table-S-inter-2N+4}\emph{Let GD and RD be the degree of polynomial generation and degree of polynomial reproduction of the proposed schemes $S_{a^{I}_{2N+4}}$.}}
\centering \setlength{\tabcolsep}{0.5pt}
\begin{tabular}{||c||c|c|c||}
  \hline \hline
N & $0$ & $1$ & $2$ \\
\hline \hline
& & & \\
$C^{0}$ & $-\frac{1}{8}< \beta < 0$ & $-\frac{5}{48}<\beta<\frac{1}{16}$ & $-\frac{39}{2560}<\beta<\frac{89}{2560}$ \\
& & & \\
\hline
& & & \\
$C^{1}$ & $\frac{2-\sqrt{6}}{8}<\beta < \frac{1-\sqrt{3}}{8}$ & $\frac{15-\sqrt{273}}{96}<\beta<\frac{-5+3\sqrt{37}}{224}$ & $\frac{219-\sqrt{661133}}{31232}< \beta <\frac{-171+\sqrt{56713}}{6144}$ \\
& & & \\
\hline
& & & \\
$C^{2}$ &  & $0<\beta<\frac{1}{36}$ & $\frac{16-\sqrt{286}}{96}<\beta<\frac{-675+\sqrt{637161}}{31744}$ \\
& & & \\
\hline
& & & \\
$C^{3}$ &  &  & $-\frac{157}{20000}<\beta<-\frac{43}{20000}$ \\
& & & \\
\hline
& & & \\
GD & 1 $\forall$ $\beta$ & 3 $\forall$ $\beta$ & 5 $\forall$ $\beta$ \\
& 3 for $\beta=-\frac{1}{16}$ & 5 for $\beta=\frac{3}{256}$ & 7 for $\beta=-\frac{5}{2048}$ \\
& & & \\
\hline
& & & \\
RD & 1 $\forall$ $\beta$ & 3 $\forall$ $\beta$ & 5 $\forall$ $\beta$ \\
& 3 for $\beta=-\frac{1}{16}$ & 5 for $\beta=\frac{3}{256}$ & 7 for $\beta=-\frac{5}{2048}$ \\
& & & \\
\hline \hline
\end{tabular}
\end{table}

\begin{thm}\label{GDtheorem}
The families of subdivision schemes (\ref{schm1}), (\ref{schm2}) and (\ref{schm3}) generate polynomials up to degree $2N+1$ for all $\alpha$, $\beta$ and for all $N \in \mathbb{N}_{0}$.
\end{thm}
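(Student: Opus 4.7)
By Theorem \ref{thm-gd-rd}(i), a binary scheme with symbol $a(z)$ generates polynomials up to degree $d$ if and only if $a(1) = 2$, $a(-1) = 0$, and $D^{(m)}a(z)|_{z=-1} = 0$ for $m = 1, \ldots, d$; equivalently, $(1+z)^{d+1}$ divides $a(z)$ and $a(1) = 2$. The plan is to verify this with $d = 2N+1$ for each of the three symbols $a_{2N+2}(z)$, $a_{2N+3}(z)$, and $a^{I}_{2N+4}(z)$, uniformly in the admissible parameters $\alpha$ and $\beta$.

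First I would treat the $(2N+2)$-point family. Because the construction (\ref{scheme})-(\ref{vec-C}) inserts $\alpha$ only into the vertex rule, the symbol decomposes as
\begin{equation*}
a_{2N+2}(z) = a^{\mathrm{DD}}_{2N+2}(z) + \alpha\, r_N(z),
\end{equation*}
where $a^{\mathrm{DD}}_{2N+2}(z) := a_{2N+2}(z)|_{\alpha = 0}$ is the symbol of the $(2N+2)$-point Deslauriers--Dubuc interpolatory scheme identified in the Remark following Table \ref{Mask-S-2N+2}. The DD symbol classically contains the factor $(1+z)^{2N+2}$, since it reproduces, and in particular generates, polynomials of degree $2N+1$. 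For $r_N(z)$, the recurrence (\ref{vec-C}) translates at the symbol level into multiplication by $-z^{-2}(1-z^2)^2$ at each induction step; starting from the base case $r_0(z) = z^{-2}(1-z^2)^2$ read off directly from (\ref{1}), induction on $N$ yields
\begin{equation*}
r_N(z) = (-1)^N z^{-(2N+2)}(1 - z^2)^{2N+2},
\end{equation*}
which manifestly carries the factor $(1+z)^{2N+2}$. Since $r_N(1) = 0$, the normalization $a_{2N+2}(1) = a^{\mathrm{DD}}_{2N+2}(1) = 2$ is preserved, and Theorem \ref{thm-gd-rd}(i) then delivers generation up to degree $2N+1$ for every $\alpha$.

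Next, for the $(2N+3)$-point family I would write
\begin{equation*}
a_{2N+3}(z) = a_{2N+2}(z) + \beta(1 - \alpha)\, s_N(z),
\end{equation*}
where $s_N(z)$ is the odd-index Laurent polynomial encoding the extra weights in (\ref{weights}). Since $(1+z)^{2N+2}$ already divides $a_{2N+2}(z)$ by the previous step, it remains to show the same divisibility for $s_N(z)$ together with $s_N(1) = 0$. I would establish the explicit factorization
\begin{equation*}
s_N(z) = z^{-(2N+3)}(1 - z^2)^{2N+2}(1 + z^2),
\end{equation*}
which can be verified for the first few $N$ against the masks listed in Section \ref{FSS2}, by viewing the signed binomial coefficients $(-1)^{j+N+1}\binom{2N+2}{N+j+2}$ in (\ref{weights}) together with the linear weights $\tfrac{2j+1}{N-j+1}$ as encoding the $(2N+2)$-th order finite difference operator acting on affine samples, which annihilates polynomial data of degree at most $2N+1$. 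Evaluating at $z = 1$ then also gives $s_N(1) = 0$, so $a_{2N+3}(1) = 2$, and Theorem \ref{thm-gd-rd}(i) produces generation up to degree $2N+1$. Finally, specializing to $\alpha = 0$ gives $a^{I}_{2N+4}(z) = a^{\mathrm{DD}}_{2N+2}(z) + \beta\, s_N(z)$, both of whose summands carry the factor $(1+z)^{2N+2}$, so the same conclusion for the interpolatory family (\ref{schm3}) follows at once.

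The main technical obstacle is the combinatorial identity producing the closed form of $s_N(z)$, equivalently the divisibility of the correction symbol by $(1+z)^{2N+2}$. This is exactly why the coefficients $(-1)^{j+N+1}\tfrac{2j+1}{N-j+1}\binom{2N+2}{N+j+2}$ in (\ref{weights}) were chosen as they are: they are engineered so that the associated $(2N+2)$-th order finite difference annihilates any polynomial sample of degree at most $2N+1$. By comparison, the factorization of $r_N$ and the classical DD factorization of $a^{\mathrm{DD}}_{2N+2}$ are routine.
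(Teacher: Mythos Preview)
Your proposal is correct and considerably more substantive than the paper's own argument. The paper's proof consists only of the bare assertion that $a(1)=2$, $a(-1)=0$, and $D^{(m)}a(-1)=0$ for $m\le 2N+1$, followed by an appeal to Theorem~\ref{thm-gd-rd}; no mechanism is given for why these vanish uniformly in $N$. Your route---splitting each symbol into its Deslauriers--Dubuc part and a parameter-dependent correction, and then exhibiting the explicit factorizations $r_N(z)=(-1)^N z^{-(2N+2)}(1-z^2)^{2N+2}$ and $s_N(z)=z^{-(2N+3)}(1-z^2)^{2N+2}(1+z^2)$---actually \emph{proves} the divisibility by $(1+z)^{2N+2}$ and hence the derivative conditions. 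This buys a structural explanation the paper does not provide and makes clear that the generation degree is independent of $\alpha,\beta$ precisely because the corrections are built from iterated second differences.

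Two small points to tighten. First, the multiplicative recursion you invoke is literally the recursion for the displacement symbols $c_N$ attached to $\vec C_{2i,N}$, not for the cumulative coefficient $r_N$ of $\alpha$ in $a_{2N+2}$; one extra line (using $r_0(z)=z^{-2}(1-z^2)^2$ and $c_1(z)=z^{-2}-1+z^2$, so that $r_{N}-r_{N-1}=c_{N+1}$ forces $r_N=-z^{-2}(1-z^2)^2 r_{N-1}$) closes this. Second, your justification of the closed form for $s_N$ is still heuristic; you flag this yourself, and indeed verifying the identity $\sum_{j} b_{2j+1} z^{2j+1} - \sum_j a_{2j+1,N+1} z^{2j+1} = \beta(1-\alpha) z^{-(2N+3)}(1-z^2)^{2N+2}(1+z^2)$ from the explicit coefficients in~(\ref{b2j+1}) is the only nontrivial step. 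Once that identity is written down, the rest is immediate.
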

\begin{proof}
It is to be noted that for all $\alpha$ and $\beta$ the results $a_{2N+2}(1)=a_{2N+3}(1)=a^{I}_{2N+4}(1)=2$ and $a_{2N+2}(-1)=a_{2N+3}(-1)=a^{I}_{2N+4}(-1)=0$ are trivial for all $N \in \mathbb{N}_{0}$. Furthermore, $\left. D^{(m)} a_{2N+2}(z)\right|_{z=-1}=\left. D^{(m)} a_{2N+3}(z)\right|_{z=-1}=\left. D^{(m)}a^{I}_{2N+4}(z)\right|_{z=-1}=0$ for $m=0,1,\ldots,2N+1$ and $N \in \mathbb{N}_{0}$. Therefore, by Theorem \ref{thm-gd-rd} the result proved.
\end{proof}

\begin{thm}\label{RDtheorem}
The families of subdivision schemes (\ref{schm1}), (\ref{schm2}) and (\ref{schm3}) reproduce polynomials up to degree $2N+1$ for all $\alpha$, $\beta$ and $N \in \mathbb{N}_{0}$.
\end{thm}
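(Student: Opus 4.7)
The plan is to verify the hypotheses of Theorem~\ref{thm-gd-rd}(ii), using the generation fact from Theorem~\ref{GDtheorem} together with the primality of Theorem~\ref{primaltheorem}. By primality, $a(z)=a(z^{-1})$, so $a_i=a_{-i}$ and $D^{(1)}a(z)|_{z=1}=\sum_i i\,a_i=0$ by the pairing $i\leftrightarrow-i$; hence $\tau=0$ for each of the three families. With $\tau=0$ the target value $2\prod_{h=0}^{m-1}(\tau-h)$ vanishes for every $m\ge 1$, so the reproduction claim reduces to
\[
D^{(m)}a(z)\big|_{z=1}=0,\qquad m=1,\ldots,2N+1,
\]
for each of $a_{2N+2}$, $a_{2N+3}$ and $a^{I}_{2N+4}$.

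The engine of the proof is a parity principle. If $R(z)$ is a Laurent polynomial supported on even (respectively odd) indices, then $R(-z)=R(z)$ (respectively $R(-z)=-R(z)$), and iterating this identity gives $R^{(m)}(1)=(-1)^m R^{(m)}(-1)$ (respectively $R^{(m)}(1)=(-1)^{m+1}R^{(m)}(-1)$). Thus any vanishing of derivatives at $z=-1$ is transferred automatically to $z=1$ for symbols of pure parity. For scheme (\ref{schm1}), the recursive construction in Section~\ref{FSS1} confines all $\alpha$-dependence to even-indexed mask entries, since $\alpha=\alpha_0$ multiplies only the vertex displacement $\vec{C}_{2i,N+1}$ in (\ref{scheme}); moreover, setting $\alpha=0$ collapses the vertex rule to $f^{k+1}_{2i,N+1}=f^{k}_{i,N+1}$, and the resulting $\alpha$-free part is the classical $(2N+2)$-point Deslauriers-Dubuc interpolatory symbol $a^{DD}_{2N+2}(z)=1+H(z)$, with $H$ supported on odd powers of $z$. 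Hence $a_{2N+2}(z)=1+H(z)+\alpha\,Q_N(z)$, where $H$ is odd and $Q_N$ is even.

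Applying Theorem~\ref{GDtheorem} both to $a_{2N+2}$ and to its $\alpha=0$ restriction gives $D^{(m)}a_{2N+2}(z)|_{z=-1}=D^{(m)}a^{DD}_{2N+2}(z)|_{z=-1}=0$ for $m=0,\ldots,2N+1$; subtracting yields $Q_N^{(m)}(-1)=0$ for the same range, and for $m\ge 1$ the identity $D^{(m)}a^{DD}_{2N+2}(z)|_{z=-1}=H^{(m)}(-1)$ gives $H^{(m)}(-1)=0$. The parity principle then delivers $Q_N^{(m)}(1)=0$ for $m=0,\ldots,2N+1$ and $H^{(m)}(1)=0$ for $m=1,\ldots,2N+1$, and adding shows $D^{(m)}a_{2N+2}(z)|_{z=1}=H^{(m)}(1)+\alpha\,Q_N^{(m)}(1)=0$, as required. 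The same template handles (\ref{schm2}): write $a_{2N+3}(z)=a_{2N+2}(z)+\beta(1-\alpha)P_N(z)$; formula (\ref{b2j+1}) shows that $\beta(1-\alpha)$ enters only at odd positions $2j+1$, so $P_N$ is odd, Theorem~\ref{GDtheorem} applied to both $a_{2N+2}$ and $a_{2N+3}$ gives $P_N^{(m)}(-1)=0$ for $m=0,\ldots,2N+1$ (the degenerate case $\beta(1-\alpha)=0$ trivially reduces to (\ref{schm1})), and the odd-parity identity transports this vanishing to $z=1$. Scheme (\ref{schm3}) is the $\alpha=0$ specialization of (\ref{schm2}), so it is covered verbatim with the $\alpha Q_N$ summand absent.

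The main obstacle is the structural observation that the parameters $\alpha$ and $\beta(1-\alpha)$ live in disjoint parity sectors of the mask, which must be read off from the recursive framework of Sections~\ref{FSS1}-\ref{FSS2}. Once this parity segregation is in hand, the parity principle together with Theorem~\ref{GDtheorem} bridges the generation condition at $z=-1$ and the reproduction condition at $z=+1$ uniformly across the three families, and no further mask-level computation is needed.
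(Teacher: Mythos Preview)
Your proof is correct and follows the same overall framework as the paper --- invoke primality (Theorem~\ref{primaltheorem}) to get $\tau=0$, invoke generation (Theorem~\ref{GDtheorem}), and then verify the derivative conditions at $z=1$ required by Theorem~\ref{thm-gd-rd}(ii). The difference is in how that last verification is carried out. The paper simply \emph{asserts} that $D^{(m)}a(z)|_{z=1}=2\prod_{h=0}^{m-1}(\tau-h)$ for each family and each $m\le 2N+1$, leaving it as an implicit mask-level computation. You instead supply a structural argument: decompose the symbol into its even- and odd-indexed parts, observe from the recursive construction that $\alpha$ lives entirely in the even sector (and $\beta(1-\alpha)$ in the odd sector), and then transport the vanishing at $z=-1$ furnished by Theorem~\ref{GDtheorem} to $z=+1$ via the parity identities $R^{(m)}(1)=(\pm 1)^m R^{(m)}(-1)$. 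This is a genuinely more informative route: it explains \emph{why} the reproduction degree matches the generation degree for these particular families (the $\alpha$-free part is the Deslauriers--Dubuc interpolatory symbol $1+H(z)$, so the even sector is exactly $1+\alpha Q_N(z)$), whereas a generic primal scheme with generation degree $2N+1$ need not reproduce to that degree --- the cubic B-spline being the standard counterexample. Your argument thus makes transparent the structural feature that the paper's bare assertion hides.
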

\begin{proof}
Since by Theorem \ref{primaltheorem}, the families of schemes (\ref{schm1}), (\ref{schm2}) and (\ref{schm3}) are primal, hence the parameter $\tau=0$ for all $\alpha$, $\beta$ and $N \in \mathbb{N}_{0}$. Moreover, $\left. D^{(m)} a_{2N+2}(z)\right|_{z=1}=\left. D^{(m)} a_{2N+3}(z)\right|_{z=1}=\left. D^{(m)}a^{I}_{2N+4}(z)\right|_{z=1}=2 \prod\limits_{h=0}^{m-1}(\tau-h)$ for $m=0,1,\ldots,2N+1$ and $N \in \mathbb{N}_{0}$. Now by combining these conditions with Theorem \ref{GDtheorem} and then by using Theorem \ref{thm-gd-rd}, we get the required result.
\end{proof}
\begin{rem}
Theorems \ref{GDtheorem}-\ref{RDtheorem} show that the schemes defined in (\ref{schm1}), (\ref{schm2}) and (\ref{schm3}) generate and reproduce polynomials up to degree $2N+1$ for all $\alpha$ and $\beta$. However, by choosing specific values of $\alpha$ and $\beta$, the polynomial generation and polynomial reproduction of these schemes can be increased. Tables \ref{reproduction-SS-2N+2}, \ref{reproduction-S-2N+3} and \ref{table-S-inter-2N+4} summarize the generation and reproduction degrees of proposed schemes for specials choices of tension parameters.
\end{rem}
In the coming theorem, we give the support width of the proposed families of schemes.
\begin{thm}
The support width of the family of schemes (\ref{schm1}) is $4N+4$ where $N \in \mathbb{N}_{0}$. The support width of each family of subdivision schemes defined in (\ref{schm2}) and (\ref{schm3}) is $4N+6$.
\end{thm}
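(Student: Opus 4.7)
The plan is to invoke Theorem \ref{support-theorem} directly, applied separately to each of the three families. All three support-width assertions will follow by inserting the correct value of $\xi$ into the respective formula of that theorem, so the work reduces to identifying the stencil size $\xi$ for each scheme and classifying it as relaxed or interpolatory.

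First I would handle the family (\ref{schm1}). By its construction in Subsection \ref{FSS1} it is a $(2N+2)$-point relaxed combined scheme, so $\xi=2N+2>2$ for all $N\in\mathbb{N}_0$ (the edge case $N=0$ giving a $2$-point scheme can be verified by inspecting the mask in Table \ref{Mask-S-2N+2} directly, or by noting that Theorem \ref{support-theorem}'s formula still applies to the corresponding basic limit function). Substituting into the relaxed-scheme formula $2\xi$ yields $2(2N+2)=4N+4$, so the basic limit function vanishes outside $[-(2N+2),2N+2]$, which is exactly the claimed support width.

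Next I would do the family (\ref{schm2}). The refinement rules in (\ref{schm2}) show that the vertex rule uses $2N+3$ control points (indices $j=-N-1,\dots,N+1$) while the edge rule also uses $2N+4$ control points (indices $j=-N-2,\dots,N+1$); by Definition \ref{primal-dual} and the primal classification established elsewhere, this is a $(2N+3)$-point relaxed combined scheme, i.e.\ $\xi=2N+3$. Theorem \ref{support-theorem} then gives support width $2(2N+3)=4N+6$, with vanishing outside $[-(2N+3),2N+3]$.

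Finally, for (\ref{schm3}), setting $\alpha=0$ in (\ref{schm2}) collapses the vertex rule to $f^{k+1}_{2i,N+1}=f^{k}_{i,N+1}$, so the scheme is interpolatory, with the edge rule still employing $2N+4$ control points; hence $\xi=2N+4$. Applying the interpolatory part of Theorem \ref{support-theorem} gives support width $2(2N+4)-2=4N+6$, vanishing outside $[-(2N+3),2N+3]$.

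The only mild obstacle is bookkeeping: making sure that the stencil index ranges in (\ref{schm1}), (\ref{schm2}) and (\ref{schm3}) are correctly counted and that one classifies each family properly as relaxed or interpolatory before plugging into the appropriate formula of Theorem \ref{support-theorem}. No genuinely new estimate is needed; the statement is a clean corollary of that theorem.
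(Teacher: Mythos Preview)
Your proposal is correct and takes essentially the same approach as the paper, which simply states that the result is trivial by Theorem~\ref{support-theorem}. You have supplied the stencil-counting details that the paper omits, but the underlying argument is identical.
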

\begin{proof}
By Theorem \ref{support-theorem} the result is trivial.
\end{proof}

We prove the following theorem by using Theorems \ref{thmcontinuity}-\ref{thmcontinuity--1}.

\begin{thm}\label{}
The subdivision schemes $S_{a_{3}}$, $S_{a_{5}}$ and $S_{a_{7}}$ are $C^{1}$, $C^{3}$ and $C^{5}$ respectively for some special conditions.
\end{thm}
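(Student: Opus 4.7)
The plan is to apply Theorems \ref{thmcontinuity} and \ref{thmcontinuity--1} to each of the three symbols $a_{3}(z)$, $a_{5}(z)$, $a_{7}(z)$ that are determined by the masks listed for $N=0,1,2$ in Subsection \ref{FSS2}. For the scheme $S_{a_{2N+3}}$, the target continuity is $n=2N+1$, so I would first exhibit the factorization
\begin{equation*}
a_{2N+3}(z) \;=\; \left(\frac{1+z}{2z}\right)^{2N+1} b(z),
\end{equation*}
which is available because Theorem \ref{GDtheorem} already guarantees that $(1+z)^{2N+2}$ divides the symbol for every $\alpha$ and $\beta$. Next, write $b(z) = (1+z)\,c(z)$ and attempt to show that $S_{c}$ is contractive by computing $c^{l}(z) = c(z)\,c(z^{2}) \cdots c(z^{2^{l-1}})$ for small $l$ and evaluating $\|c^{l}\|_\infty$ as in Theorem \ref{thmcontinuity--1}.

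The \emph{special conditions} will be precisely those values, or narrow ranges, of $\alpha$ and $\beta$ for which $\|c^{l}\|_\infty < 1$ for some $l \in \mathbb{N}$. Since the entries of $c(z)$ depend linearly on $\alpha$ and $\beta$, both sides of the contractivity inequality are piecewise-linear functions of the parameters, so each case reduces to a system of polynomial inequalities in $\alpha, \beta$ that is most efficiently handled symbolically with a computer algebra system, in line with the paper's general approach to continuity analysis. I would carry out this test for increasing $l = 1, 2, 3, \ldots$ until the norm drops below one, then read off the admissible parameter region; for $S_{a_3}$ contractivity is expected to appear already at $l = 1$ or $l = 2$, while for $S_{a_5}$ and $S_{a_7}$ larger $l$ will likely be needed.

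The main obstacle is expected to lie in the contractivity test at the highest level, namely $C^{5}$ for $S_{a_7}$. After stripping off five factors of $\frac{1+z}{2z}$ and one factor of $(1+z)$, the remaining symbol $c(z)$ still carries both parameters, and its iterated mask $c^{l}$ grows rapidly in size; extracting a clean sufficient condition in terms of $\alpha, \beta$ (as opposed to a purely numerical verification at one sample point) requires careful sign tracking across the mask entries and a case split on the ranges of the parameters. This is the step that forces reliance on symbolic computation and is the most delicate piece of the argument. Once the parameter region is pinned down, the conclusion follows immediately by chaining Theorems \ref{thmcontinuity} and \ref{thmcontinuity--1}, yielding the claimed continuity of $S_{a_3}$, $S_{a_5}$ and $S_{a_7}$ under the corresponding constraints on $\alpha$ and $\beta$.
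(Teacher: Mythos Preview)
Your approach is essentially the paper's: factor the symbol using the generation degree from Theorem~\ref{GDtheorem}, then apply Theorems~\ref{thmcontinuity}--\ref{thmcontinuity--1} and test contractivity of the residual scheme $S_{c}$. The paper carries this out explicitly with $l=1$ in every case and simply records the resulting sufficient conditions as inequalities of the form $\max\{\,\cdot\,\}<1$ in $\alpha,\beta$; in particular your expectation that $S_{a_{5}}$ and $S_{a_{7}}$ would require larger $l$ is unfounded---$l=1$ already yields a nonempty admissible region for all three schemes, so no iterated masks or case splits are needed.
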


\begin{proof}
The conditions for the $C^{1}$, $C^{3}$ and $C^{5}$ continuities of the schemes $S_{a_{3}}$, $S_{a_{5}}$ and $S_{a_{7}}$ respectively are given below:

The subdivision scheme $S_{a_{3}}$ is $C^{1}$ if
 \begin{eqnarray*}
&&  \mbox{max} \{\gamma_{1},\gamma_{2}+\gamma_{3}\}<1,
 \end{eqnarray*}
 where $\gamma_{1}=2\left|2\,\alpha-4 \beta\,(1-\alpha)\right|$, $\gamma_{2}=2\left|2 \beta\,(1-\alpha)\right|$ and $\gamma_{3}=\left|1+4\,\beta\,(1-\right.$ $\left.\alpha)- 4\,\alpha \right|$.\\
The scheme $S_{a_{5}}$ is $C^{3}$ if
\begin{eqnarray*}
&& \mbox{max}\{\eta_{1} +\eta_{2},  \eta_{3}+ \eta_{4}\}<1,
\end{eqnarray*}
where $\eta_{1}=16\, \left| \beta\, \left( 1-\alpha \right)  \right|$, $\eta_{2}=2\, \left| -56\,\beta\, \left( 1-\alpha \right) -32\,\alpha+\frac{1}{2} \right|$,\\ $\eta_{3}=2\, \left| -32\,\beta\, \left( 1-\alpha \right) -8\,\alpha \right|$ and $\eta_{4}=\left| -48\,\alpha-64\,\beta\, \left( 1-\alpha \right) +2 \right|$.

Similarly, the subdivision scheme $S_{a_{7}}$ is $C^{5}$ if
 \begin{eqnarray*}
&& \mbox{max}\{\chi_{1} + \chi_{2} + \chi_{3}, \chi_{4} + \chi_{5}\}<1,
 \end{eqnarray*}
      where $\chi_{1}=64\, \left| \beta\, \left( 1-\alpha \right)  \right|$, $\chi_{2}=2\, \left| -512 \,\beta\, \left( 1-\alpha \right) +192\,\alpha-\frac{3}{8} \right|$,\\ $\chi_{3}=\left| -{\frac {19}{4}}-960\,\beta\, \left( 1-\alpha \right) +640\,\alpha \right|$, $\chi_{4}=2\, \left| -192\,\beta\, \left( 1-\alpha \right) +32\,\alpha \right|$ \\ and $\chi_{5}=2\, \left| 480\,\alpha-\frac{9}{4}-832\,\beta\, \left( 1-\alpha \right) \right|$.
\end{proof}

\begin{rem}
Throughout the article, red bullets and red lines represent initial points and initial polygons/meshes respectively.
\end{rem}
\begin{figure}[!h] 
\begin{center}
\begin{tabular}{ccc}
\epsfig{file=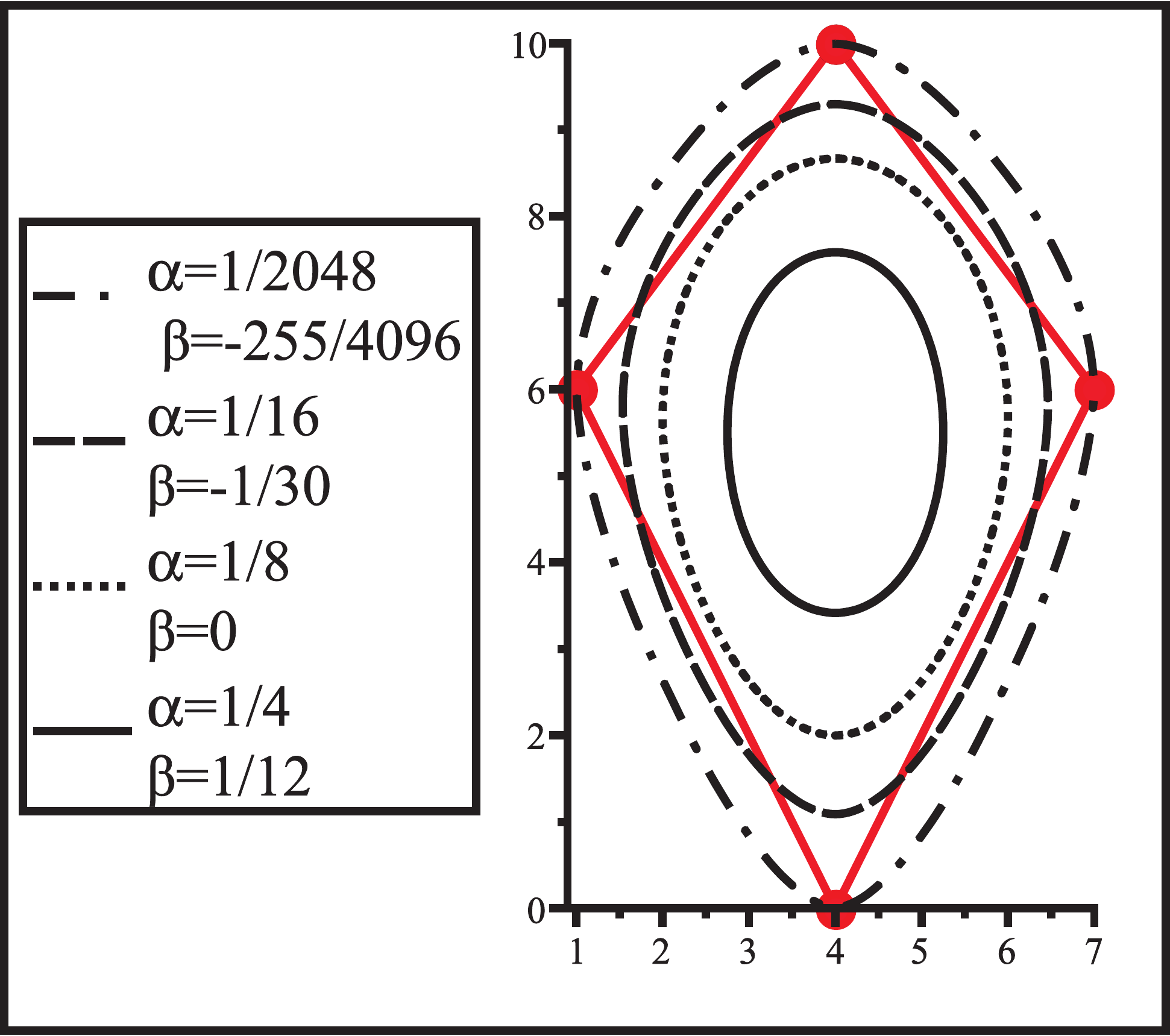, width=1.8 in} & \epsfig{file=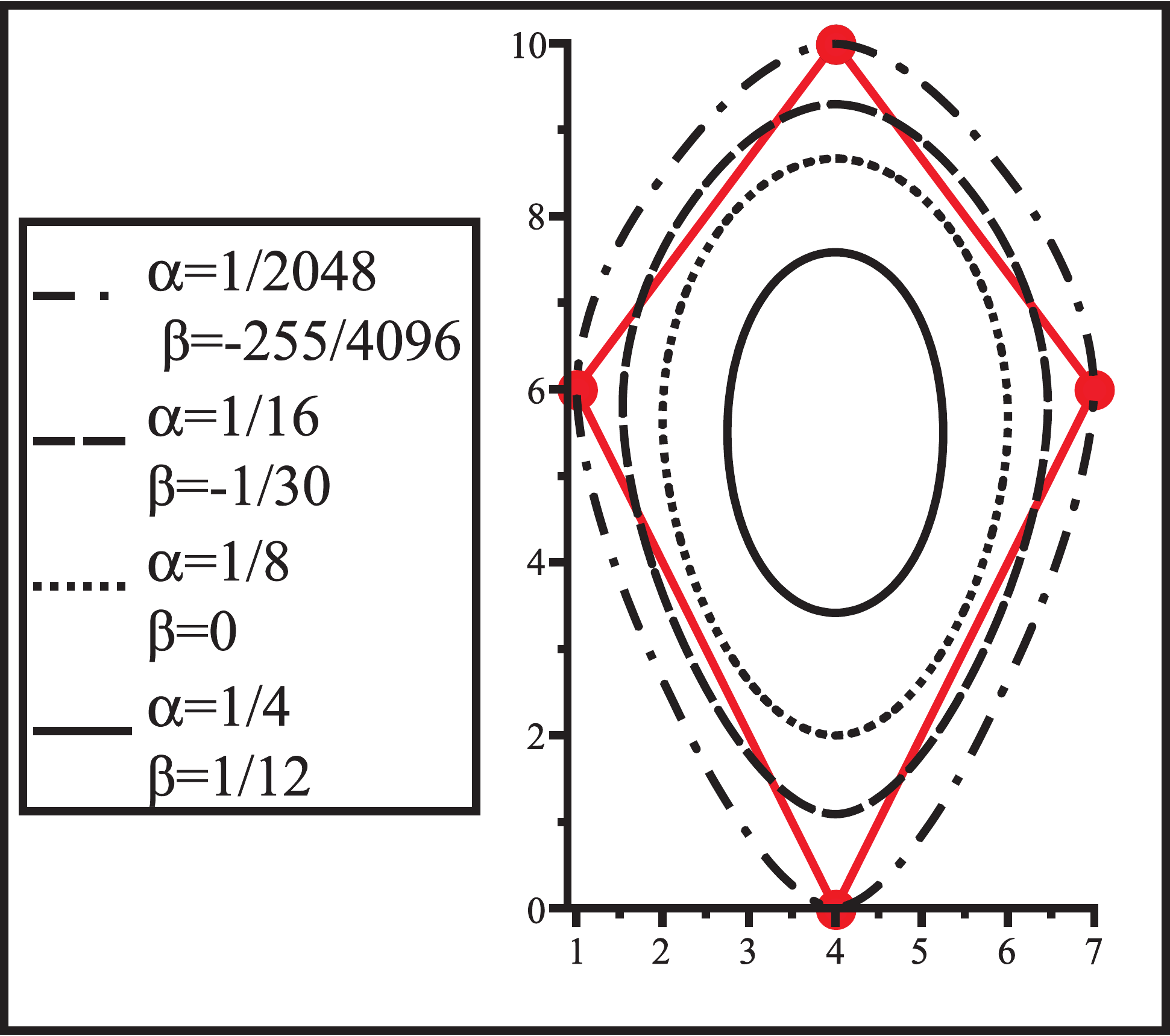, width=1.8 in} & \epsfig{file=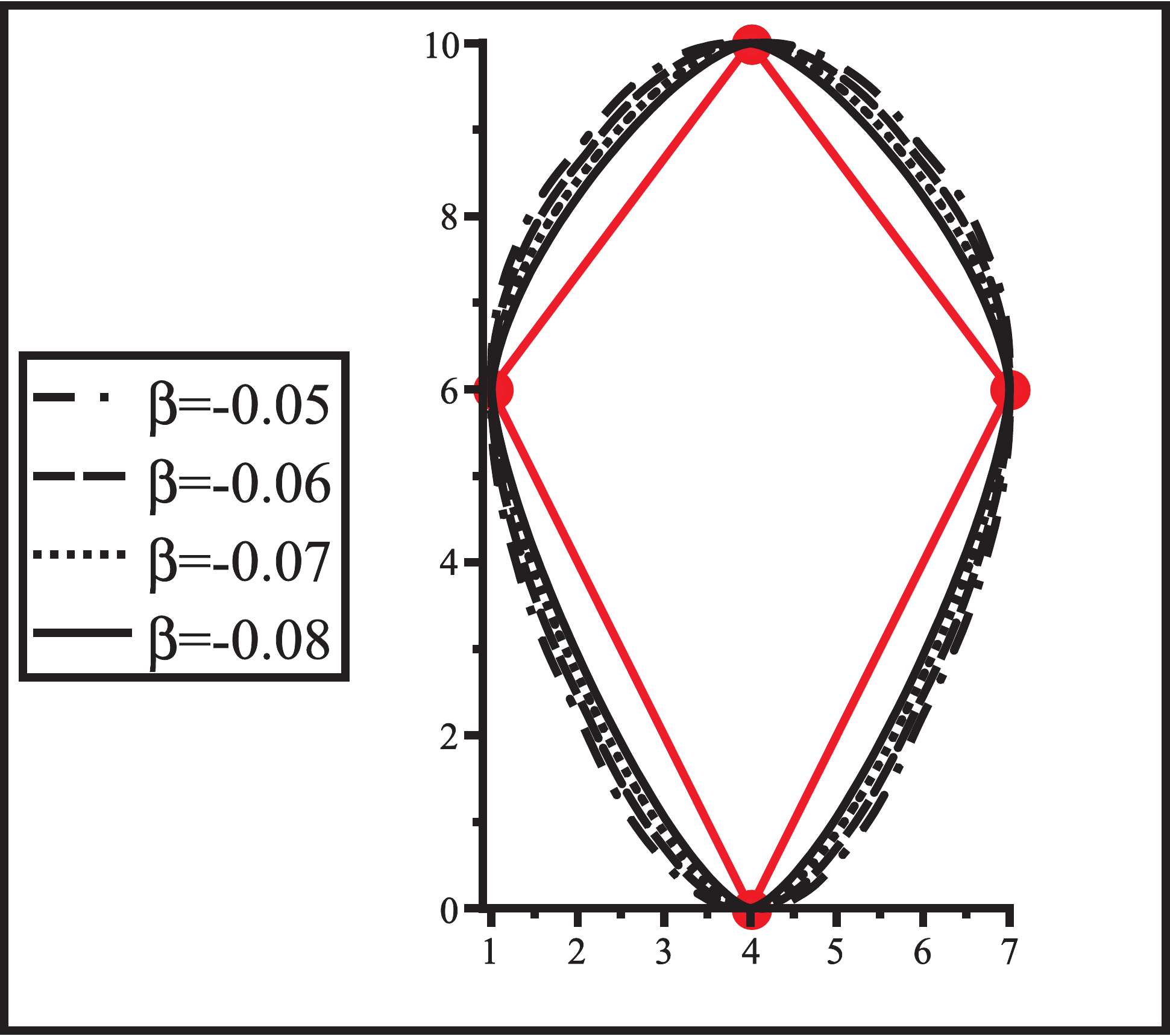, width=1.8 in}\\
(a) & (b) & (c)
\end{tabular}
\end{center}
 \caption[Limit curves obtained by the subdivision schemes $S_{a_{2}}$, $S_{a_{3}}$ $\&$ $S_{a^{I}_{4}}$.]{\label{p5-3-point}\emph{Black curves are the limit curves obtained by the subdivision schemes (a) $S_{a_{2}}$ (b) $S_{a_{3}}$ $\&$ (c) $S_{a^{I}_{4}}$ respectively.}}
\end{figure}
\begin{figure}[!h] 
\begin{center}
\begin{tabular}{ccc}
\epsfig{file=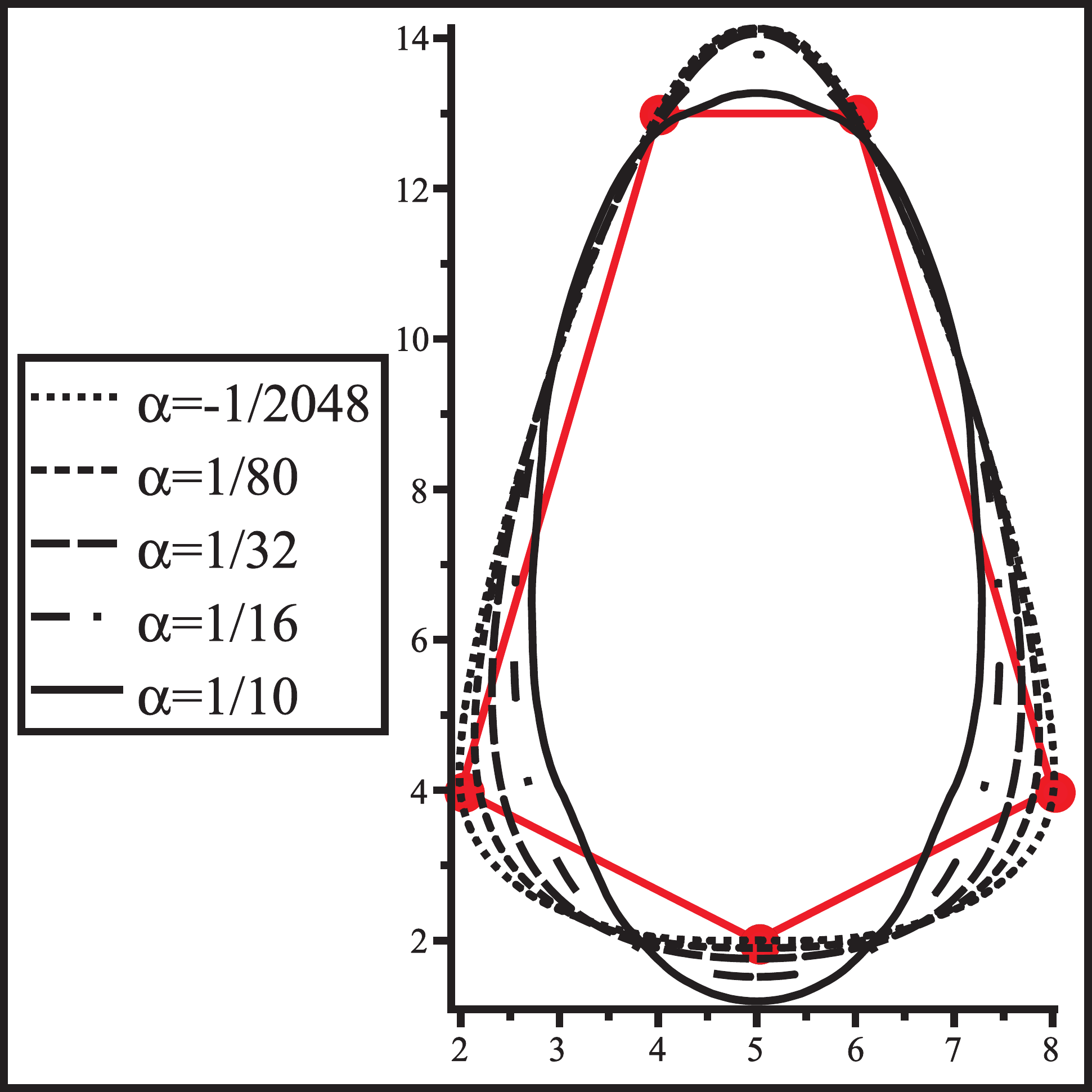, width=1.8 in} & \epsfig{file=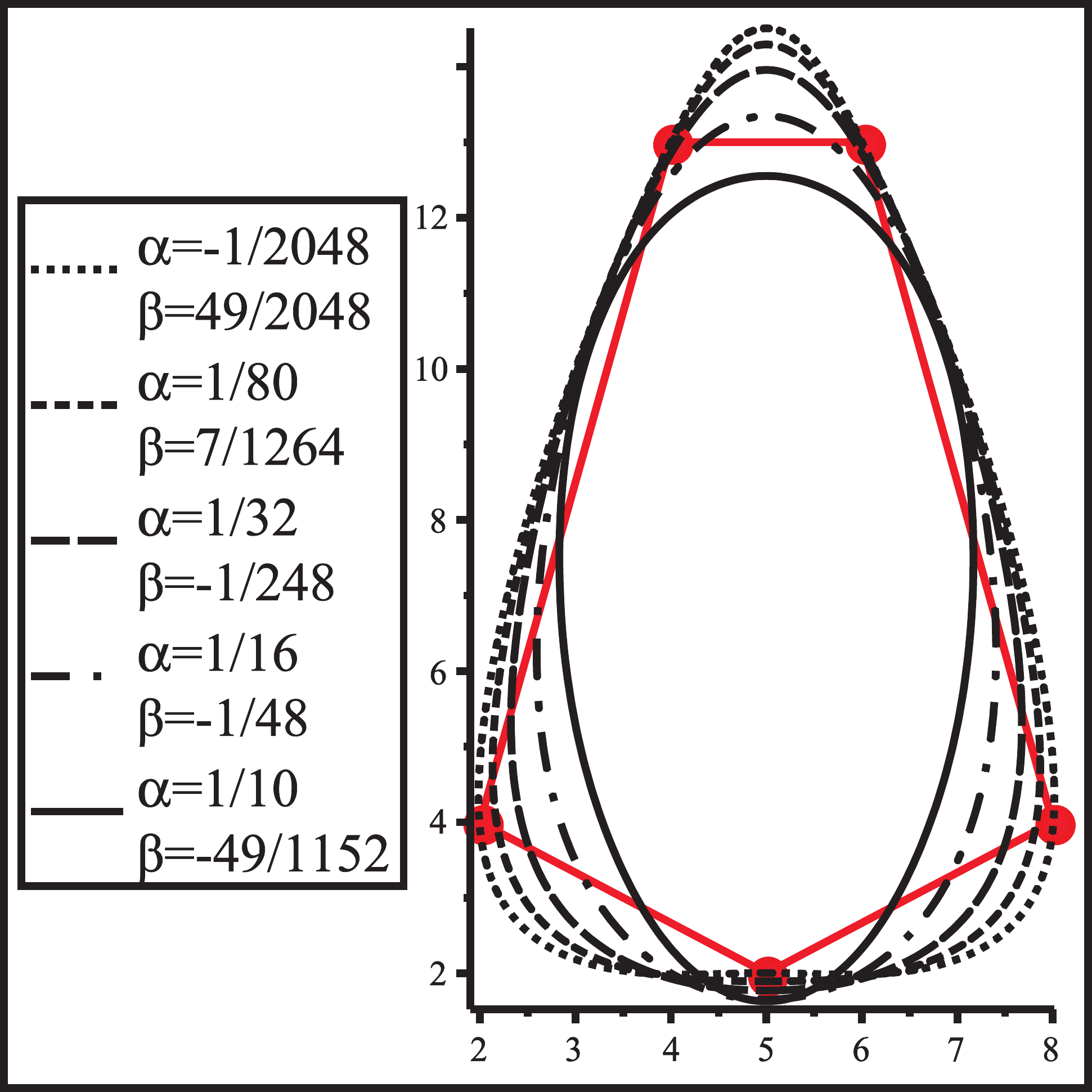, width=1.8 in} & \epsfig{file=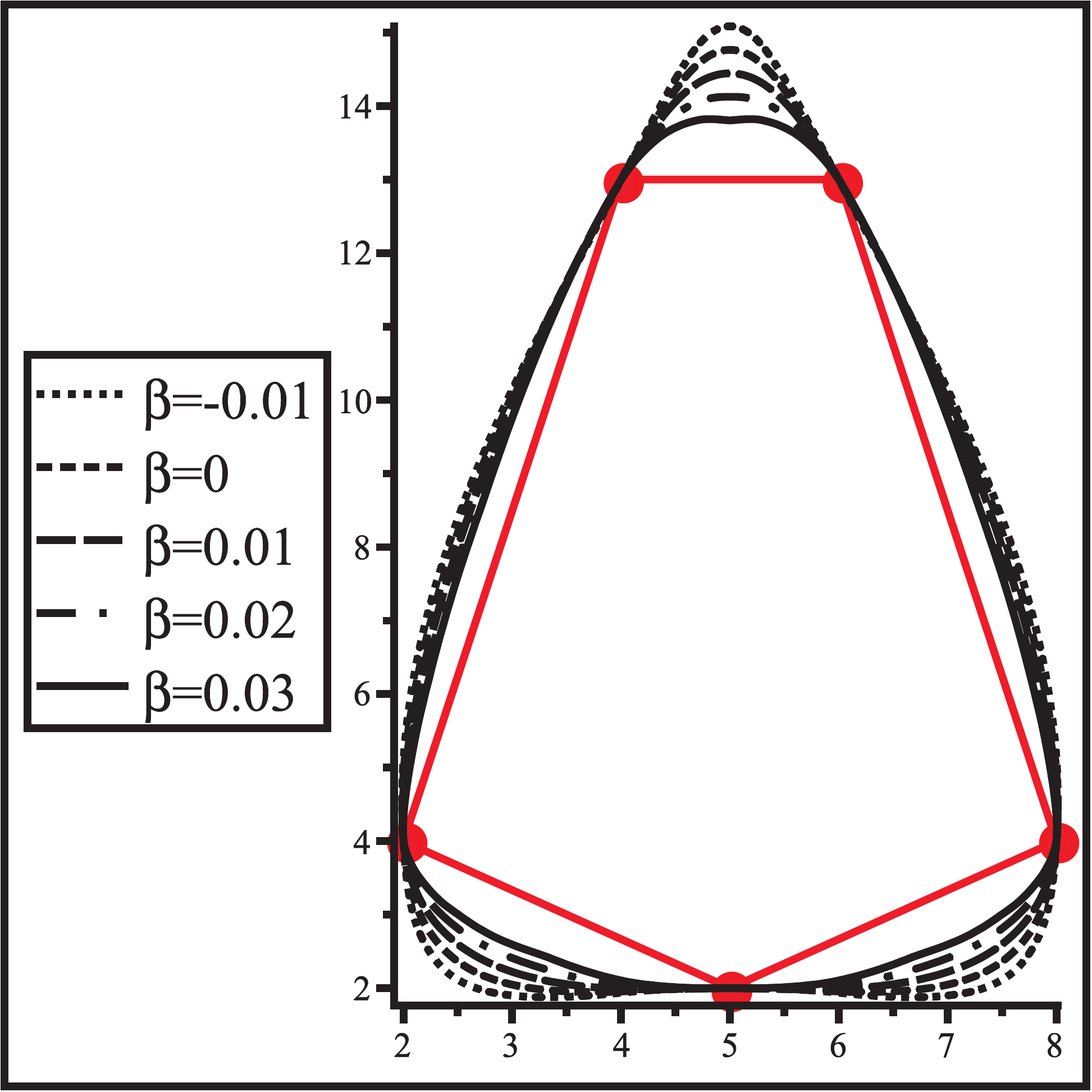, width=1.8 in}\\
(a) & (b) & (c)
\end{tabular}
\end{center}
 \caption[Limit curves obtained by the subdivision schemes $S_{a_{4}}$, $S_{a_{5}}$ $\&$ $S_{a^{I}_{6}}$.]{\label{p5-5-point}\emph{Black curves are the limit curves obtained by the subdivision schemes (a) $S_{a_{4}}$ (b) $S_{a_{5}}$ $\&$ (c) $S_{a^{I}_{6}}$ respectively.}}
\end{figure}
\begin{figure}[!h] 
\begin{center}
\begin{tabular}{ccc}
\epsfig{file=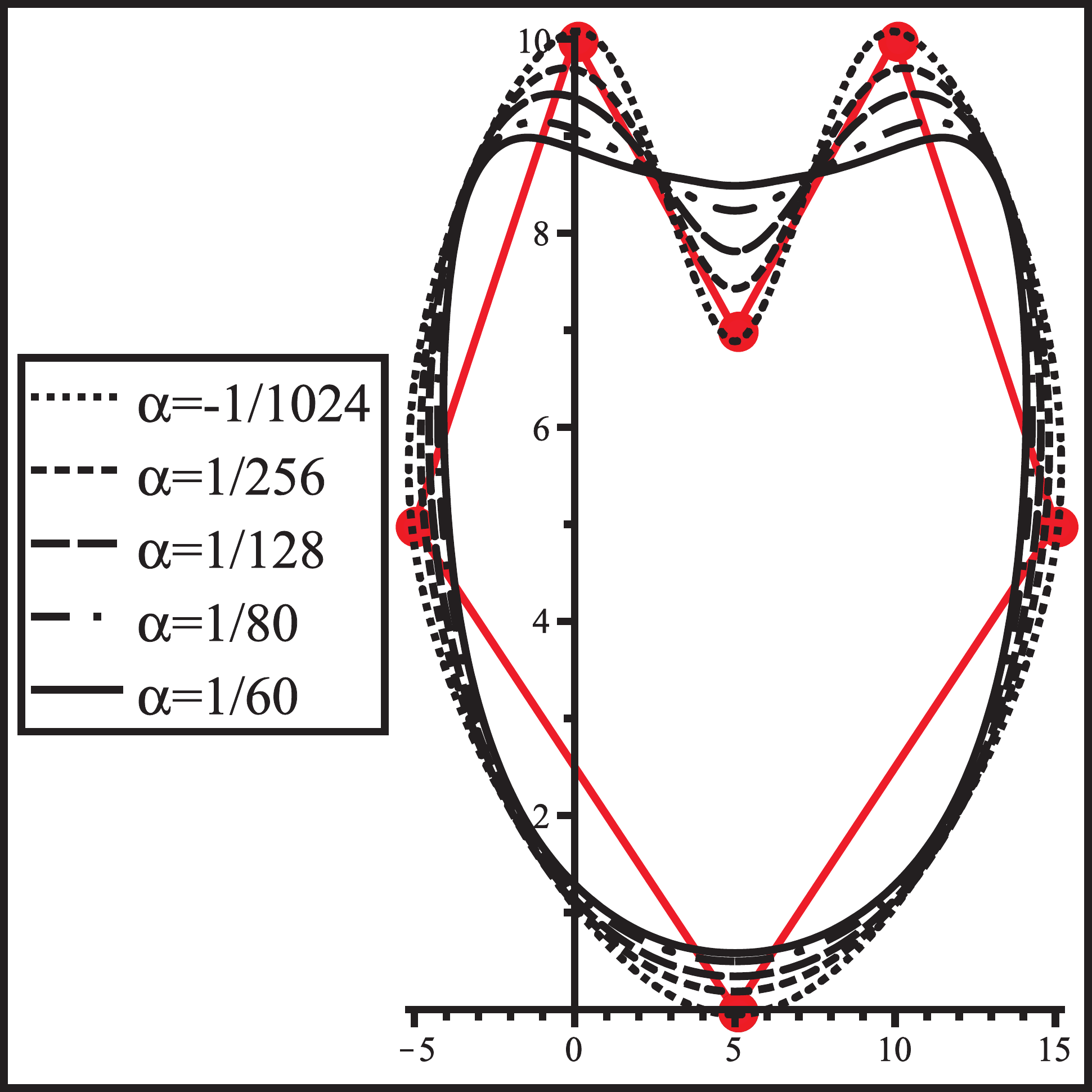, width=1.8 in} & \epsfig{file=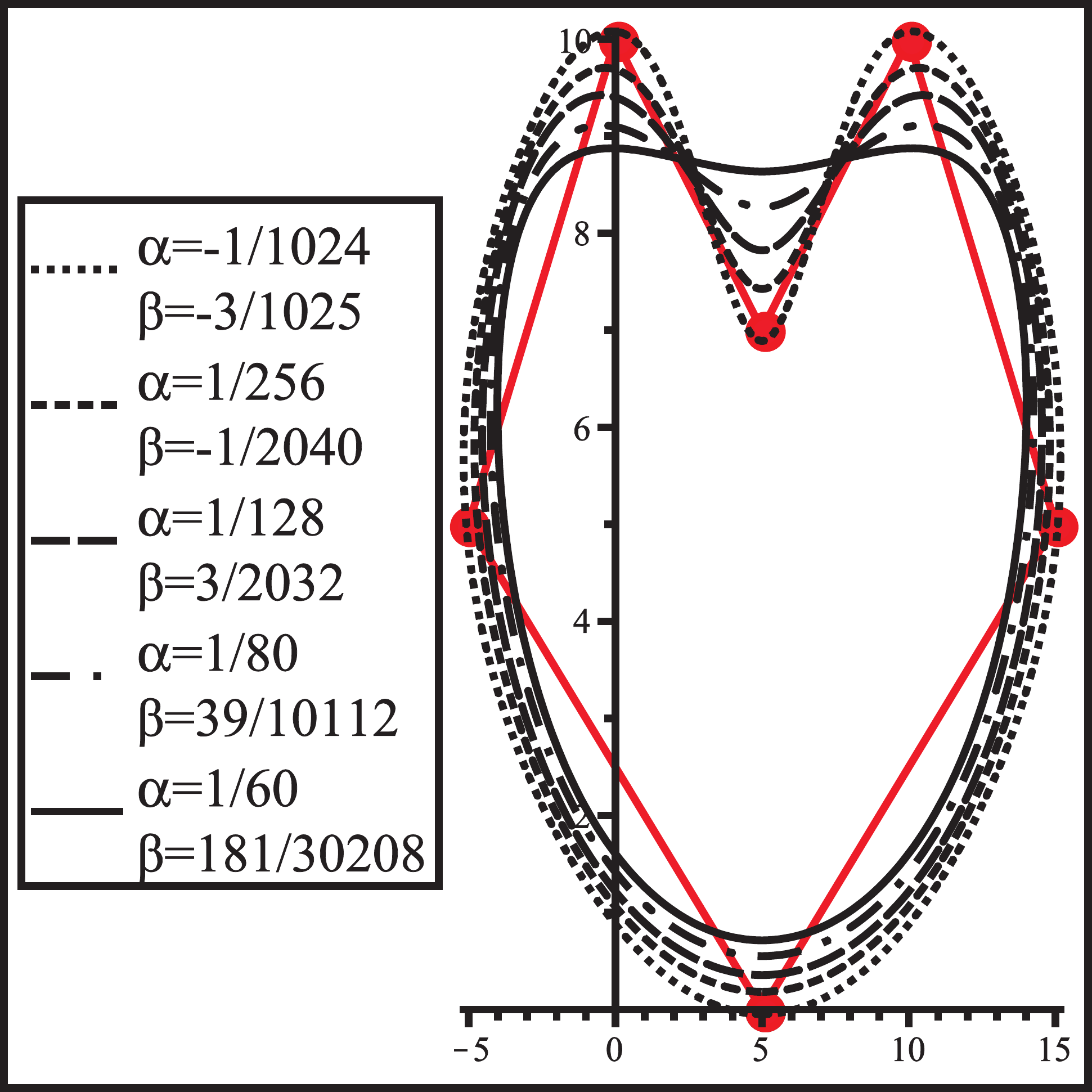, width=1.8 in} & \epsfig{file=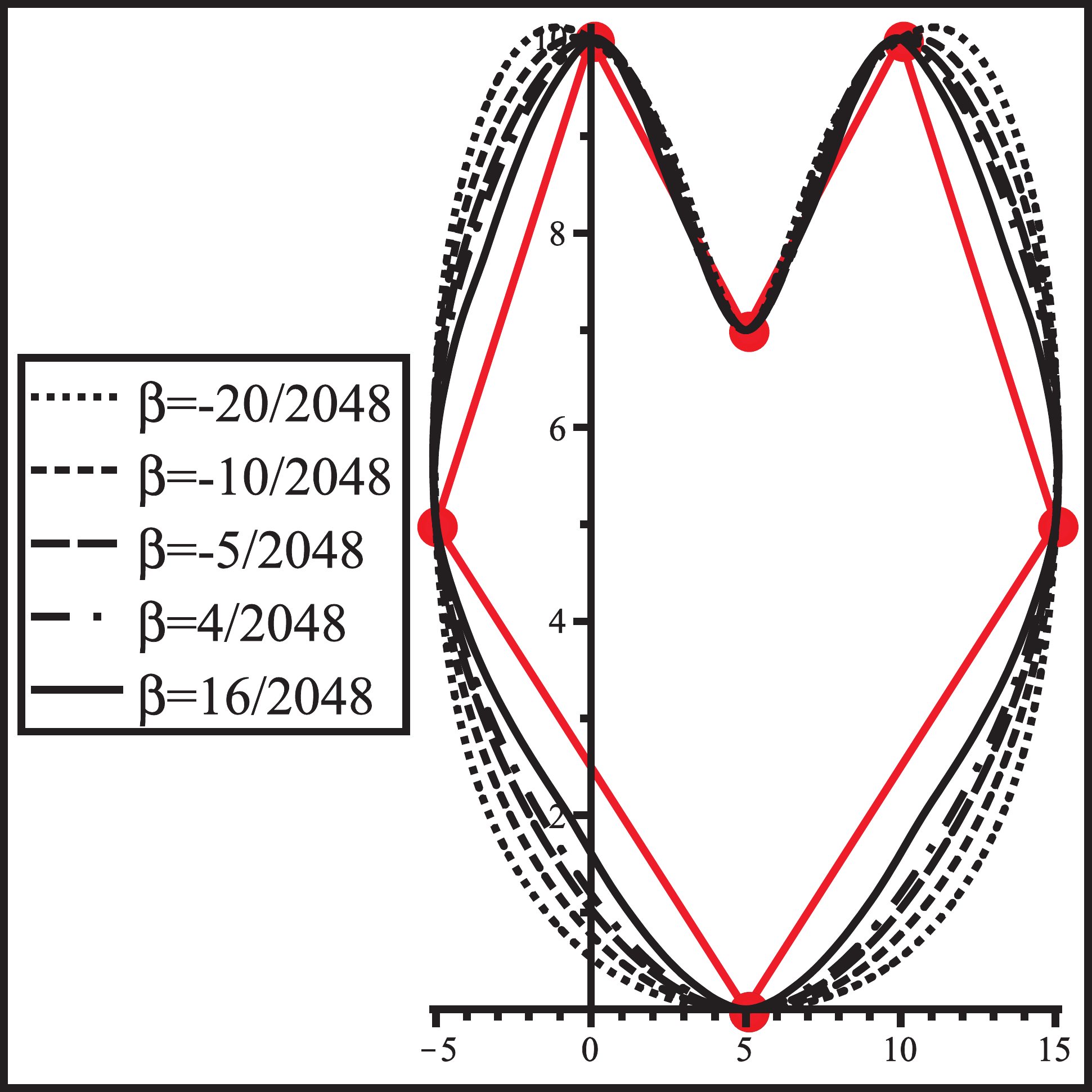, width=1.8 in}\\
(a) & (b) & (c)
\end{tabular}
\end{center}
 \caption[Limit curves obtained by the subdivision schemes $S_{a_{6}}$, $S_{a_{7}}$ $\&$ $S_{a^{I}_{8}}$.]{\label{p5-7-point}\emph{Black curves are the limit curves obtained by the subdivision schemes (a) $S_{a_{6}}$ (b) $S_{a_{7}}$ $\&$ (c) $S_{a^{I}_{8}}$ respectively.}}
\end{figure}

\begin{figure}[!h] 
\begin{center}
\begin{tabular}{cccc}
\epsfig{file=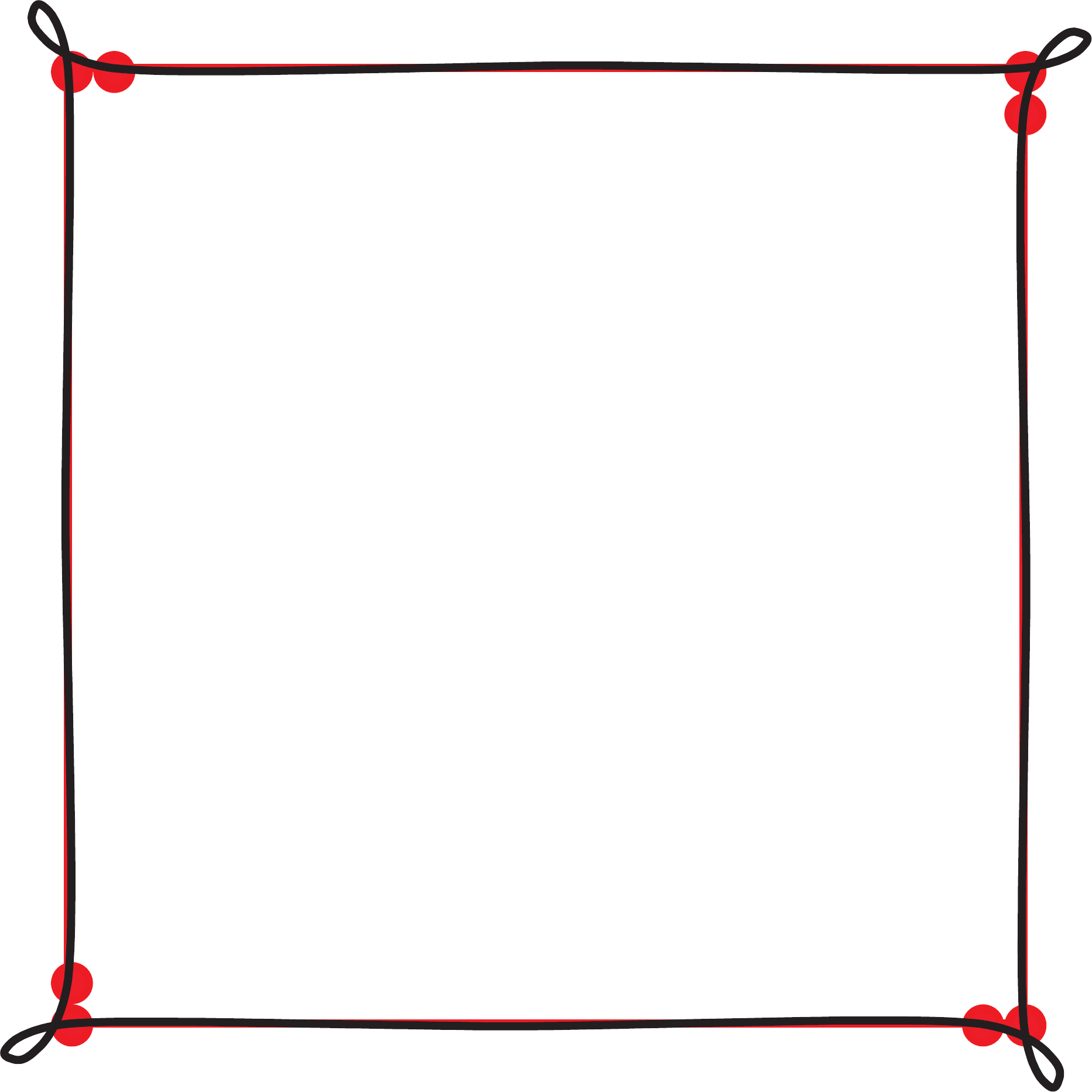, width=1.4 in} & \epsfig{file=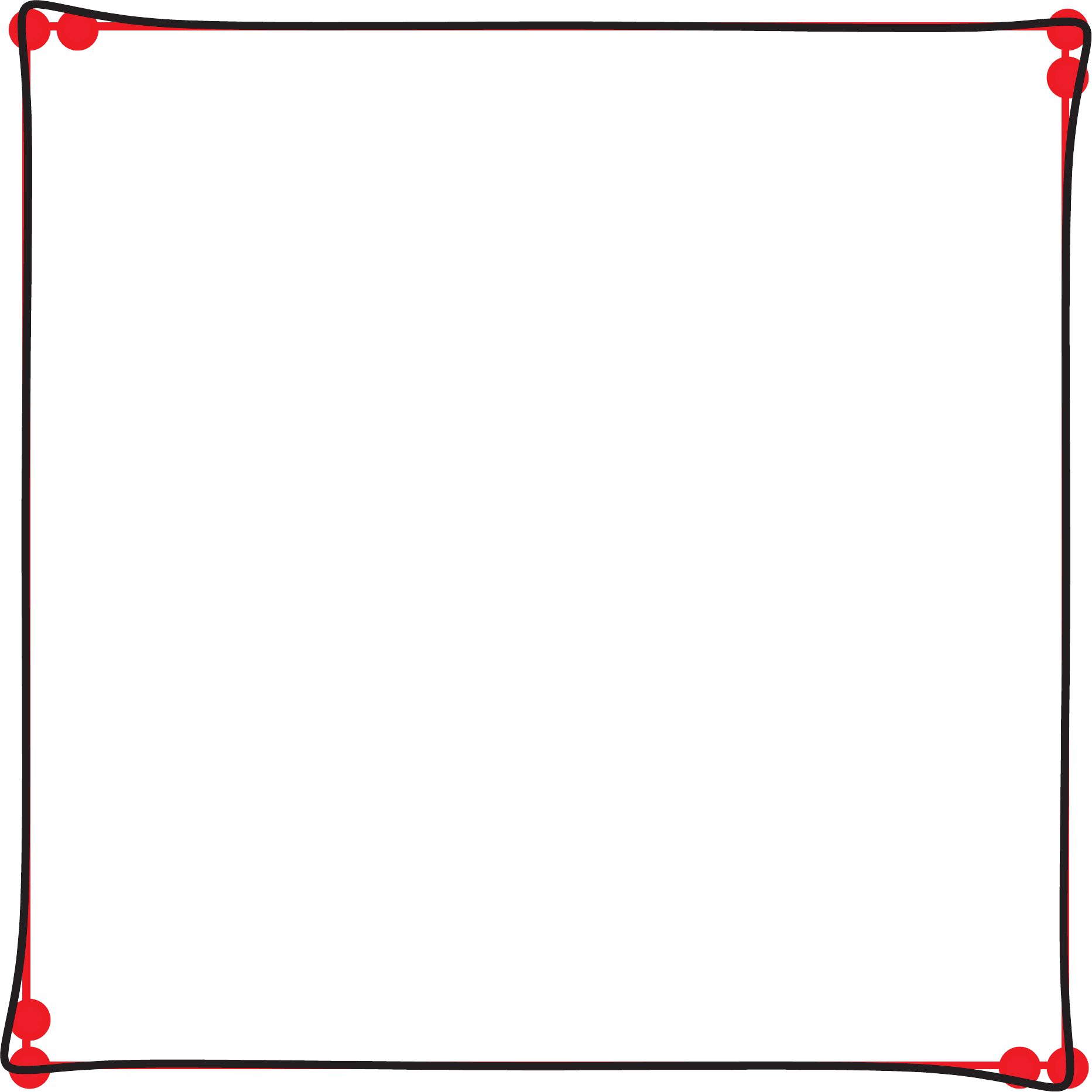, width=1.4 in}  & \epsfig{file=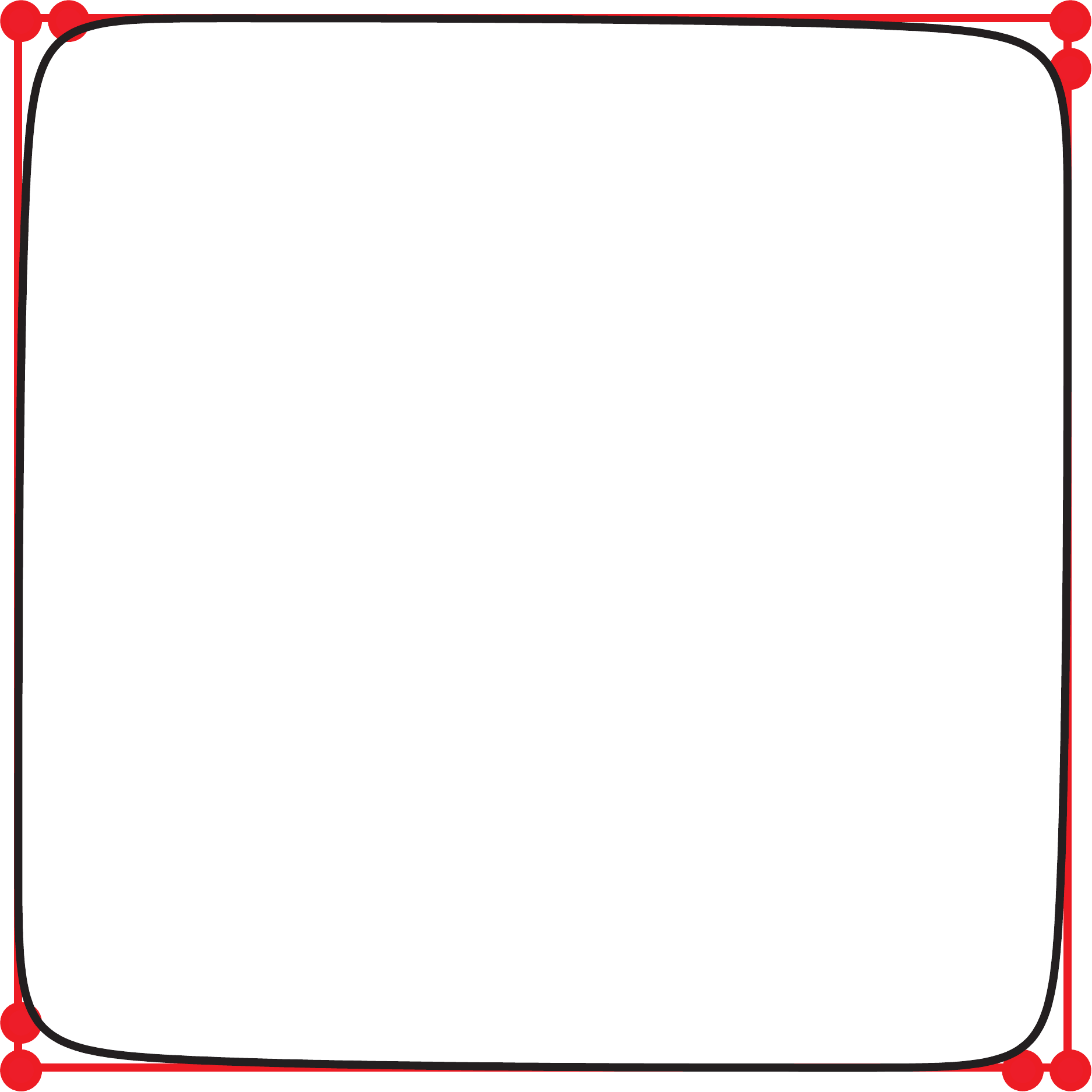, width=1.4 in} & \epsfig{file=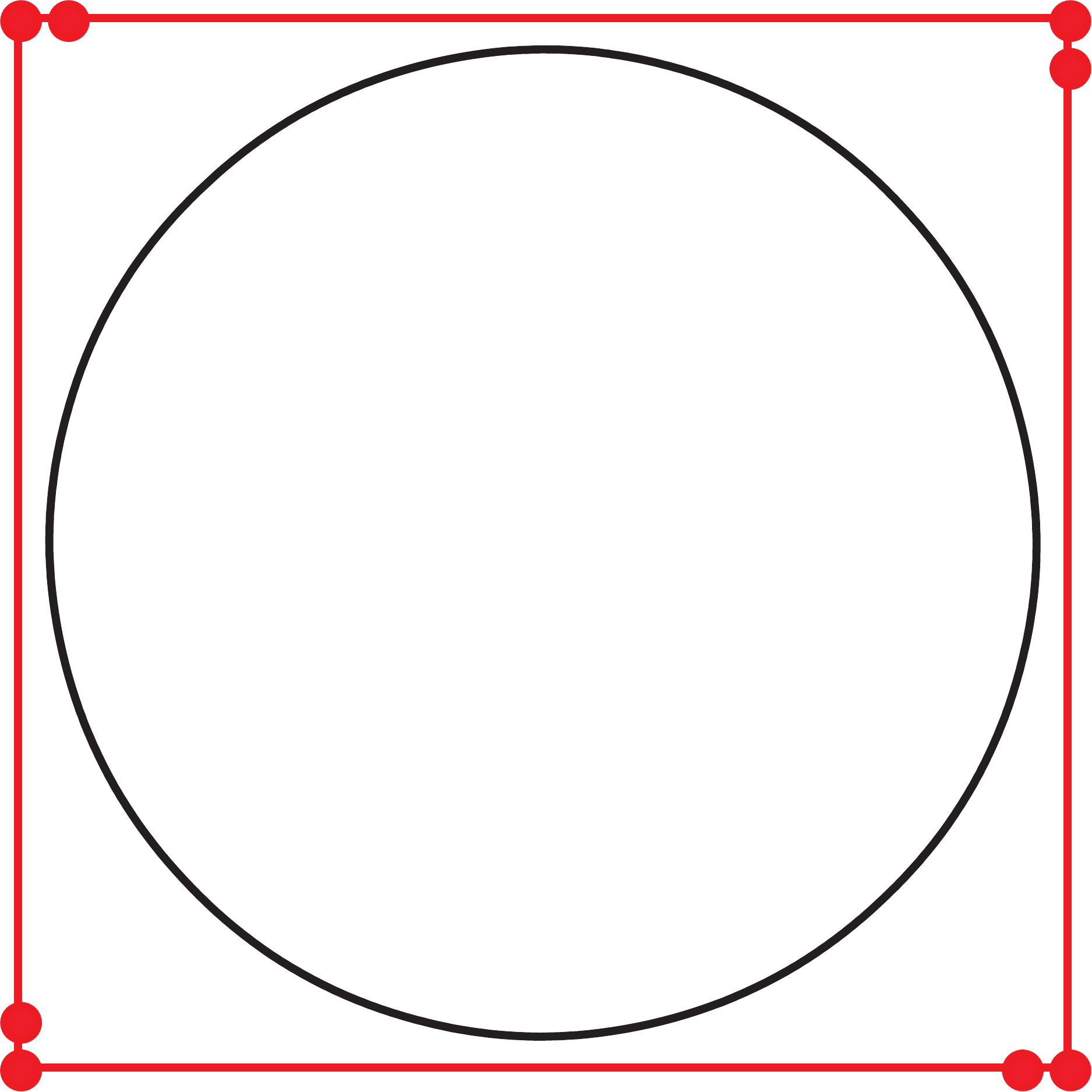, width=1.4 in}\\
\epsfig{file=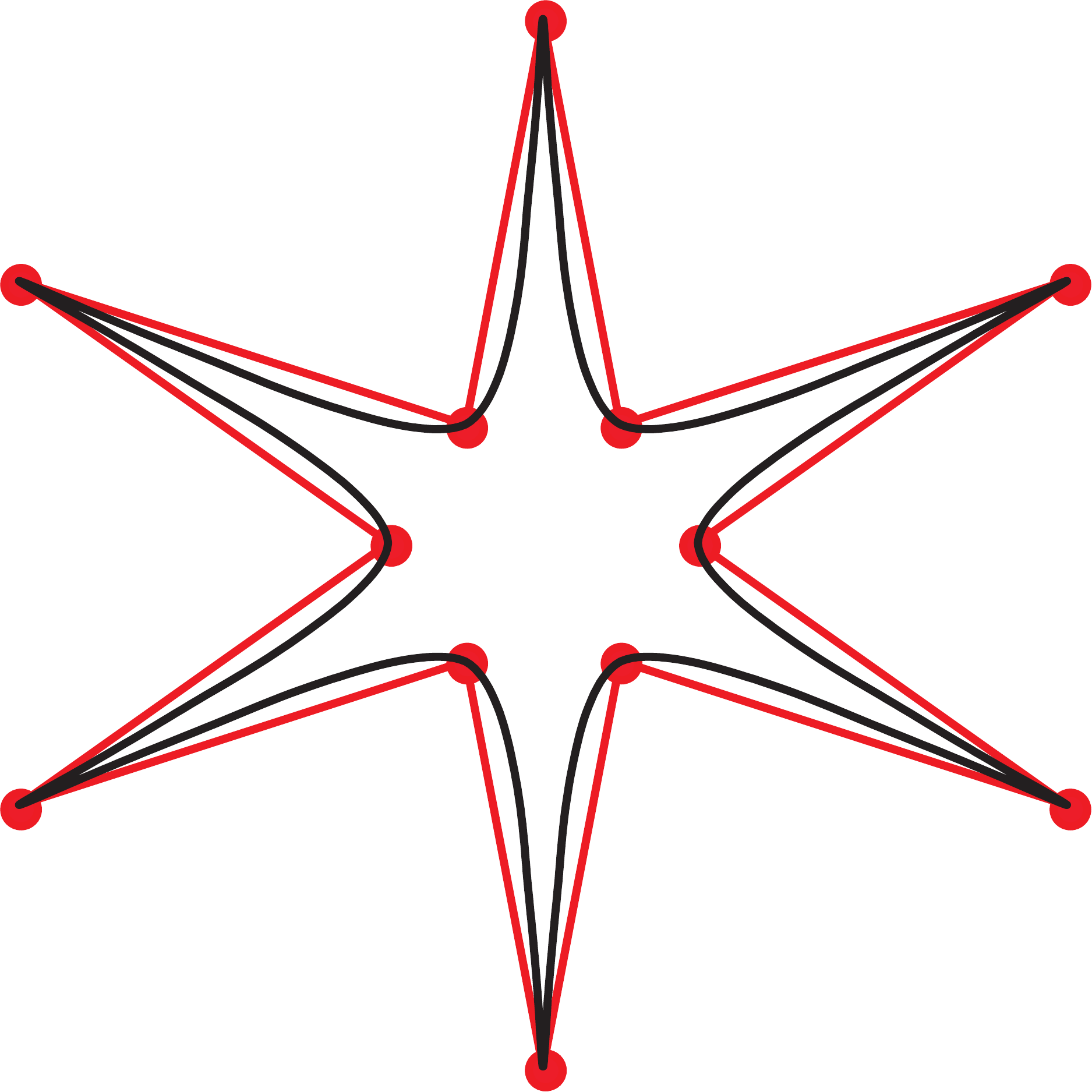, width=1.4 in} & \epsfig{file=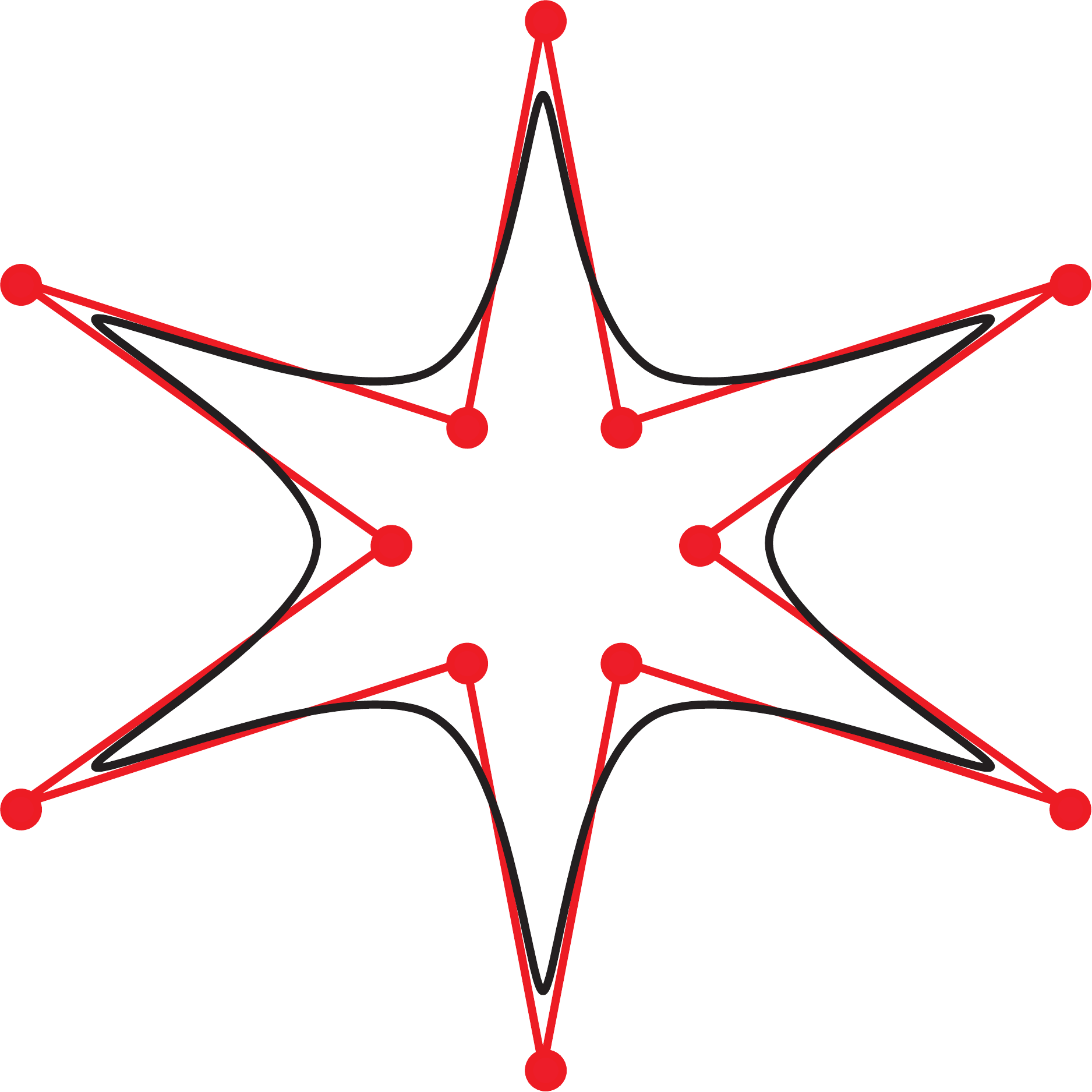, width=1.4 in}  & \epsfig{file=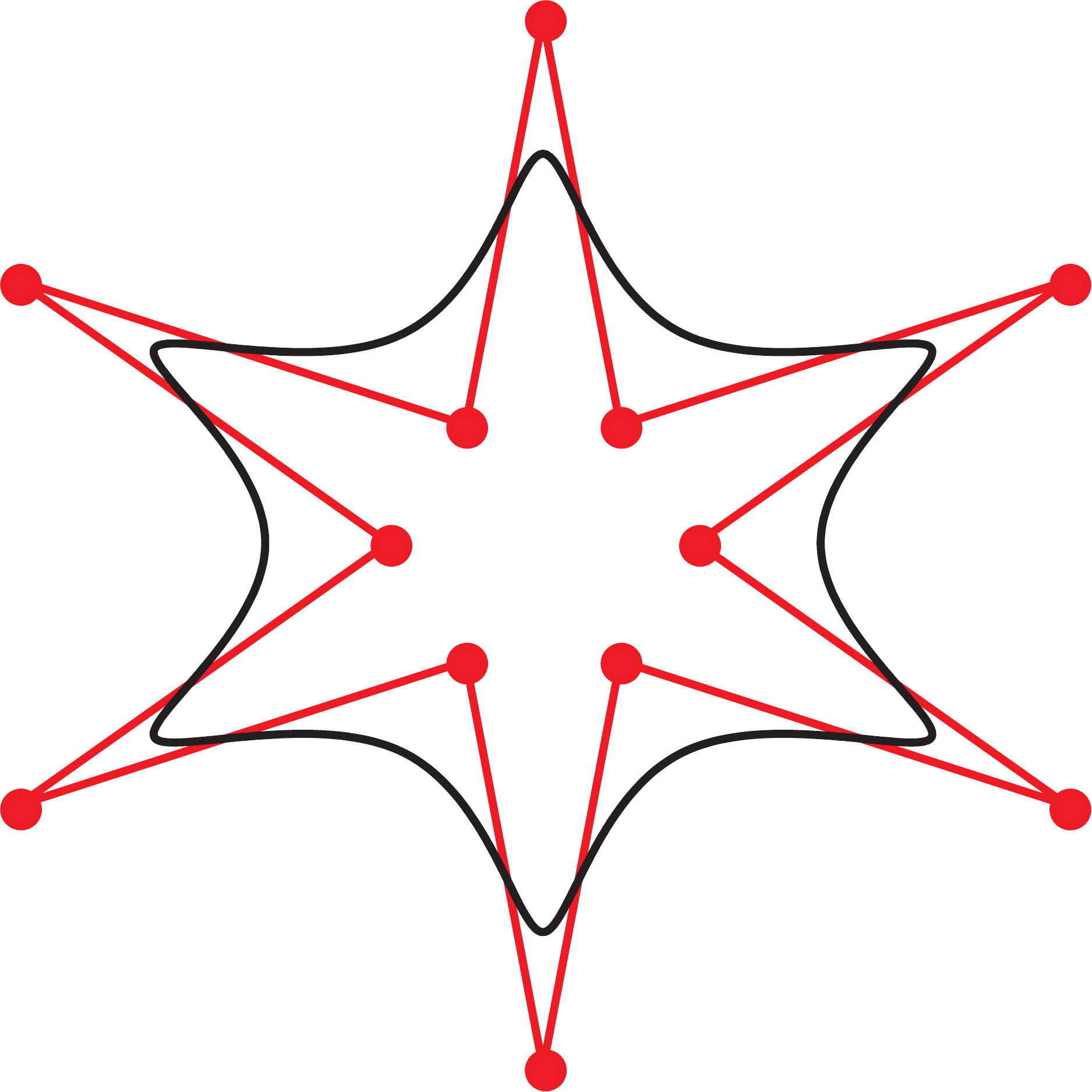, width=1.4 in} & \epsfig{file=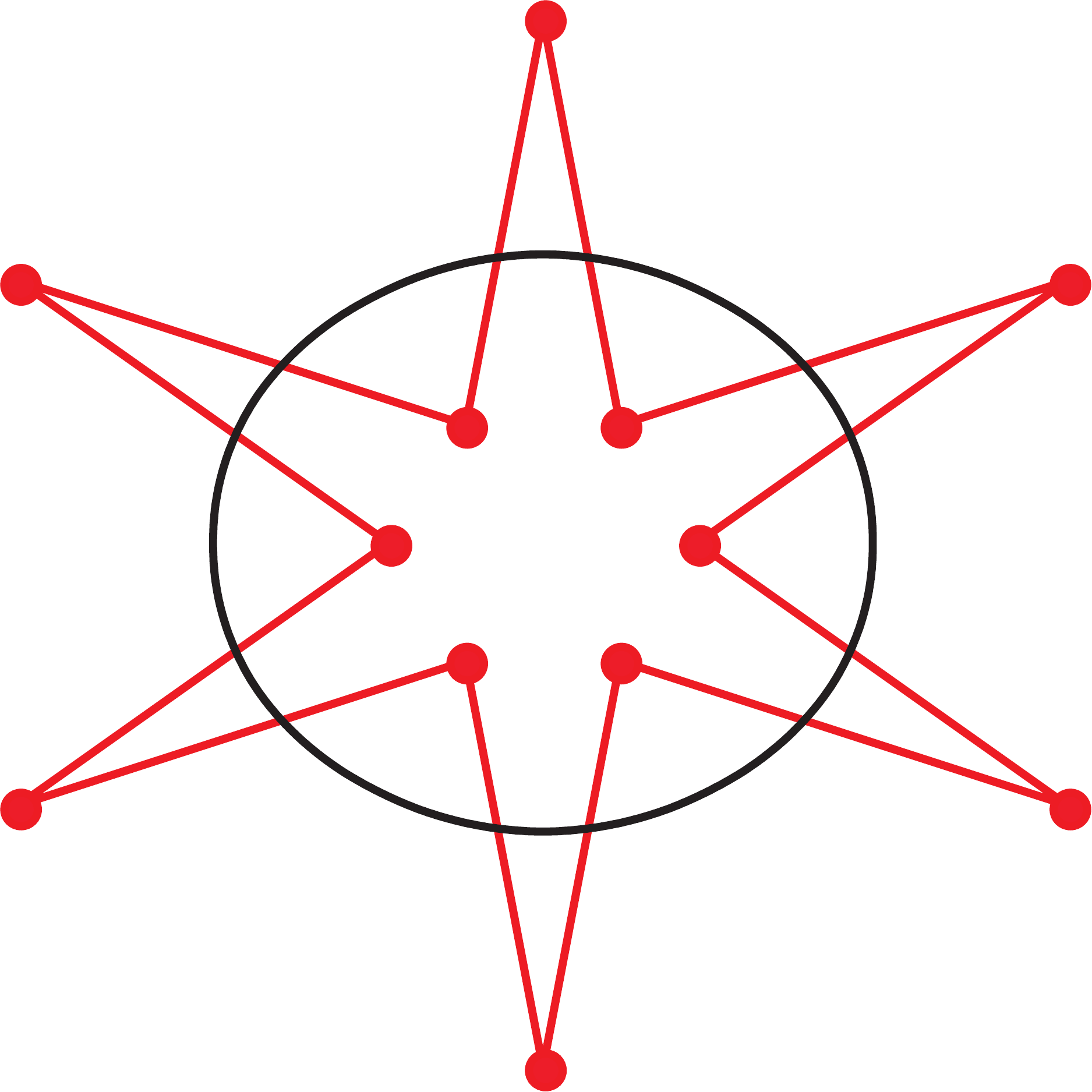, width=1.4 in}\\
(a) $\alpha=\frac{1}{2048}$ &(b) $\alpha=\frac{1}{16}$  & (c) $\alpha=\frac{1}{8}$ & (d) $\alpha=\frac{1}{4}$\\
\qquad $\beta=-\frac{255}{4096}$ & \qquad $\beta=-\frac{1}{30}$ & \qquad  $\beta=0$ & \qquad  $\beta=\frac{1}{4}$
 \end{tabular}
\end{center}
 \caption[Limit curves obtained by the scheme $S_{a_{3}}$.]{\label{p5-3-point-1}\emph{Black curves are the limit curves obtained by subdivision scheme $S_{a_{3}}$.}}
\end{figure}
\begin{figure}[!h] 
\begin{center}
\begin{tabular}{cccc}
\epsfig{file=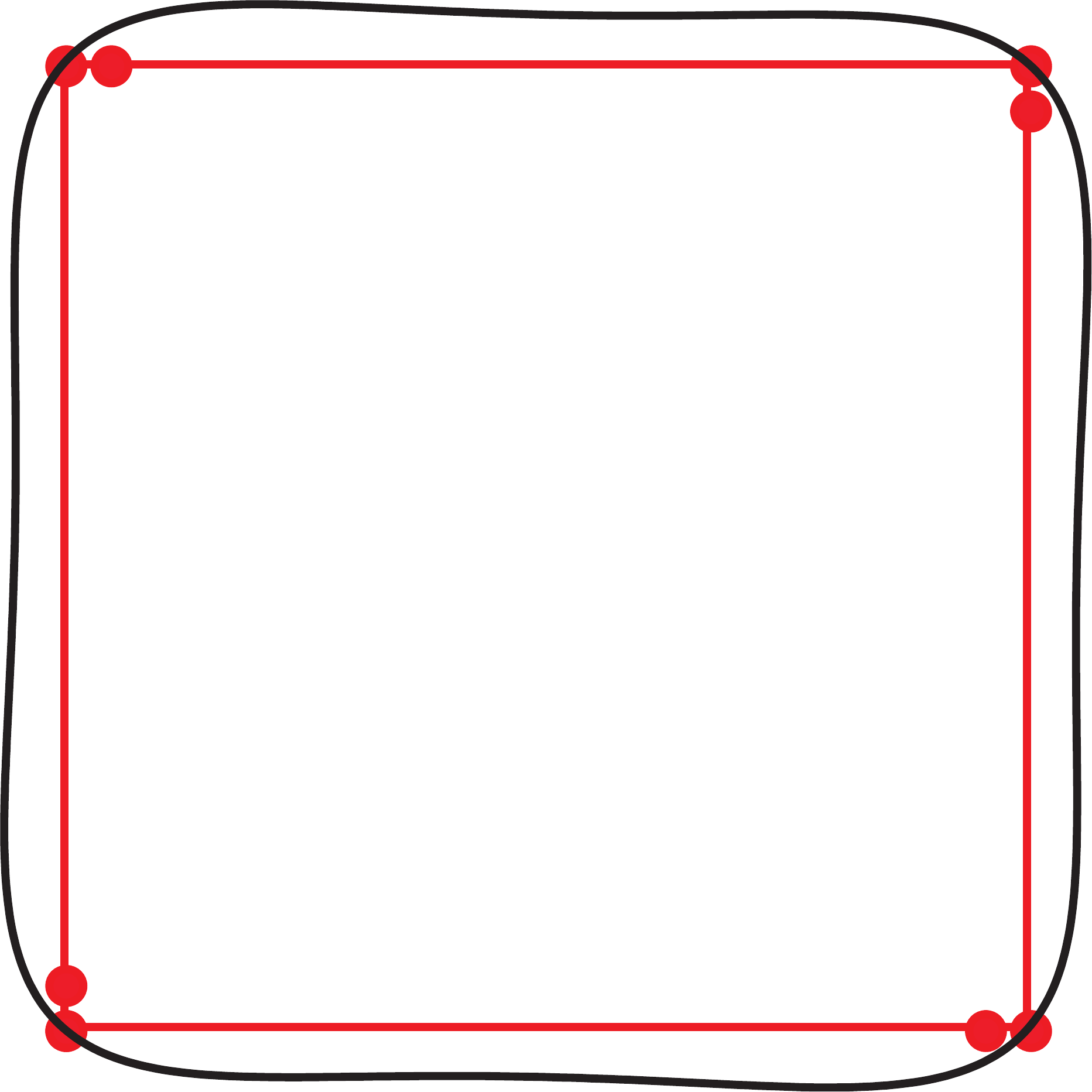, width=1.4 in} & \epsfig{file=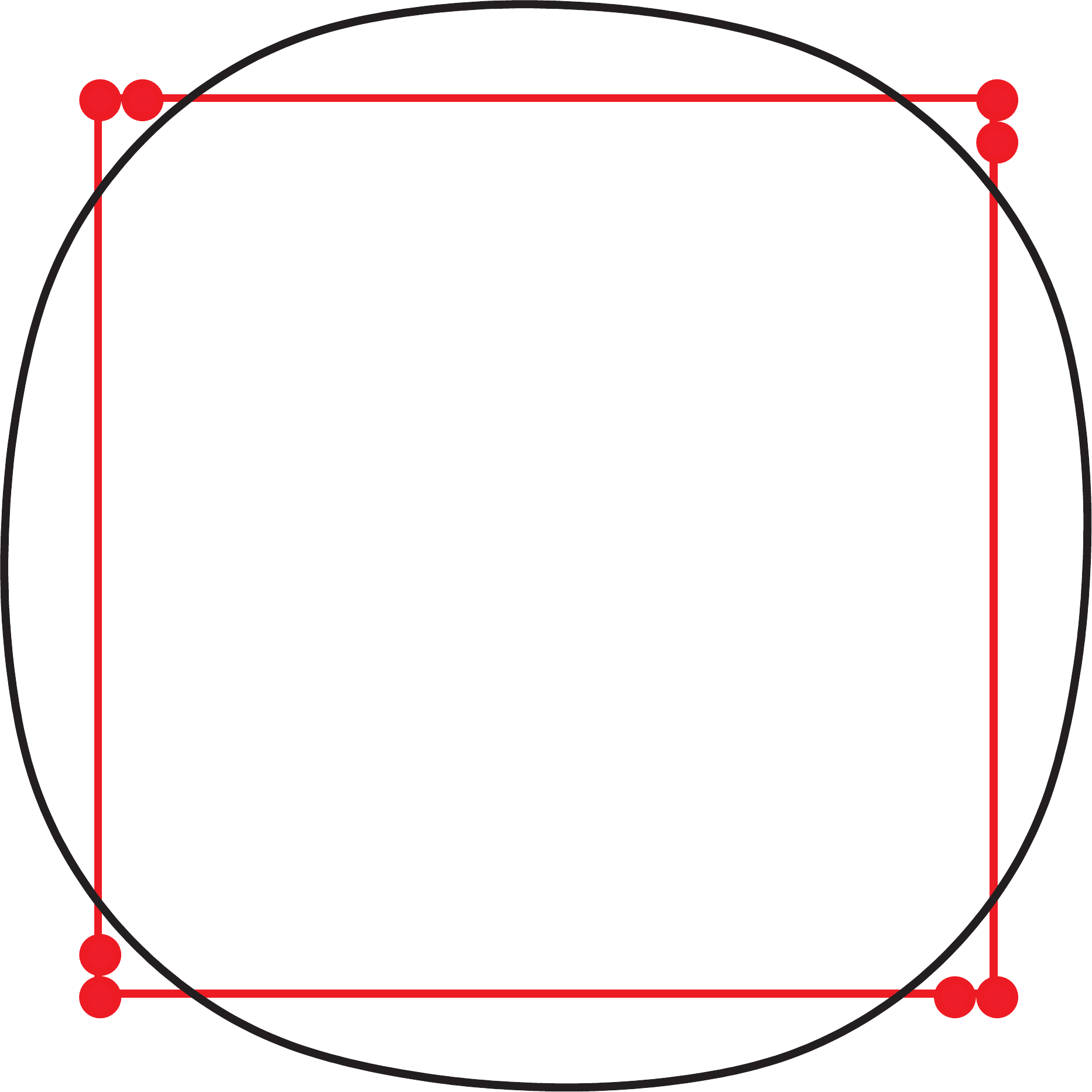, width=1.4 in}  & \epsfig{file=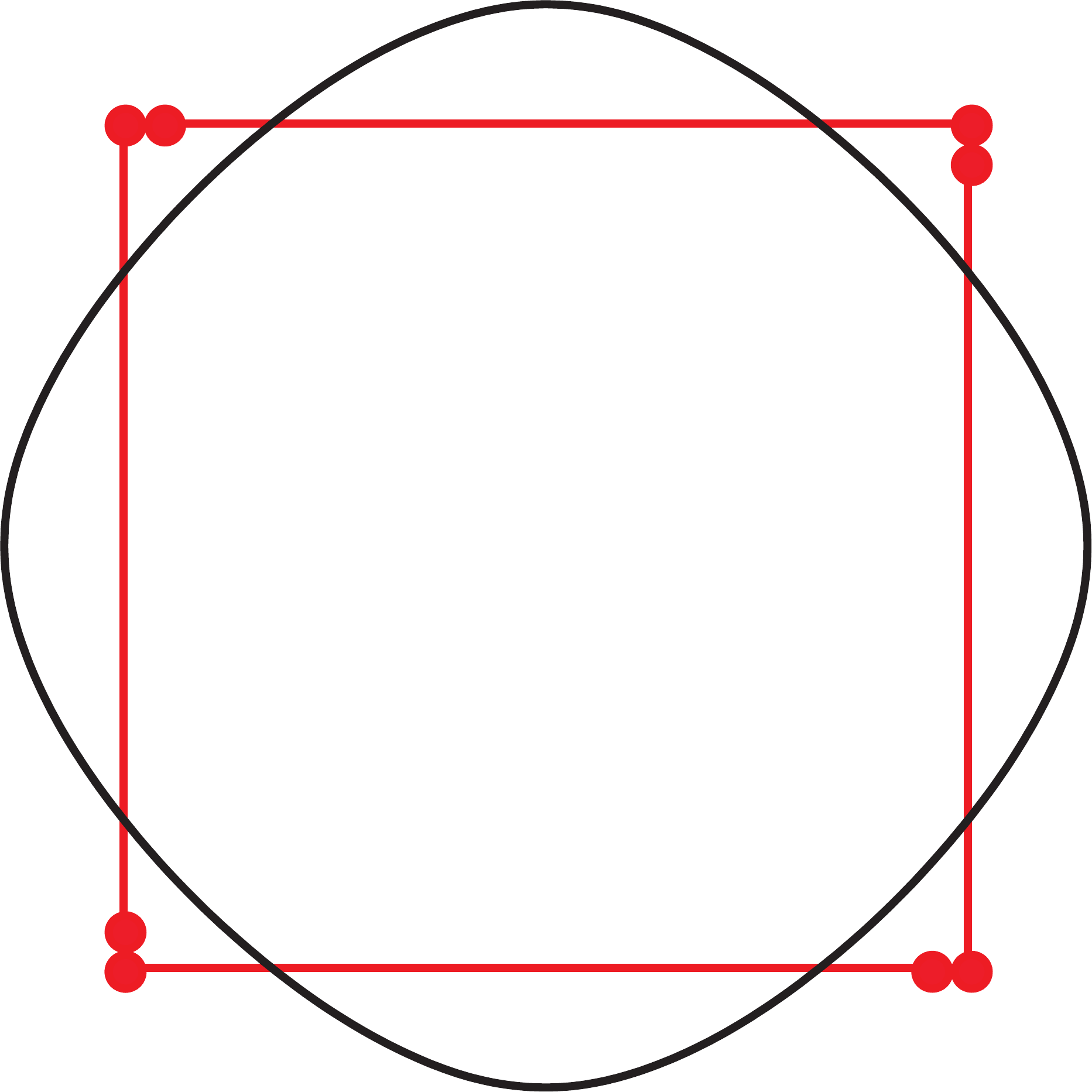, width=1.4 in} & \epsfig{file=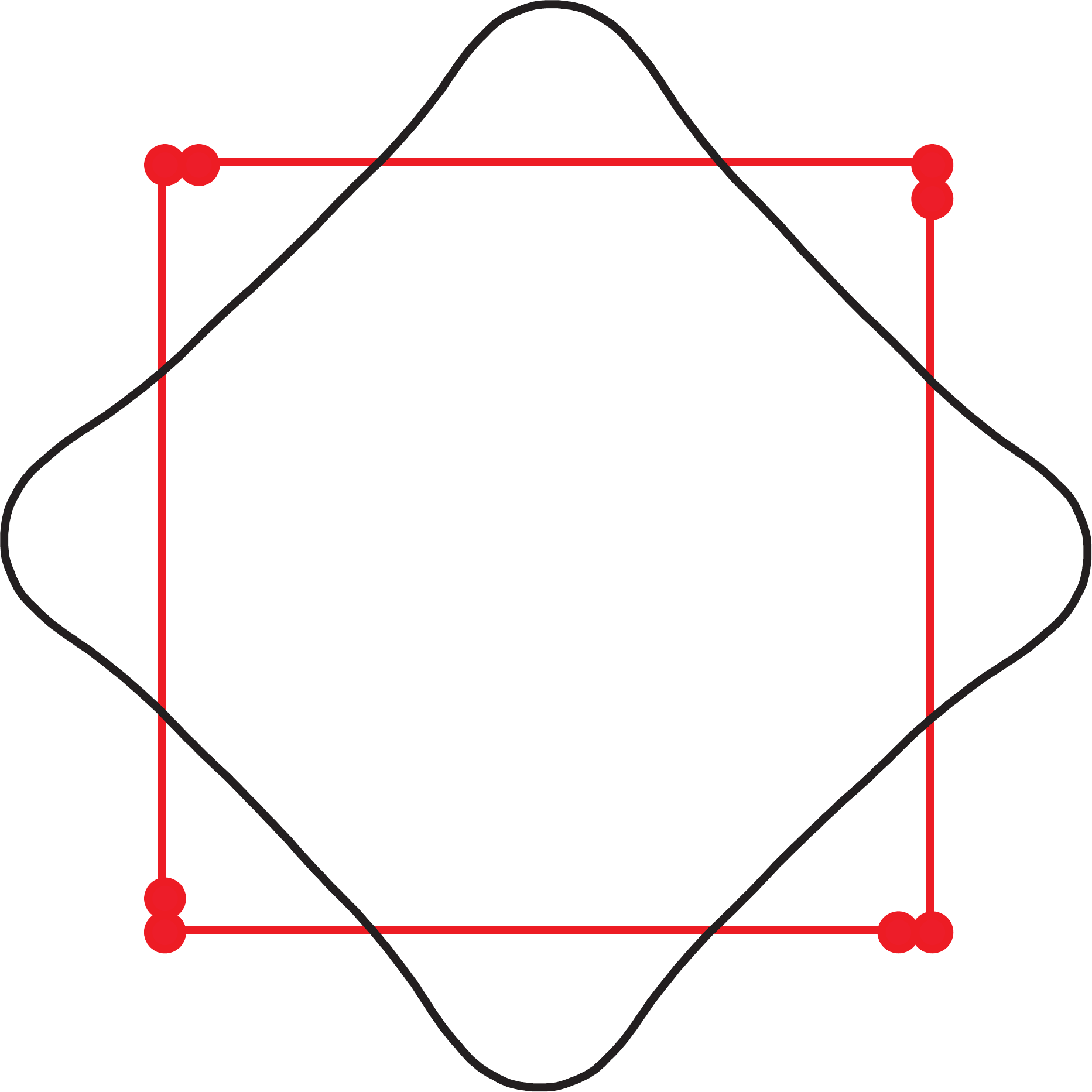, width=1.4 in}\\
\epsfig{file=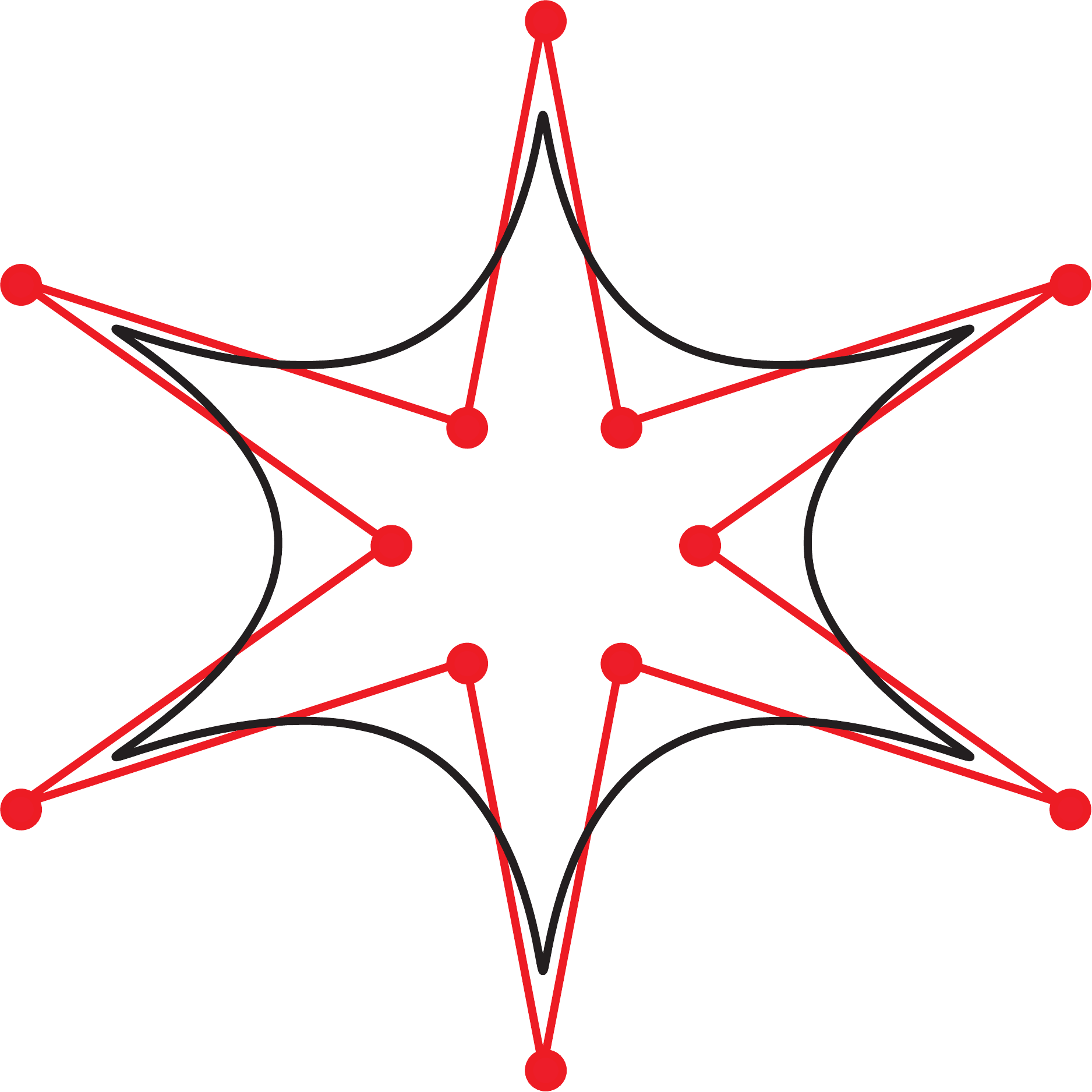, width=1.4 in} & \epsfig{file=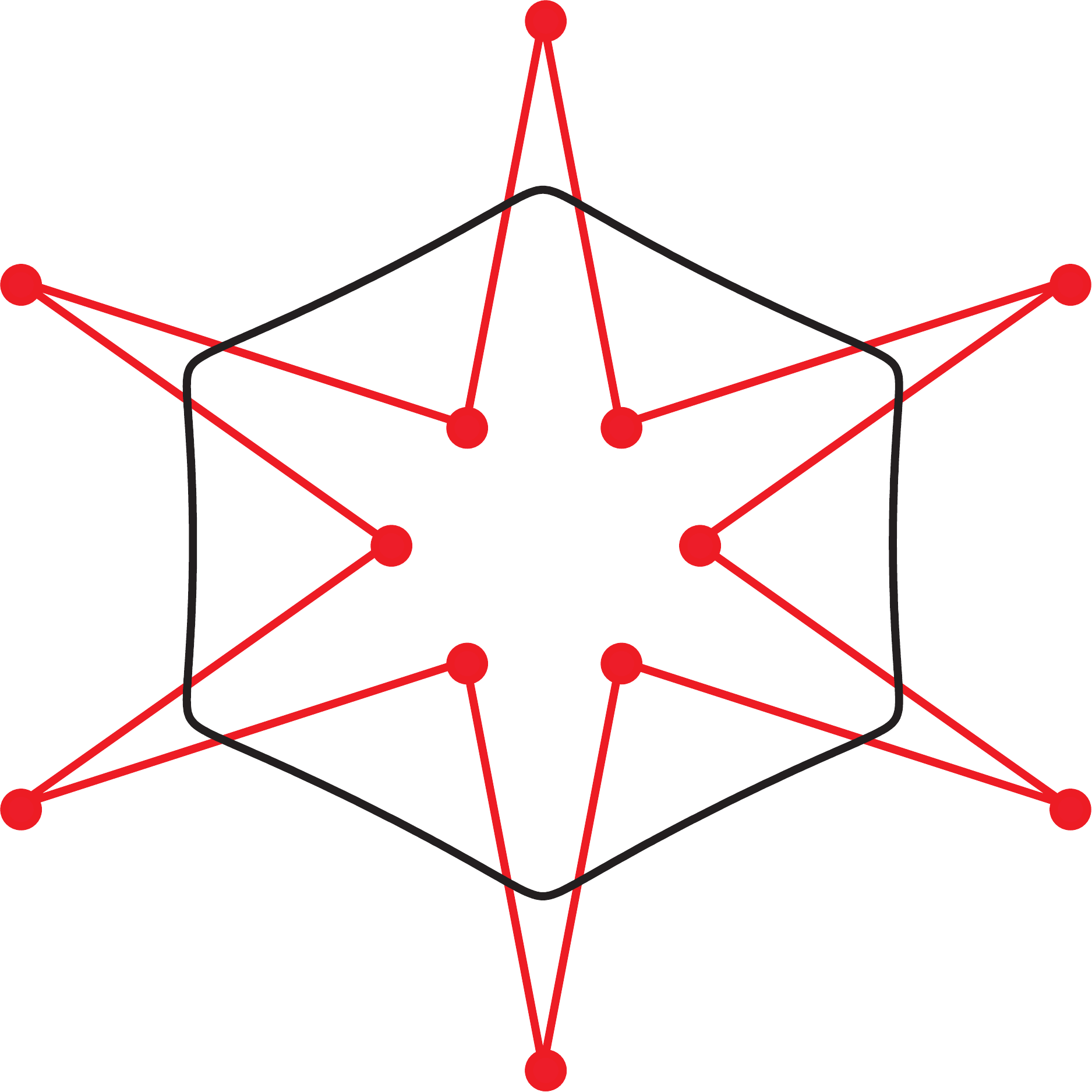, width=1.4 in}  & \epsfig{file=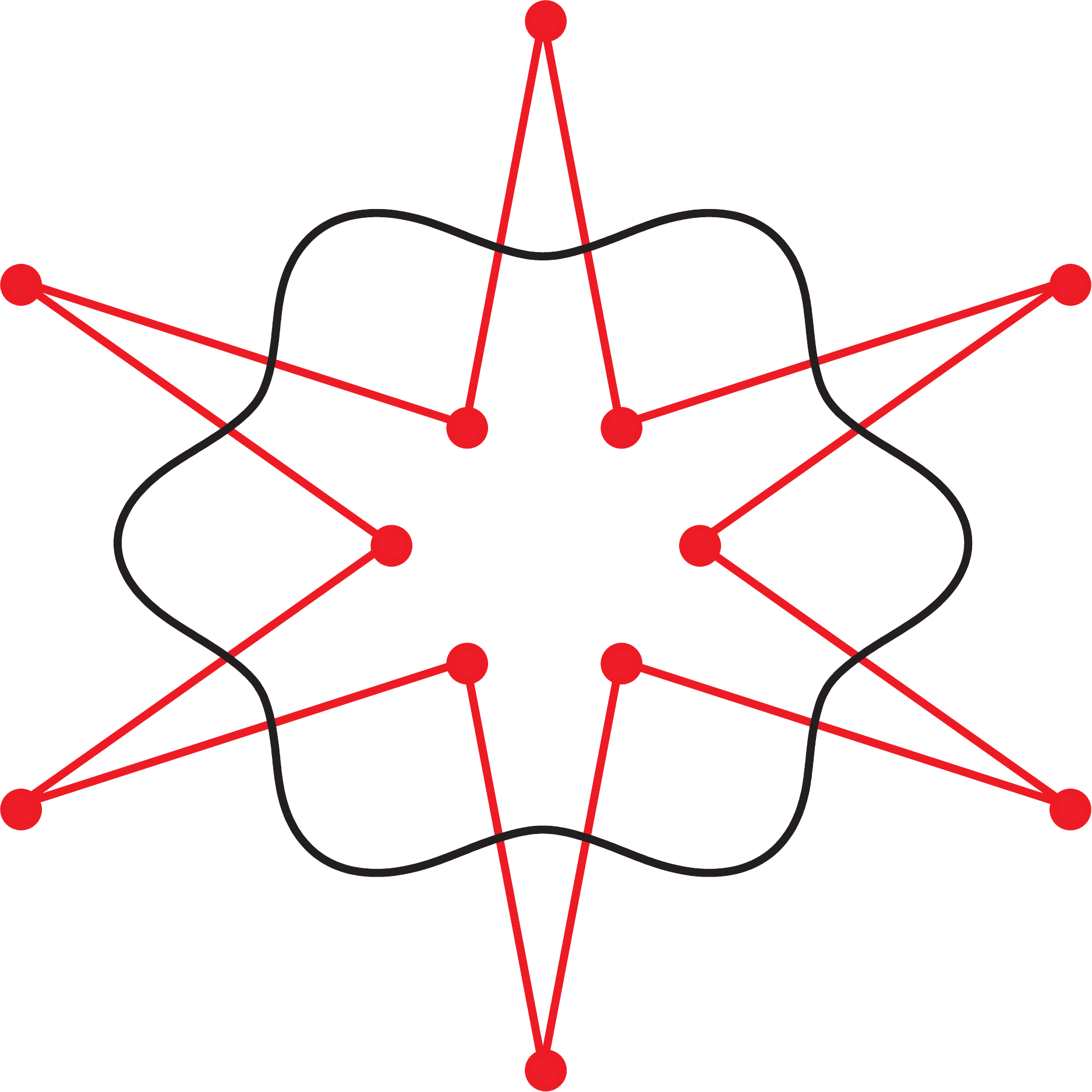, width=1.4 in} & \epsfig{file=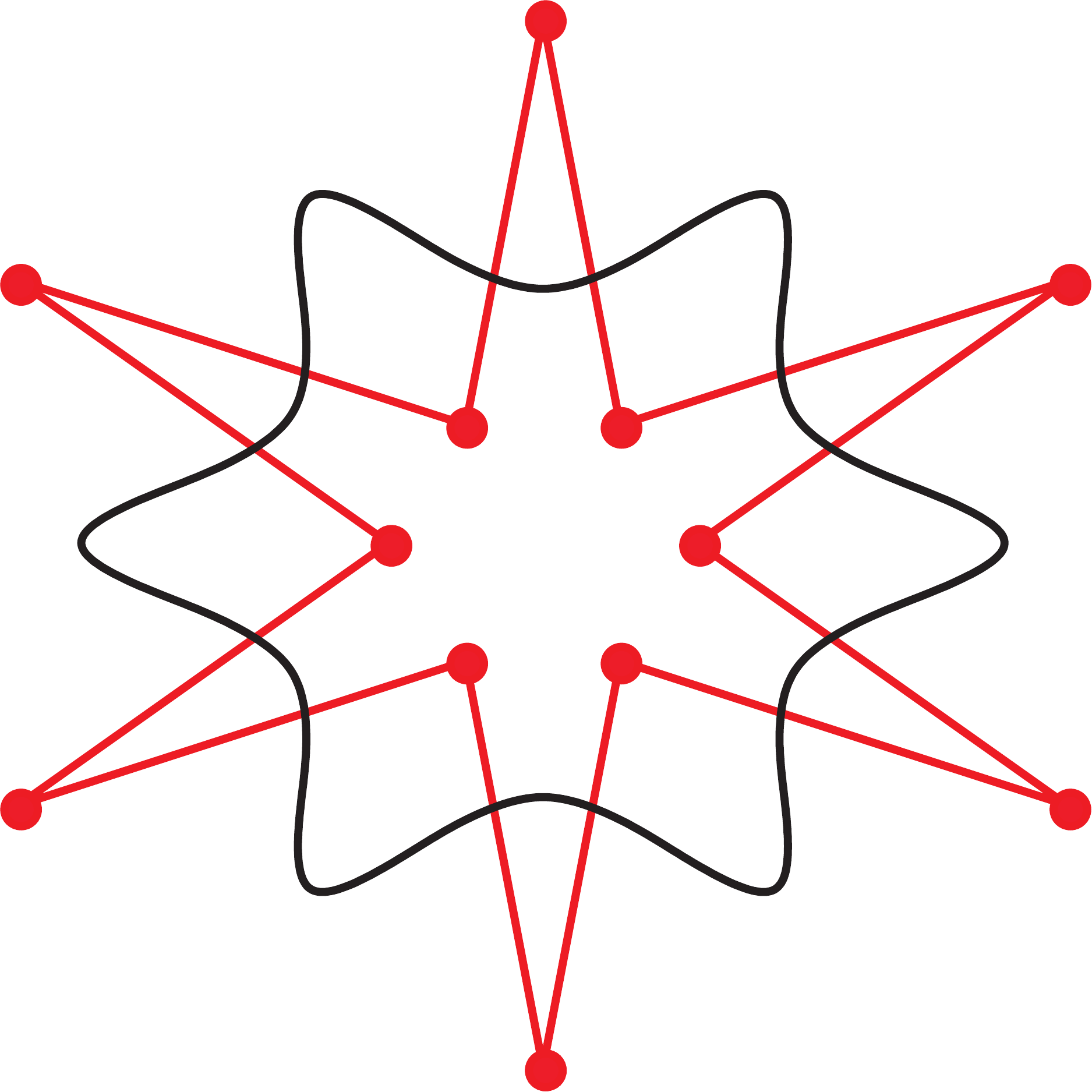, width=1.4 in}\\
(a) $\alpha=\frac{1}{32}$ &(b) $\alpha=\frac{1}{16}$  & (c) $\alpha=\frac{1}{10}$ & (d) $\alpha=\frac{1}{8}$\\
\qquad     $\beta=-\frac{3}{62}$ & \qquad \quad $\beta=-\frac{1}{48}$ & \qquad \quad $\beta=-\frac{49}{1152}$ & \qquad \quad $\beta=-\frac{11}{128}$
 \end{tabular}
\end{center}
 \caption[Limit curves obtained by the scheme $S_{a_{5}}$.]{\label{p5-5-point-1}\emph{Black curves are the limit curves obtained by subdivision scheme $S_{a_{5}}$.}}
\end{figure}

\begin{figure}[!h] 
\begin{center}
\begin{tabular}{cccc}
\epsfig{file=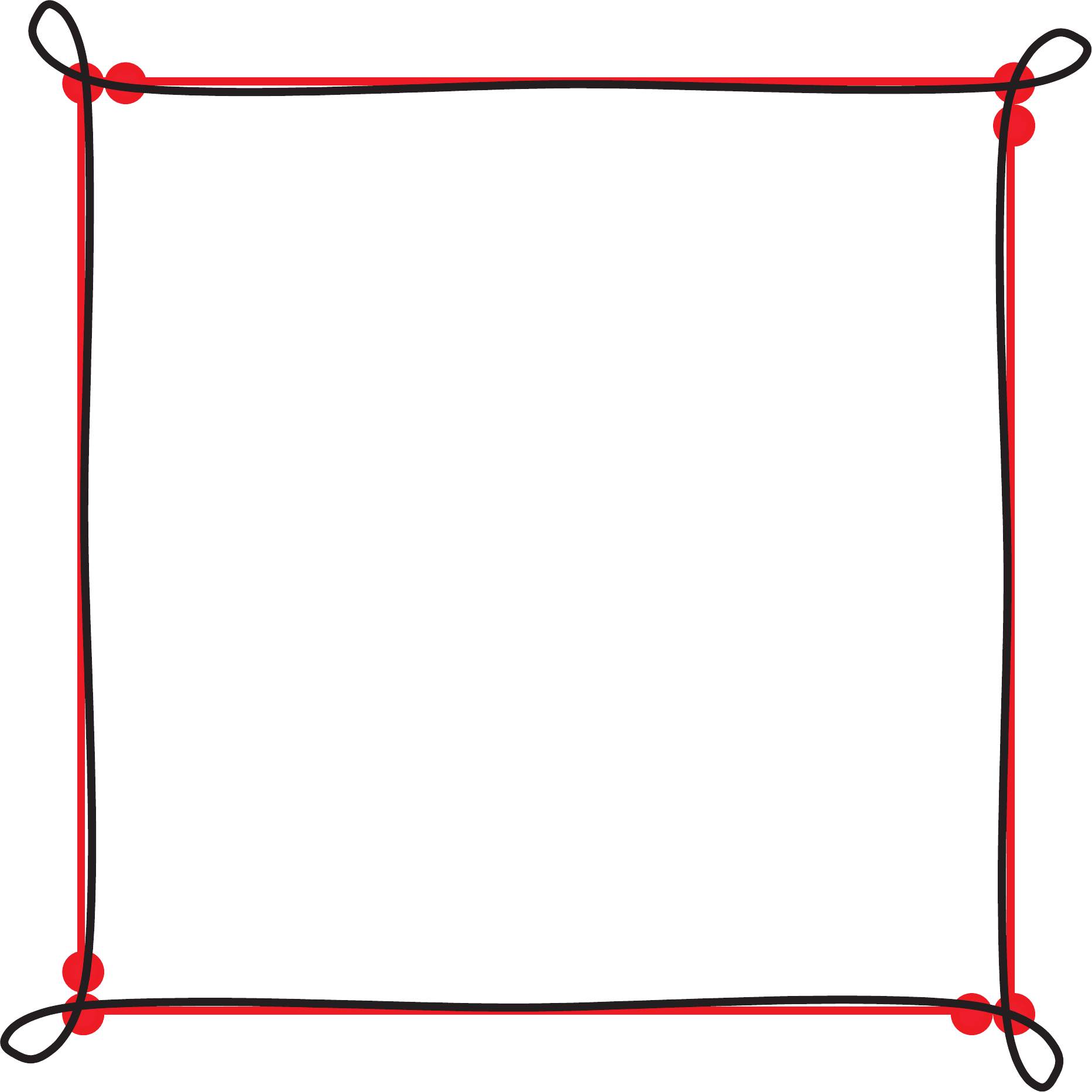, width=1.4 in} & \epsfig{file=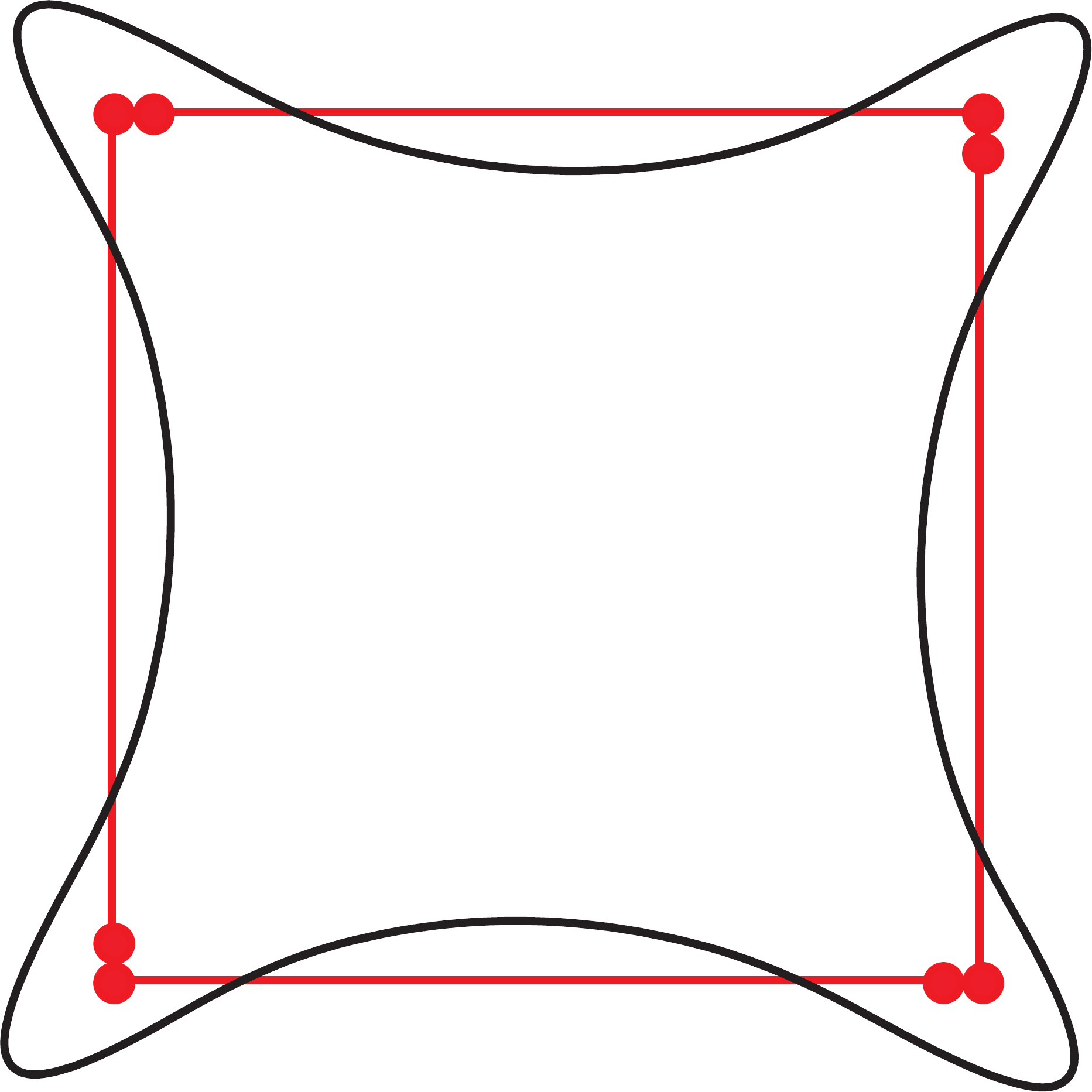, width=1.4 in}  & \epsfig{file=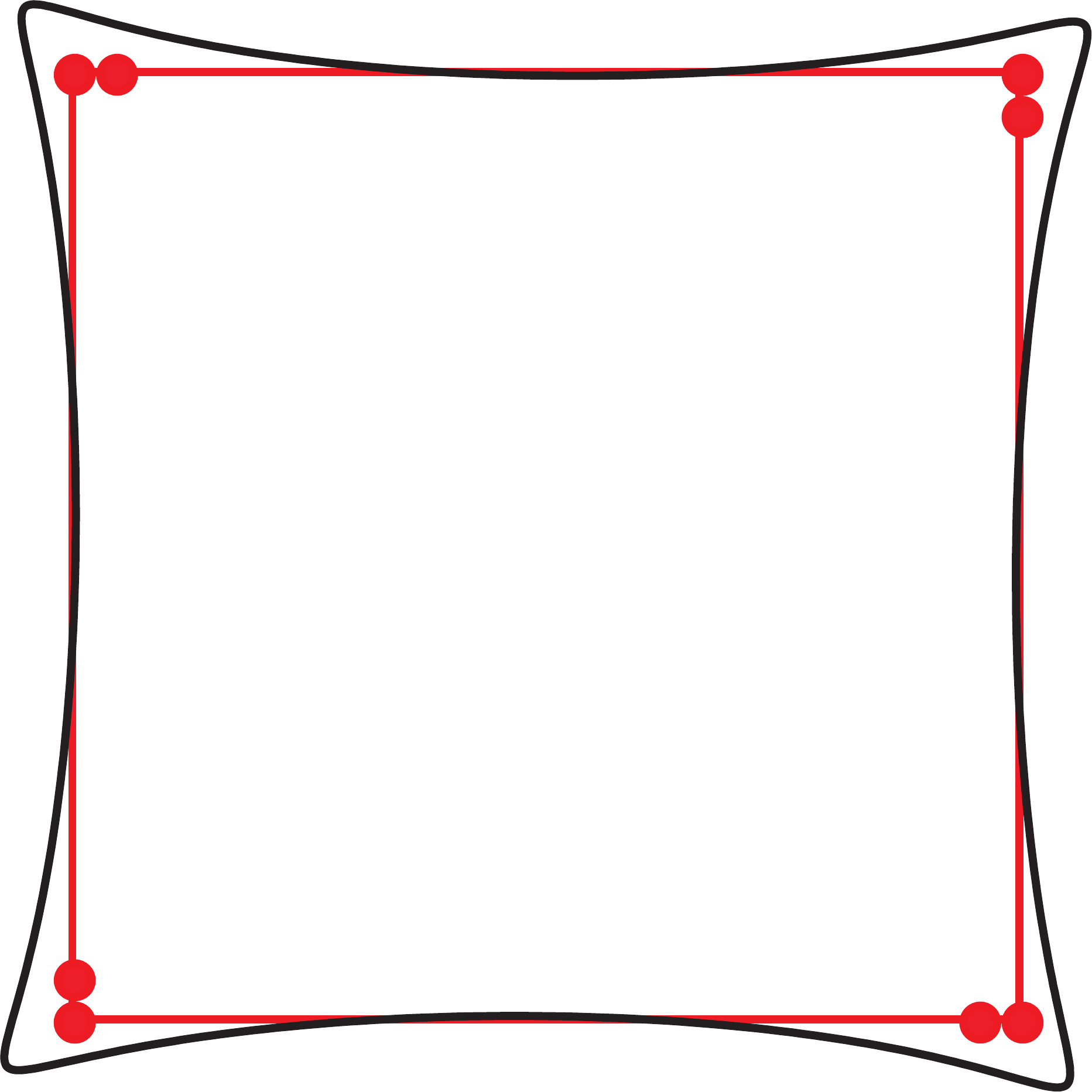, width=1.4 in} & \epsfig{file=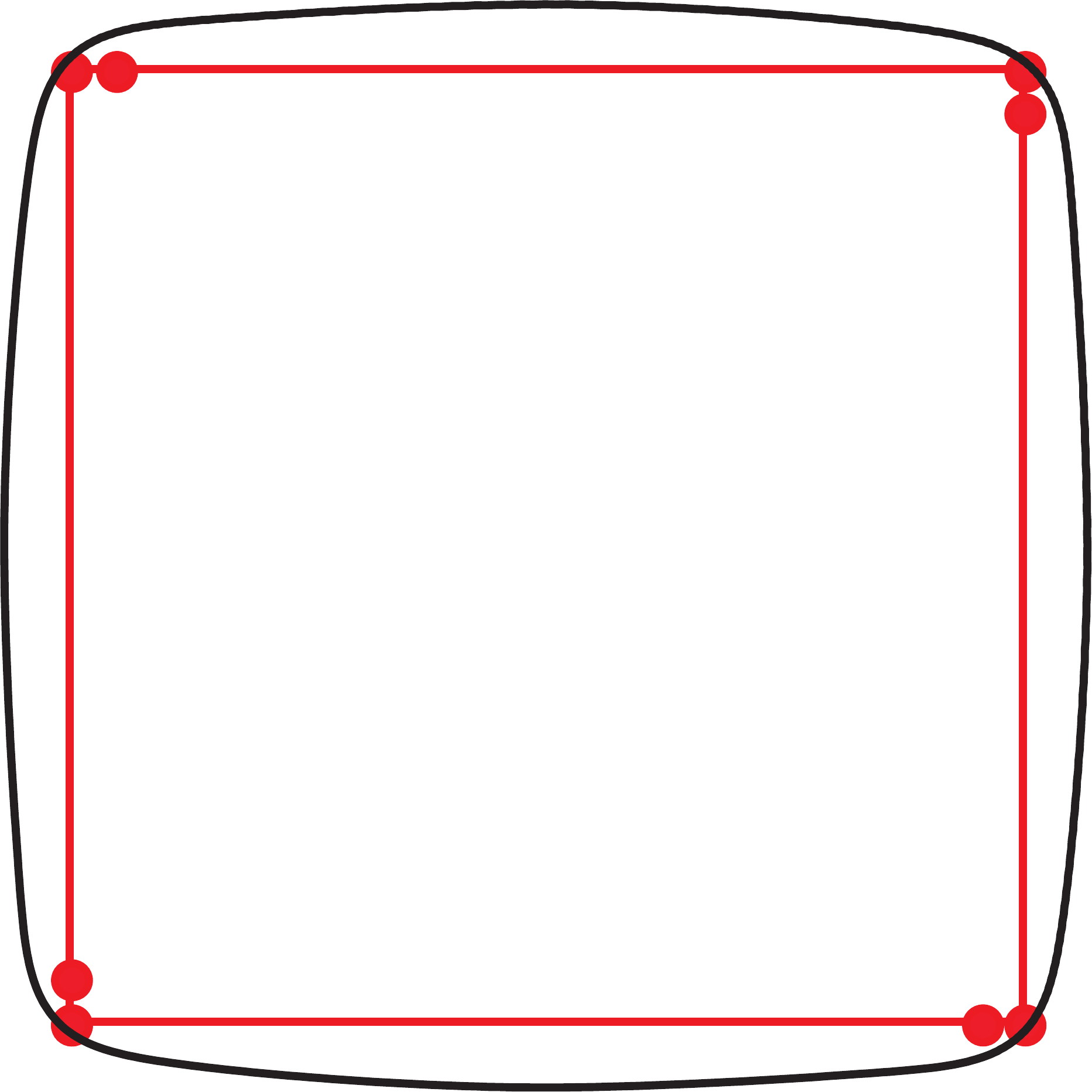, width=1.4 in}\\
\epsfig{file=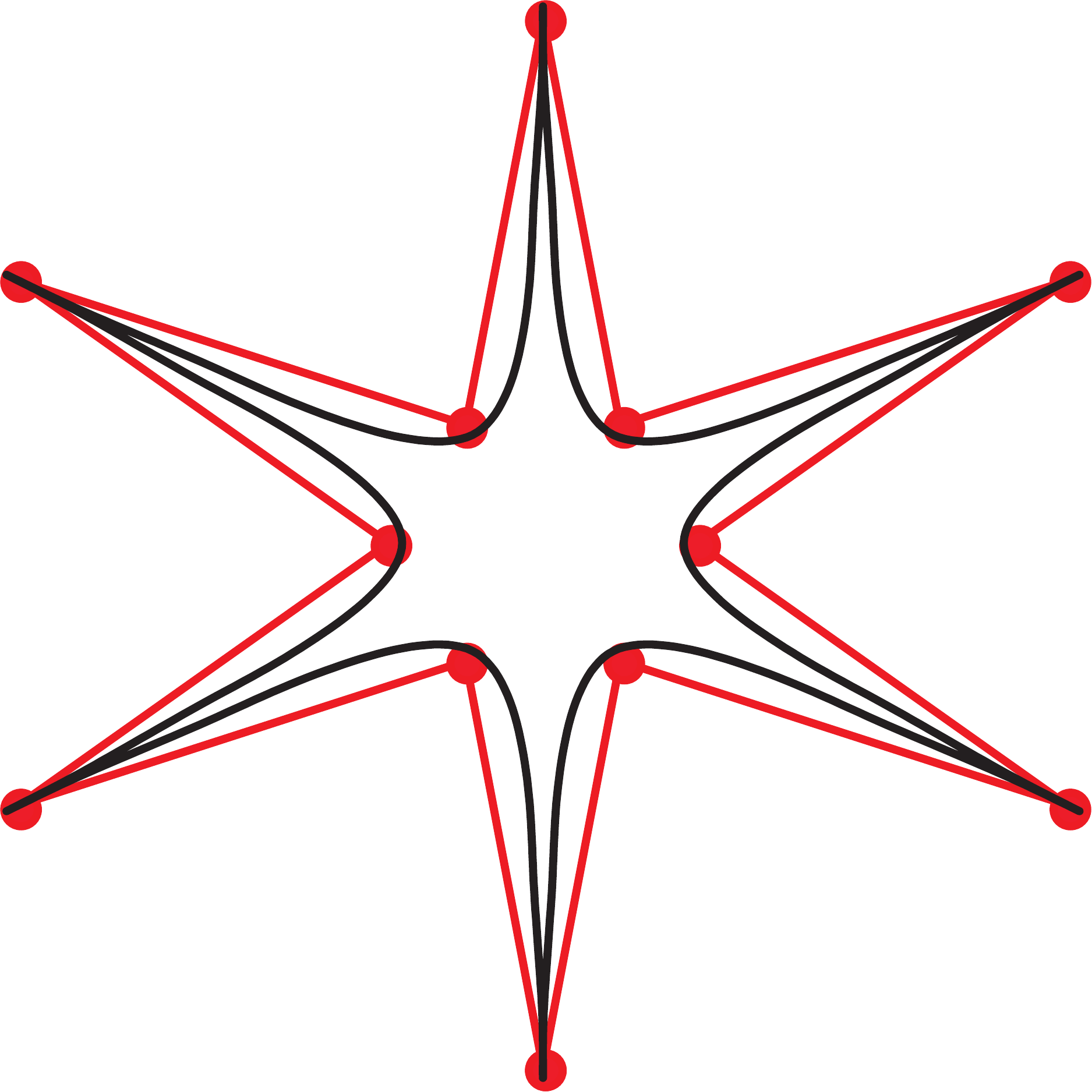, width=1.4 in} & \epsfig{file=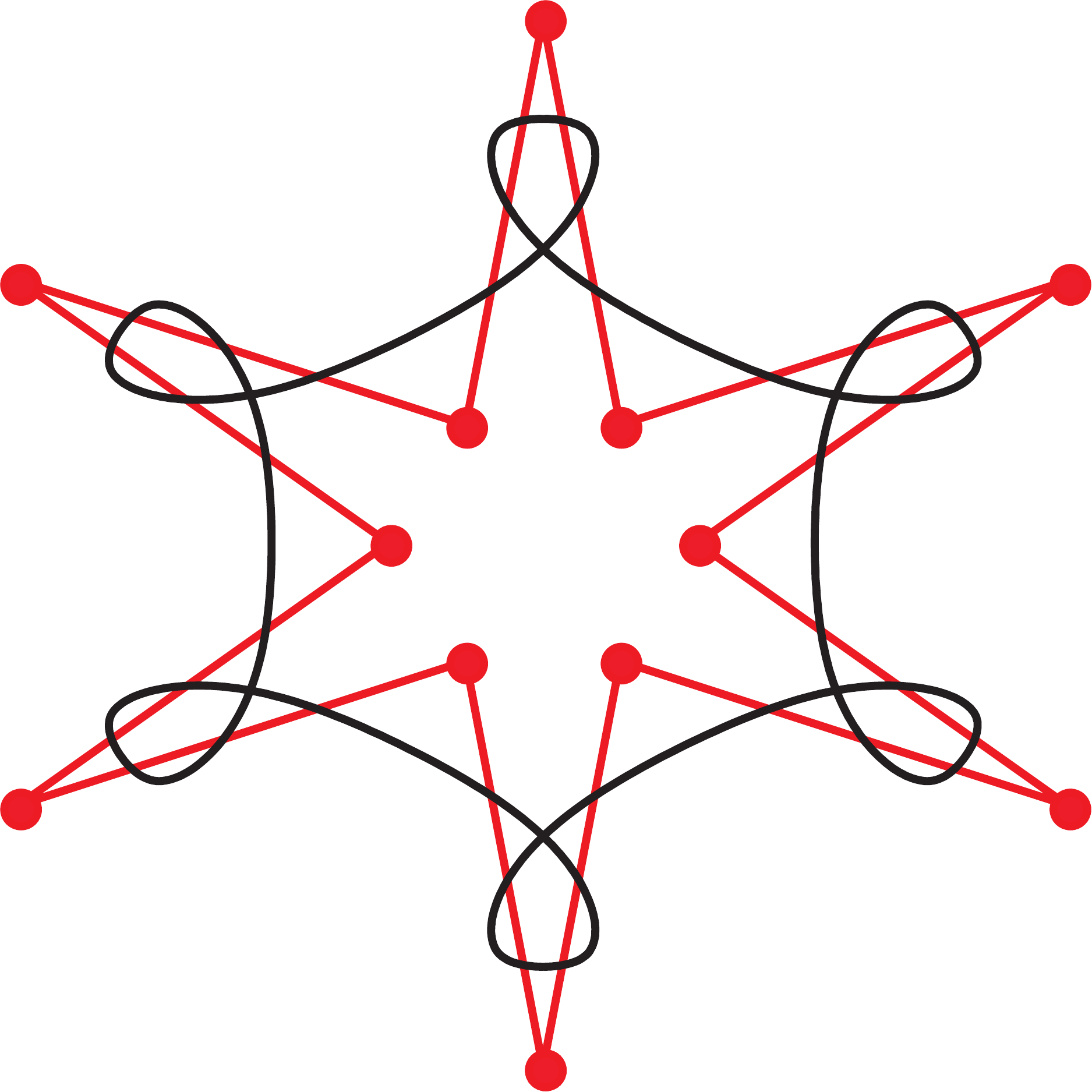, width=1.4 in}  & \epsfig{file=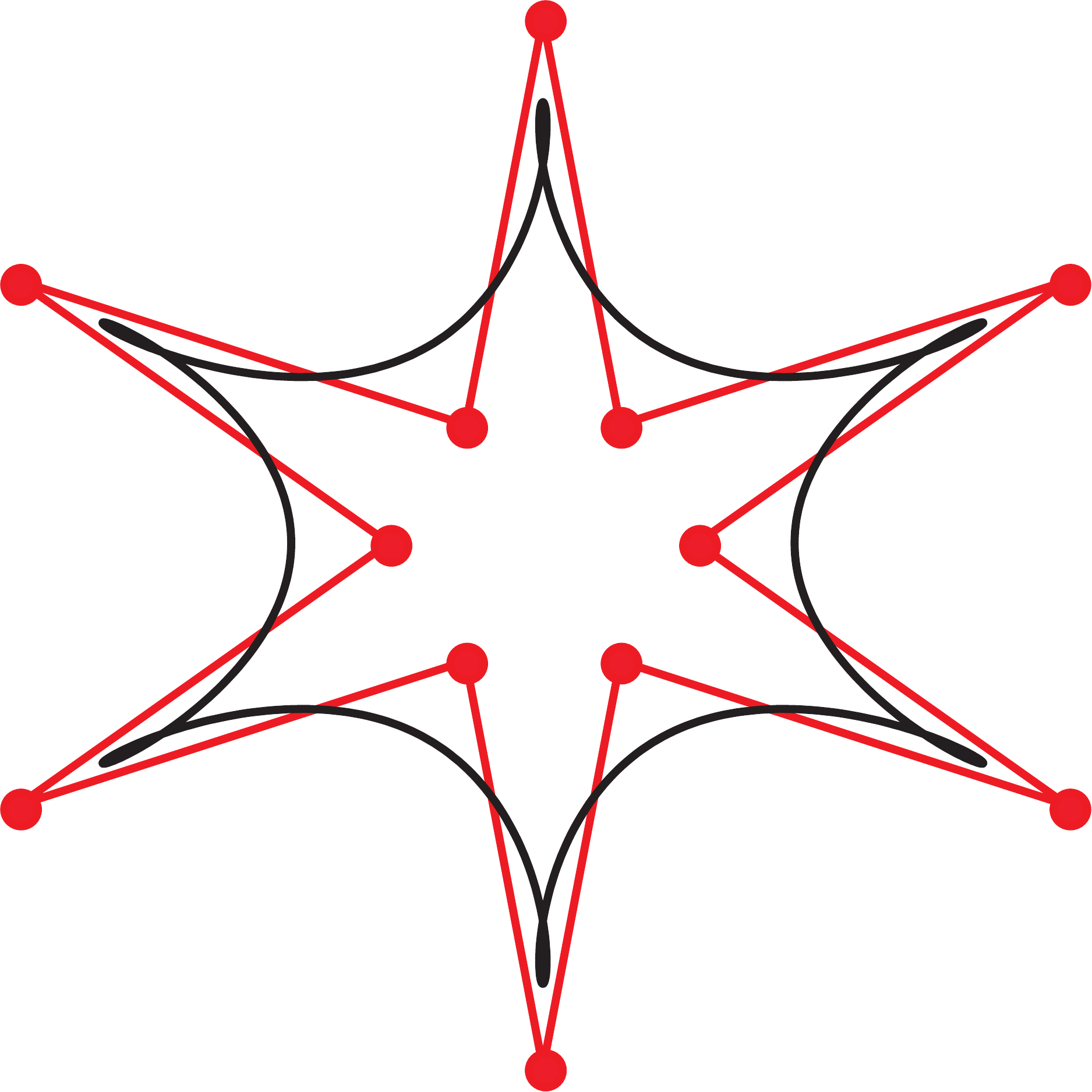, width=1.4 in} & \epsfig{file=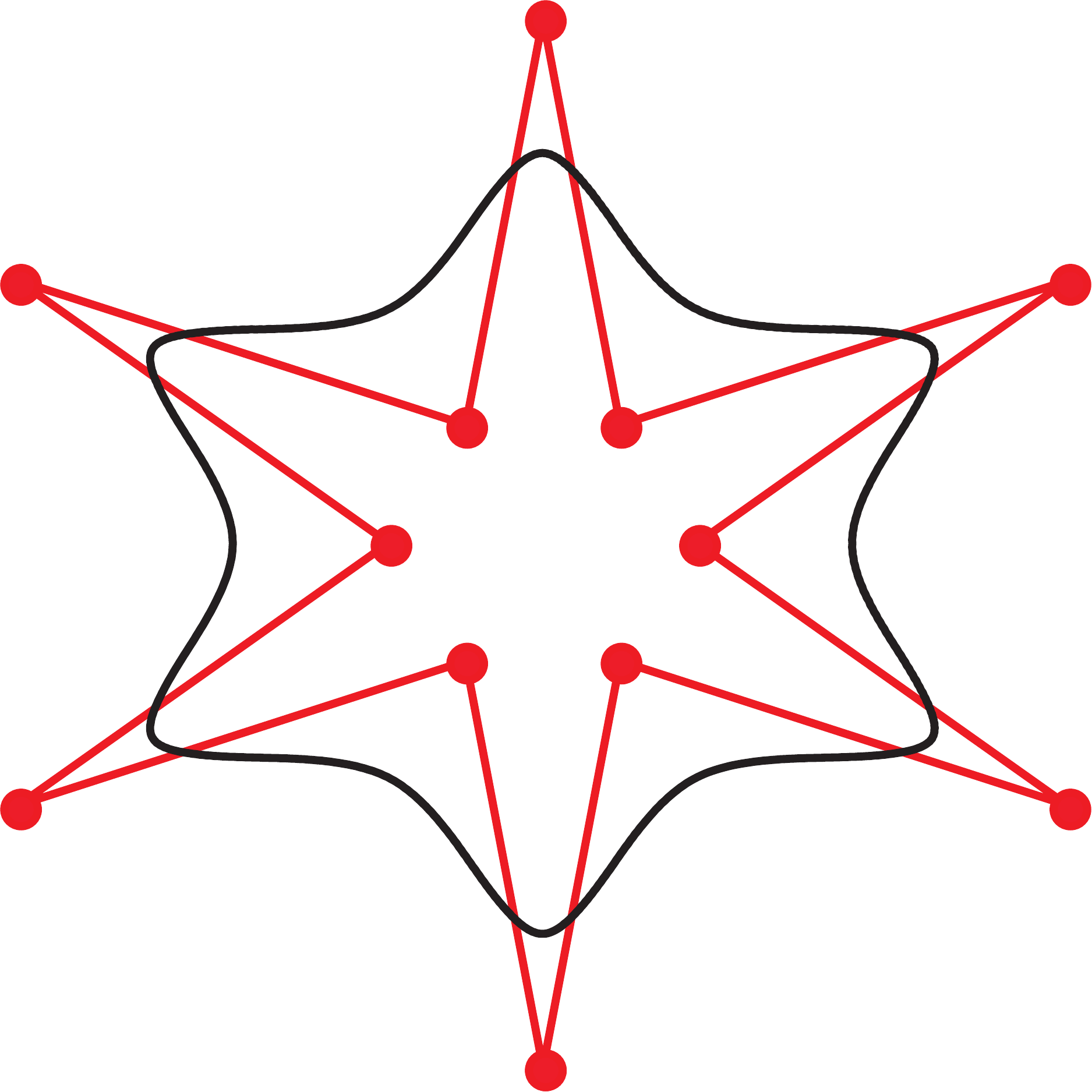, width=1.4 in}\\
(a) $\alpha=-\frac{1}{2048}$ &(b) $\alpha=\frac{1}{100}$  & (c) $\alpha=\frac{1}{128}$ & (d) $\alpha=\frac{1}{80}$\\
\,\ $\beta=\frac{1}{512}$ & \qquad  $\beta=-\frac{1}{128}$ & \quad \,\ $\beta=\frac{1}{512}$ & \qquad  $\beta=\frac{1}{80}$
 \end{tabular}
\end{center}
 \caption[Limit curves obtained by the scheme $S_{a_{7}}$.]{\label{p5-7-point-1}\emph{Black curves are the limit curves obtained by subdivision scheme $S_{a_{7}}$.}}
\end{figure}
\begin{figure}[!h] 
\begin{center}
\begin{tabular}{cccc}
\epsfig{file=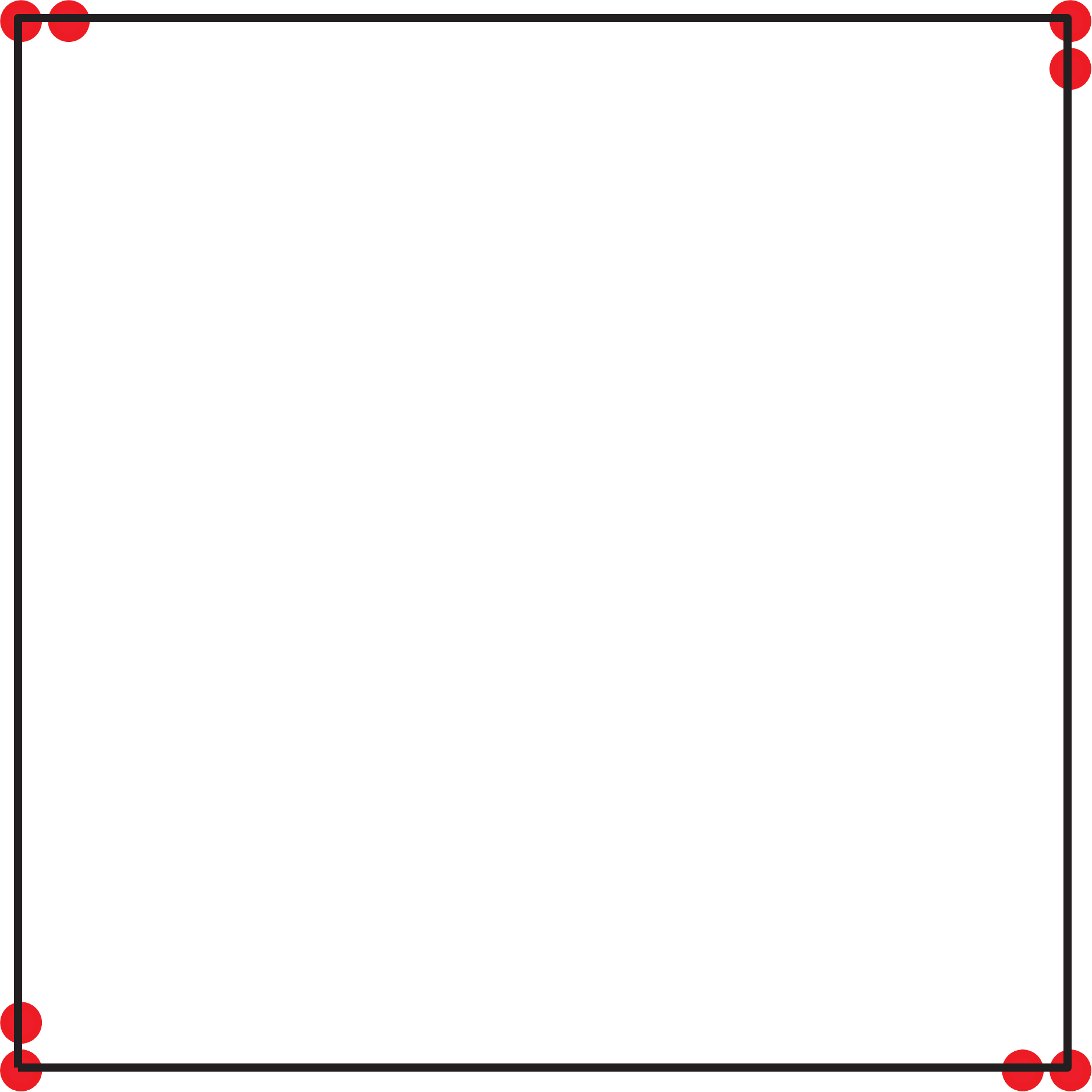, width=1.3 in} & \epsfig{file=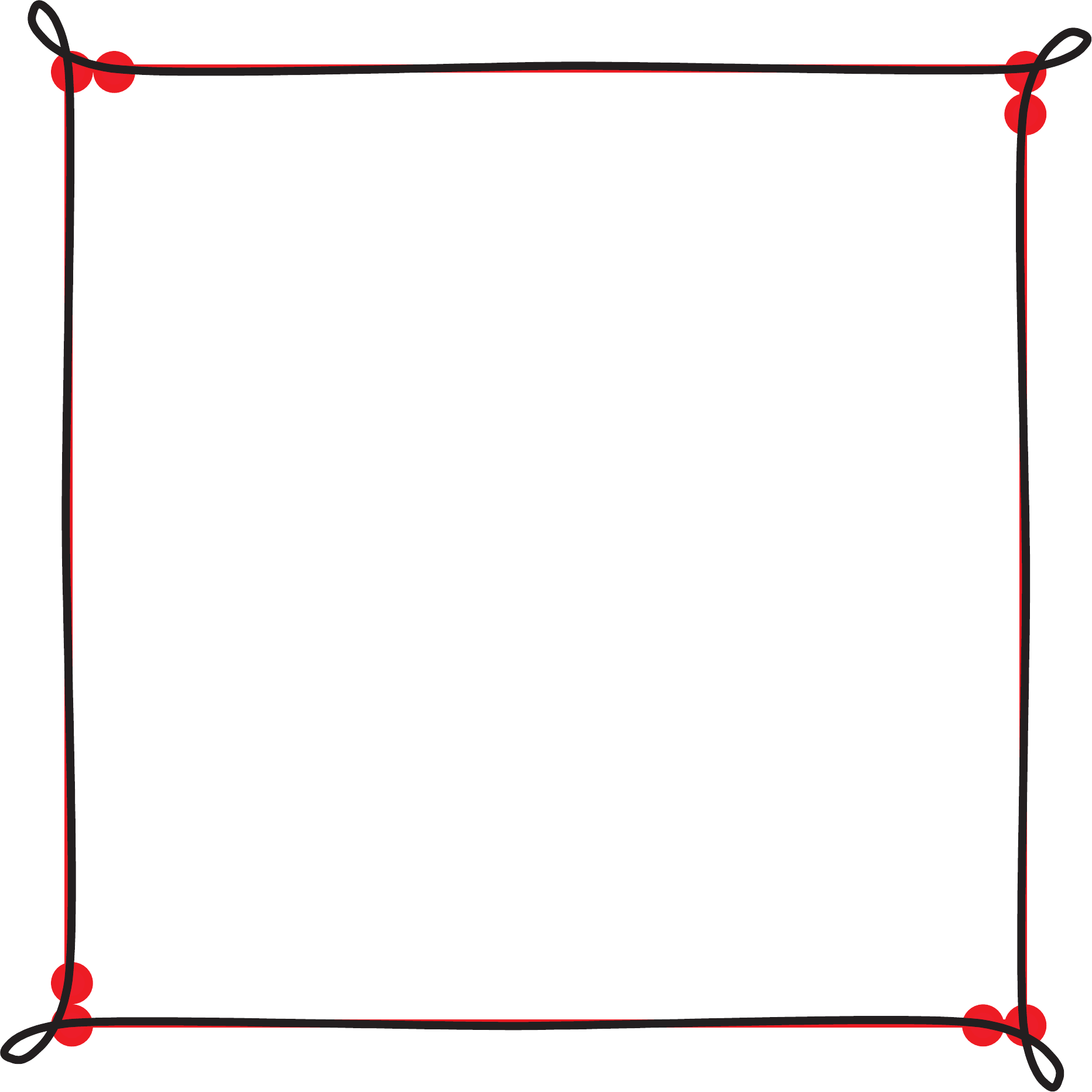, width=1.3 in}  & \epsfig{file=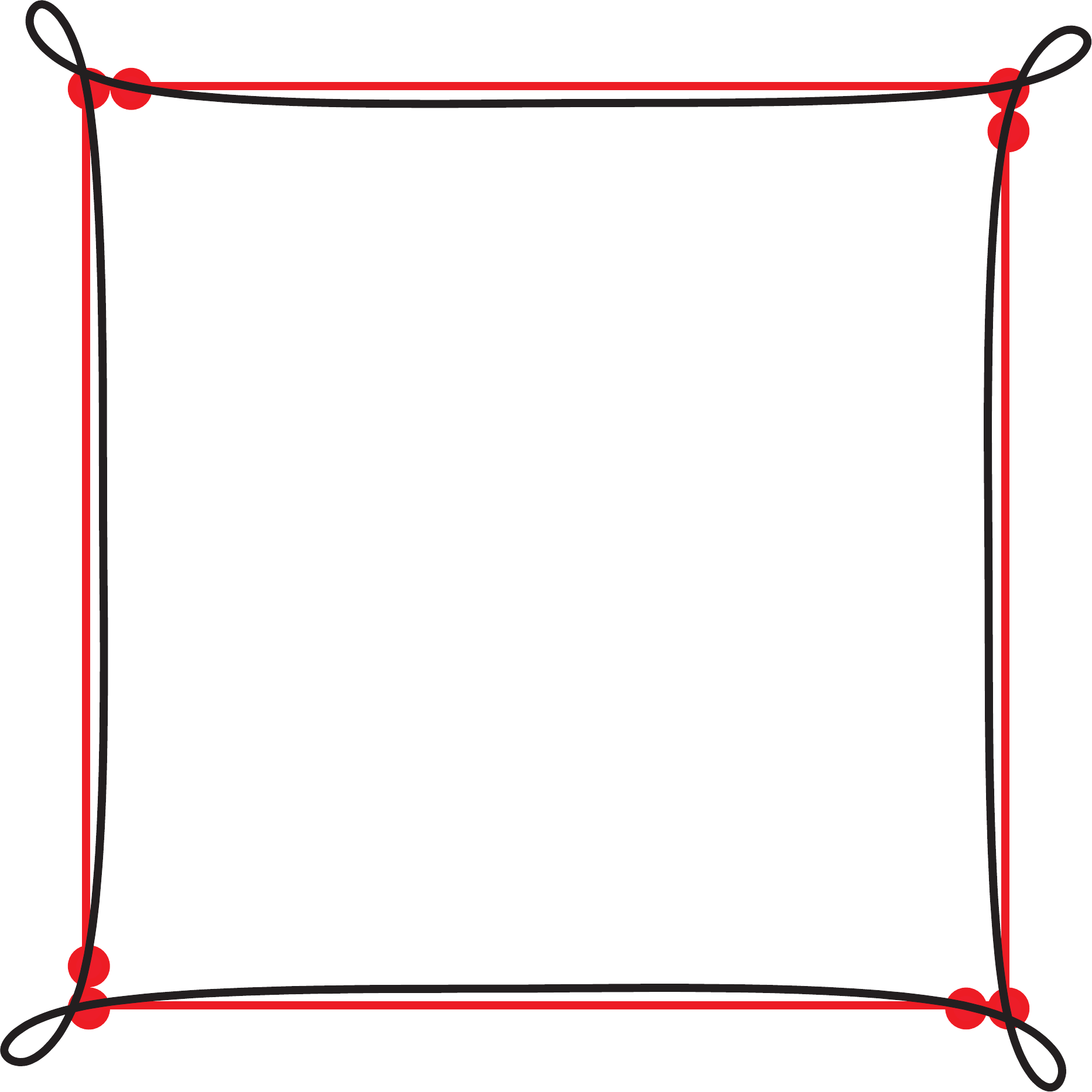, width=1.3 in} & \epsfig{file=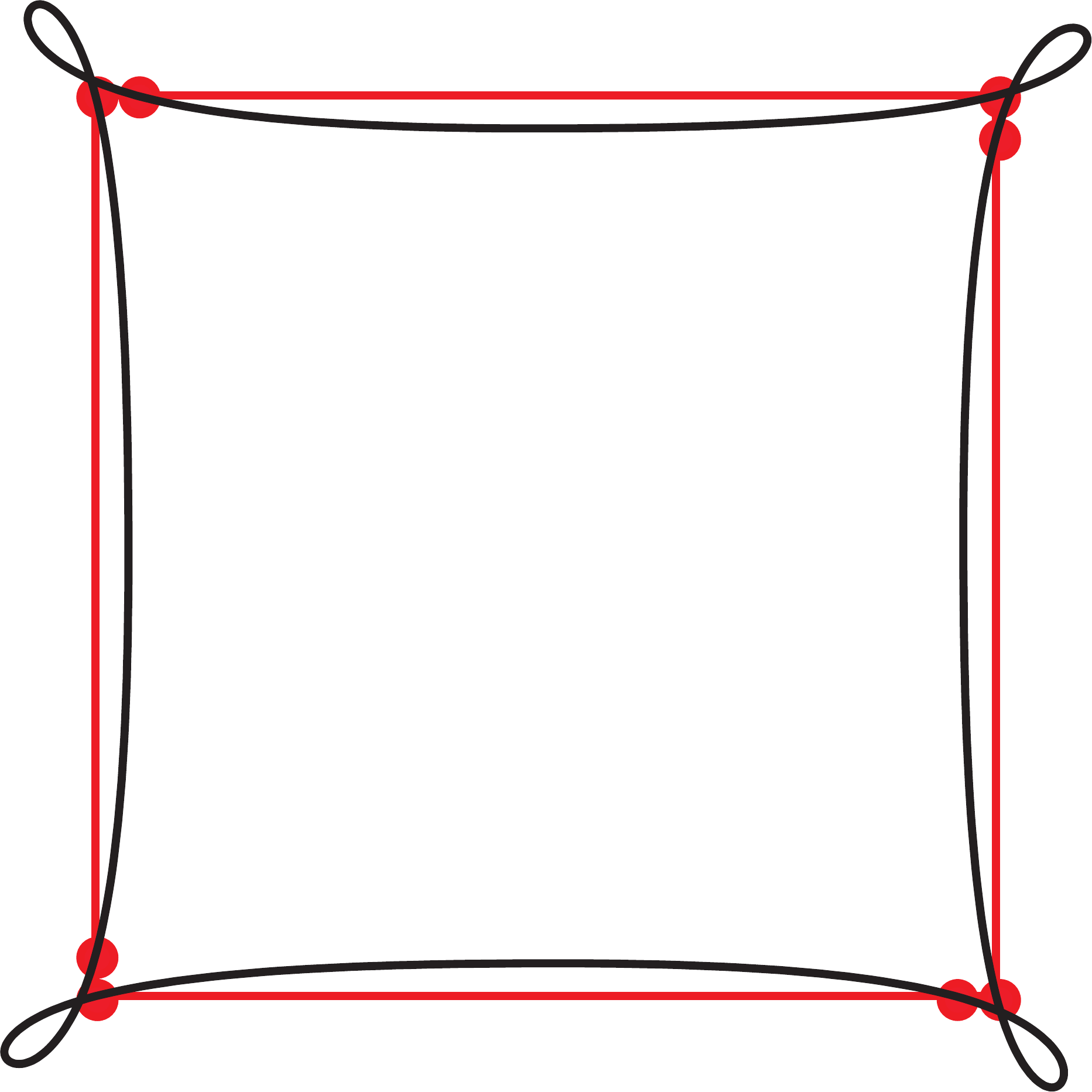, width=1.3 in}  \\
\epsfig{file=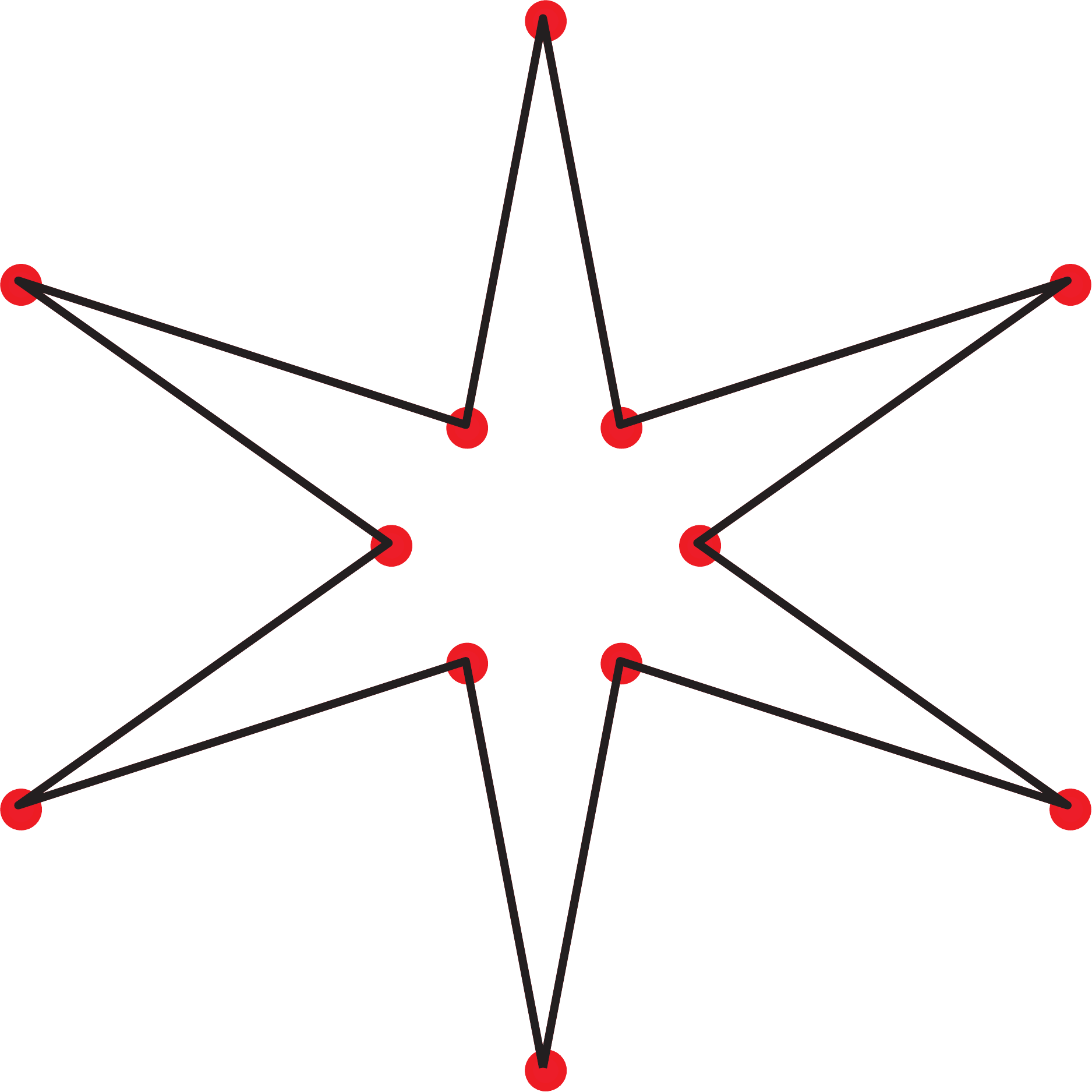, width=1.3 in} & \epsfig{file=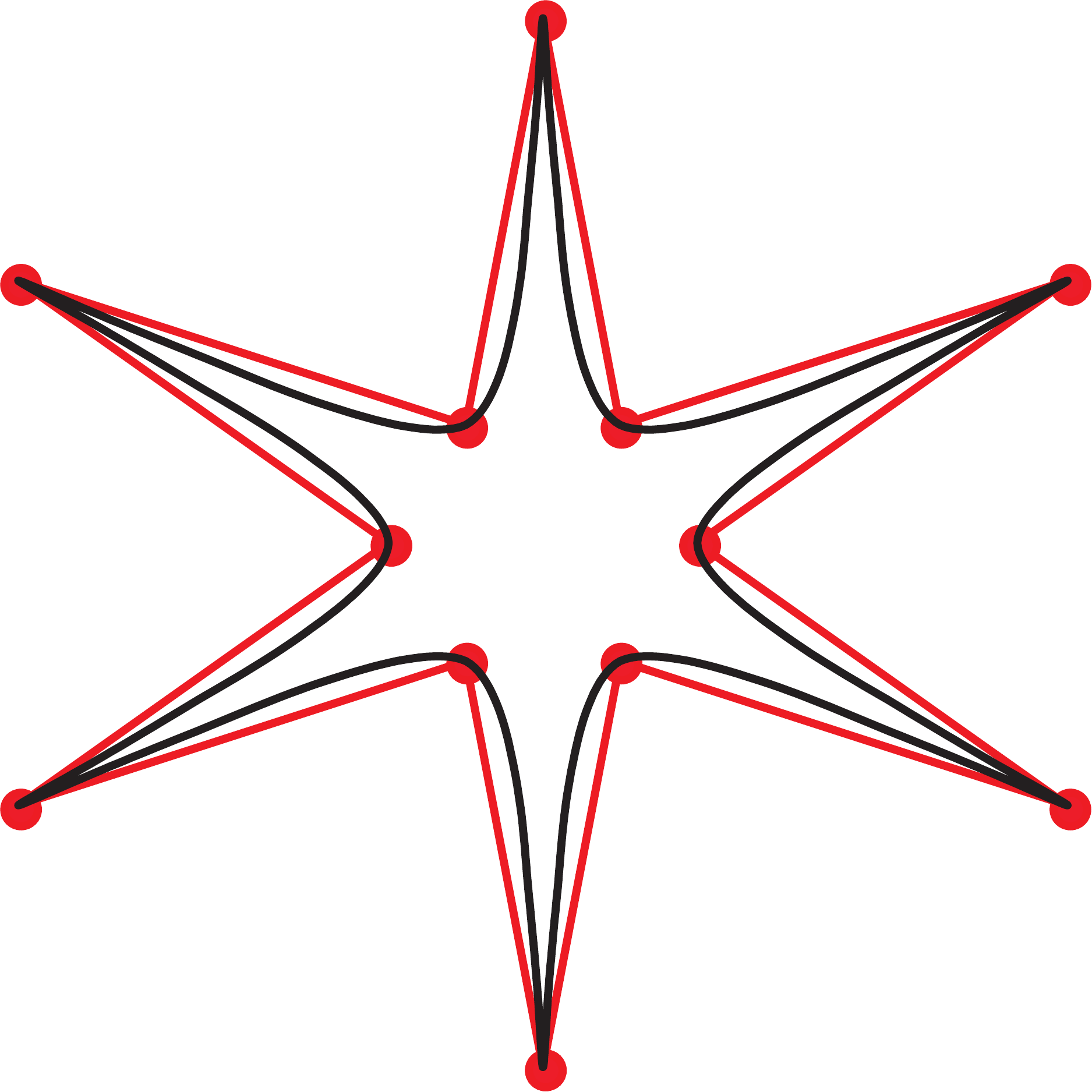, width=1.3 in}  & \epsfig{file=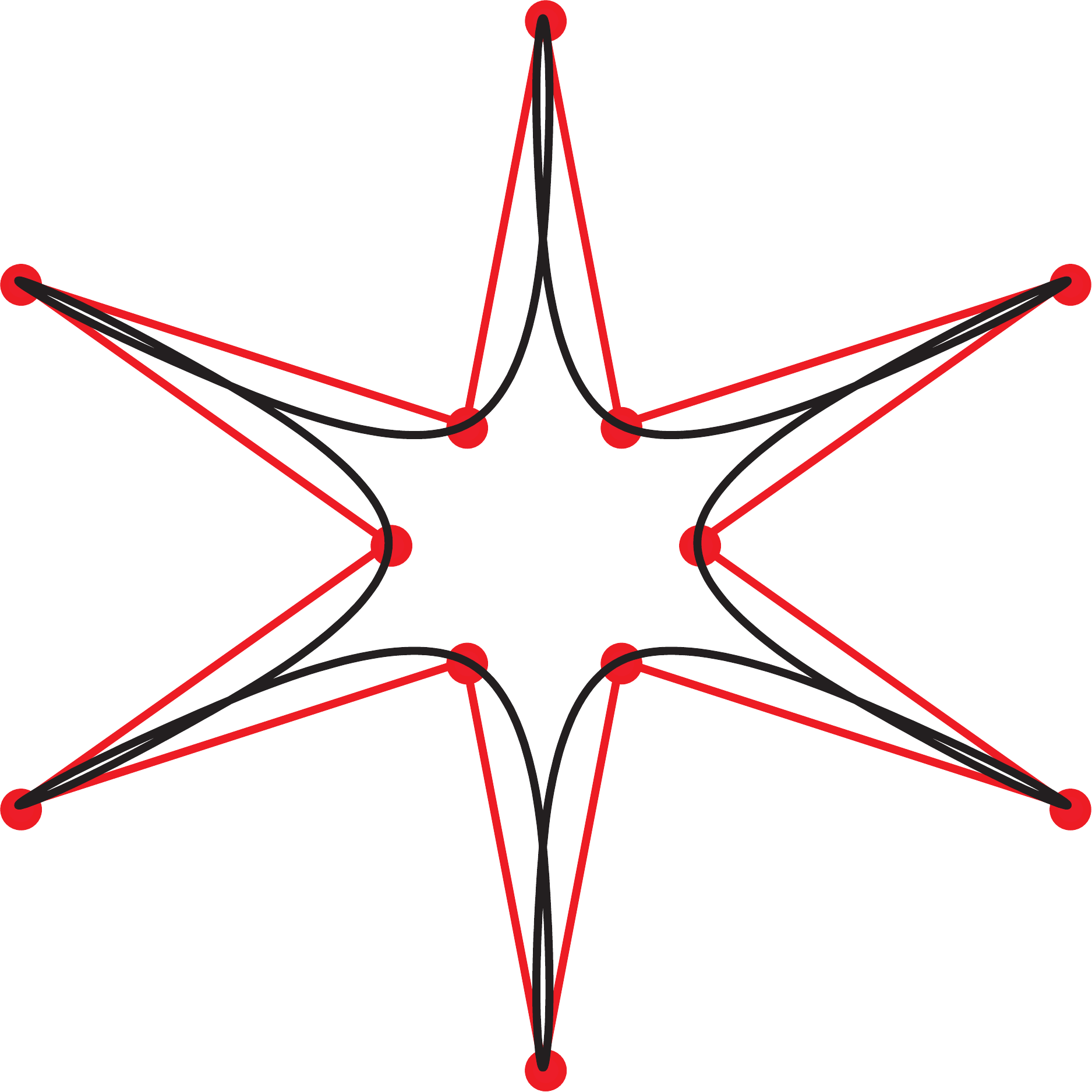, width=1.3 in} & \epsfig{file=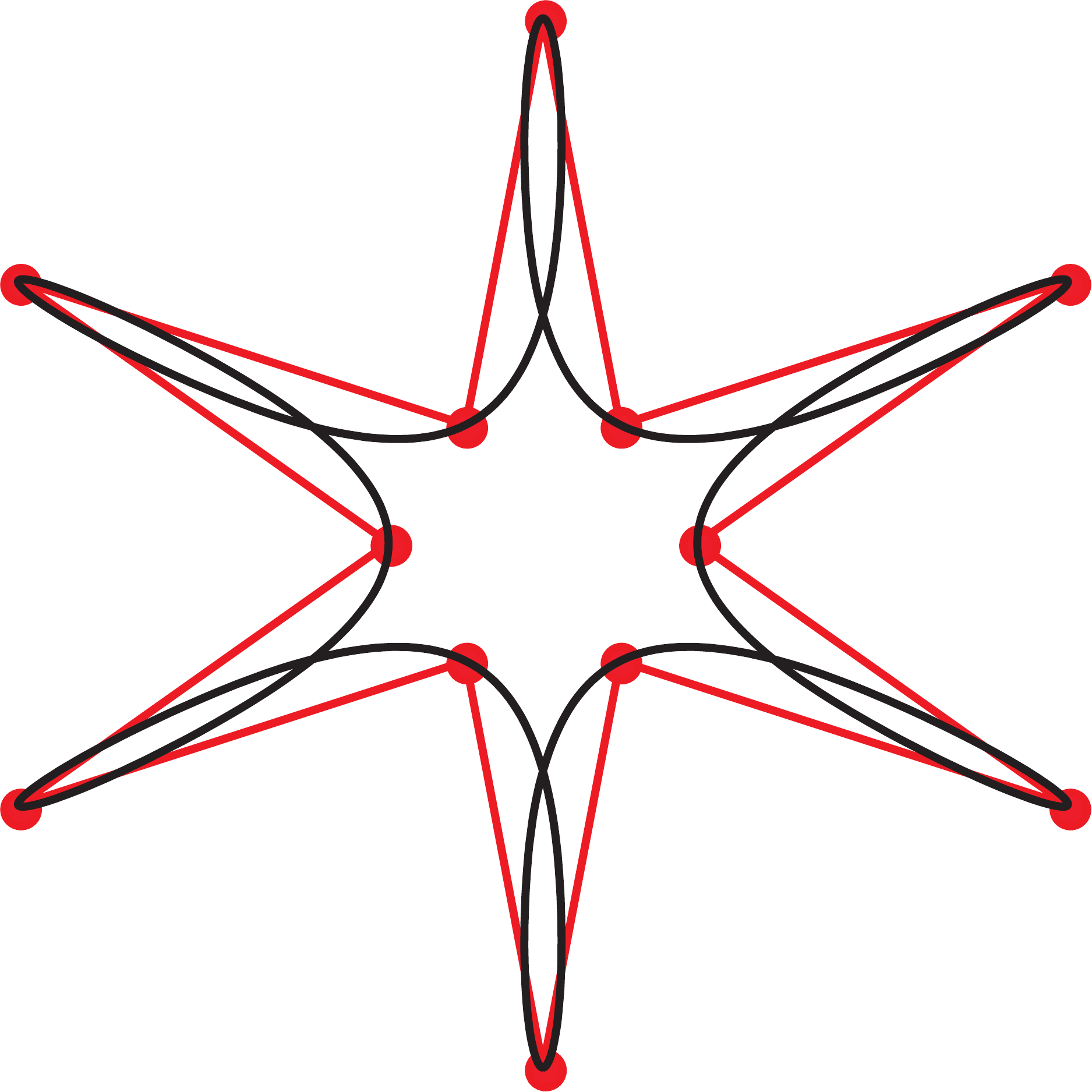, width=1.3 in}  \\
(a) $2$-point scheme & (b) $4$-point scheme & (c) $6$-point scheme & (c) $8$-point scheme
 \end{tabular}
\end{center}
 \caption[Limit curves produced by primal schemes of Deslauriers and Dubuc \cite{Dubuc}.]{\label{p5-DD-u}\emph{Solid black lines represent limit curves produced by primal schemes of Deslauriers and Dubuc \cite{Dubuc}.}}
\end{figure}

\begin{figure}[!h] 
\begin{center}
\begin{tabular}{cccc}
\epsfig{file=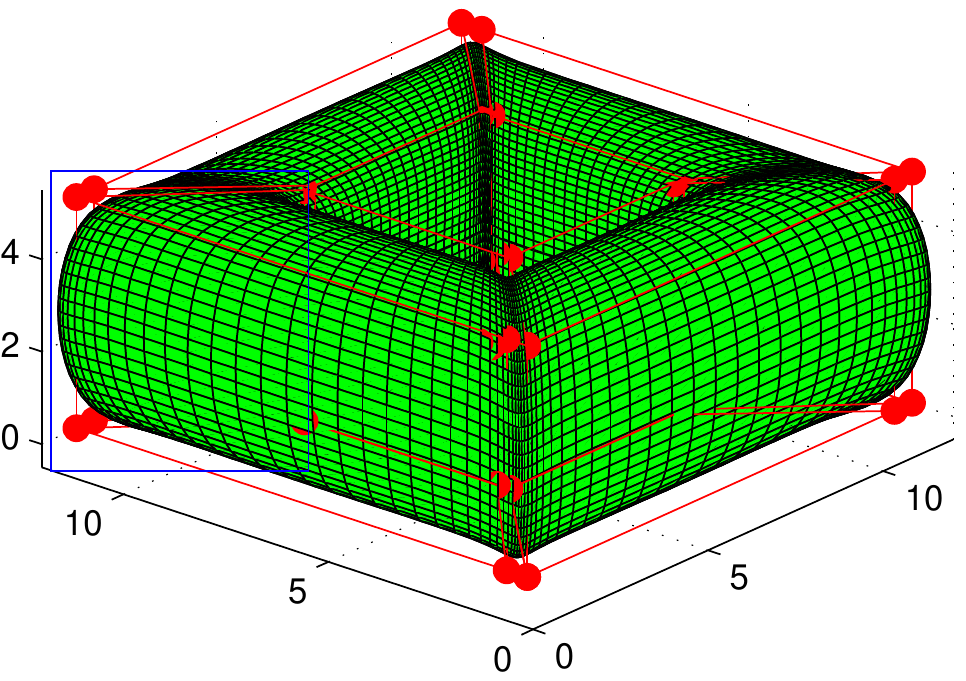, width=1.9 in} & \epsfig{file=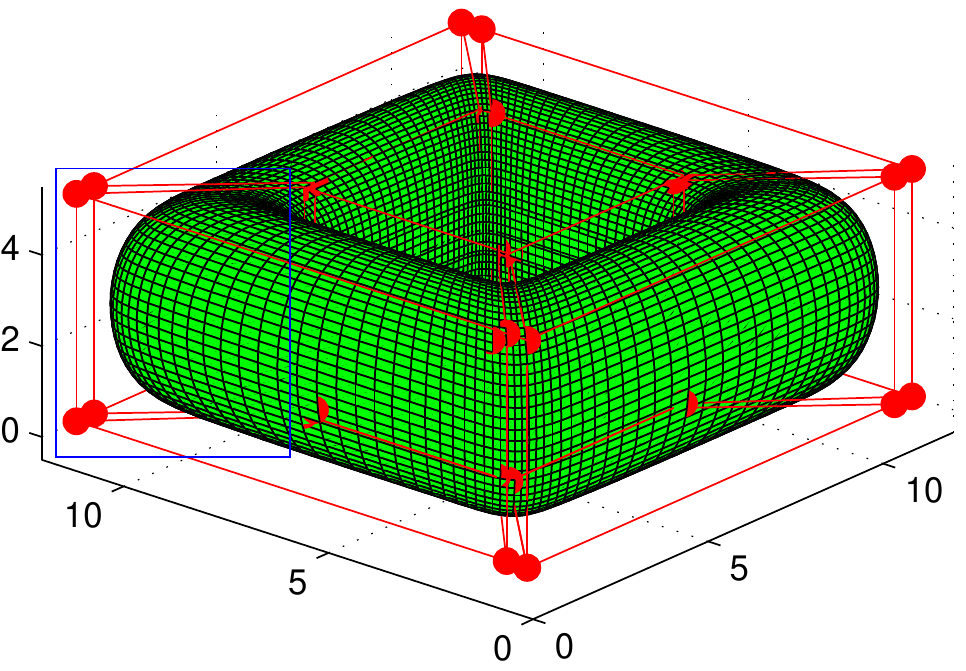, width=1.9 in}  & \epsfig{file=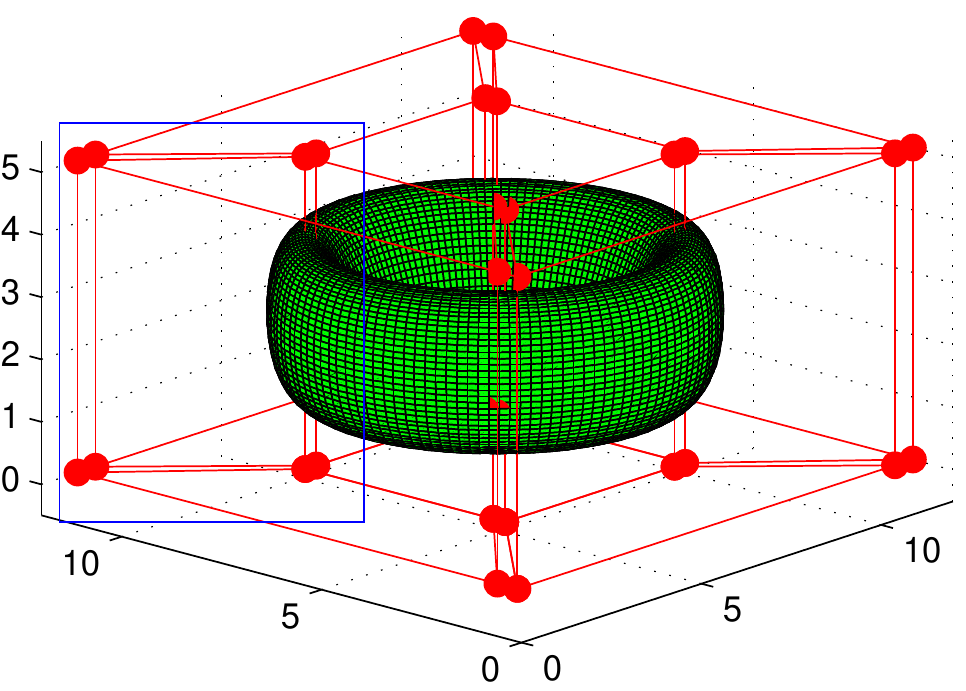, width=1.9 in}\\
(a) $(\alpha,\beta)=(\frac{1}{16},-\frac{1}{30})$ &(b) $(\alpha,\beta)=(\frac{1}{8},0)$  & (c) $(\alpha,\beta)=(\frac{1}{4},\frac{1}{4})$ & \\ \\
\epsfig{file=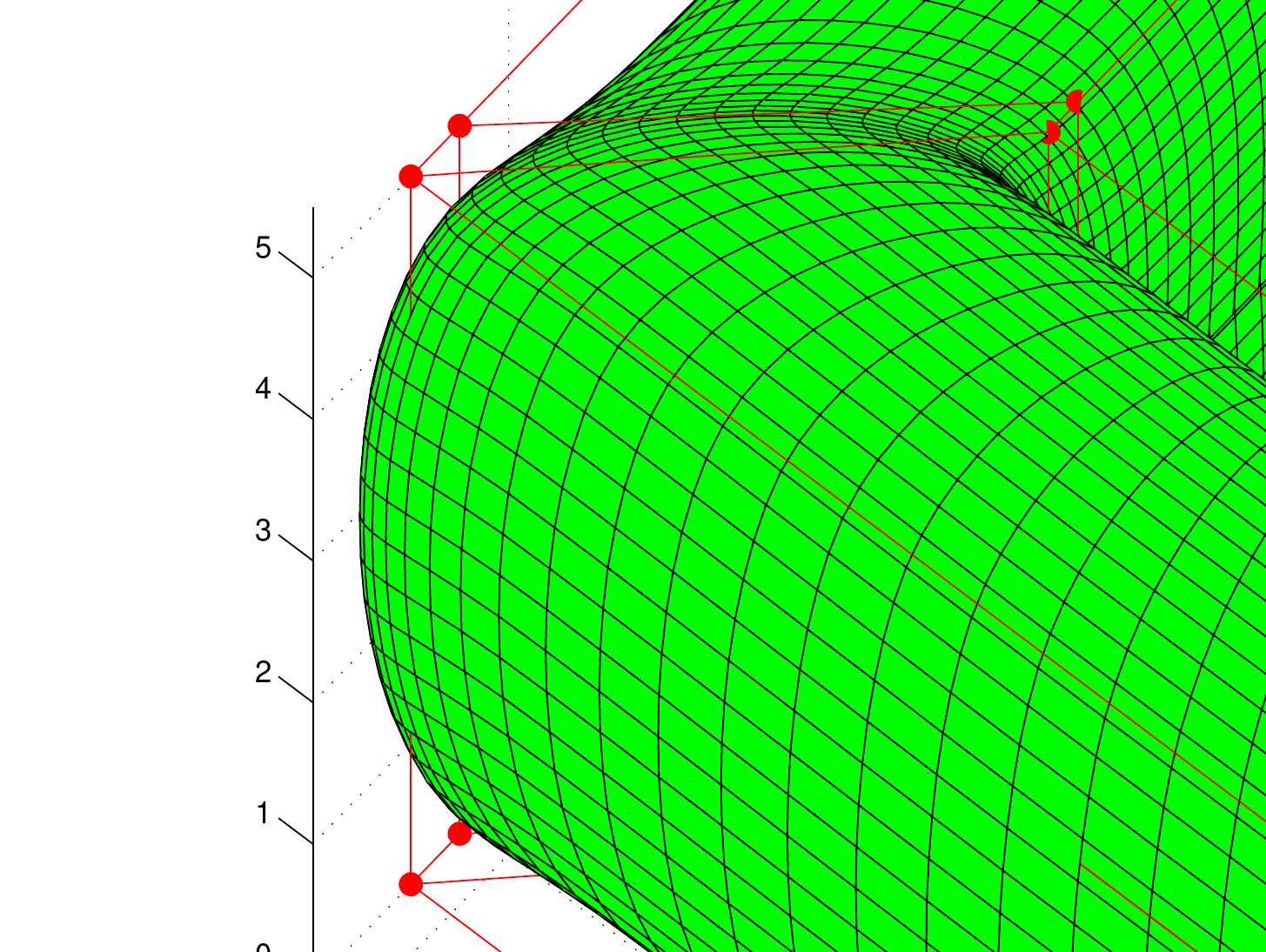, width=1.9 in} & \epsfig{file=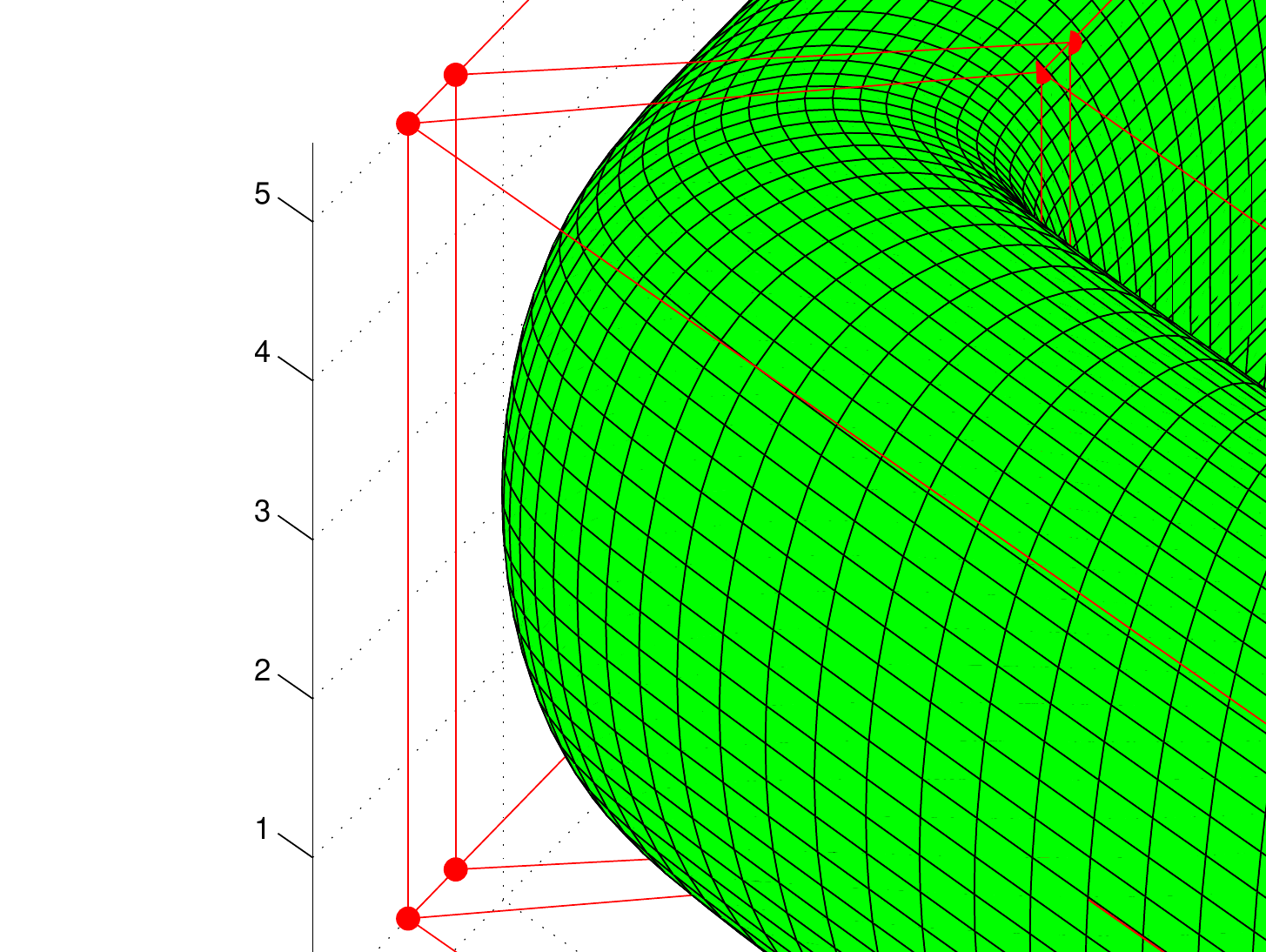, width=1.9 in}  & \epsfig{file=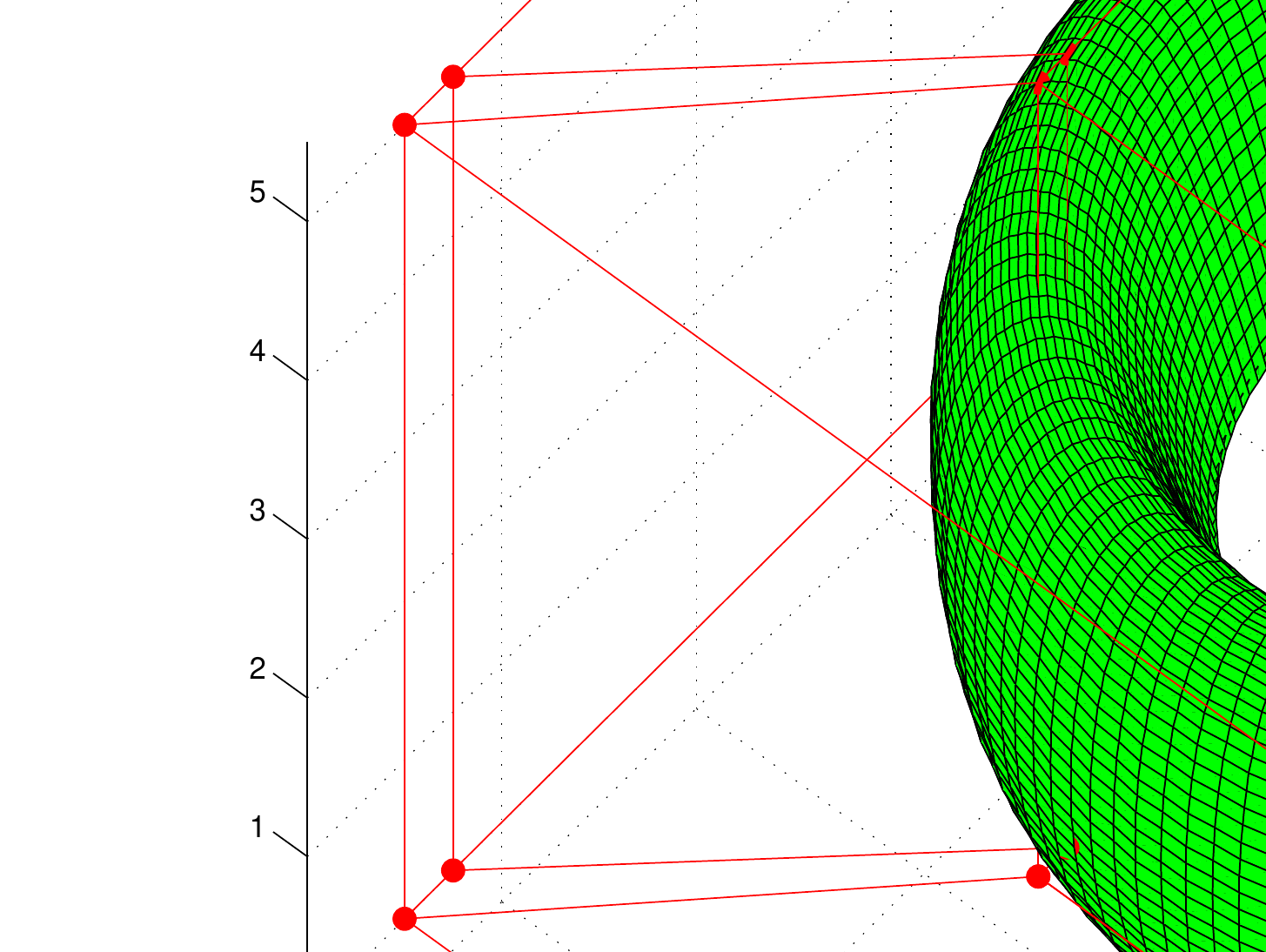, width=1.9 in}\\
(d) & (e) & (f) \\ \\
\epsfig{file=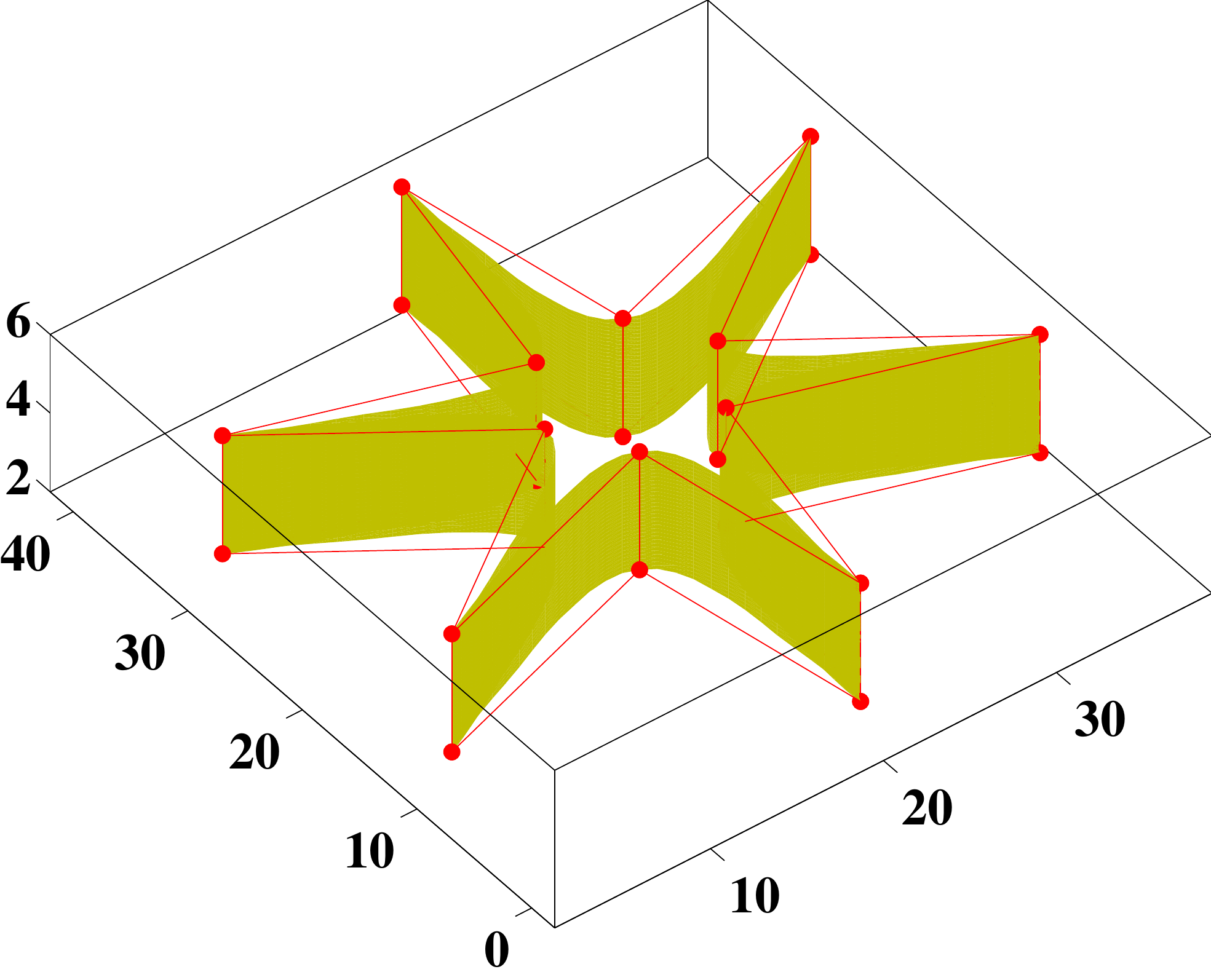, width=1.9 in} & \epsfig{file=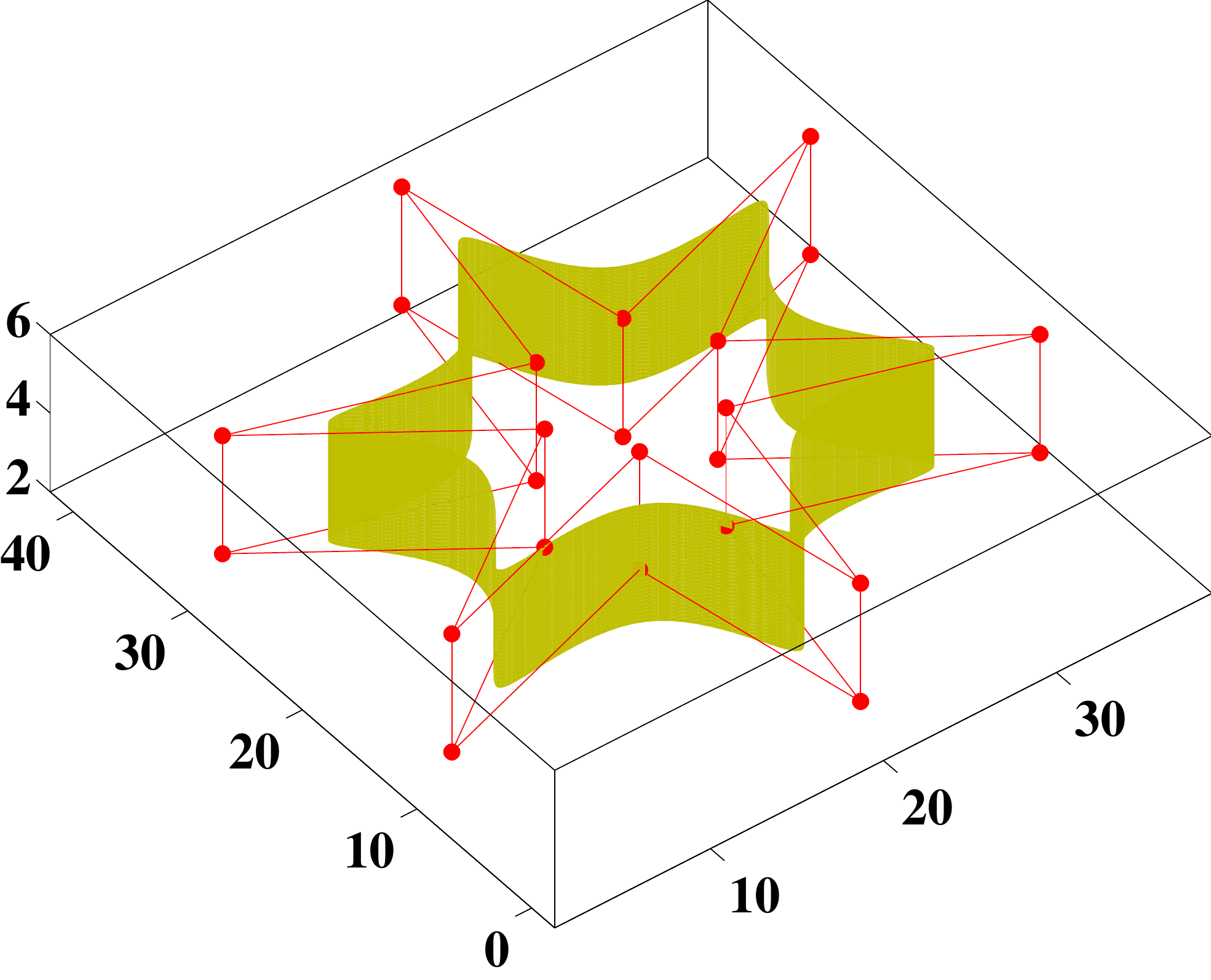, width=1.9 in}  & \epsfig{file=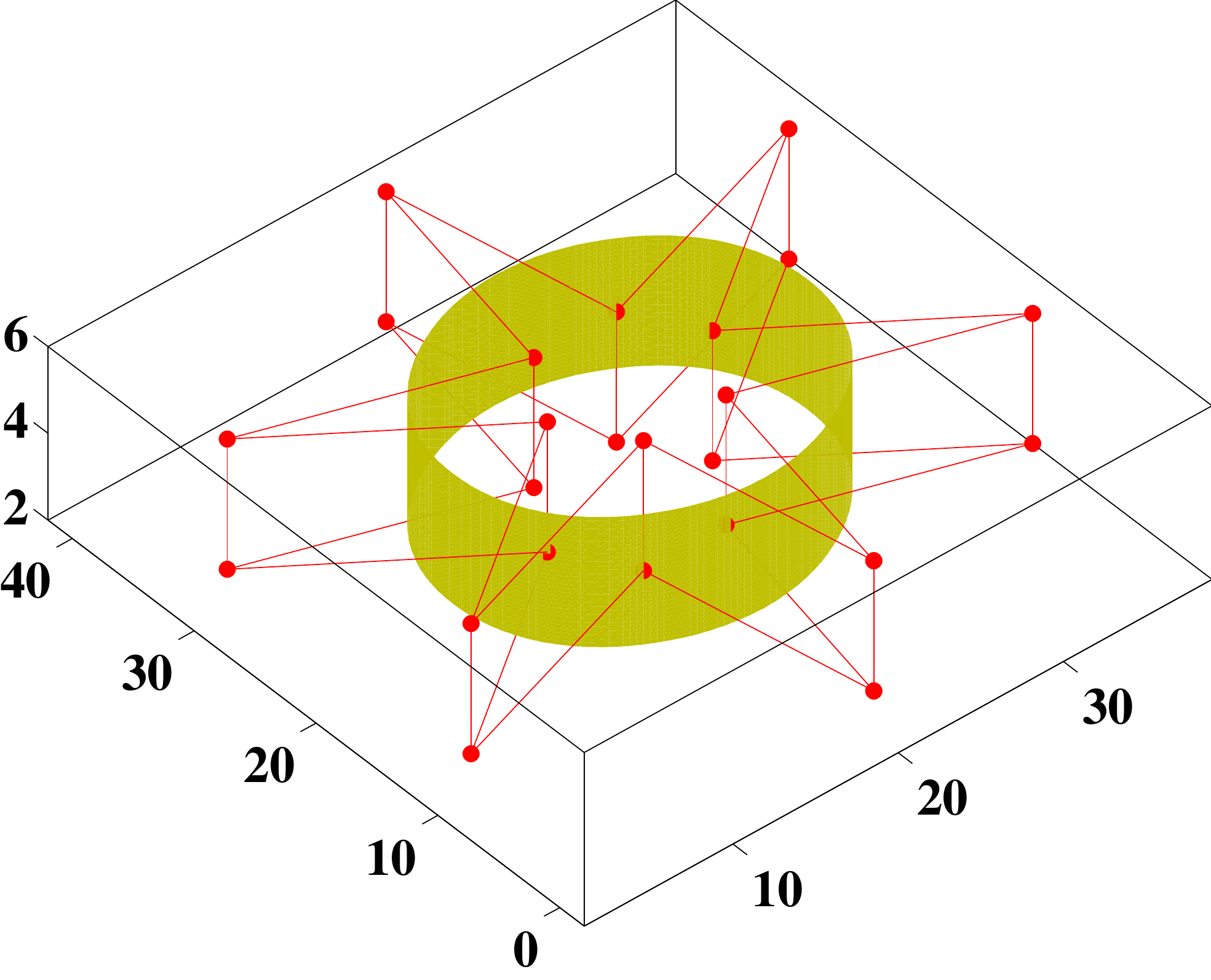, width=1.9 in}\\
(g) $(\alpha,\beta)=(0,-\frac{1}{12})$ &(h) $(\alpha,\beta)=(\frac{1}{8},0)$  & (i) $(\alpha,\beta)=(\frac{1}{4},\frac{1}{4})$ & \\ \\
\epsfig{file=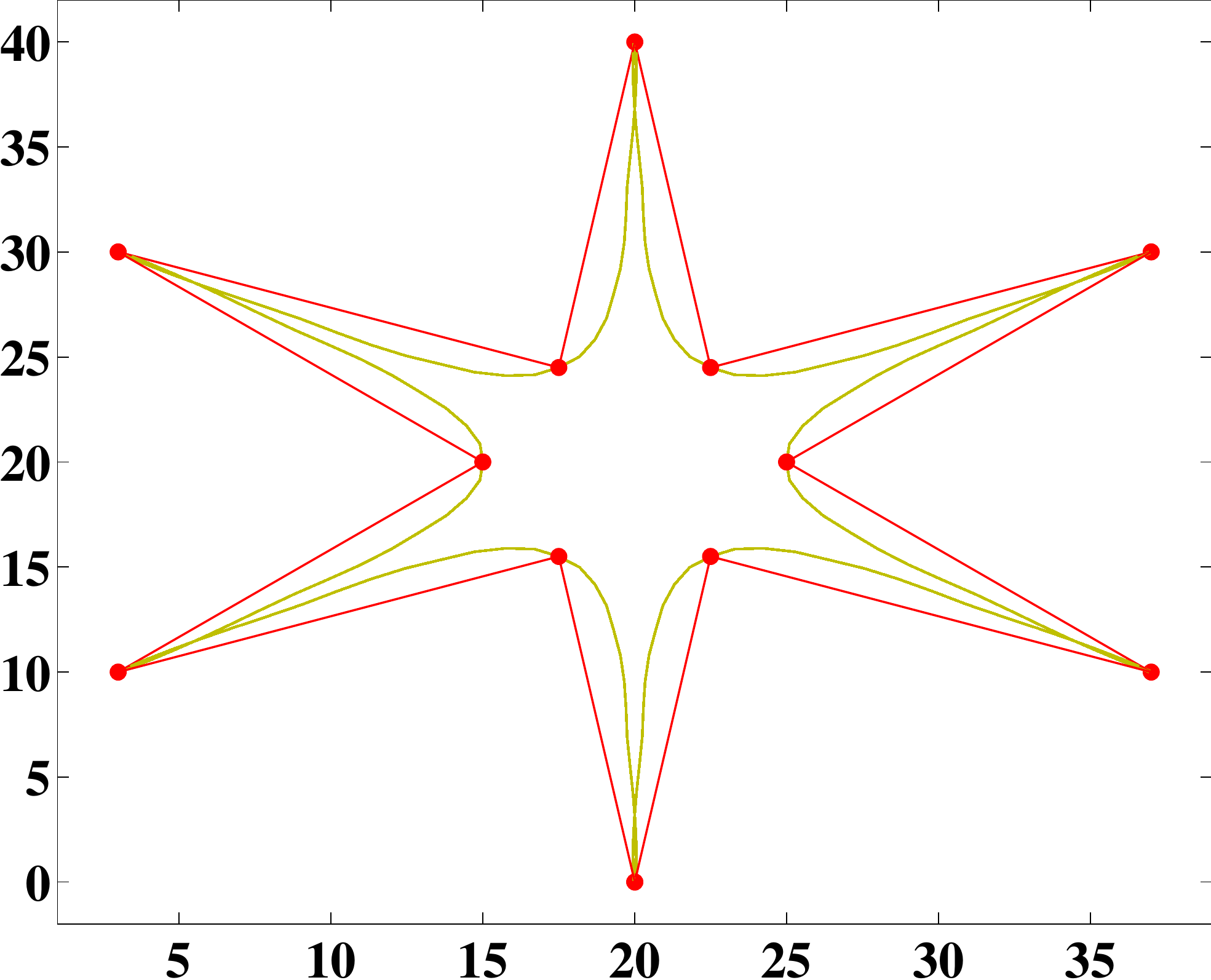, width=1.9 in} & \epsfig{file=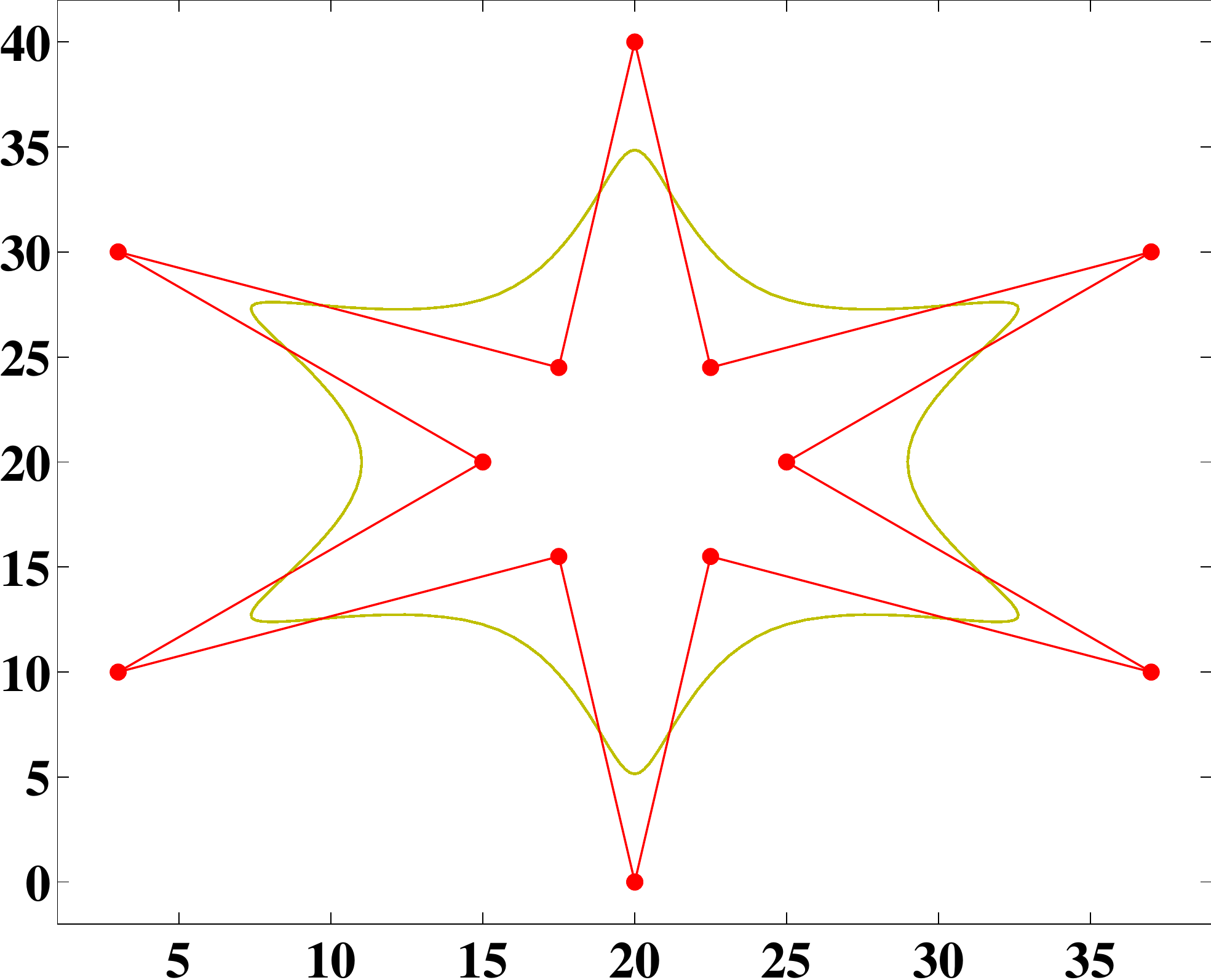, width=1.9 in}  & \epsfig{file=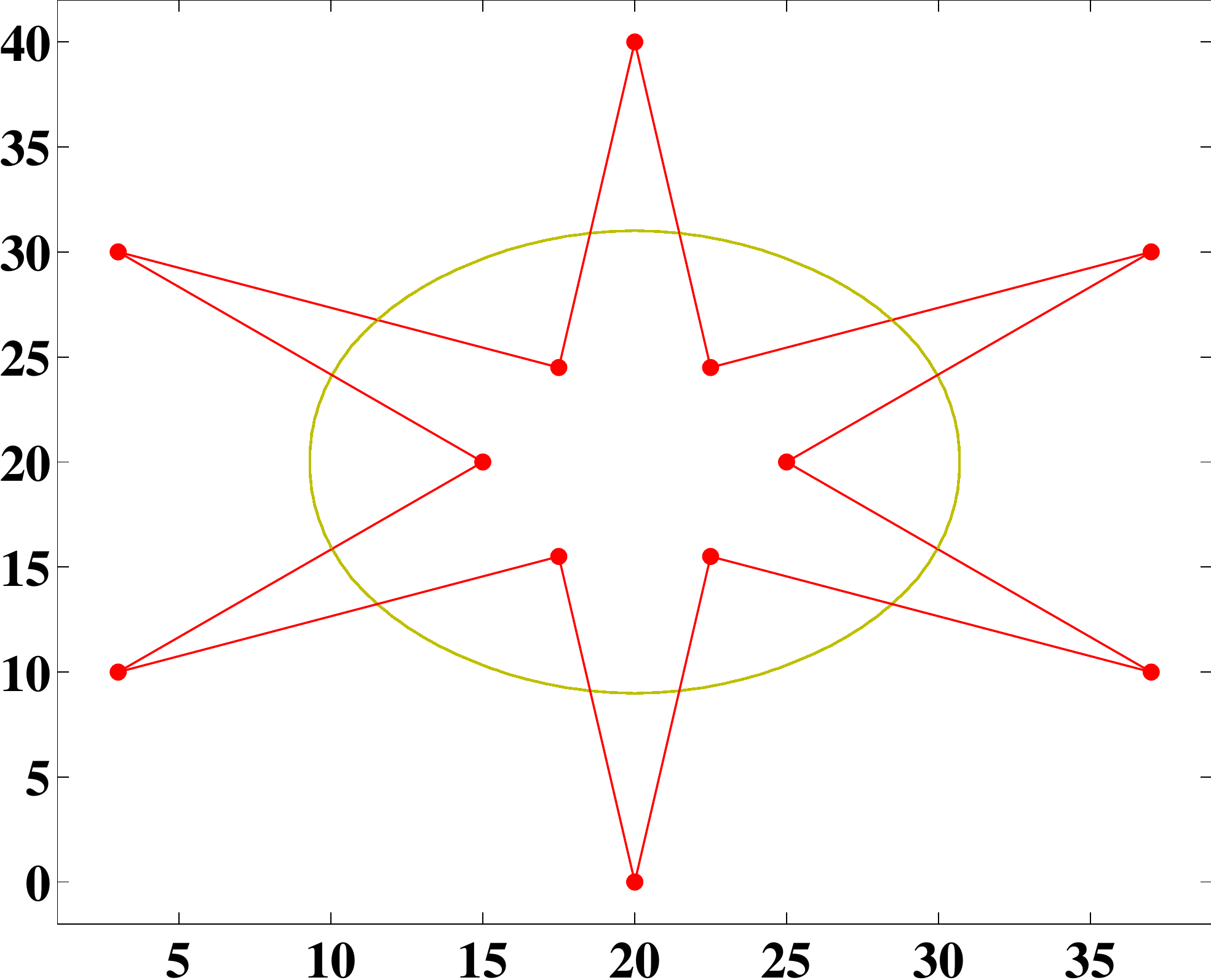, width=1.9 in}\\
(j) & (k) & (l)
 \end{tabular}
\end{center}
 \caption[Surfaces generated by the tensor product of scheme $S_{a_{3}}$.]{\label{p5-3-point-b}\emph{(a)-(c) $\&$ (g)-(i) are the surfaces generated by the tensor product of scheme $S_{a_{3}}$. (d)-(f) are the mirror images of the parts inside the blue rectangles of (a)-(c) respectively, whereas (j)-(l) are the 2-dimensional images in $xy$-planes of (g)-(i) respectively.}}
\end{figure}


\begin{figure}[!h] 
\begin{center}
\begin{tabular}{cccc}
\epsfig{file=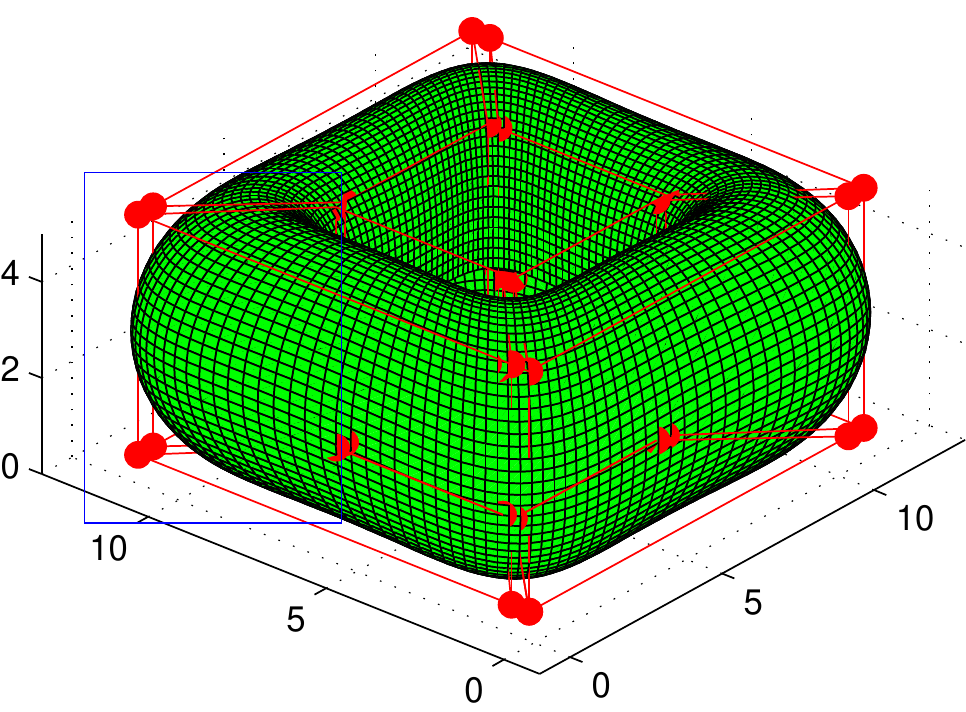, width=1.9 in} & \epsfig{file=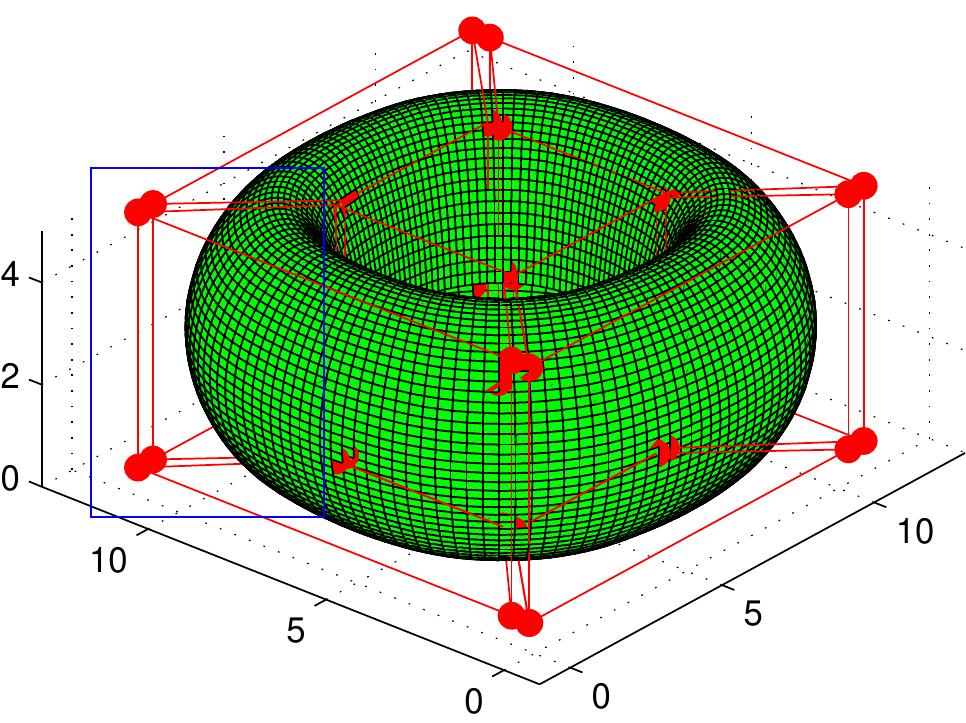, width=1.9 in}  & \epsfig{file=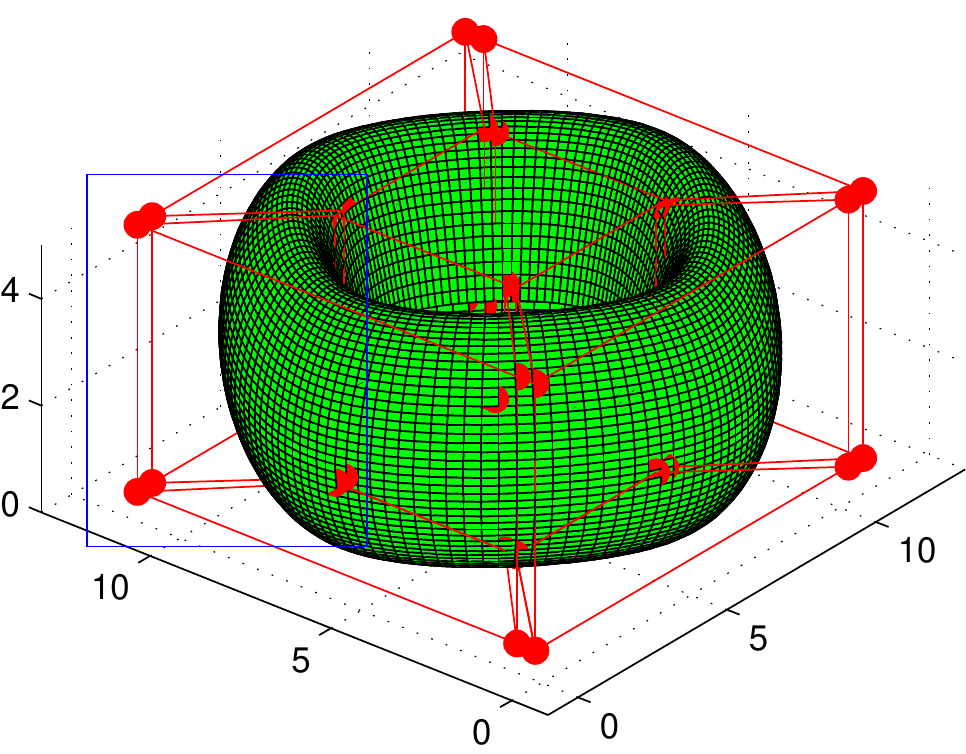, width=1.9 in}\\
(a) $(\alpha,\beta)=(\frac{1}{16},-\frac{1}{48})$ &(b) $(\alpha,\beta)=(\frac{1}{10},-\frac{49}{1152})$  & (c) $(\alpha,\beta)=(\frac{1}{8},-\frac{13}{224})$ & \\ \\
\epsfig{file=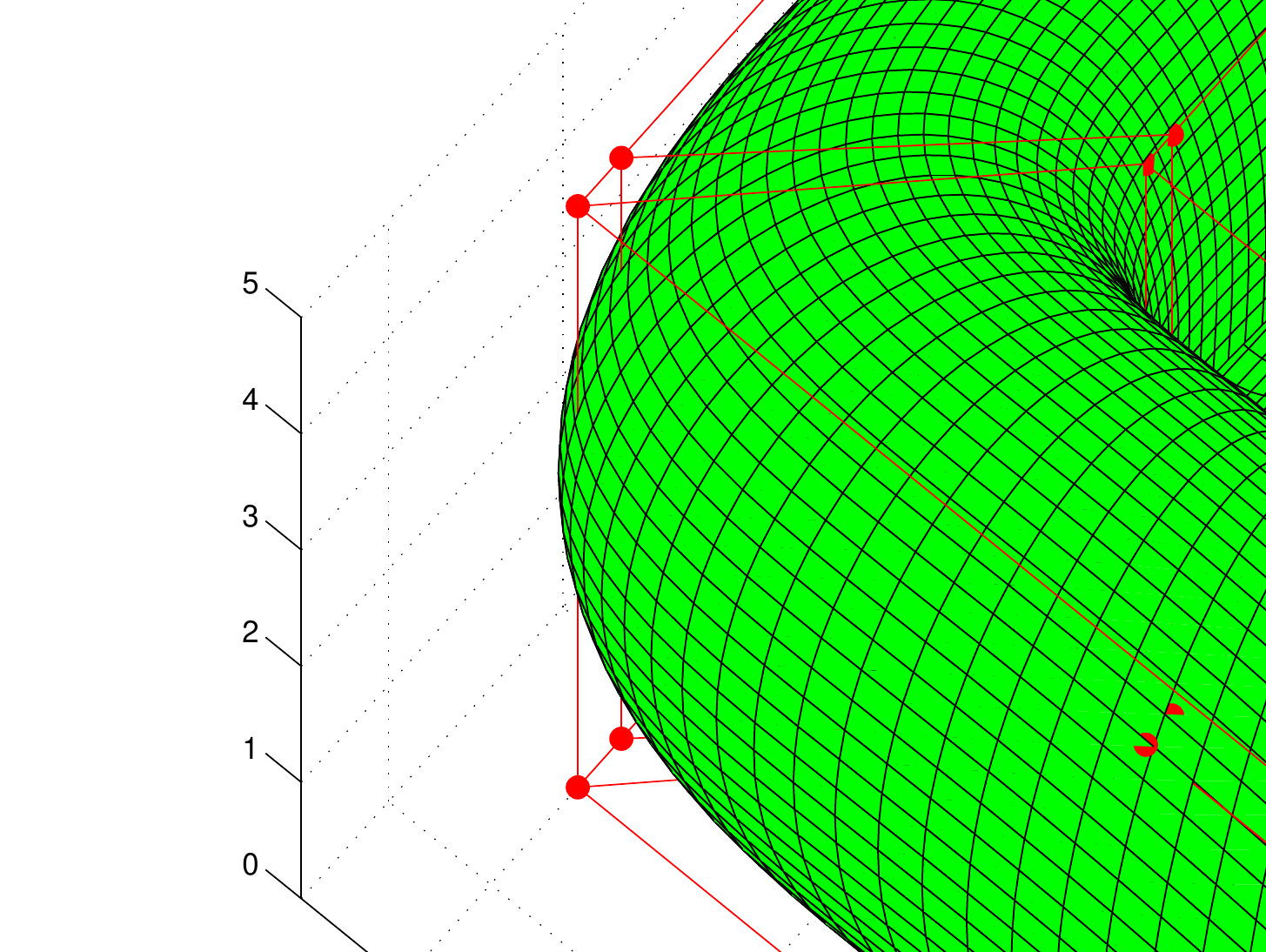, width=1.9 in} & \epsfig{file=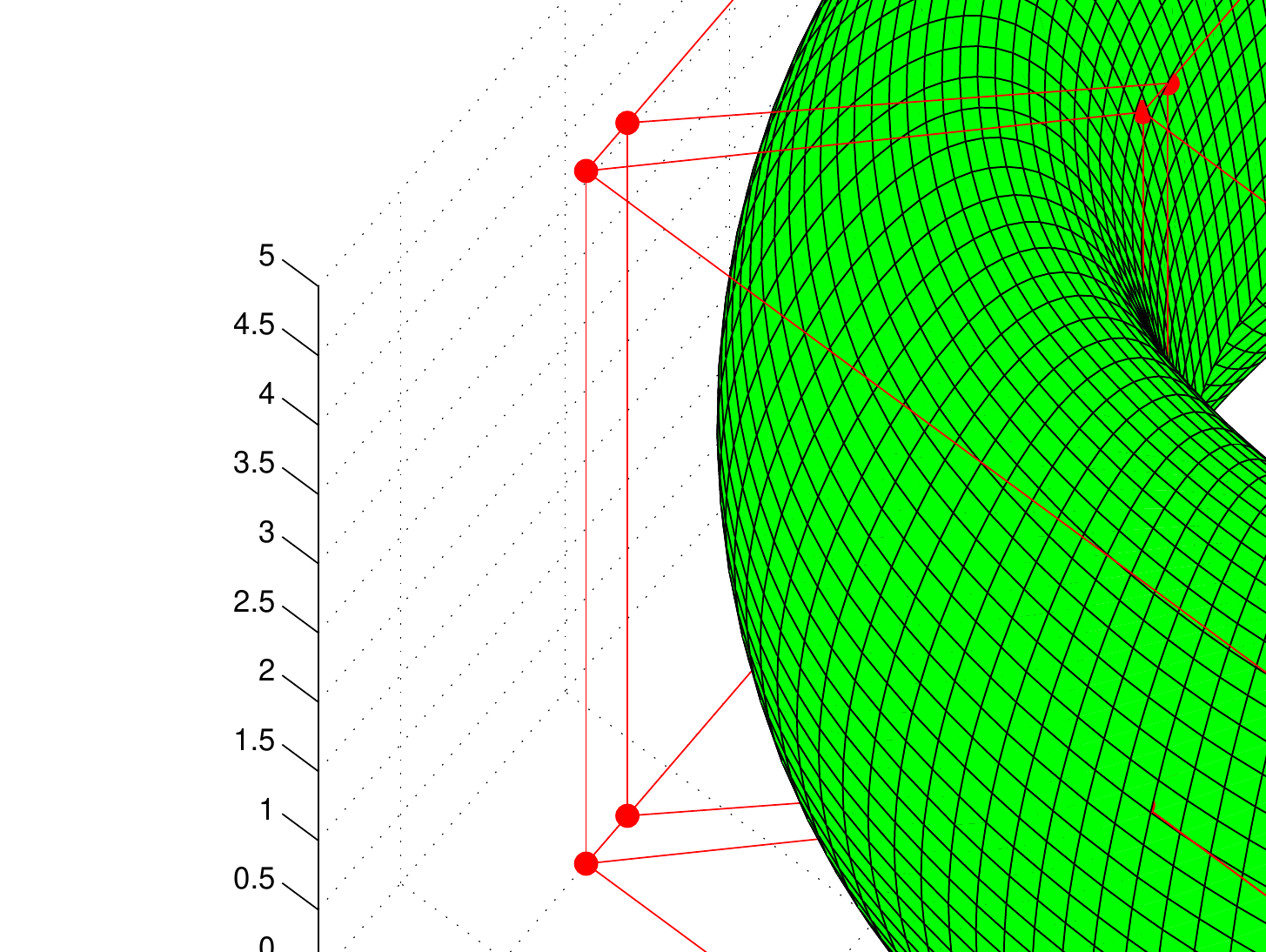, width=1.9 in}  & \epsfig{file=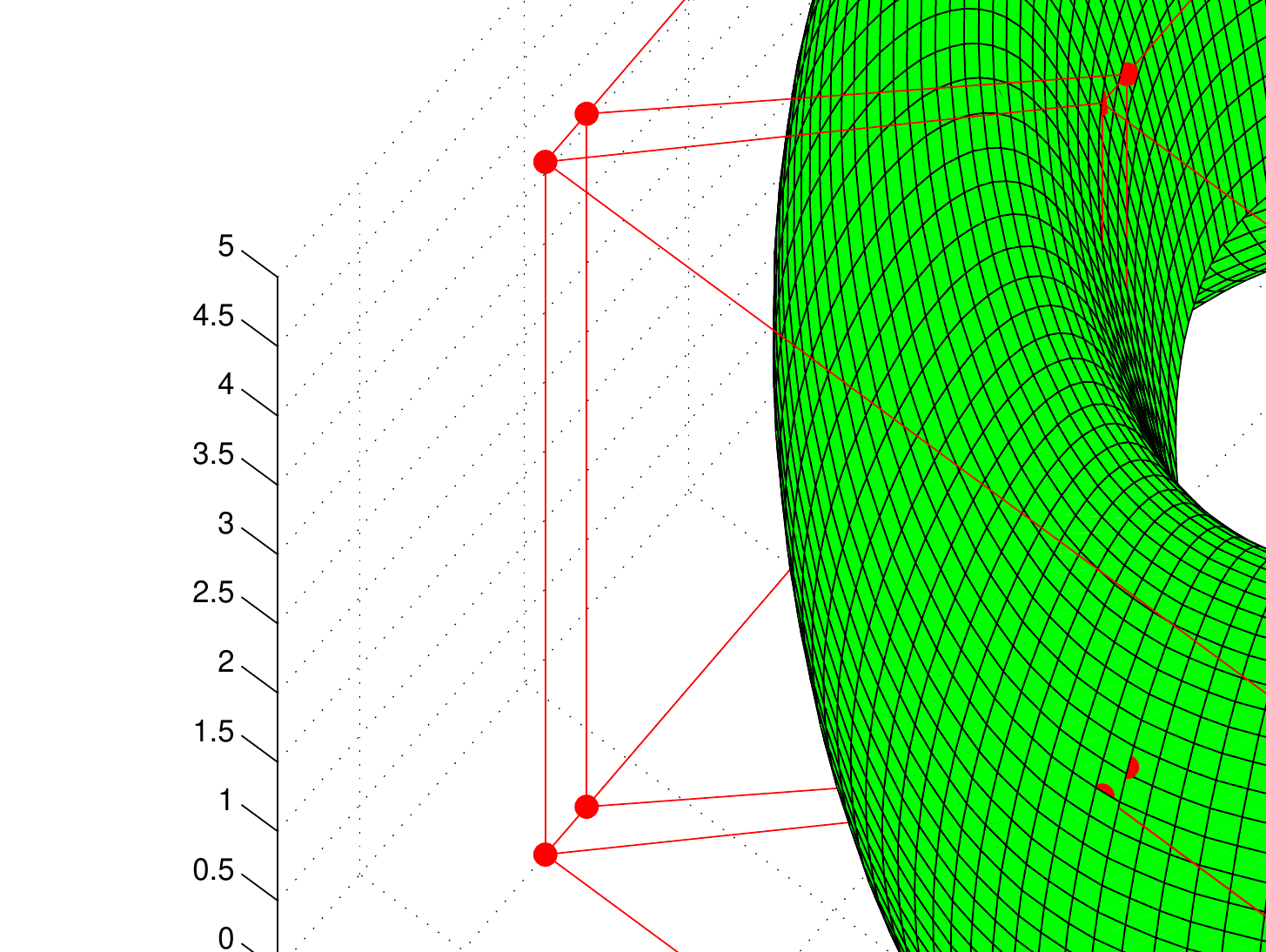, width=1.9 in}\\
(d) & (e) & (f) \\ \\
\epsfig{file=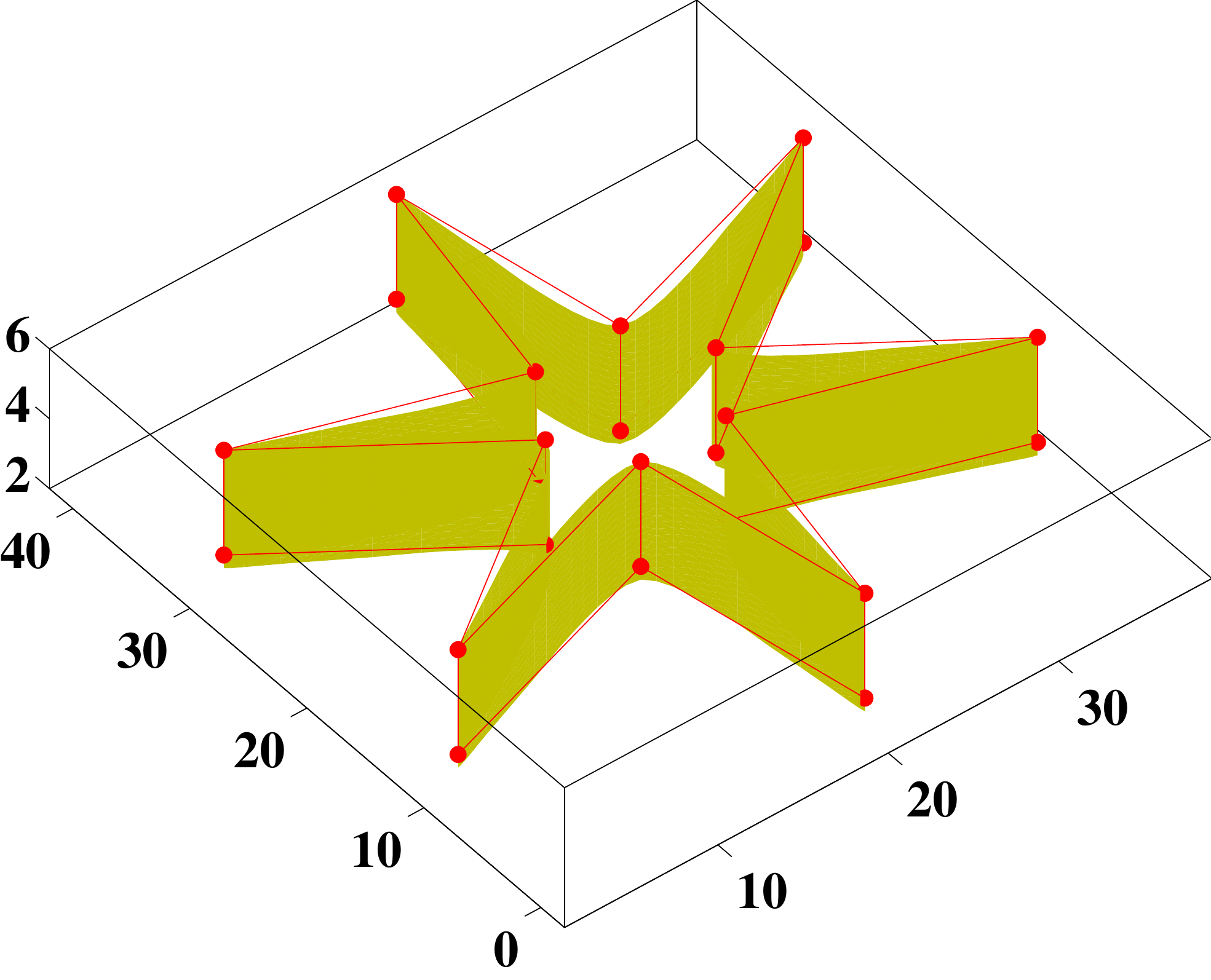, width=1.9 in} & \epsfig{file=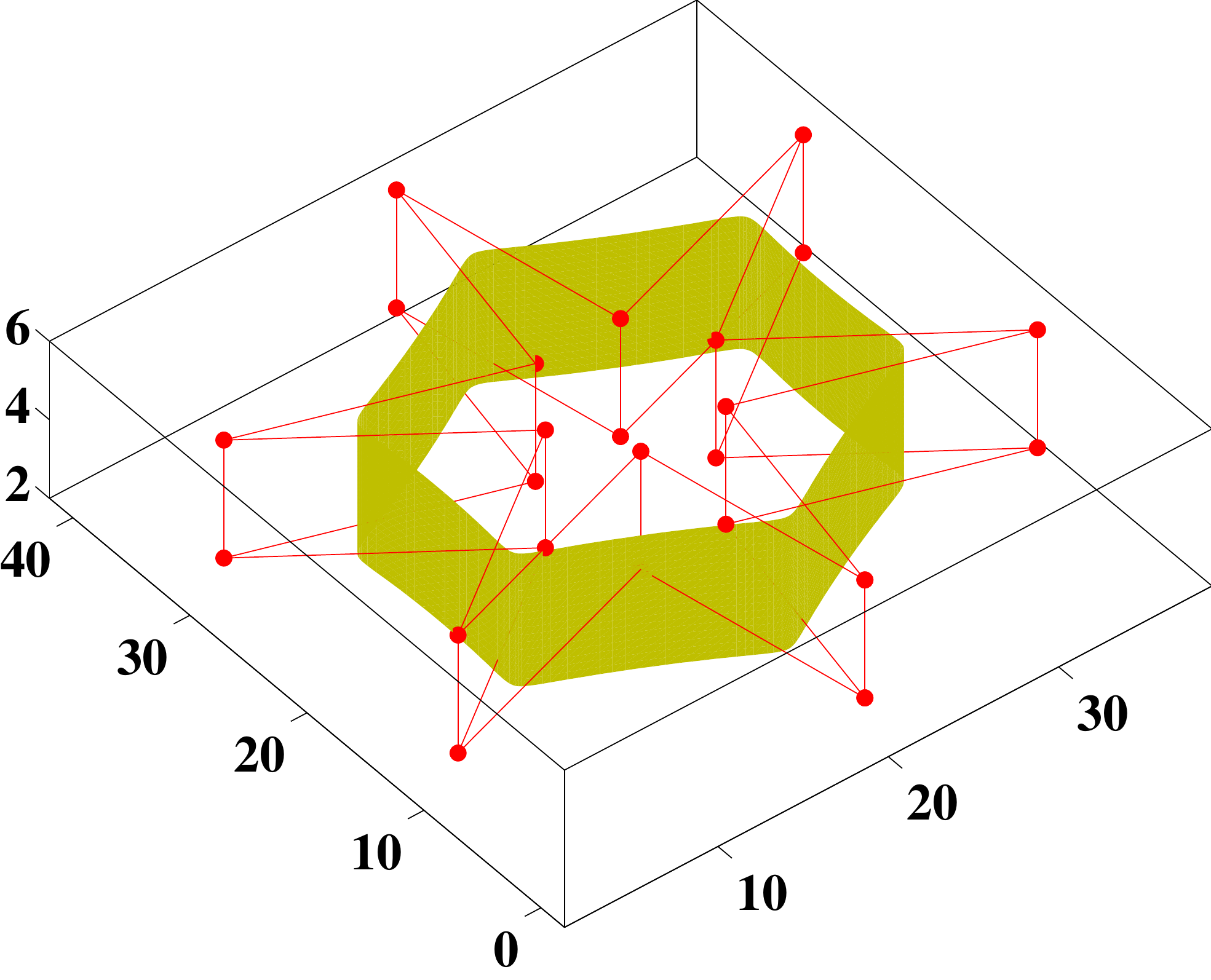, width=1.9 in}  & \epsfig{file=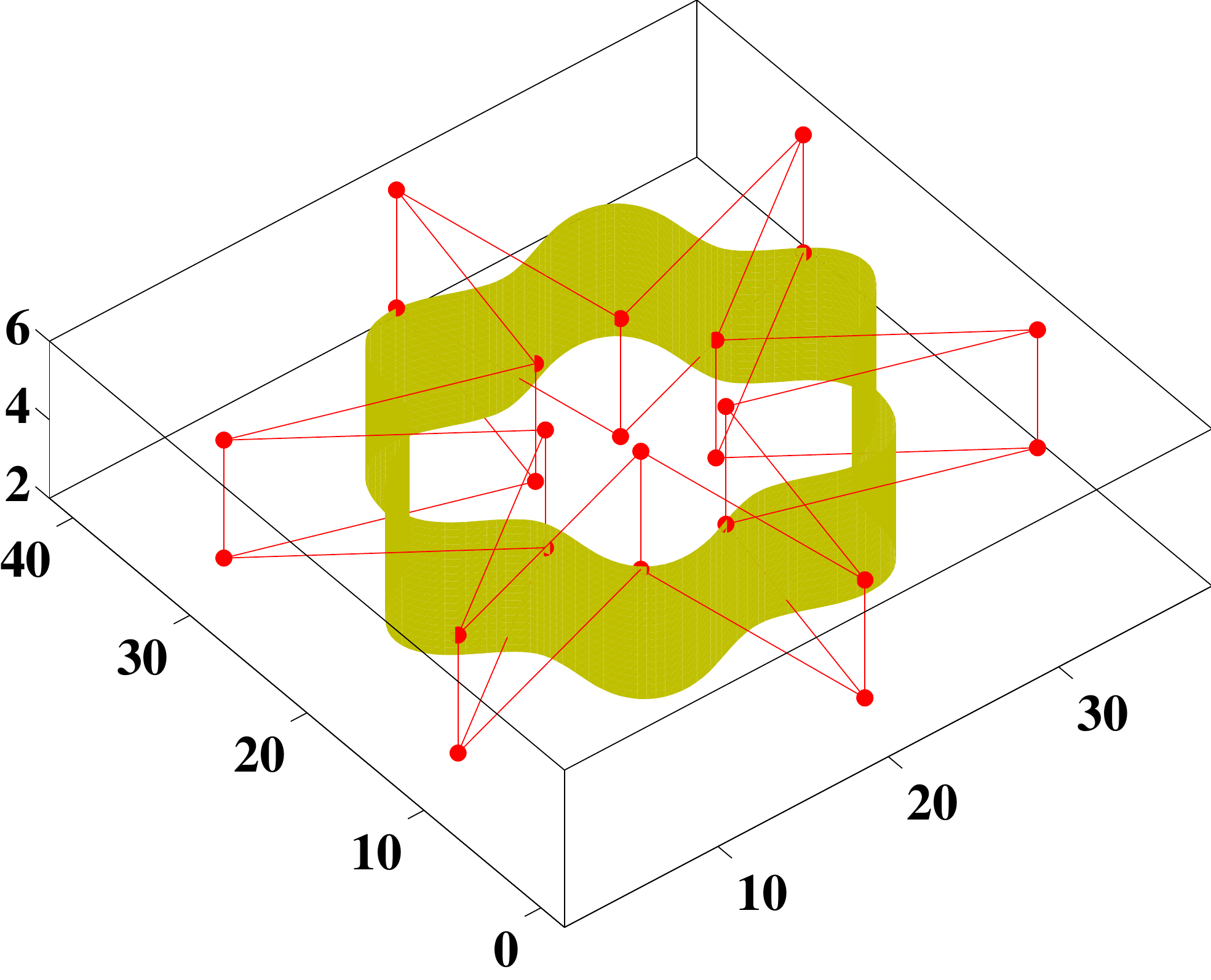, width=1.9 in}\\
(g) $(\alpha,\beta)=(0,-\frac{1}{256})$ &(h) $(\alpha,\beta)=(\frac{1}{16},-\frac{1}{48})$  & (i) $(\alpha,\beta)=(\frac{1}{10},-\frac{49}{1152})$ & \\ \\
\epsfig{file=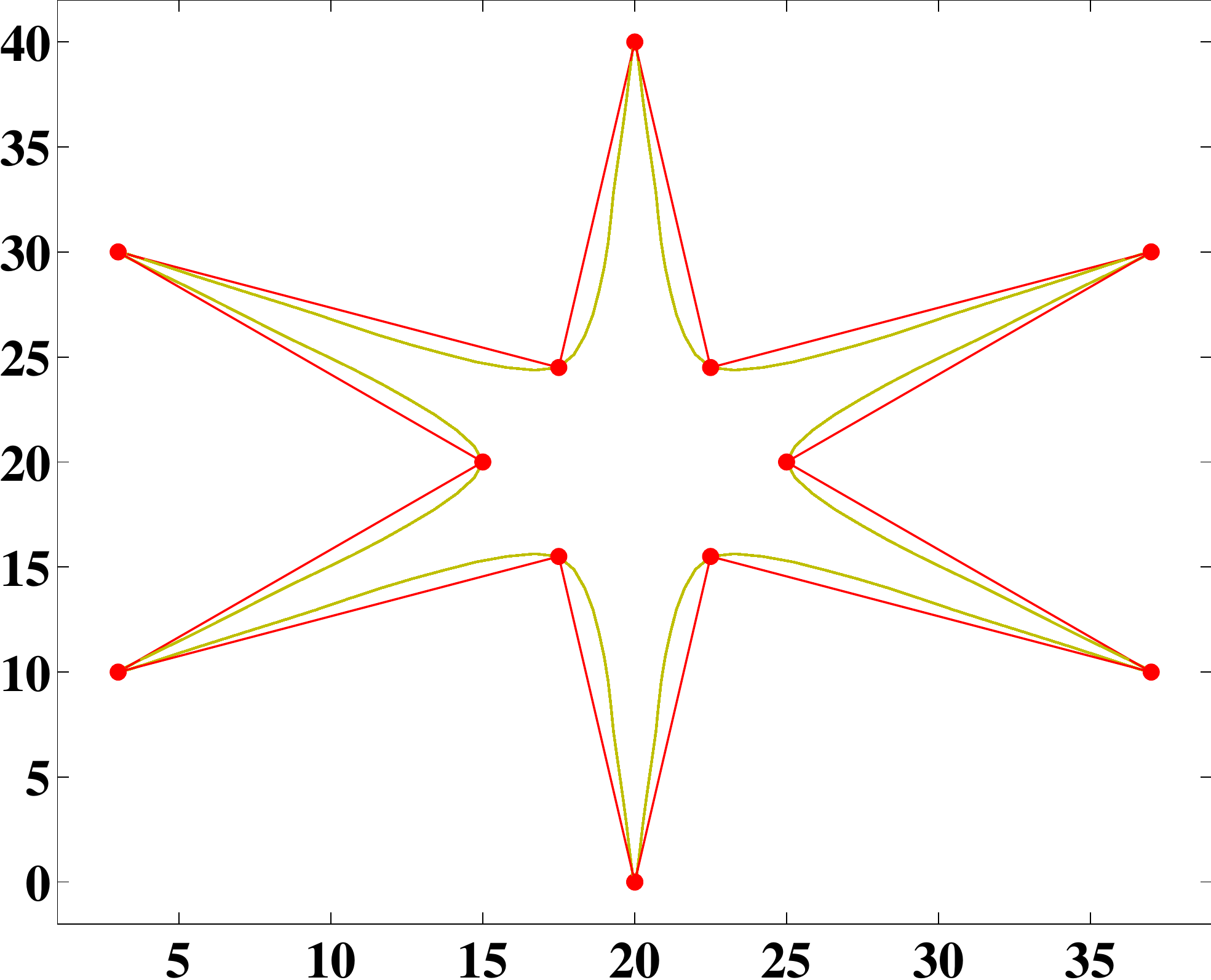, width=1.9 in} & \epsfig{file=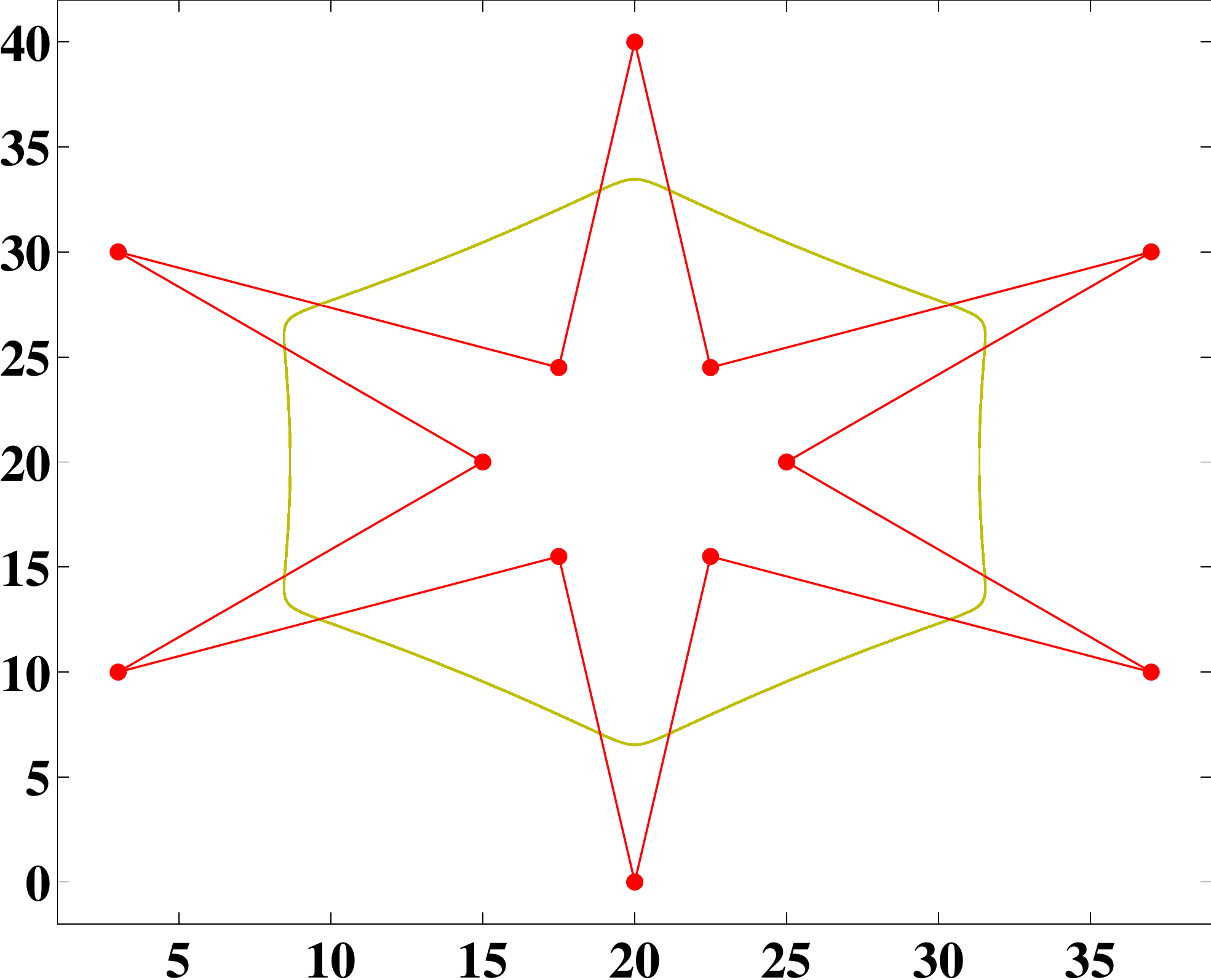, width=1.9 in}  & \epsfig{file=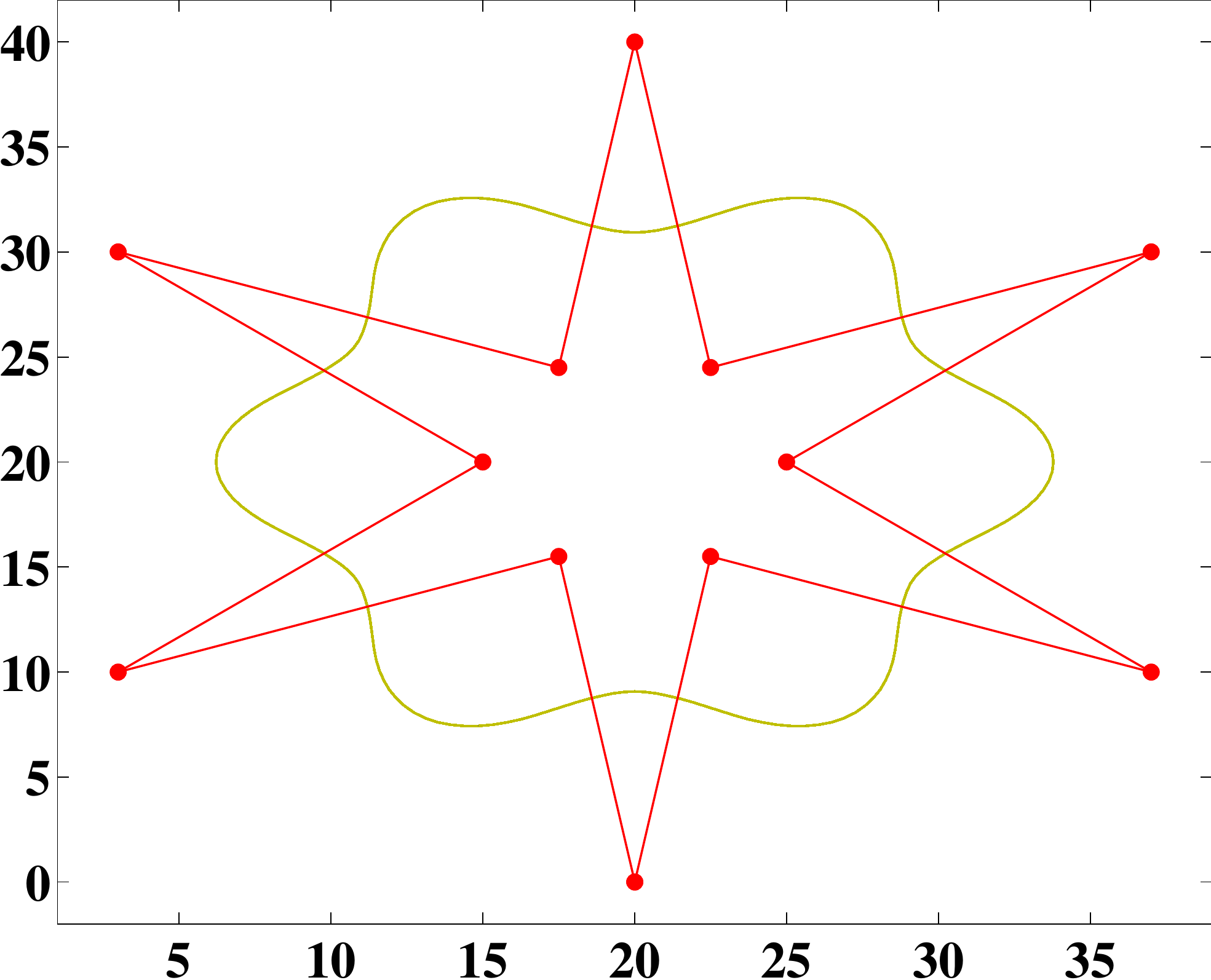, width=1.9 in}\\
(j) & (k) & (l)
 \end{tabular}
\end{center}
 \caption[Surfaces generated by the tensor product of scheme $S_{a_{5}}$.]{\label{p5-5-point-b}\emph{(a)-(c) $\&$ (g)-(i) are the surfaces generated by the tensor product of scheme $S_{a_{5}}$. (d)-(f) are the mirror images of the parts inside the blue rectangles of (a)-(c) respectively, whereas (j)-(l) are the 2-dimensional images in $xy$-planes of (g)-(i) respectively.}}
\end{figure}
\begin{figure}[!h] 
\begin{center}
\begin{tabular}{cccc}
\epsfig{file=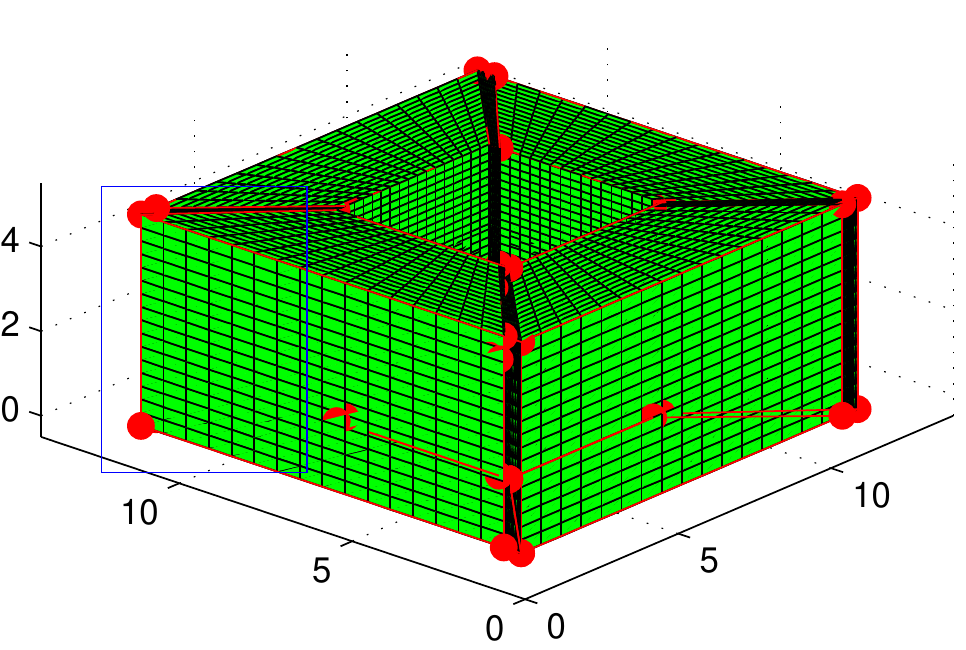, width=1.9 in} & \epsfig{file=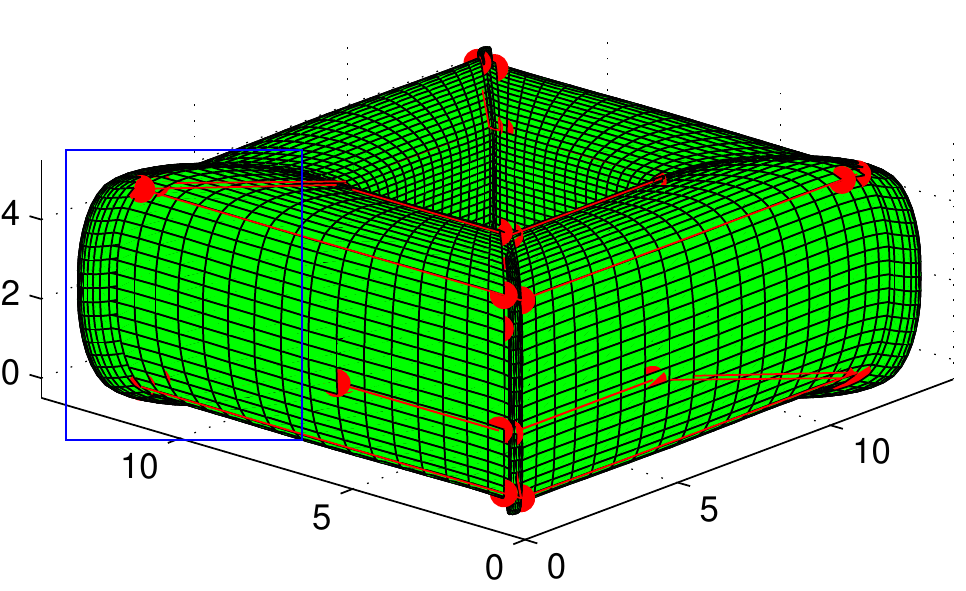, width=1.9 in}  & \epsfig{file=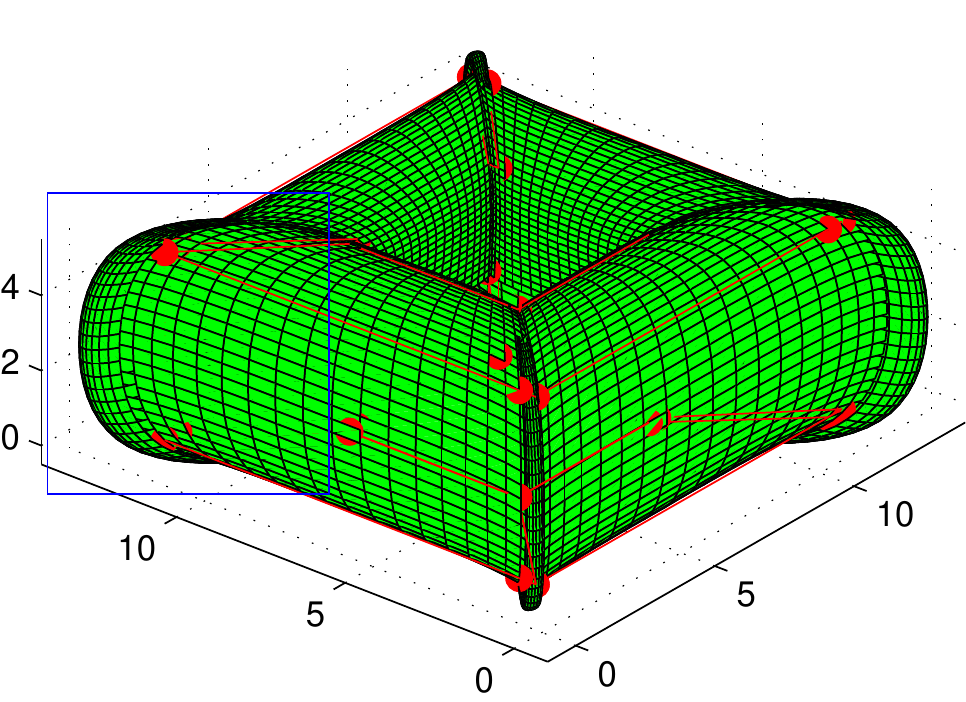, width=1.9 in}\\
(a) $2$-point scheme & (b) $4$-point scheme & (c) $6$-point scheme \\ \\
\epsfig{file=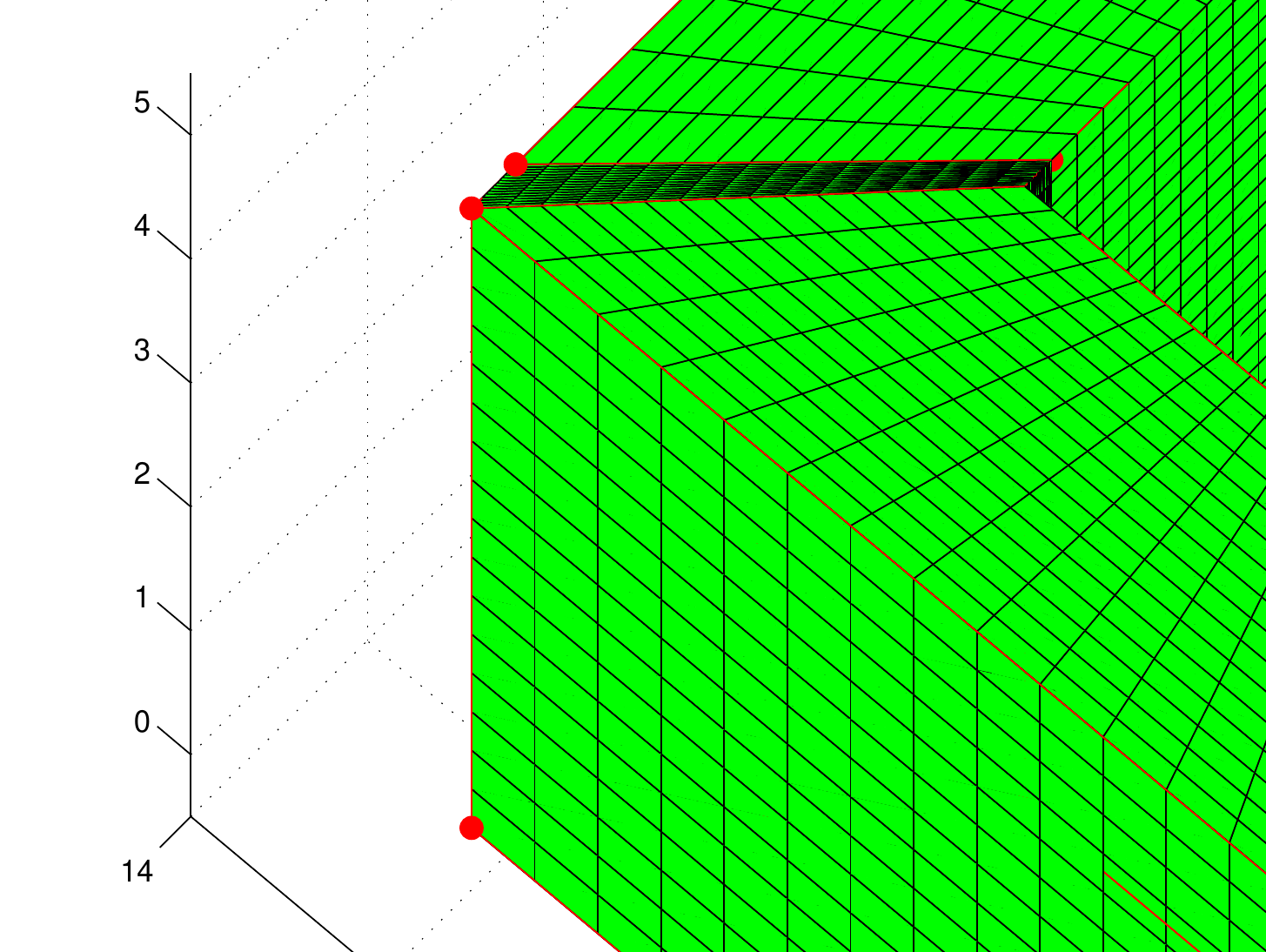, width=1.9 in} & \epsfig{file=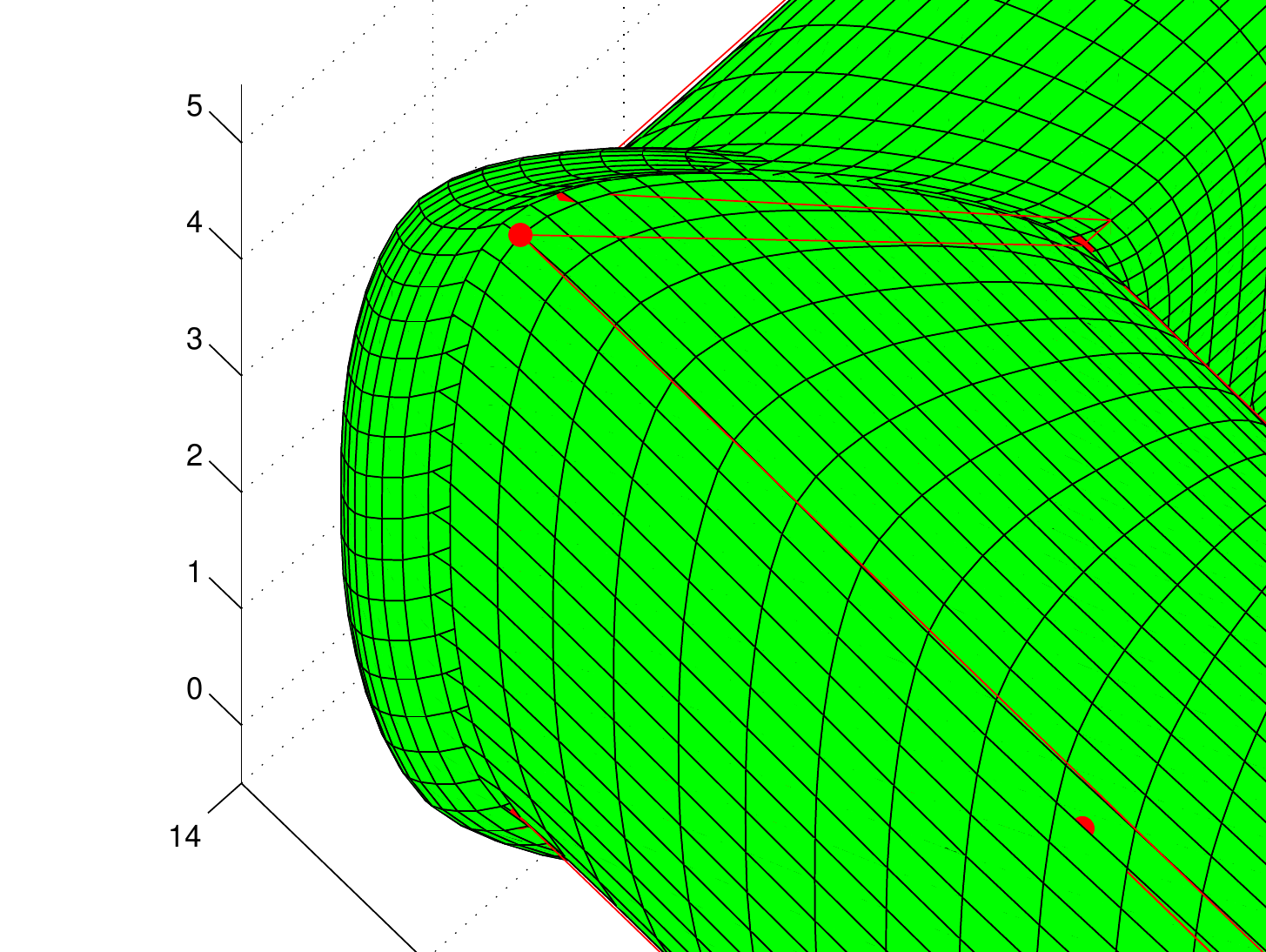, width=1.9 in}  & \epsfig{file=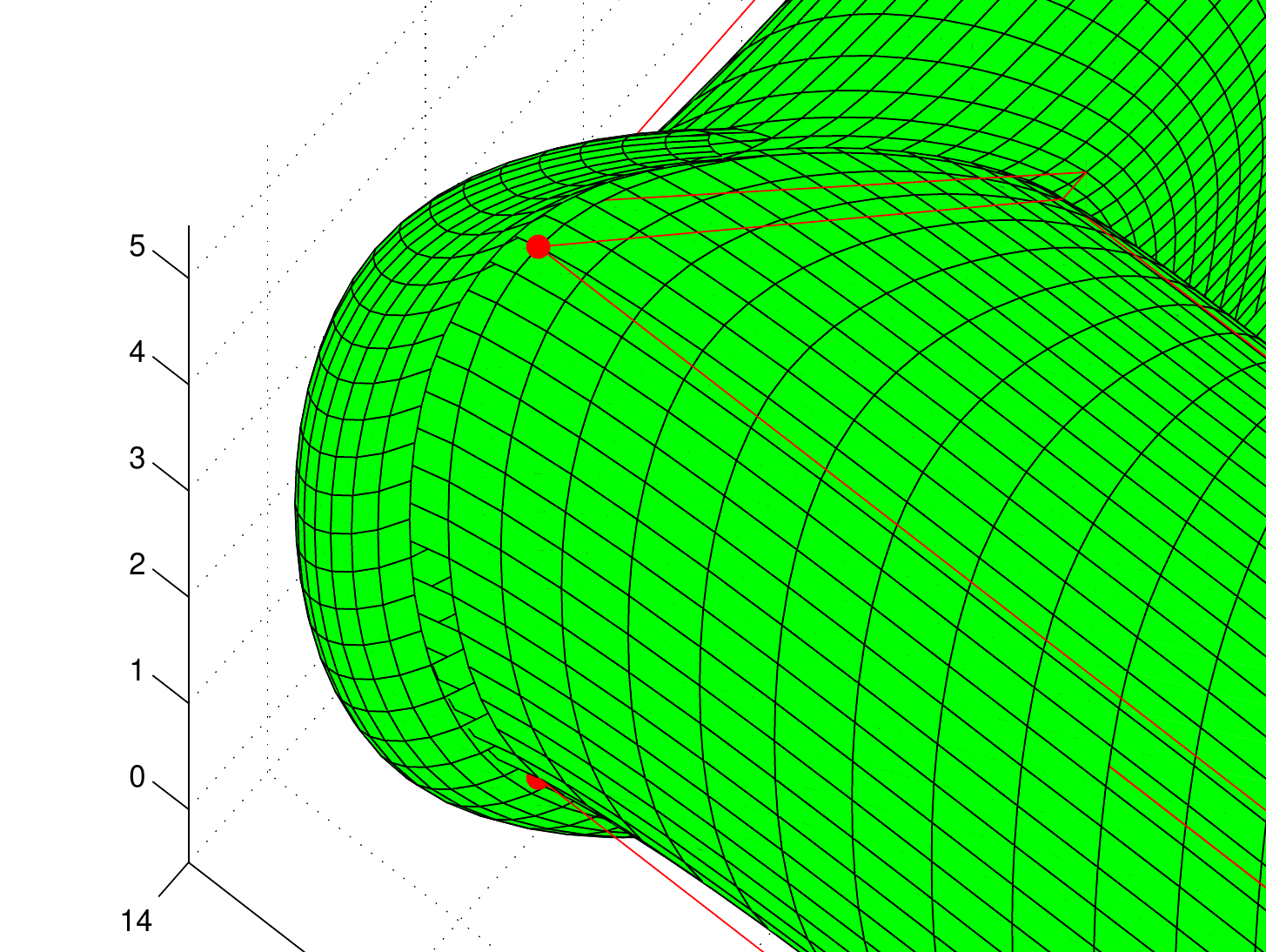, width=1.9 in}\\
(d) & (e) & (f) \\ \\
\epsfig{file=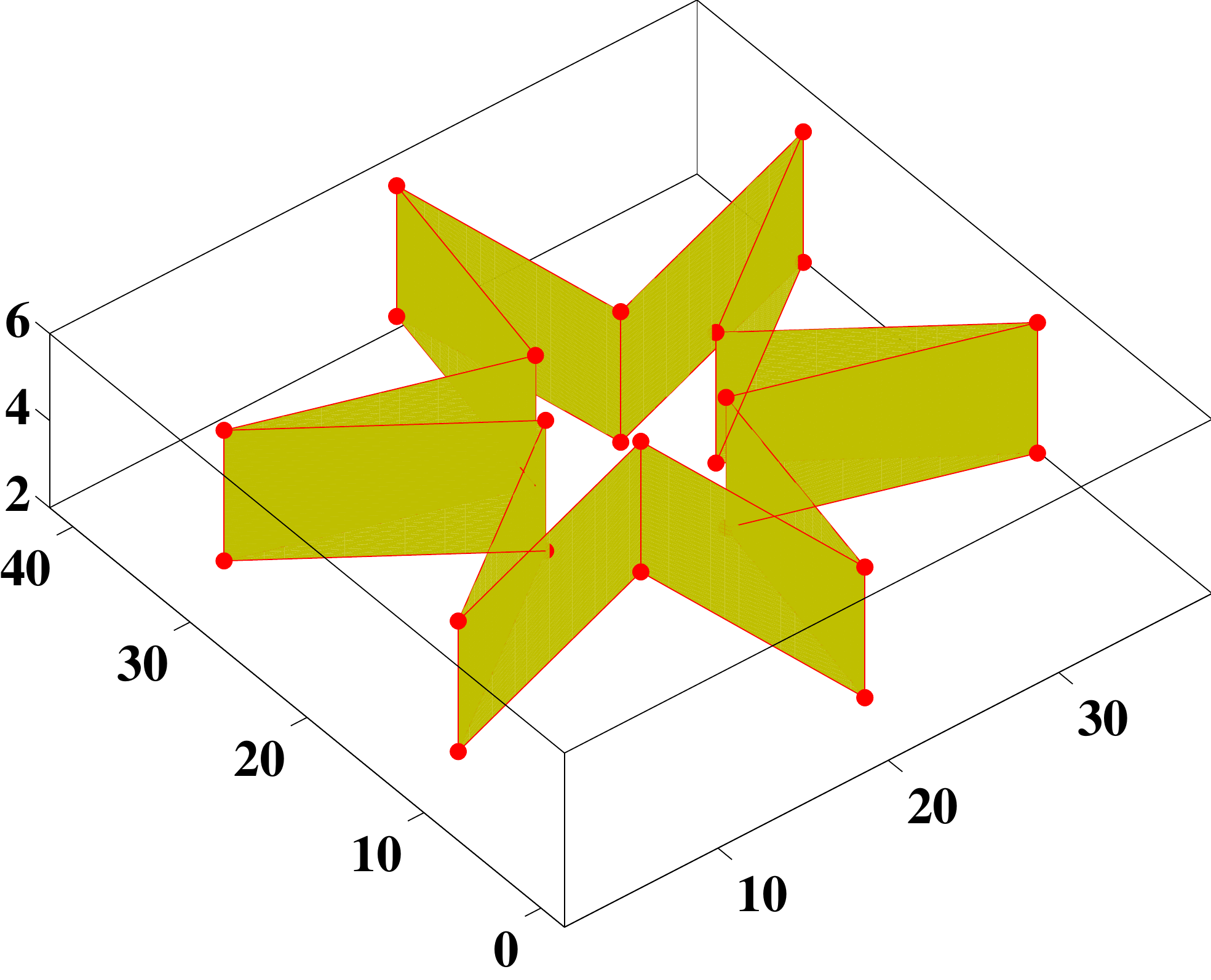, width=1.9 in} & \epsfig{file=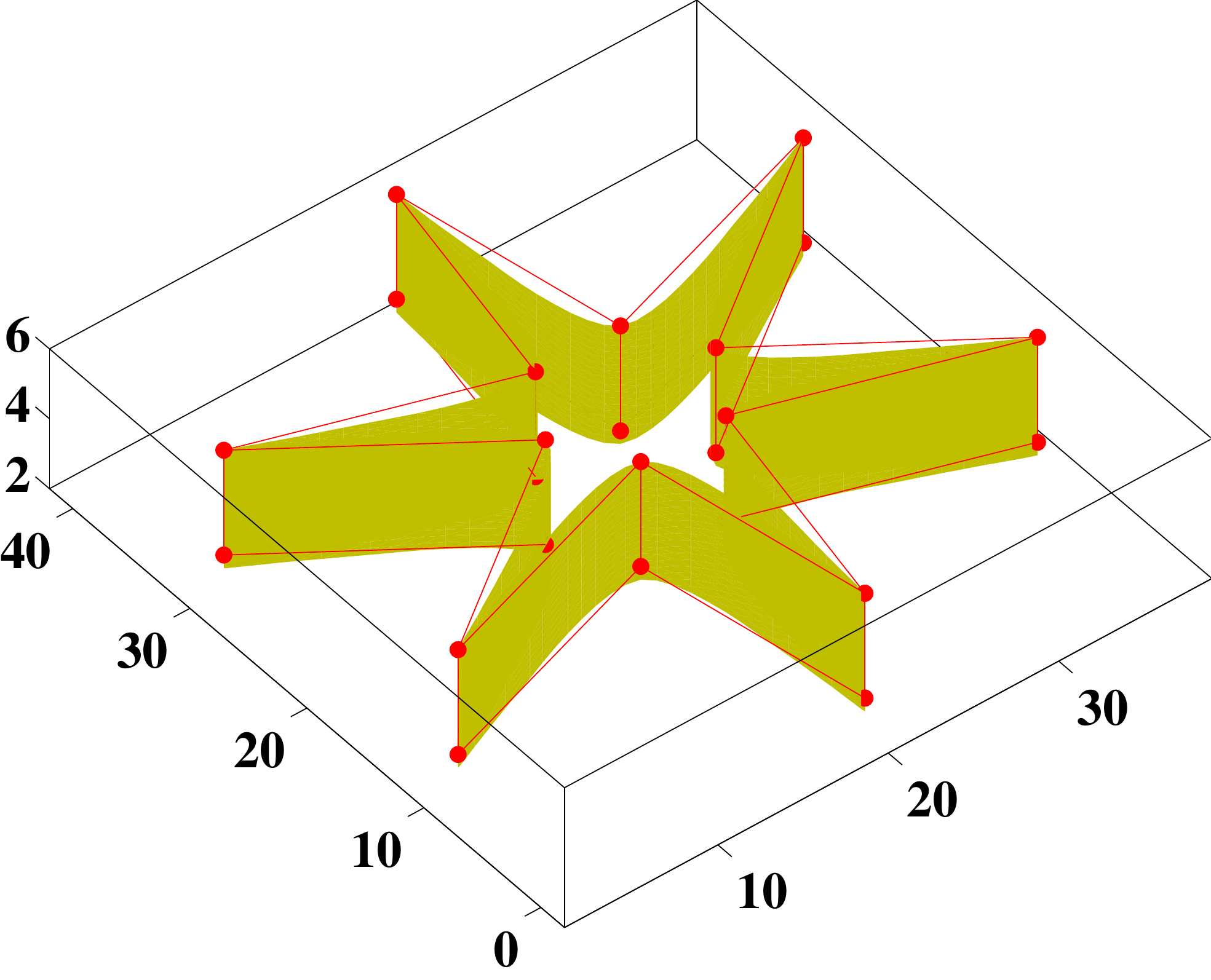, width=1.9 in}  & \epsfig{file=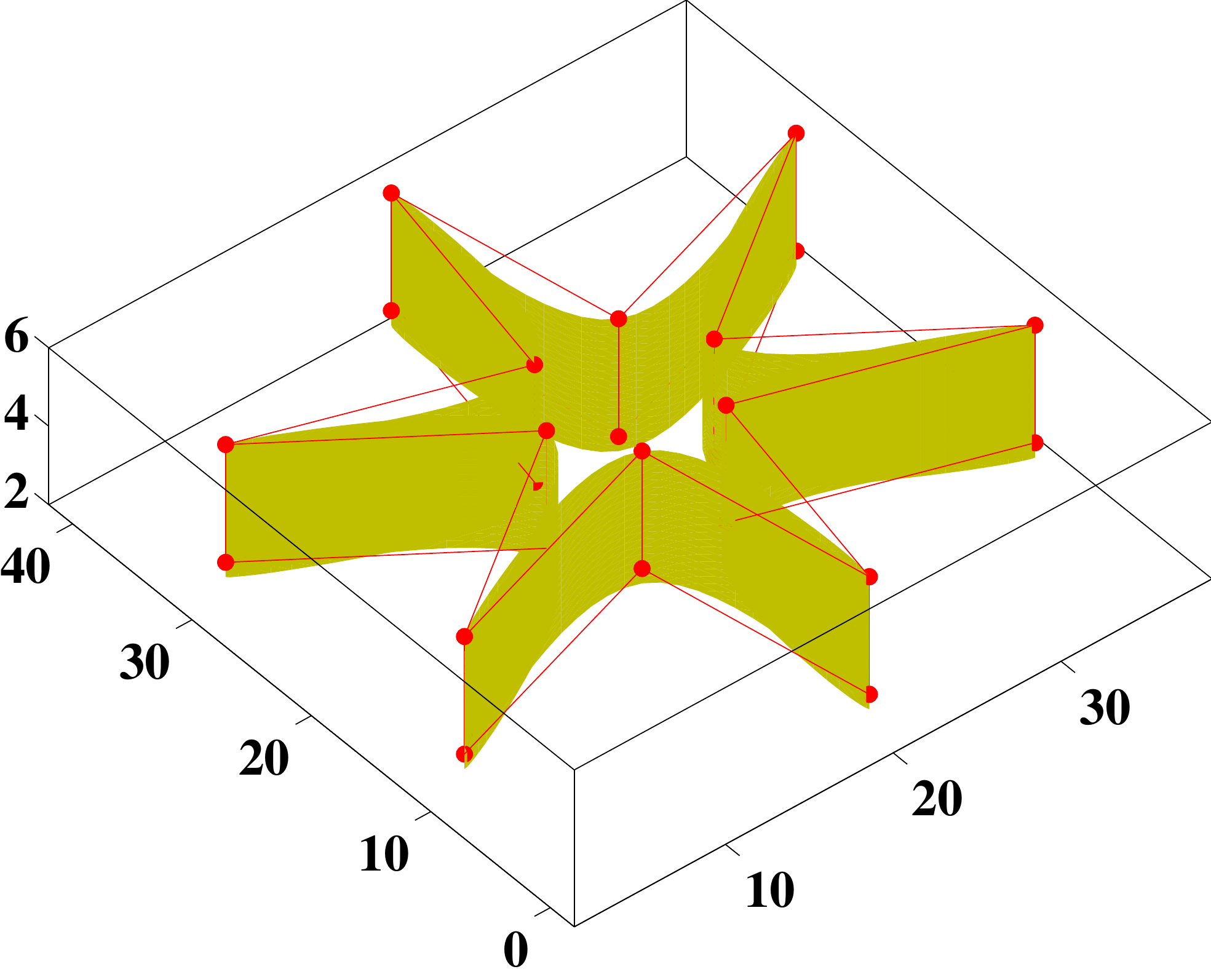, width=1.9 in}\\
(g) $2$-point scheme & (h) $4$-point scheme & (i) $6$-point scheme \\ \\
\epsfig{file=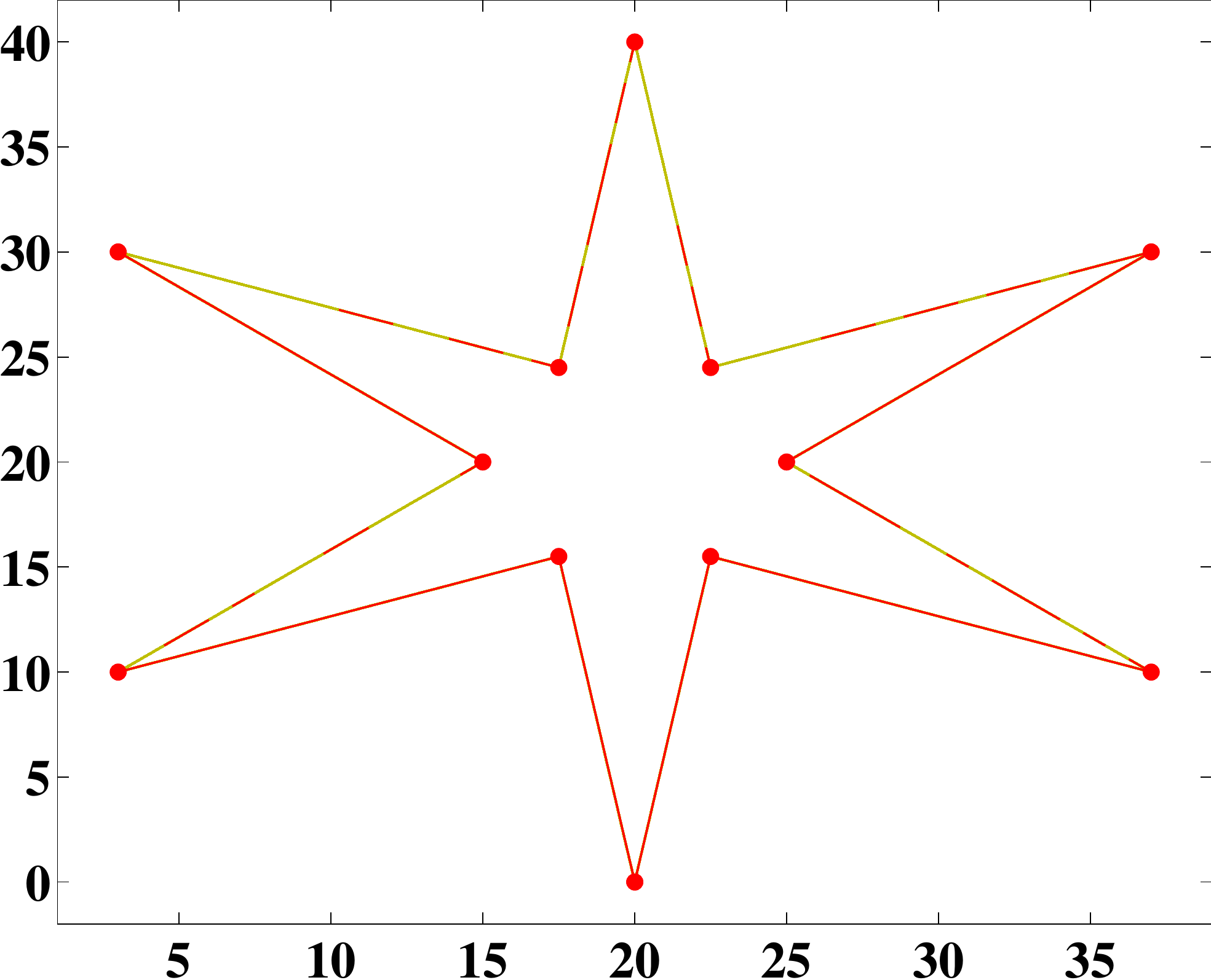, width=1.9 in} & \epsfig{file=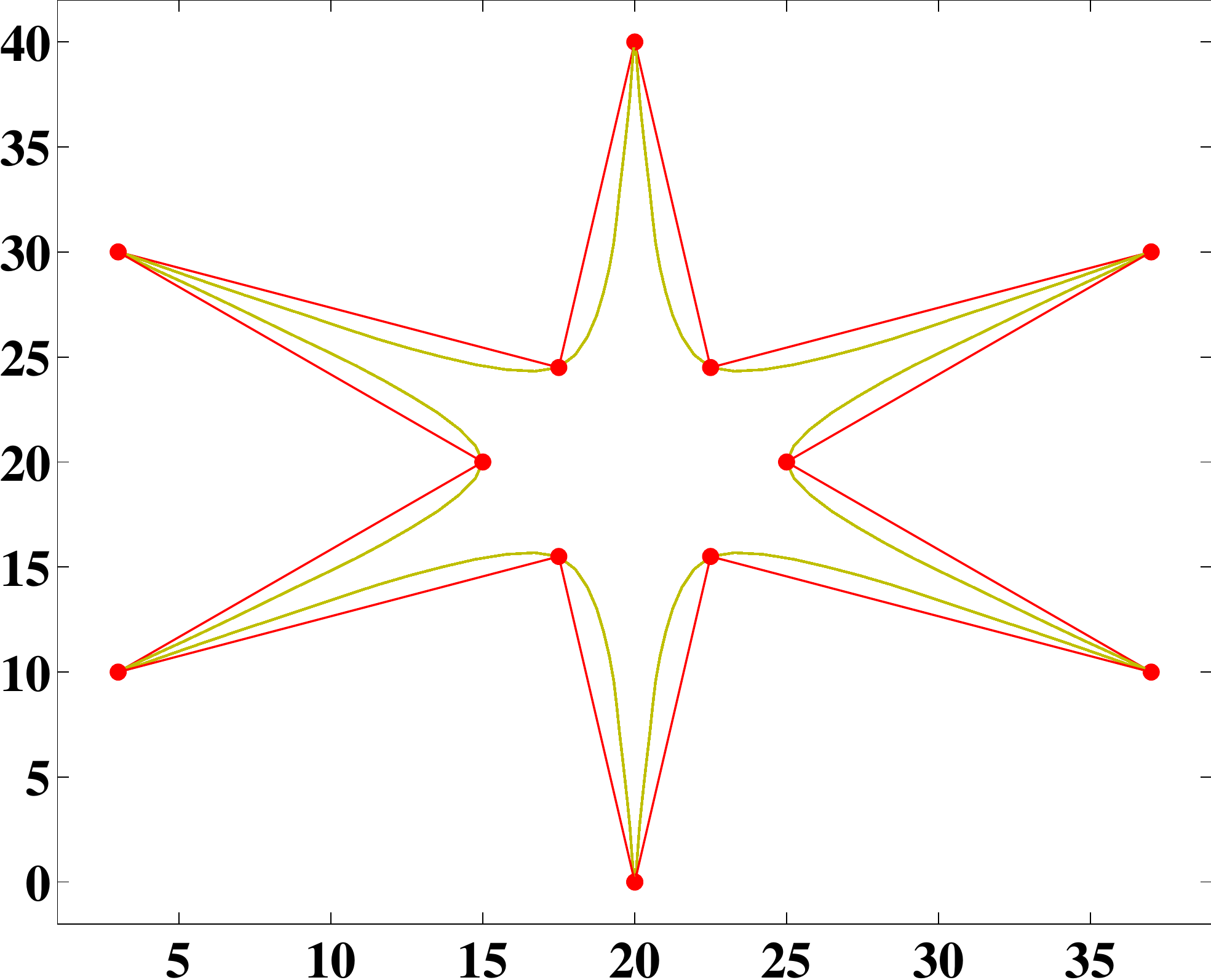, width=1.9 in}  & \epsfig{file=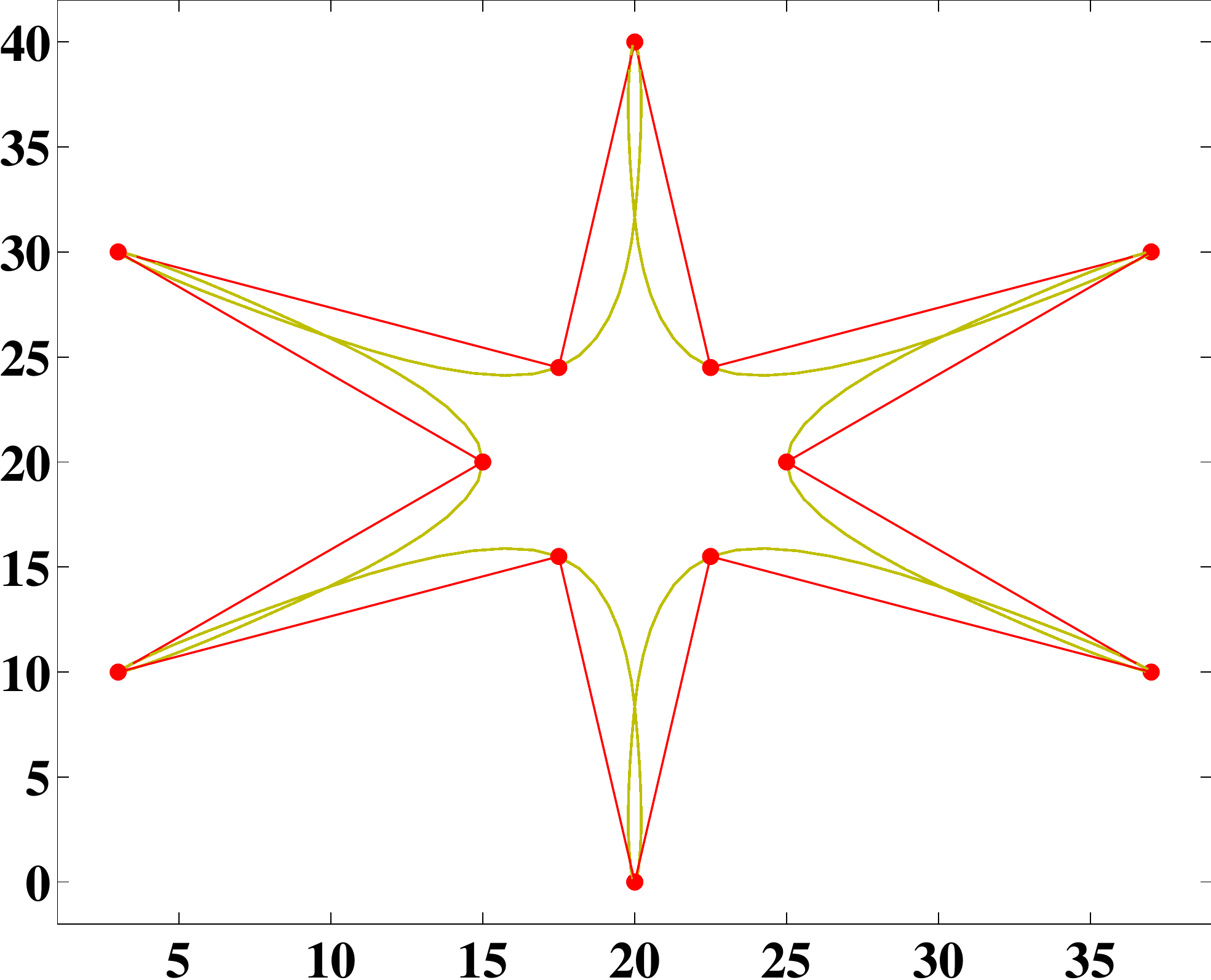, width=1.9 in}\\
(j) & (k) & (l)
 \end{tabular}
\end{center}
 \caption[Surfaces generated by the tensor product of the schemes of Deslauriers and Dubuc \cite{Dubuc}.]{\label{p5-DD-b}\emph{(a)-(c) $\&$ (g)-(i) are the surfaces generated by the tensor product schemes of Deslauriers and Dubuc \cite{Dubuc}. (d)-(f) are the mirror images of the parts inside the blue rectangles of (a)-(c) respectively, whereas (j)-(l) are the 2-dimensional images in $xy$-planes of (g)-(i) respectively.}}
\end{figure}

\section{Numerical examples and comparisons}\label{Numerical examples and comparisons}

This section deals with the numerical examples of the proposed families of subdivision schemes. We also give some numerical and mathematical companions of the proposed schemes with the existing schemes. Numerical performance of $2$-point, $3$-point, $4$-point, $5$-point, $6$-point and $7$-point relaxed subdivision schemes are shown in Figures \ref{p5-3-point}(a)-\ref{p5-3-point}(b), \ref{p5-5-point}(a)-\ref{p5-5-point}(b) and \ref{p5-7-point}(a)-\ref{p5-7-point}(b) respectively. From these figures, it is easy to see that the tension parameters which are involved in the proposed schemes allow us to draw different models from same initial points. Similarly, graphical behaviors of interpolatory $4$-point, $6$-point and $8$-point schemes for different values of tension parameters are shown in Figures \ref{p5-3-point}(c), \ref{p5-5-point}(c) and \ref{p5-7-point}(c) respectively. An interesting graphical behavior of $3$-point, $5$-point and $7$-point schemes is shown in Figures \ref{p5-3-point-1}-\ref{p5-7-point-1} respectively.

We apply these schemes to two different initial models with same values of the tension parameters. From these figures, we observe that the artifacts in the limit curve can be remove either by changing values of the tension parameters or by changing the initial polygons. While schemes proposed by Deslauriers and Dubuc \cite{Dubuc}, which are also the special cases of the proposed schemes for specific values of tension parameters $\alpha$ and $\beta$, do not have this characteristic (see Figure \ref{p5-DD-u}). Figures \ref{p5-3-point-b}, \ref{p5-5-point-b} and \ref{p5-DD-b} show that the surfaces produce by the tensor product schemes of proposed primal schemes also give better numerical results than that of the tensor product schemes of primal schemes \cite{Dubuc}. Table \ref{comparison-table-DD} gives comparisons between proposed families of primal schemes and the family of primal interpolatroy schemes of Deslauriers and Dubuc \cite{Dubuc}. The parameters used in our families of subdivision schemes not only increase the choices of drawing different shapes but also increase the polynomial reproduction, polynomial generation and smoothness of the proposed schemes. Moreover, we can draw interpolatory and approximating curves by using a single member of the family. Table \ref{comparison-table-all} gives the comparisons of the proposed primal schemes with the primal approximating schemes of \cite{Hameed, Mustafa10}. Since approximating subdivision schemes give more smoothness in limits curves as comparative to the interpolating and combined schemes, hence some of the proposed schemes give lower level of continuity than that of the schemes of \cite{Hameed, Mustafa10}. But the degrees of polynomial reproduction of the proposed schemes are higher than that of these approximating primal schemes.

\begin{table}[!h] 
\caption[Comparison of the proposed schemes with the existing schemes.]{\label{comparison-table-DD}\emph{Let NTP, RD, GD, MC, Pr, In, Cm, Re and DD-schemes be the number of tension parameter(s), Degree of polynomial reproduction, degree of polynomial generation, maximum continuity, primal, interpolatory, combined, relaxed and schemes of Deslauriers and Dubuc \cite{Dubuc} respectively. Where $N \in \mathbb{N}_{0}$ with $N < 5$. Here $\forall$ and $\forall^{'}$ stand for "for all value(s) of parameter(s)" and "not for all value(s) of parameter(s) (at specific values of parameter(s))" respectively.}}
\centering \setlength{\tabcolsep}{0.5pt}
\begin{tabular}{||c|c|c|c|c|c||}
  \hline \hline
  Family of schemes                & Type                 & NTP               & RD           & GD         & MC \\
\hline \hline
$(2N+2)$-point DD-schemes        & Pr/In                & $0$               & $2N+1$       & $2N+1$     & $C^{N}$     \\
  \hline
  $(2N+2)$-point schemes                     & Pr/Cm/Re            & $1$                & $2N+1$ $\forall$       & $2N+3$ $\forall^{'}$     & At least $C^{N+1}$    \\
  \hline
  $(2N+3)$-point schemes                     & Pr/Cm/Re            & $2$                & $2N+3$ $\forall^{'}$       & $2N+5$ $\forall^{'}$    & At least $C^{4}$ \\
  \hline
  $(2N+4)$-point schemes                     & Pr/In               & $1$                & $2N+3$ $\forall^{'}$       & $2N+3$ $\forall^{'}$     & $C^{N+1}$ \\
  \hline \hline
\end{tabular}
\end{table}

\begin{table}[!h] 
\caption[]{\label{comparison-table-all}\emph{Comparison of the proposed schemes with existing primal relaxed schemes having the same support width. Let SW, NTP, GD, GD1, RD and RD1 be the support width, number of tension parameter(s), degree of polynomial generation at all value(s) of tension parameter(s), degree of polynomial generation at specific value(s) of tension parameter(s), degree of polynomial reproduction for all value(s) of tension parameter(s) and degree of polynomial reproduction at specific value(s) of tension parameter(s).}}
\centering \setlength{\tabcolsep}{1.2pt}
\begin{tabular}{||c|c|c|c|c|c|c|c|c||}
\hline
\hline
Schemes & SW & NTP & Type & GD & GD1 & RD & RD1 & Continuity \\
\hline
\hline
Scheme $S_{a_{3}}$ & 6 & 2 & combined & 1 & 5 & 1 & 3 & $C^{4}$\\
\hline
Scheme \cite{Mustafa10} & 6 & 1 & approximating & 1 & 5 & 1 & 1 & $C^{1}$\\
\hline \hline
Scheme $S_{a_{4}}$ & 8 & 1 & combined & 3 & 5 & 3 & 3 & $C^{2}$ \\
\hline
Scheme \cite{Mustafa10} & 8 & 1 & approximating &3 & 7 & 1 & 1 & $C^{3}$\\
\hline
Scheme \cite{Hameed} & 8 & 1 & approximating & 1 & 3 & 1 & 1 & $C^{1}$\\
\hline \hline
Scheme $S_{a_{5}}$ & 10 & 2 & combined & 3 & 7 & 3 & 5 & $C^{4}$ \\
\hline
Scheme \cite{Mustafa10} & 10 & 1 & approximating & 5 & 9 & 1 & 1 & $C^{5}$\\
\hline \hline
Scheme $S_{a_{6}}$ & 12 & 1 & combined & 5 & 7 & 5 & 5 & $C^{3}$ \\
\hline
Scheme \cite{Mustafa10} & 12 & 1 & approximating & 7 & 11 & 1 & 1 & $C^{7}$\\
\hline \hline
Scheme $S_{a_{7}}$ & 14 & 2 & combined & 5 & 9 & 5 & 7 & $C^{4}$ \\
\hline
Scheme \cite{Hameed} & 14 & 1 & approximating & 3 & 5 & 1 & 3 & $C^{3}$\\
\hline
\hline
\end{tabular}
\end{table}

\begin{figure}[!h] 
\begin{center}
\begin{tabular}{cccc}
& & \\
& & \\
& & \\
\epsfig{file=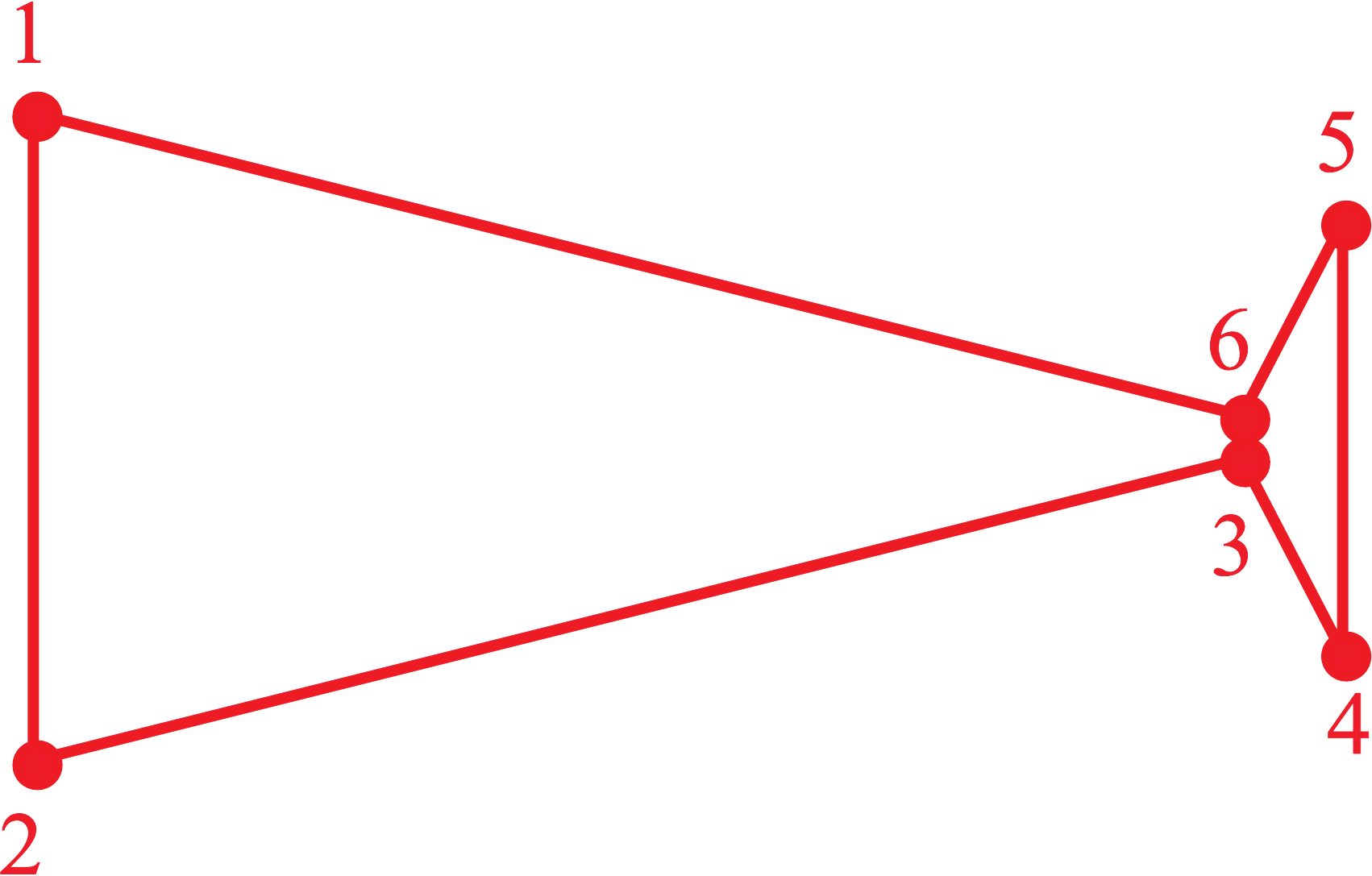, width=1.7 in} & \epsfig{file=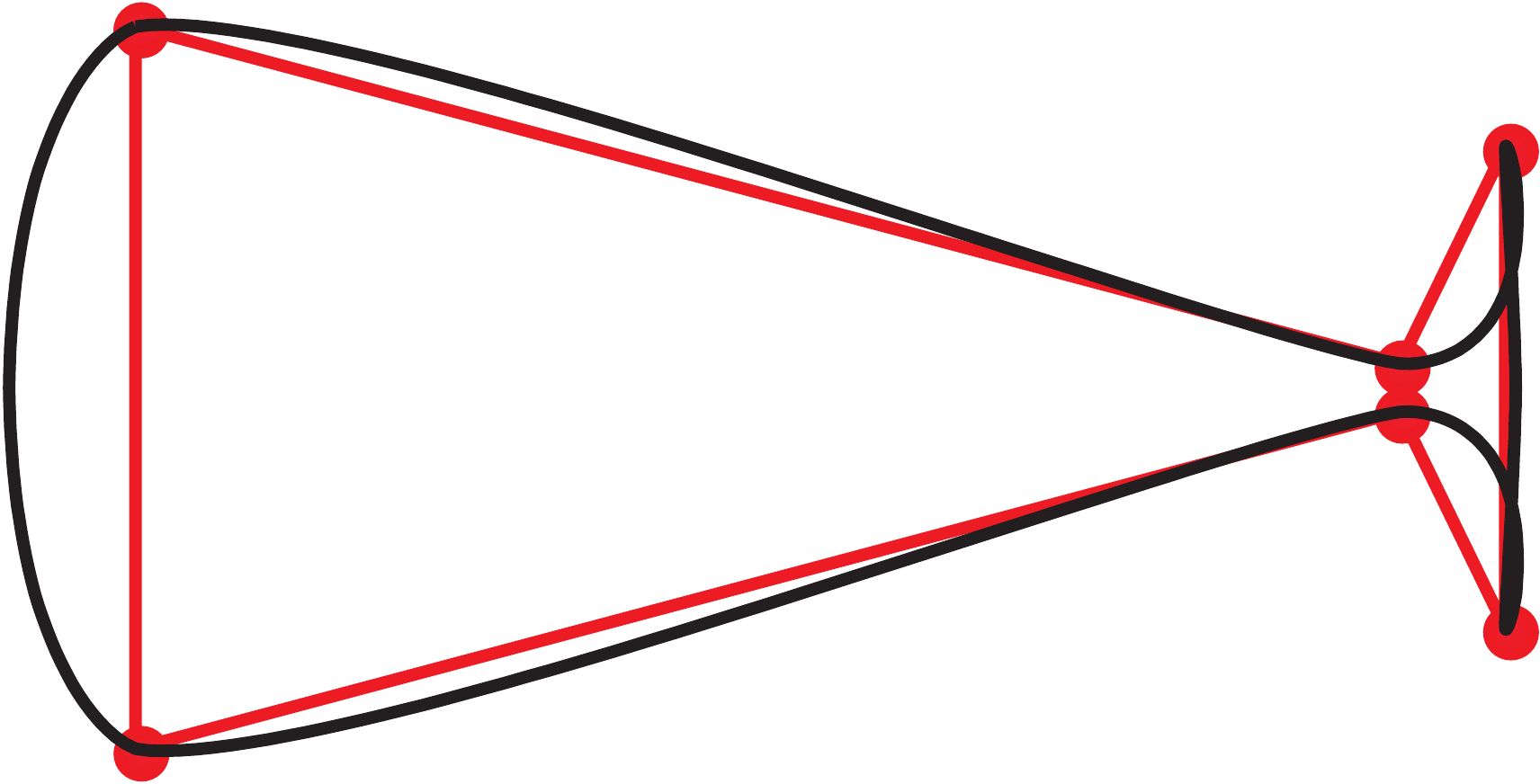, width=1.7 in}  & \epsfig{file=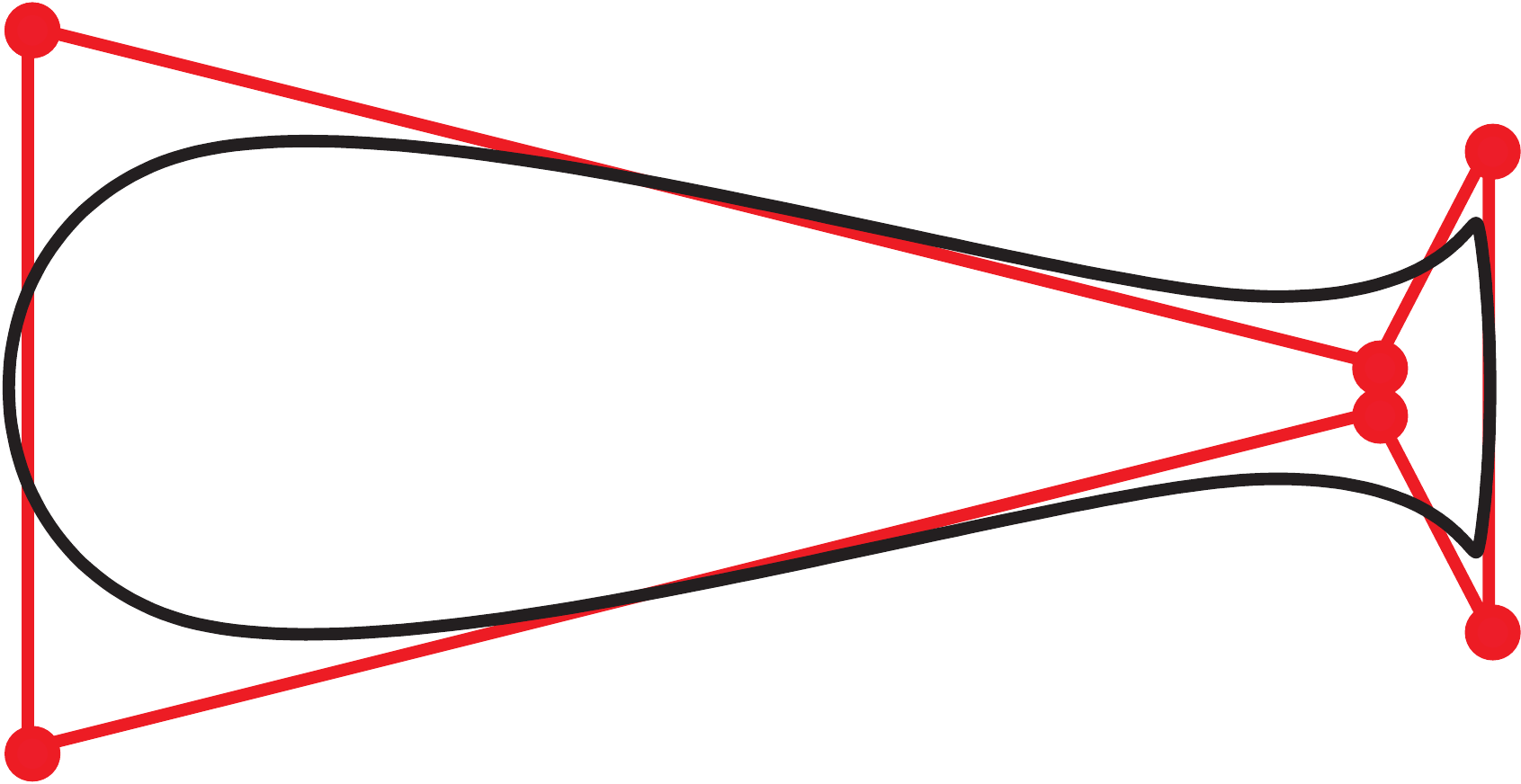, width=1.7 in}\\
\begin{scriptsize}(a) Initial control points with index\end{scriptsize} & \begin{scriptsize}(b) Interpolating all control points\end{scriptsize} & \begin{scriptsize}(c) Approximating all control points\end{scriptsize} \\
& \begin{scriptsize}$(\alpha,\beta)=(0,-0.05)$\end{scriptsize} & \begin{scriptsize}$(\alpha,\beta)=(\frac{1}{12},-\frac{1}{44})$ \end{scriptsize} \\
\epsfig{file=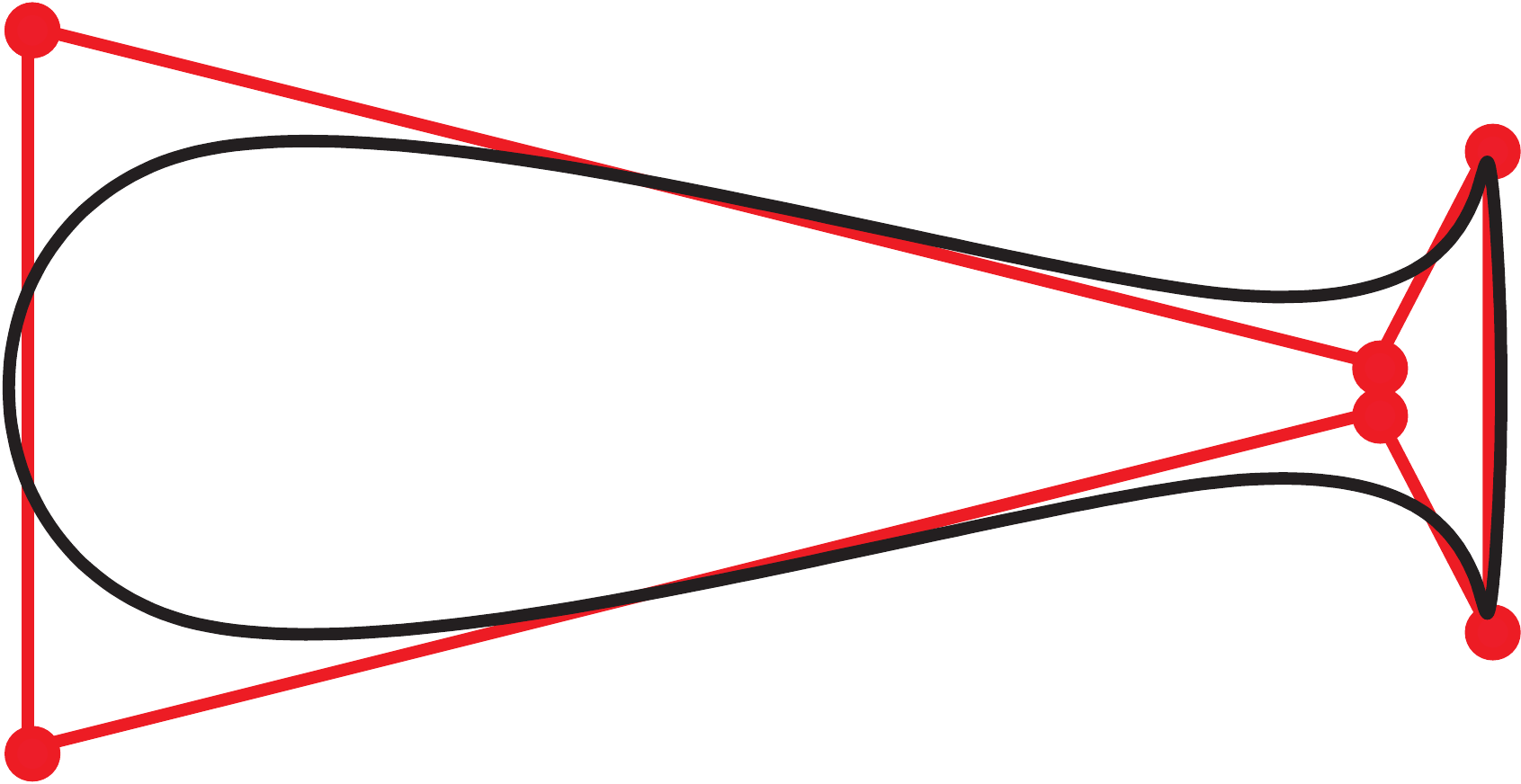,  width=1.7 in} & \epsfig{file=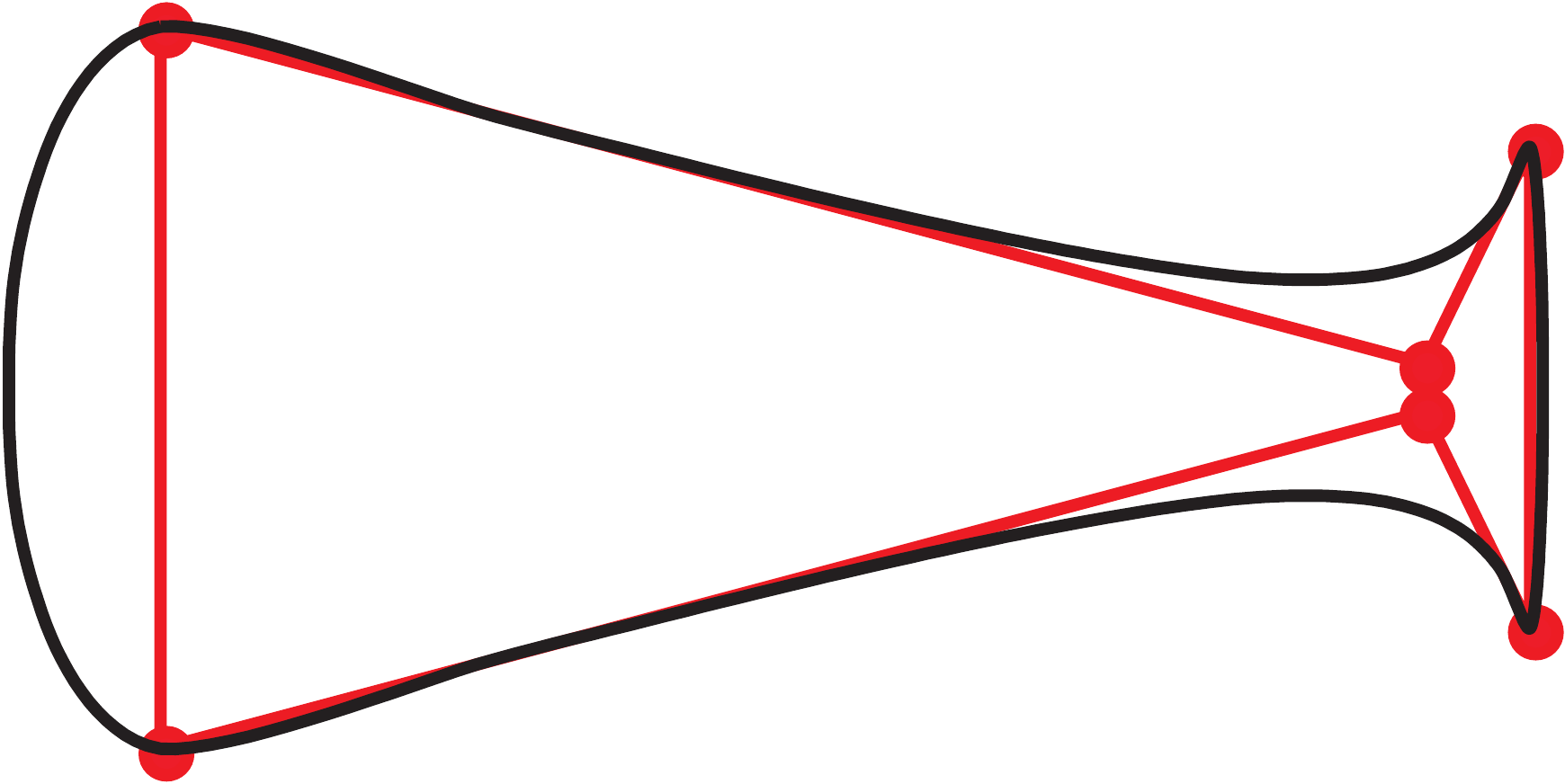, width=1.7 in}  & \epsfig{file=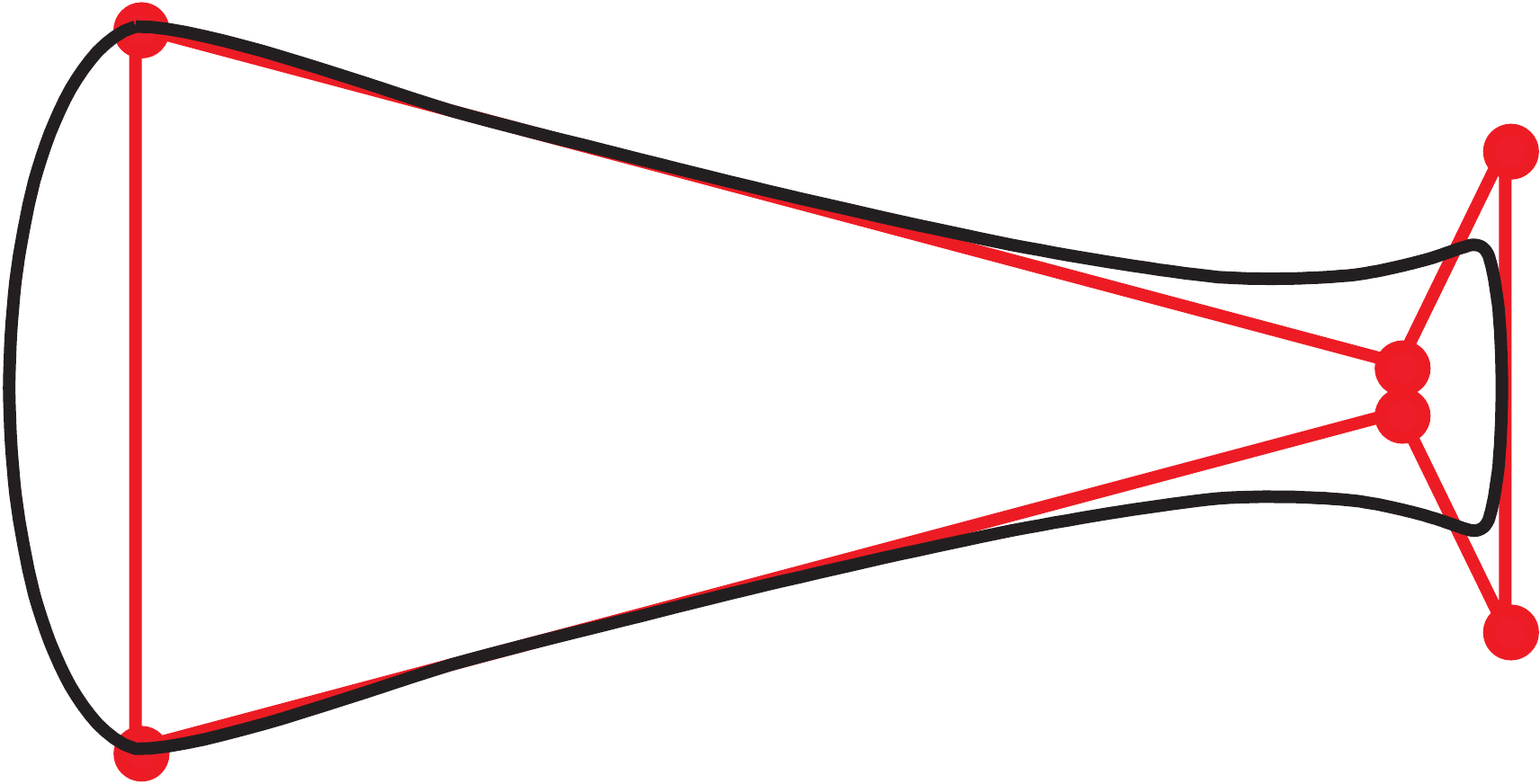, width=1.7 in}\\
\begin{scriptsize}(d) $(\alpha_{i},\beta_{i})=\left[(\frac{1}{12},-\frac{1}{44}),(\frac{1}{12},-\frac{1}{44}),(\frac{1}{12},\right.$\end{scriptsize}& \begin{scriptsize}(e) $(\alpha_{i},\beta_{i})=\left[(0,-\frac{3}{50}),(0,-\frac{3}{50})\right.$\end{scriptsize}& \begin{scriptsize}(f) $(\alpha_{i},\beta_{i})=\left[(0,-\frac{1}{20}),(0,-\frac{1}{20})\right.$\end{scriptsize}\\
\begin{scriptsize} \,\ $\left.-\frac{1}{44}),(0,-\frac{3}{50}),(0,-\frac{3}{50}),(\frac{1}{12},-\frac{1}{44})\right]$\end{scriptsize} & \begin{scriptsize} \,\ $\left.,(\frac{1}{8},0),(0,-\frac{3}{50}),(0,-\frac{3}{50}),(\frac{1}{8},0)\right]$\end{scriptsize}& \begin{scriptsize} \,\ $\left.,(\frac{1}{8},0),(\frac{1}{8},0),(\frac{1}{8},0),(\frac{1}{8},0)\right]$ \end{scriptsize} \\
\begin{scriptsize} at first subdivision level \end{scriptsize} & \begin{scriptsize} at first subdivision level \end{scriptsize} & \begin{scriptsize} at first subdivision level \end{scriptsize}
 \end{tabular}
\end{center}
\caption[Interproximate settings of scheme $S_{a_{3}}$.]{\label{interproximate-1-fig}\emph{The effect of local interpolation by our combined subdivision scheme $S_{a_{3}}$ with different $\alpha_{i}$ and $\beta_{i}$ where $i \in \mathbb{R}$ with $0< i < 7$.}}
\end{figure}
\begin{figure}[!h] 
\begin{center}
\begin{tabular}{cccc}
\epsfig{file=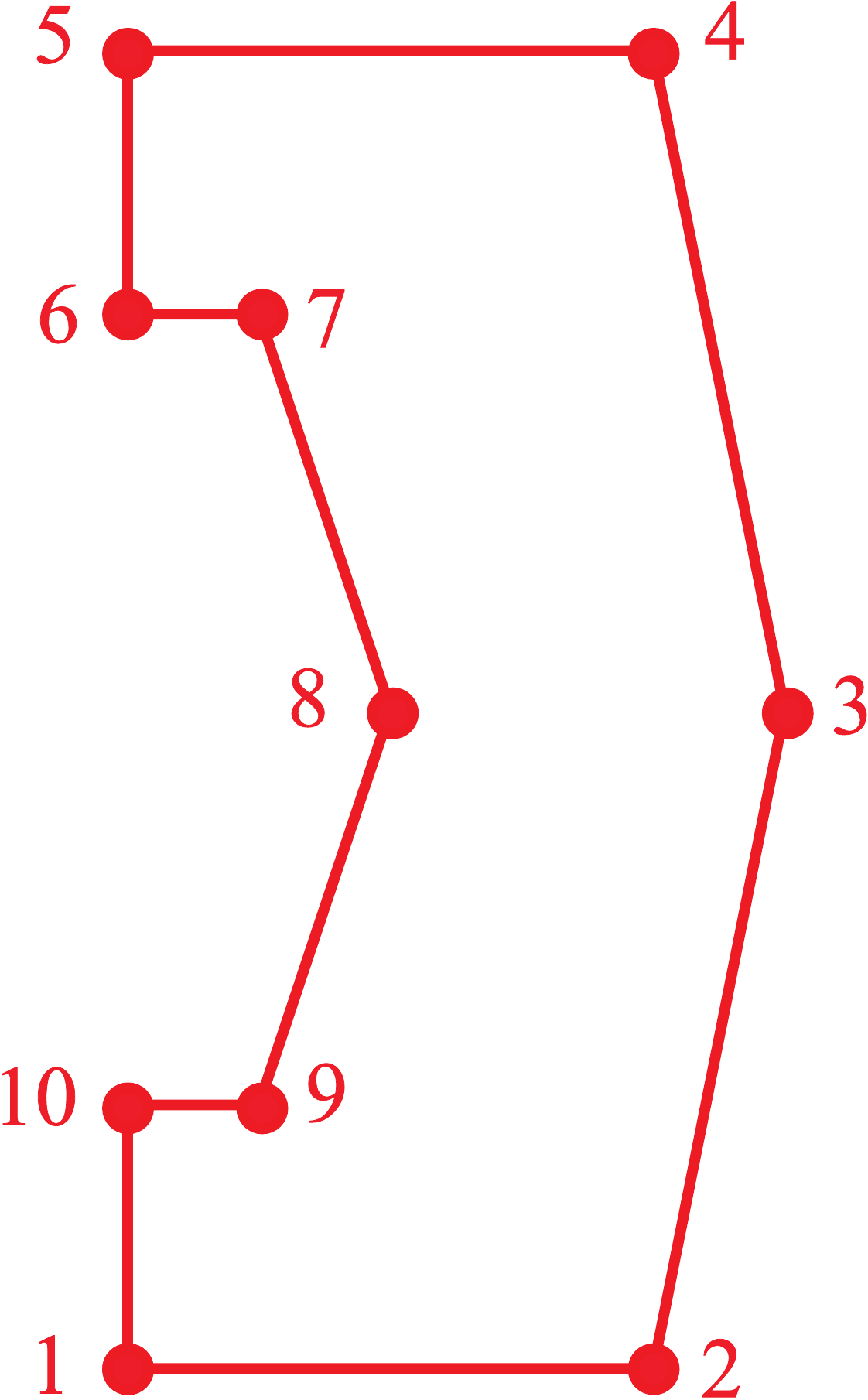, width=1.55 in} \qquad \qquad& \epsfig{file=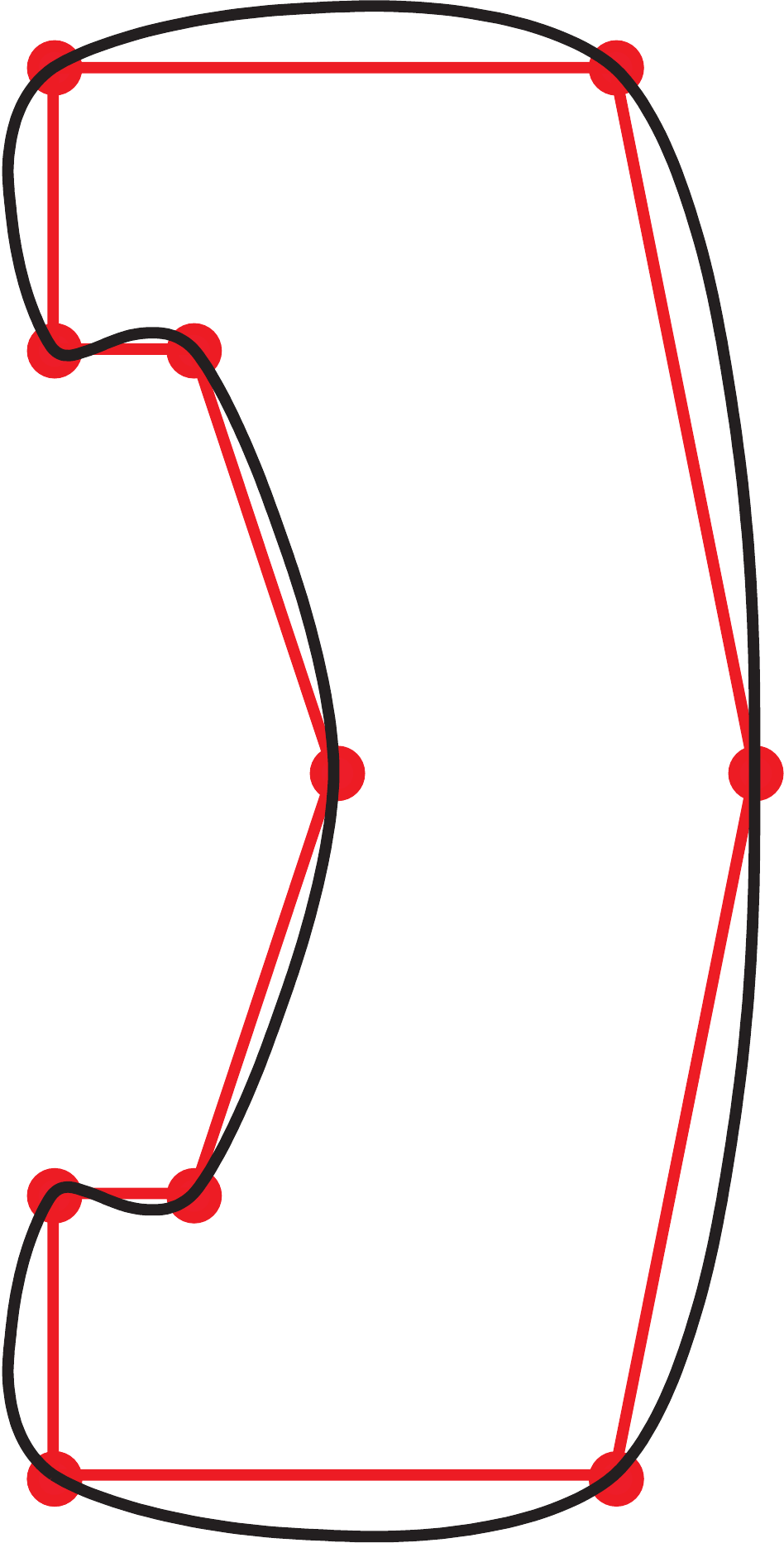, width=1.3 in} \qquad \qquad  & \epsfig{file=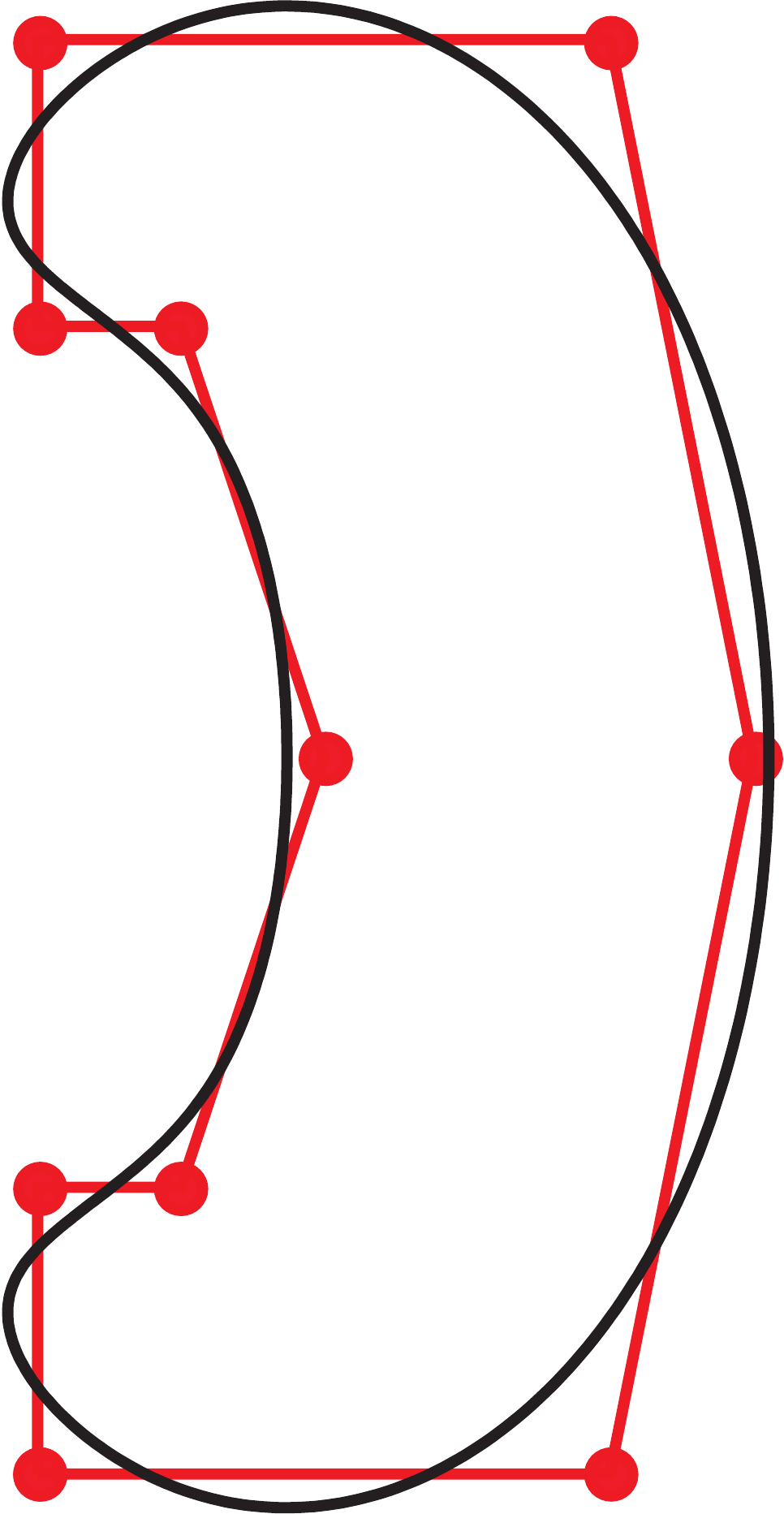, width=1.3 in}\\
(a)  \qquad \qquad& (b)  \qquad \qquad& (c) \\
\epsfig{file=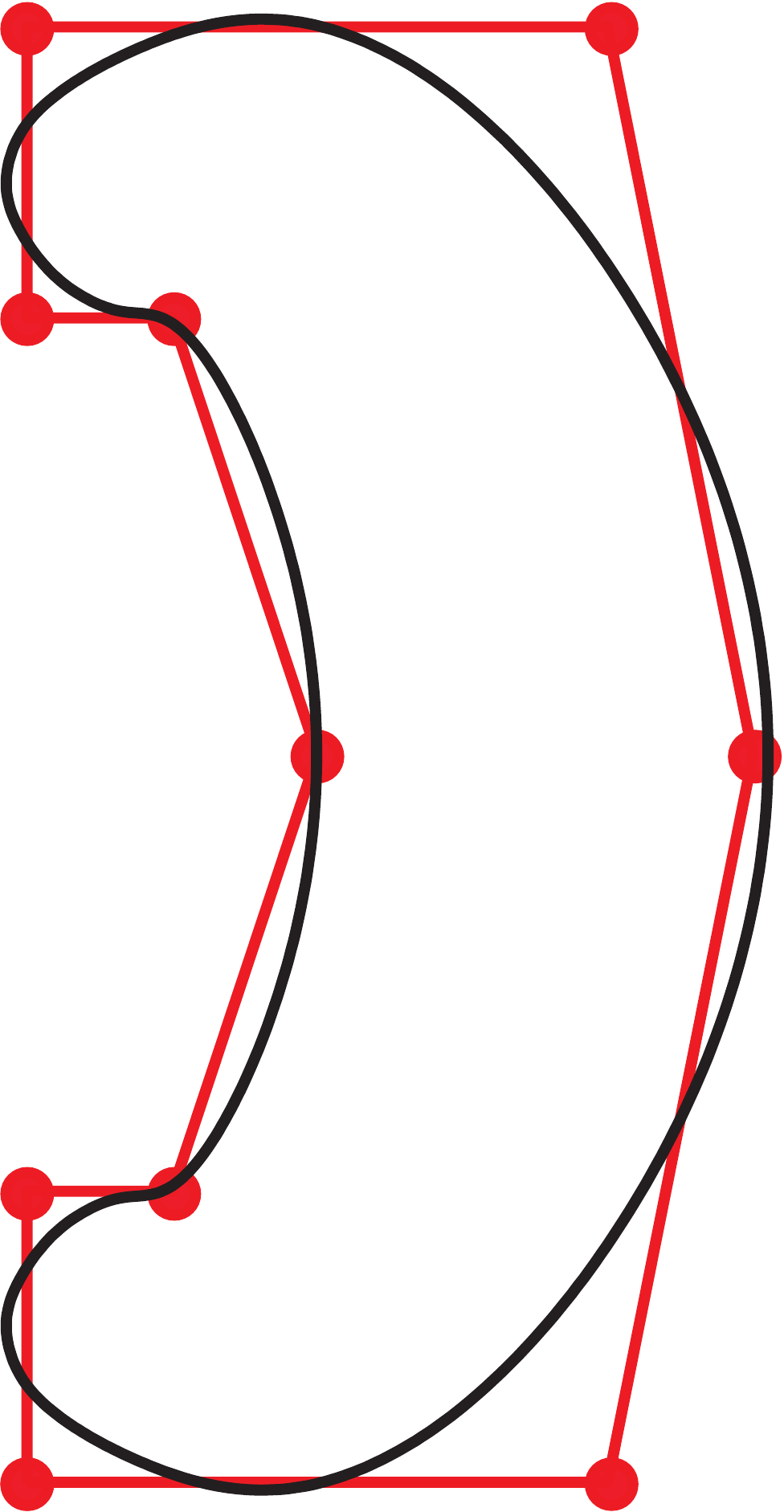, width=1.3 in} \qquad \qquad  & \epsfig{file=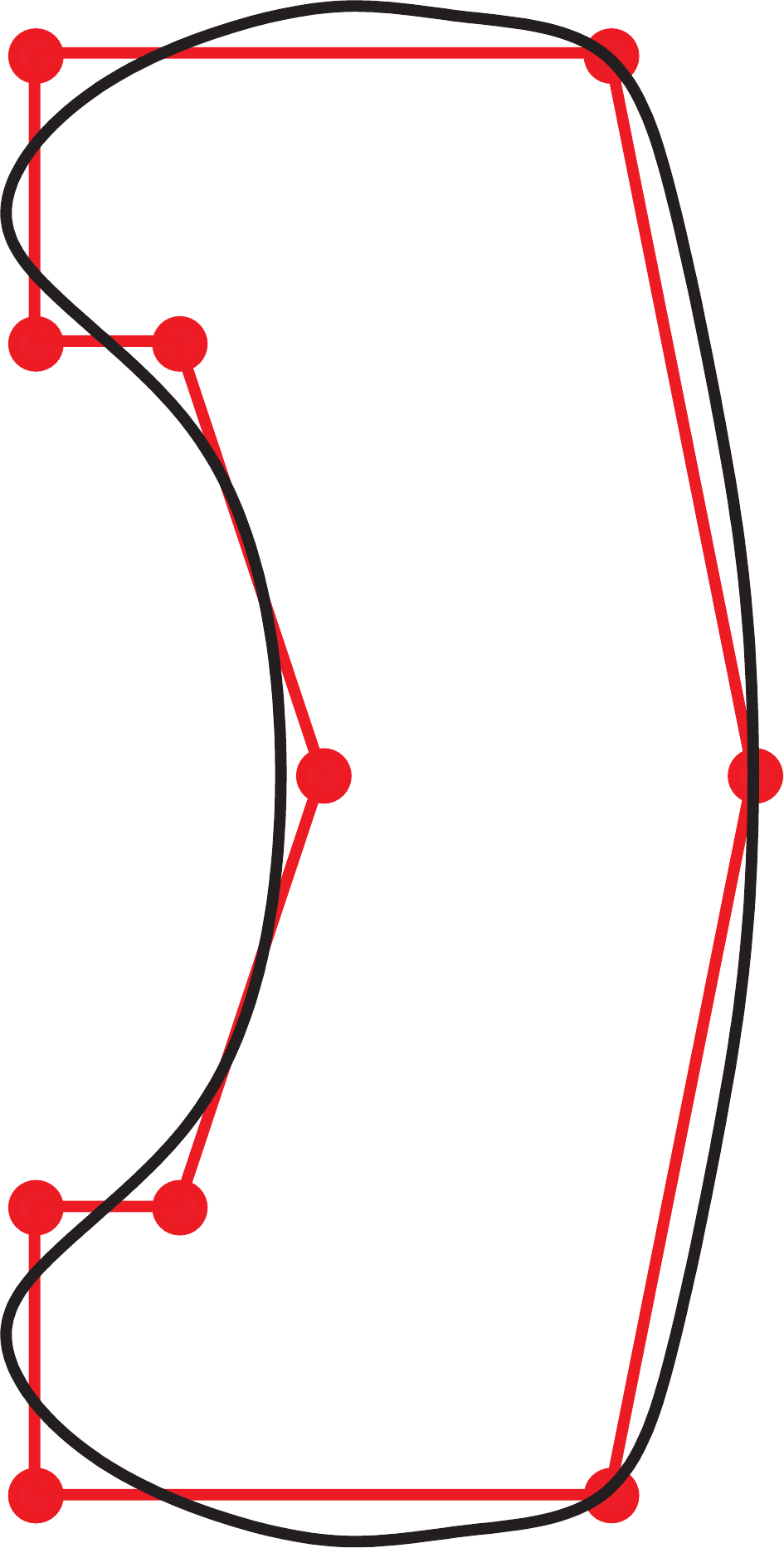, width=1.3 in} \qquad \qquad   & \epsfig{file=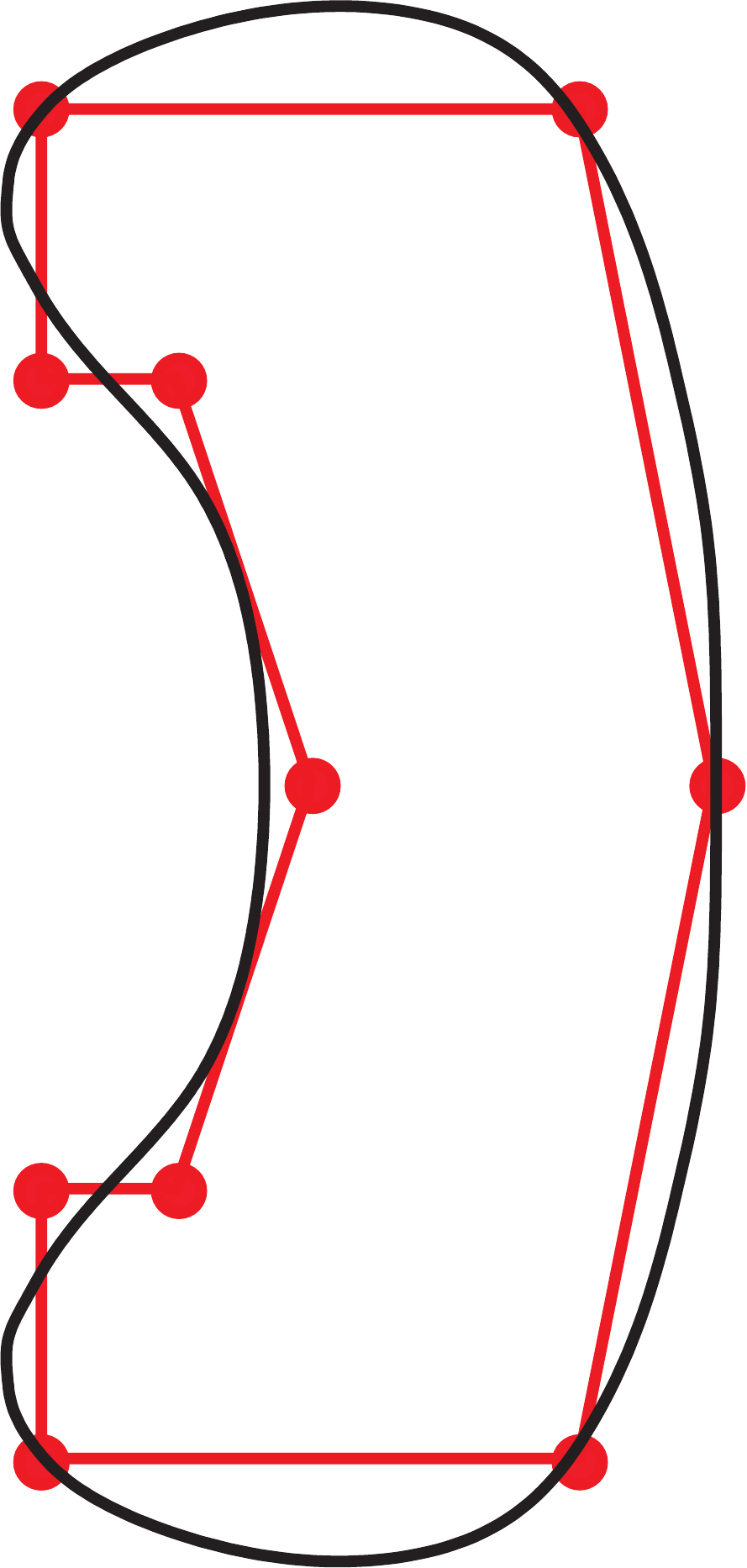, width=1.3 in}\\
 (d) \qquad \qquad  & (e) \qquad \qquad  & (f)
 \end{tabular}
\end{center}
\caption[Interproximate settings of scheme $S_{a_{5}}$.]{\label{interproximate-2-fig}\emph{The effect of local interpolation by our combined subdivision scheme $S_{a_{5}}$ with different $\alpha_{i}$ and $\beta_{i}$.}}
\end{figure}
\begin{figure}[!h] 
\begin{center}
\begin{tabular}{cccc}
\epsfig{file=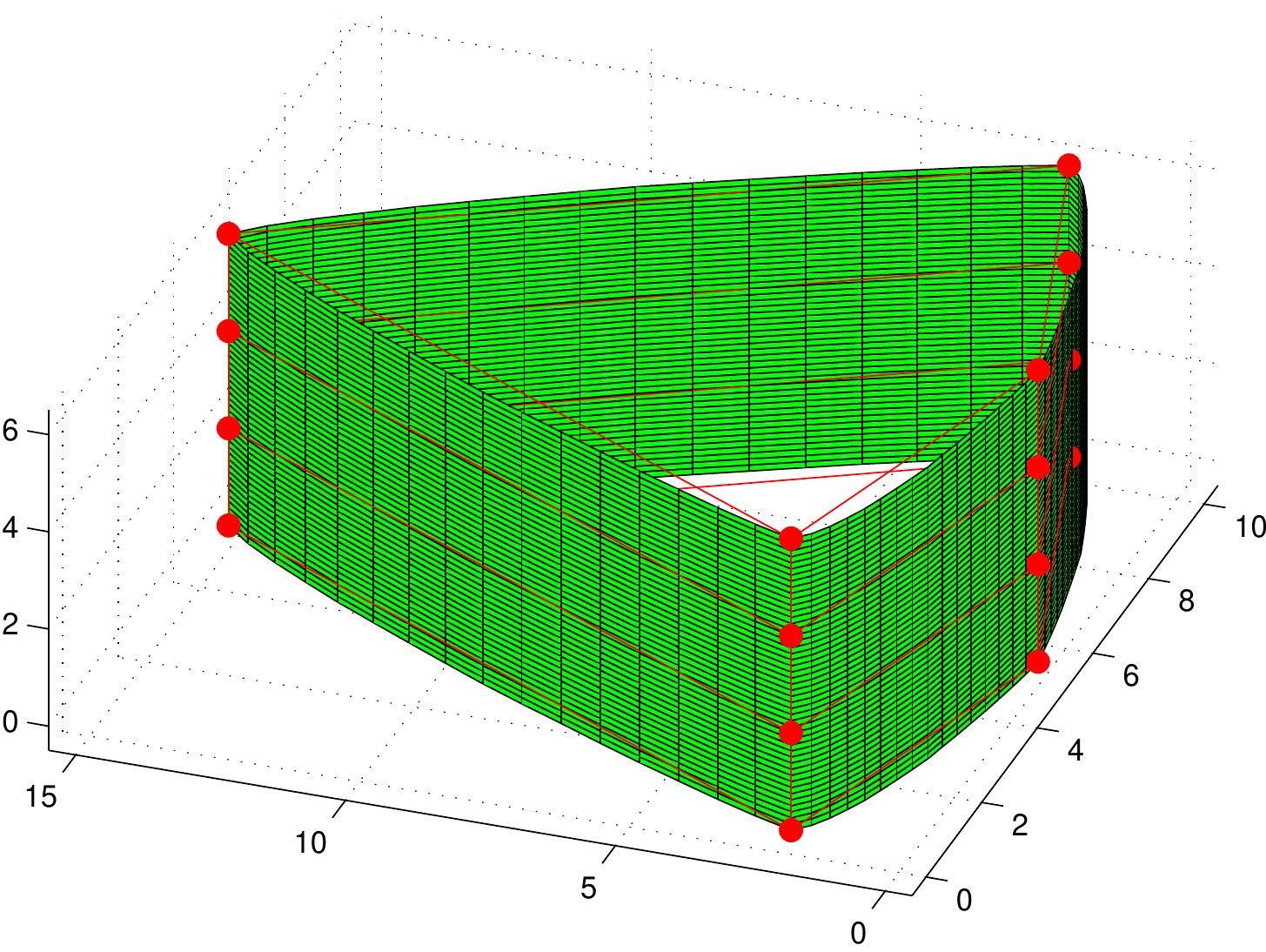, width=1.9 in} & \epsfig{file=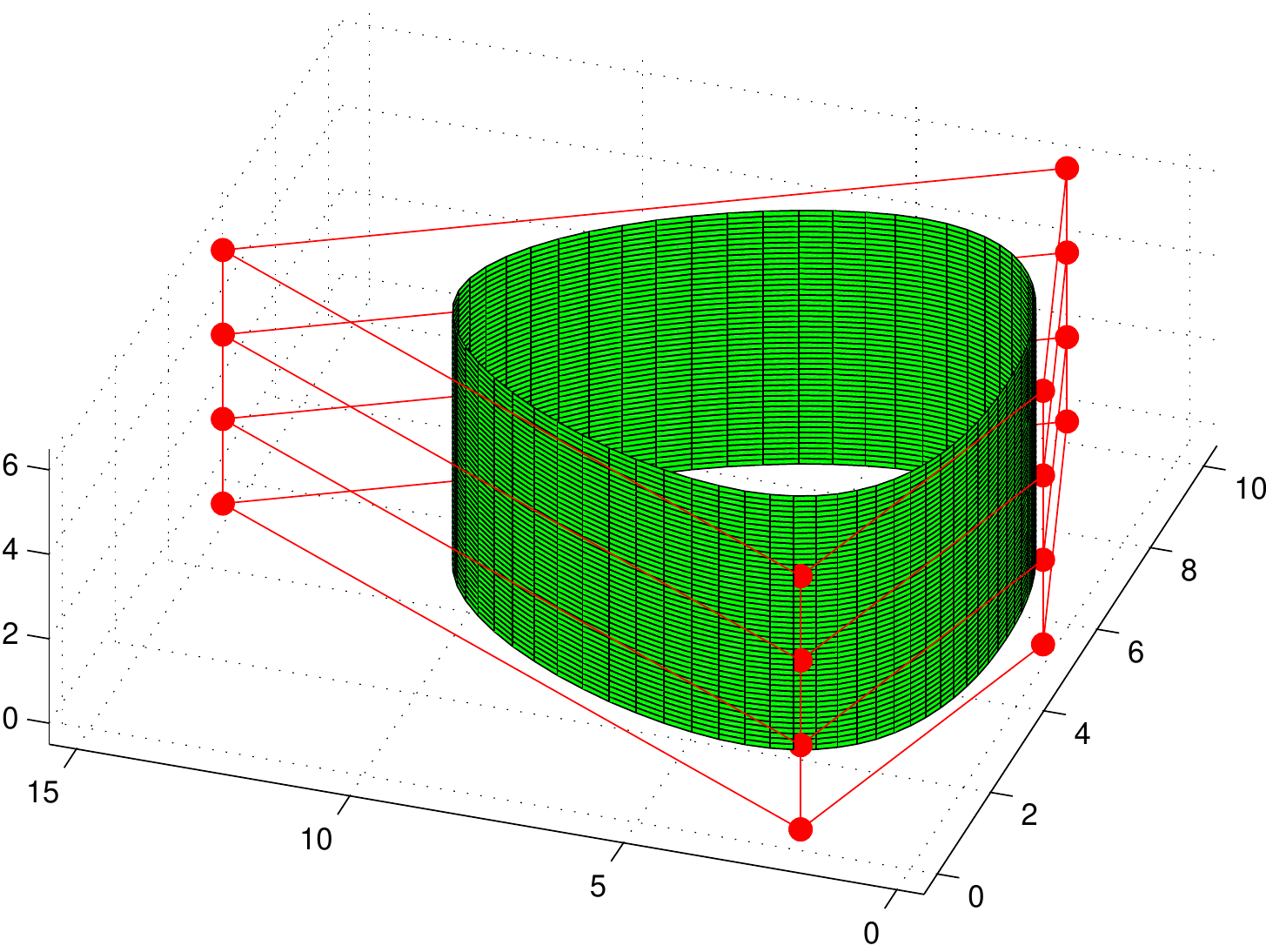, width=1.9 in}  & \epsfig{file=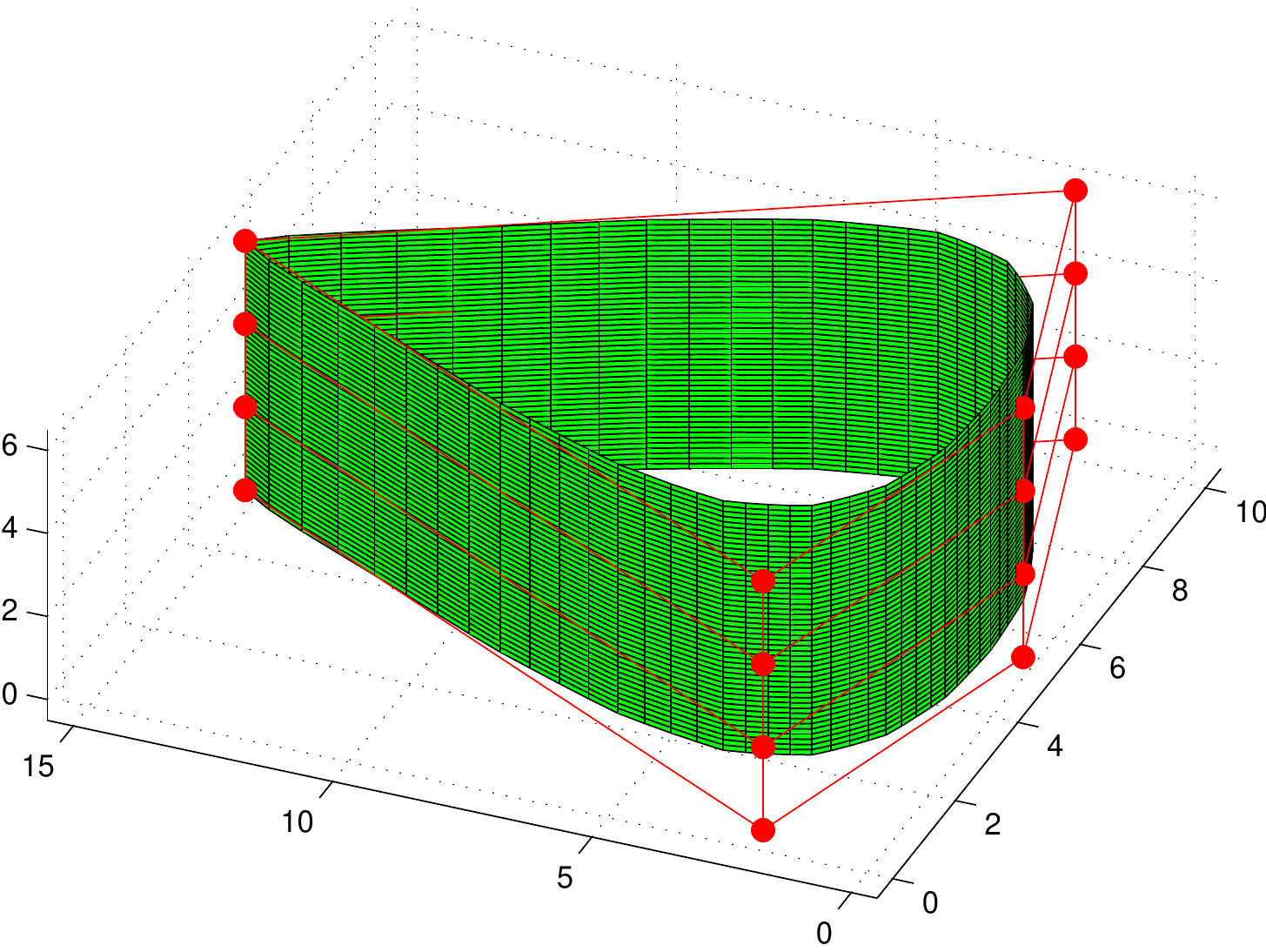, width=1.9 in}\\
(a)  & (b)  & (c)
 \end{tabular}
\end{center}
 \caption[Interproximate settings for the tensor product of scheme $S_{a_{3}}$.]{\label{interproximate-b-fig}\emph{The effect of local interpolation by tensor product of our combined subdivision scheme $S_{a_{3}}$ with different $\alpha_{i}$ and $\beta_{i}$.}}
\end{figure}

\section{Interproximate subdivision schemes}\label{Interproximate subdivision schemes}
In this section, we present a new family of subdivision schemes, that is the family of interproximate subdivision schemes, for generating curves that interpolate certain given initial control points and approximate the other initial control points. By the interproximate subdivision scheme, only the initial control points specified to be interpolated are fixed and the other points are updated at each refinement step.
The interproximate subdivision schemes can be defined by replacing $\alpha_{i}$ and $\beta_{i}$ as the substitution of $\alpha$ and $\beta$ in (\ref{schm2}) and (\ref{b2j+1}). In this interproximate subdivision process, the parameters $\alpha_{i}$ control the interpolating property of the subdivision schemes and parameters $\beta_{i}$ control the approximating property of the subdivision schemes. Figure \ref{interproximate-1-fig} shows the limit curves generated by the subdivision scheme $S_{a_{3}}$ using initial control points $(1,5),(1,2),(13,3.4),(14,2.5),(14,4.5),(13,3.6)$ and $(1,5)$. Figure \ref{interproximate-1-fig}(d)-\ref{interproximate-1-fig}(f) demonstrates that this scheme interpolates the control points in a local manner and uses a different value of tension parameter for each edge of the control polygon. Values of the tension parameters at first subdivision levels are shown in these figures. Whereas at the other subdivision levels we use same interpolating values for the control points which are interpolated at the first subdivision levels and use the approximating values of the tension parameters for the modified and new inserted points. Figure \ref{interproximate-2-fig} shows the limit curves generated by subdivision scheme $S_{a_{5}}$ using initial control points $(0,0),$ $(4,0),$ $(5,5),$ $(4,10),$ $(0,10),$ $(0,8),$ $(1,8),$ $(2,5),$ $(1,2)$ and $(0,2)$. In this figure:
\begin{itemize}
  \item (a) represents the initial polygon with indexed initial control points.
  \item (b) shows the limit curve that interpolates all the control points with $(\alpha,\beta)=(0,-0.001)$.
  \item (c) represents the limit curve that approximates all the initial control points with $(\alpha,\beta)=(\frac{1}{16},-\frac{1}{48})$.
  \item (d) shows the interproximate limit curve with $(\alpha_{i},\beta_{i})=[(\frac{1}{10},-\frac{49}{1152}),$ $(\frac{1}{10},$ $-\frac{49}{1152}),$ $(\frac{1}{10},$ $-\frac{49}{1152}),$ $(\frac{1}{10},$ $-\frac{49}{1152}),$ $(\frac{1}{10},-\frac{49}{1152}),$ $(\frac{1}{10},$ $-\frac{49}{1152}),$ $(0,\frac{1}{64}),$ $(0,\frac{1}{64}),$ $(0,\frac{1}{64}),$ $(\frac{1}{10},$ $-\frac{49}{1152})]$ at first subdivision level. Whereas at other subdivision levels, we use $(\alpha_{i},\beta_{i})=(0,\frac{1}{64})$ for the points $(1,8),$ $(2,5)$ $\&$ $(1,2)$ and $(\alpha_{i},\beta_{i})=(\frac{1}{10},-\frac{49}{1152})$ for all the other points.
  \item (e) shows the interproximate limit curve with $(\alpha_{i},\beta_{i})=[(\frac{1}{14},$ $-\frac{43}{1664}),$ $(0,$ $-\frac{2}{125}),$ $(0,-\frac{2}{125}),$ $(0,-\frac{2}{125}),$ $(\frac{1}{14},-\frac{43}{1664}),$ $(\frac{1}{14},-\frac{43}{1664}),$ $(\frac{1}{14},-\frac{43}{1664}),$ $(\frac{1}{14},$ $-\frac{43}{1664}),$ $(\frac{1}{14},-\frac{43}{1664}),$ $(\frac{1}{14},-\frac{43}{1664})]$ at first subdivision level. Whereas at other subdivision levels, we use $(\alpha_{i},\beta_{i})=(0,-\frac{2}{125})$ for the points $(4,0),$ $(5,5)$ $\&$ $(4,10)$ and $(\alpha_{i},\beta_{i})=(\frac{1}{14},-\frac{43}{1664})$ for all the other points.
  \item (f) shows the interproximate limit curve with $(\alpha_{i},\beta_{i})=[(0,\frac{1}{30}),(0,\frac{1}{30}),$ $(\frac{1}{11},-\frac{1}{30}),$ $(0,\frac{1}{30}),$ $(0,\frac{1}{30}),$ $(\frac{1}{11},-\frac{1}{30}),$ $(\frac{1}{11},-\frac{1}{30}),$ $(\frac{1}{11},-\frac{1}{30}),$ $(\frac{1}{11},-\frac{1}{30}),$ $(\frac{1}{11},$ $-\frac{1}{30})]$ at first subdivision level. Whereas at other subdivision levels, we use $(\alpha_{i},\beta_{i})=(0,\frac{1}{30})$ for the points $(0,0),$ $(4,0),$ $(4,10)$ $\&$ $(0,10)$ and $(\alpha_{i},\beta_{i})=(\frac{1}{11},-\frac{1}{30})$ for all the other points.
\end{itemize}
These figures show that proposed schemes can interpolate the initial control points which are chose by the programmers to be interpolated.

Figure \ref{interproximate-b-fig} shows the limit surfaces generated by tensor product subdivision scheme of scheme $S_{a_{3}}$ using initial control points $(0,2,0),$ $(5,0,0),$ $(10,2,0),$ $(5,15,0),$ $(0,2,0),$ $(0,2,2),$ $(5,0,2),$ $(10,2,2),$ $(5,15,2),$ $(0,2,2),$ $(0,2,$ $4),$ $(5,0,4),$ $(10,2,4),$ $(5,15,4),$ $(0,2,$ $4),$ $(0,2,6),$ $(5,0,6),$ $(10,2,6),$ $(5,15,6),$ $(0,2,6)$, where Figure \ref{interproximate-b-fig}(a) shows the limit surface that interpolates all the initial control points with $(\alpha,\beta)=(0,-\frac{1}{40})$, Figure \ref{interproximate-b-fig}(b) shows the limit surface that approximates all the initial control points with $(\alpha,\beta)=(\frac{1}{8},0)$ and Figure \ref{interproximate-b-fig}(c) shows the limit surface that interpolates only the control points $(5,15,0),$ $(5,15,2),$ $(5,15,4),$ $(5,15,6)$ at each level of subdivision with $(\alpha_{i},\beta_{i})=(0,-\frac{1}{40})$ and approximates all the other control points at each level of subdivision with $(\alpha_{i},\beta_{i})=(\frac{1}{8},0)$. Similarly, a programmer can choose other control points of his choice to be interpolated by using the tensor product schemes of the proposed schemes.

\section{Conclusion}\label{Conclusion3}
In this article, we have proposed a recursive method to generate the refinement rules of combined subdivision schemes. On the basis of that recursive refinement rules we have presented the family of $(2N+2)$-point relaxed primal combined schemes, the family of $(2N+3)$-point relaxed combined schemes and the family of $(2N+4)$-point interpolatory subdivision schemes with reproduction degrees $2N+1$, $2N+3$ and $2N+3$ respectively at certain values of the tension parameters. In fact, when value of $N$ is increased by one, polynomial reproductions of the proposed families of schemes are increased by two. Similarly, polynomial generations of the proposed families of schemes are $2N+3$, $2N+5$ and $2N+3$ respectively. $N$ is also directly proportional to the polynomial generations of the schemes. The continuity of the proposed families may be increased by increasing $N$. Our families of schemes not only give the flexibility in fitting limit curves/surfaces because of the involvement of tension parameters, but also give the optimal polynomial reproduction, polynomial generation and continuity than the existing primal schemes. Moreover, we converted the proposed family of $(2N+3)$-point combined subdivision schemes to the family of interproximate subdivision schemes by defining local parameters. One of these parameters is defined to control the interpolating property of the subdivision schemes and other one is defined to control the approximating property of the subdivision schemes. The interproximate subdivision schemes have applications in situations where some of the initial data points cannot be measured exactly. Future work is to do a theoretical study that how to choose values of tension parameters in an interproximate algorithm automatically.

\subsection*{Acknowledgement}
This work is supported by NRPU Project. No. 3183, Pakistan.

\section*{References}

\end{document}